\documentclass[10pt]{amsart} 

\usepackage{amsmath, amssymb, mathrsfs}

\usepackage[mathscr]{euscript} 

\newlength{\mylength}
\setlength{\mylength}{0.25cm}

\usepackage{enumitem}
\setenumerate{listparindent=\parindent, itemsep=0cm, parsep=\mylength, topsep=0cm}
\setitemize{listparindent=\parindent, itemsep=0cm, parsep=\mylength, topsep=0cm}

\usepackage[breaklinks=true]{hyperref} 
\usepackage{comment} 

\usepackage{url}

\usepackage{tikz-cd}

\usepackage{amsthm}

\makeatletter
\renewenvironment{proof}{\par
	\pushQED{\qed}%
	\normalfont \topsep6\p@\@plus6\p@\relax
	\noindent\emph{Proof.} 
	\ignorespaces
}{%
\popQED\endtrivlist\@endpefalse
}
\makeatother

\makeatletter
\newenvironment{proofofclaim}{\par
	\pushQED{\qed}%
	\normalfont \topsep6\p@\@plus6\p@\relax
	\noindent\emph{Proof of claim.} 
	\ignorespaces
}{%
\popQED\endtrivlist\@endpefalse
}
\makeatother

\makeatletter
\newenvironment{myproof}[1]{\par
	\pushQED{\qed}%
	\normalfont \topsep6\p@\@plus6\p@\relax
	\noindent\emph{#1.} 
	\ignorespaces
}{%
\popQED\endtrivlist\@endpefalse
}
\makeatother

\newtheoremstyle{mythm}% name of the style to be used
{\mylength}% measure of space to leave above the theorem. E.g.: 3pt
{0pt}% measure of space to leave below the theorem. E.g.: 3pt
{\itshape}% name of font to use in the body of the theorem
{0pt}% measure of space to indent
{\bfseries}% name of head font
{.\ }% punctuation between head and body
{ }% space after theorem head; " " = normal interword space
{\thmname{#1}\thmnumber{ #2}\thmnote{ (#3)}}

\theoremstyle{mythm} 
\newtheorem*{claim}{Claim}
\newtheorem*{thm}{Theorem} 
\newtheorem*{prop}{Proposition} 
\newtheorem*{lem}{Lemma}
\newtheorem*{cor}{Corollary}

\theoremstyle{definition}
\newtheorem*{defn}{Definition} 

\theoremstyle{remark} 
\newtheorem*{rmk}{Remark}

\newcommand{\nc}{\newcommand} 
\nc{\on}{\operatorname}
\nc{\rnc}{\renewcommand} 

\nc{\Vect}{\on{Vect}}
\nc{\QCoh}{\on{QCoh}}

\nc{\wt}{\widetilde}
\nc{\wh}{\widehat} 
\nc{\ol}{\overline} 

\nc{\disk}{\mathbb{D}}

\nc{\BN}{\mathbb{N}}
\nc{\BZ}{\mathbb{Z}}
\nc{\BQ}{\mathbb{Q}}
\nc{\BR}{\mathbb{R}}
\nc{\BC}{\mathbb{C}}
\nc{\BA}{\mathbb{A}}
\nc{\BP}{\mathbb{P}}
\nc{\BE}{\mathbb{E}}
\nc{\BG}{\mathbb{G}}
\nc{\qbar}{\ol{\mathbb{Q}}_\ell}
\nc{\ul}{\underline}

\nc{\fset}{\on{fSet}}
\nc{\fsetsurj}{\on{fSet}^{\on{surj}}}
\nc{\fsetsurjne}{\on{fSet}^{\on{surj}}_{\on{n.e.}}}
\nc{\fsetne}{\on{fSet}_{\on{n.e.}}}
\nc{\fsetsurjneop}{\on{fSet}^{\on{surj}, \op}_{\on{n.e.}}}
\nc{\fsetsurjop}{\on{fSet}^{\on{surj}, \op}}

\nc{\raninf}{\on{Ran}_{\on{inf}}}
\nc{\ranf}[1]{\on{Ran}_{\la #1 \ra}}

\nc{\holim}{\displaystyle\sideset{}{^\cdot}\lim}
\nc{\nlim}[1]{\displaystyle\sideset{}{^{#1}}\lim}

\nc{\la}{\langle}
\nc{\ra}{\rangle} 

\nc{\mb}{\mathbf}
\nc{\mf}{\mathfrak}
\nc{\mc}{\mathscr}

\nc{\ira}{\hookrightarrow}
\nc{\hra}{\hookrightarrow}
\nc{\sra}{\twoheadrightarrow} 

\renewcommand{\setminus}{\smallsetminus}

\nc{\Ext}{\on{Ext}}
\nc{\Spec}{\on{Spec}}
\rnc{\Im}{\on{Im}}
\rnc{\Re}{\on{Re}}
\nc{\Id}{\on{Id}}
\nc{\id}{\on{id}}
\nc{\Hom}{\on{Hom}}
\nc{\op}{{\on{op}}}

\nc{\Supp}{\on{Supp}}

\nc{\Ann}{\on{Ann}}
\nc{\fun}{\on{Fun}}

\nc{\oh}{\mc{O}}
\nc{\D}{\mc{D}}
\nc{\Gr}{\on{Gr}}
\nc{\gm}{\mathbb{G}_m}
\nc{\ga}{\mathbb{G}_a}
\nc{\Pic}{\mathbf{Pic}}
\nc{\pr}{\on{pr}}
\nc{\Ran}{\on{Ran}}
\nc{\ran}{\on{Ran}}
\nc{\dr}{{\on{dR}}}

\DeclareMathOperator*\colim{colim}

\newcommand{\bb}[1]{[\![ #1 ]\!]}

\nc{\End}{\on{End}}

\newenvironment{cd}{\begin{equation*}\begin{tikzcd}}{\end{tikzcd}\end{equation*}\ignorespacesafterend}
\nc{\e}[1]{\begin{align*} #1 \end{align*}}

\usepackage[margin=1.5in]{geometry}

\setlength{\parskip}{0.25cm}%

\title[$n$-excisive functors, canonical connections, and line bundles on $\Ran(X)$]{$n$-excisive functors, canonical connections, and line bundles on the Ran space} 
\author{James Tao}
\date{June 18, 2019} 

\begin{document}
	
\maketitle

\vspace{-\mylength} 

\begin{abstract}
	Let $X$ be a smooth algebraic variety over $k$. We prove that any flat quasicoherent sheaf on $\Ran(X)$ canonically acquires a $\D$-module structure. In addition, we prove that, if the geometric fiber $X_{\ol{k}}$ is connected and admits a smooth compactification, then any line bundle on $S \times \Ran(X)$ is pulled back from $S$, for any locally Noetherian $k$-scheme $S$. Both theorems rely on a family of results which state that the (partial) limit of an $n$-excisive functor defined on the category of pointed finite sets is trivial. 
\end{abstract}

\vspace{0.6in}

\begin{center}
	\includegraphics[scale=0.22]{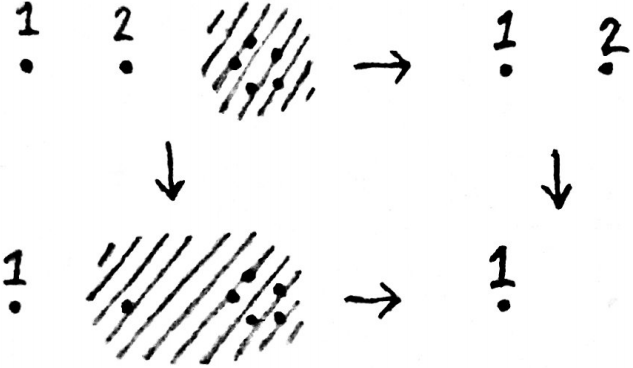}
\end{center}

\newpage 

\setlength{\parskip}{0.22cm}
\tableofcontents

\newpage

\setlength{\parskip}{0.25cm}

\section{Introduction} 

\subsection{The Ran space} \label{ran-top}

Given a topological space $X$, its Ran space $\Ran(X)$ is the set of nonempty finite subsets of $X$, endowed with a suitable topology. This space plays a role in the study of configuration spaces and factorization structures associated to $X$. It has a semigroup operation given by `union of finite subsets,' which is idempotent because $S \cup S = S$ for any subset $S \subset X$. This semigroup operation can be used to show that $\Ran(X)$ is weakly contractible, see~\cite[3.4.1]{bd}. 

Beilinson and Drinfeld introduced an analogous notion in algebraic geometry: for any $k$-scheme $X$, the Ran prestack of $X$, denoted $\Ran(X)$, is the Set-valued presheaf on affine schemes over $k$ which sends a test scheme $T$ to the set of nonempty finite subsets of $\on{Maps}(T, X)$. Hence, for every nonempty finite set $I$, there is a natural map $X^I \to \Ran(X)$ sending an $I$-tuple in $\on{Maps}(T, X)$ to the corresponding subset of $\on{Maps}(T, X)$.  (See Section~\ref{ran-def} for more details.) 

The prestack $\Ran(X)$ is the natural base over which various algebro-geometric objects with factorization structure are defined. As such, it plays a role in relating geometric structures (affine Grassmannians and loop groups) to algebraic structures (chiral algebras and factorization algebras) living on $X$, thereby enabling problems concerning the former to be translated into problems concerning the latter. See~\cite[0.6]{g} for more comments on why the Ran prestack is useful, and see~\cite[Sect.\ 3]{z} for a clear explanation of how to define affine Grassmannians over $\Ran(X)$ when $X$ is a curve. 

In this paper we assume that $X$ is a smooth algebraic variety over $k$, and we study quasicoherent sheaves and line bundles on $\Ran(X)$. 

\begin{rmk}
	Although Beilinson and Drinfeld, in their development of chiral algebras, mainly restricted to the case in which $X$ is a curve, it was later demonstrated by Francis and Gaitsgory that the notion of chiral (and factorization) algebras is intelligible and interesting when $X$ is any separated finite type $k$-scheme, see \cite{fg}. Likewise, we have taken pains to ensure that our results apply when $\dim X > 1$. However, we need the additional hypothesis of smoothness in order to control the behavior of quasicoherent sheaves on infinitesimal neighborhoods; this issue does not arise for $\D$-modules because they are trivial on infinitesimal neighborhoods, by definition. 
\end{rmk}

\subsection{Main results} 

We shall prove two statements about sheaves on $\Ran(X)$: 
\begin{enumerate}[label=\textbf{(\arabic*)}]
	\item[\textbf{(1)}] If $X$ is a smooth $k$-scheme, then any flat quasicoherent sheaf on $\Ran(X)$ has a unique $\D$-module structure. (Theorem~\ref{thm-dr})
	\item[\textbf{(2)}] Let $X$ be a smooth $k$-scheme satisfying property (C): the base change $X_{\ol{k}}$ admits an open embedding into a smooth proper connected $\ol{k}$-variety.\footnote{In particular, $X$ is smooth and geometrically integral over $k$.} Then, for any locally Noetherian $k$-scheme $S$, every line bundle on $S \times \Ran(X)$ is pulled back from $S$. (Theorem~\ref{main2}) \label{r2}
\end{enumerate} 
The proofs of both statements rely on some results about the vanishing of limits of $n$-excisive functors. Here is one such result: 
\begin{enumerate}
	\item[\textbf{(3)}] Let $G : \fset_* \to \mc{A}$ be a functor from pointed finite sets to some abelian category $\mc{A}$. Assume that $G$ is $n$-excisive in the sense that it sends certain strongly cocartesian $(n+1)$-hypercubes in $\fset_*$ to weakly cartesian diagrams in $\mc{A}$. Then $\lim_{I \in \fsetsurjne} G(\{*\} \sqcup I) \simeq G(\{*\})$. (Proposition~\ref{prop1}) \label{r3}
\end{enumerate}
There is an analogous result when $\mc{A}$ is replaced by a stable $(\infty, 1)$-category. (Theorem~\ref{thm2})

\subsubsection{Discussion of \textbf{(1)}} \label{discuss-1} 

Beilinson and Drinfeld defined a factorization algebra to be a quasicoherent sheaf $\mc{F}$ on $\Ran(X)$ equipped with a unital factorization structure\footnote{Tangential point: Beilinson and Drinfeld also impose a mild flatness hypothesis, which could be summarized as `flatness along the diagonals,' see \cite[Lem.\ 3.4.3(i)]{bd} and also Remark~\ref{delta-flat}.} -- this consists of the datum of `partial multiplicativity' of $\mc{F}$ with respect to the semigroup structure of $\Ran(X)$, and the datum of a `unit' $1 \in \mc{F}$. Using both of these data, Beilinson and Drinfeld proved that any factorization algebra canonically acquires a $\D$-module structure~\cite[Prop.\ 3.4.7]{bd}. What our point \textbf{(1)} shows is that, if $\mc{F}$ is flat, then neither datum is necessary. On the other hand, if $\mc{F}$ does happen to have a unital factorization structure, then comparing the proofs of \cite[Prop.\ 3.4.7]{bd} and Theorem~\ref{thm-dr} shows that our $\D$-module structure on $\mc{F}$ equals the one produced by Beilinson and Drinfeld. 

A non-flat quasicoherent sheaf on $\Ran(X)$ need not admit any $\D$-module structure; for example, take a skyscraper sheaf at a closed point of $X$, and push it forward along the map $X \to \Ran(X)$. 

An immediate corollary of point \textbf{(1)} is that every flat schematic map $Y \to \Ran(X)$ automatically descends to $\Ran(X)_{\dr}$, i.e.\ $Y$ acquires the structure of $\D$-scheme over $\Ran(X)$ (and hence its pullbacks to each $X^I$ are $\D_{X^I}$-schemes), in the sense of~\cite[2.3]{bd}. From the same ideas, it follows that any map from $\Ran(X)$ to a scheme $S$ must be constant, see Remark~\ref{constant}. 

\subsubsection{Discussion of \textbf{(2)}} \label{discuss-2}

In analogy with the weak contractibility of $\Ran(X)$ in the topological setting (see~\ref{ran-top}), there are various results in algebraic geometry which state that $\Ran(X)$ is `contractible' with respect to some cohomology theory: 
\begin{enumerate}[label=(\alph*)]
	\item Let $k$ be an algebraically closed field of characteristic zero, and let $X$ be a connected separated $k$-scheme. Then the de Rham cohomology of $\Ran(X)$ is trivial, in the sense that $H_{\bullet}(\Ran(X)) \simeq k$. \cite[Prop.\ 4.3.3]{bd} (cf.\ \cite[Thm.\ 1.6.5]{g})
	\item Let $k$ be an algebraically closed field, and let $X$ be a connected $k$-scheme. Then the $\ell$-adic cohomology of $\Ran(X)$ is trivial, in the sense that $H_\bullet(\Ran(X); \BQ_\ell) \simeq \BQ_\ell$. \cite[Thm.\ 2.4.5]{gl}\footnote{Their notation `$\Ran^u(X)$' is what we call $\Ran(X)$. For more information on comparing these notations, see~\ref{second-def} and \ref{iv}.} 
	\item Let $k$ be any field, let $X$ be a smooth $k$-scheme, and let $S$ be a $k$-scheme. Then every regular function on $S \times \Ran(X)$ is the pullback of a regular function on $S$. \cite[Prop.\ 4.3.10(1)]{z}
\end{enumerate}
When $X$ is a curve, these facts can be applied to study moduli problems which involve the datum of a dense open subset $U \subset X$ which is allowed to vary. (For example, one could study rational maps $X \dashrightarrow G$ with a domain of definition $U \subset X$ -- this example plays a central role in~\cite{g}.)  For such moduli problems, there is a semigroup action by $\Ran(X)$ where a subset $S \subset X$ acts by subtracting $S$ from $U$, and the above `contractibility' results are applied to show that taking the quotient by this action does not affect the cohomology of the moduli stack. As explained in \cite[4.3]{z}, this idea enters in proving local-to-global principles which relate the moduli stack of $G$-bundles on $X$ to the Beilinson--Drinfeld affine Grassmannian $\Gr_{G, \Ran(X)}$. In fact, the present article was motivated by an application of this kind, see \cite[Prop.\ 1.4]{tz}. 

In comparison with these known results, the main difficulty of proving point \textbf{(2)} lies in the fact that the `cohomology theory' sending $X \rightsquigarrow \Pic(X)$ does not satisfy a K\"unneth theorem, so the proofs of (a) and (b) do not directly apply. The relevant substitute is the Theorem of the Cube~ \cite[\href{https://stacks.math.columbia.edu/tag/0BF4}{Tag 0BF4}]{stacks}. This theorem requires 2 out of 3 varieties to be proper, so the hypothesis (C) in point \textbf{(2)} is designed to allow us to reduce to the proper case, at least when $S$ is regular (e.g.\ a field). 

Note that, if $\on{char}(k) = 0$, then every smooth geometrically connected $k$-variety $X$ satisfies hypothesis (C). This follows from Nagata's compactification theorem and Hironaka's theorem on resolution of singularities. 

\begin{rmk}
	The proof of (c) given in \cite{z} only works when $k$ is of characteristic zero, because this is the generality in which \cite[Lem.\ 4.3.11]{z} holds. For more discussion of this point, see Remark~\ref{exercise}. In the course of proving \textbf{(2)}, we will supply a proof of (c) that works in arbitrary characteristic, at least when $S$ is locally Noetherian, see Lemma~\ref{func1}. 
\end{rmk}

\subsubsection{Discussion of \textbf{(3)}} \label{discuss-3}

In Goodwillie calculus, a functor $G : \mc{C}_1 \to \mc{C}_2$ between $(\infty, 1)$-categories is called $n$-excisive if $G$ sends strongly cocartesian $(n+1)$-hypercubes in $\mc{C}_1$ to weakly cartesian $(n+1)$-hypercubes in $\mc{C}_2$. Here `strongly cocartesian' means that every 2-dimensional face is a cocartesian square, while `weakly cartesian' means that the initial vertex is the limit of the remaining part of the hypercube. The paradigm of Goodwillie calculus is to study an arbitrary functor $G : \mc{C}_1 \to \mc{C}_2$ by its $n$-excisive approximations for successively larger $n$. The analogy with calculus arises because these approximations behave like Taylor series.

As indicated in \textbf{(3)}, we consider $n$-excisive functors $\fset_* \to \mc{C}$ where $\mc{C}$ is an abelian category or an $(\infty, 1)$-category. Let us indicate the two examples of this notion which will be relevant in this paper. The following lemmas, which can be treated as exercises, say that the functors of `polynomials of degree $\le d$' and `line bundles' are $d$-excisive and 2-excisive, respectively: 

\begin{lem}
	For each $1 \le i \le d+1$, let $L_i \subset \BA^{d+1}_{k}$ denote the $i$-th coordinate hyperplane, defined by $x_i = 0$. The space of polynomials of degree $\le d$ on $\BA^{d+1}_k$ is isomorphic to the space parameterizing the following data: 
	\begin{itemize}
		\item For each $1 \le i \le d+1$, we have a polynomial of degree $\le d$ on $L_i$, denoted $f_i$. 
		\item For each pair $i, j$ with $1 \le i < j \le d+1$, we require that $f_i|_{L_i \cap L_j} = f_j|_{L_i \cap L_j}$. 
	\end{itemize}
\end{lem}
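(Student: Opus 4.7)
The plan is to show that the natural restriction map
\e{
\Phi : k[x_1, \ldots, x_{d+1}]_{\le d} \to \prod_{i=1}^{d+1} k[L_i]_{\le d}
}
lands in the subspace cut out by the compatibility conditions, and is an isomorphism onto it. The proof splits cleanly into injectivity and surjectivity.

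For injectivity, observe that if $f$ restricts to zero on each $L_i$, then $x_i \mid f$ for every $i$, and since the $x_i$ are pairwise coprime this forces $x_1 x_2 \cdots x_{d+1} \mid f$. But this product has degree $d+1$, strictly larger than $\deg f \le d$, so $f = 0$. This is the point where the particular numerical balance between ``$d+1$ hyperplanes'' and ``degree $\le d$'' enters.

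For surjectivity I would argue by an explicit inclusion–exclusion. Given compatible data $(f_i)_{i=1}^{d+1}$, for any nonempty $S \subseteq \{1, \ldots, d+1\}$ set $L_S = \bigcap_{i \in S} L_i$ and define $f_S \in k[L_S]_{\le d}$ by $f_S = f_i|_{L_S}$ for any $i \in S$; that this is independent of the choice of $i \in S$ follows by iterating the pairwise hypothesis $f_i|_{L_i \cap L_j} = f_j|_{L_i \cap L_j}$ (first for $|S|=2$, then by restriction for all larger $S$). Let $\tilde{f}_S \in k[x_1, \ldots, x_{d+1}]_{\le d}$ be the polynomial $f_S$ regarded as a polynomial in the variables $\{x_j : j \notin S\} \subset \{x_1, \ldots, x_{d+1}\}$, and form the candidate lift
\e{
f \;=\; \sum_{\emptyset \neq S \subseteq \{1, \ldots, d+1\}} (-1)^{|S|-1} \tilde{f}_S.
}
Since each $\tilde{f}_S$ has degree $\le d$, so does $f$. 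To verify $f|_{L_i} = f_i$, I would split the sum according to whether $i \in S$ or $i \notin S$; setting $x_i = 0$ fixes $\tilde{f}_S$ in the former case and converts $\tilde{f}_S$ into $\tilde{f}_{S \cup \{i\}}$ in the latter, after which reindexing $S \mapsto S \cup \{i\}$ shows that every term with $|S| \ge 2$ cancels in pairs, leaving precisely the $S = \{i\}$ contribution $\tilde{f}_i|_{L_i} = f_i$.

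The only subtle point is the well-definedness of $f_S$ on triple and higher intersections from the assumed pairwise compatibility, but this falls out of one line of restriction-chasing. Everything else is a routine signed-sum cancellation, so I expect no genuine obstacle — the conceptual content is entirely captured by the numerical observation $\deg(x_1 \cdots x_{d+1}) = d+1 > d$, which simultaneously forces injectivity and makes the inclusion–exclusion formula produce a polynomial of the correct degree.
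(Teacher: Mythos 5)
Your proof is correct, but it follows a genuinely different route from the paper's. The paper treats this lemma as a special case of the assertion that the functor sending $\{*\} \sqcup I$ to the space of polynomials of degree $\le d$ on $\BA^I_k$ is $d$-excisive, and proves that by working one monomial at a time (Lemma~\ref{rnd-cart}): each monomial of degree $\le d$ in $d+1$ variables omits at least one variable, so the hypercube of restriction maps splits as a direct sum of sub-diagrams that are constant copies of $k$ on part of the cube and $0$ elsewhere, hence strongly cartesian; the reduction of the full punctured cube to the two-level data $(f_i, f_i|_{L_i\cap L_j})$ is then handled by the equalizer criterion of Lemma~\ref{lem-poly}(iii) (or Lemma~\ref{paring}). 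You instead keep polynomials whole: injectivity of the restriction map via the divisibility $x_1\cdots x_{d+1} \mid f$ together with $d+1 > d$, and surjectivity via the explicit inclusion--exclusion section $f = \sum_{\emptyset \ne S}(-1)^{|S|-1}\tilde f_S$; your verification is sound, including the point that pairwise compatibility already determines $f_S$ on deeper intersections (since $L_S \subset L_i \cap L_j$) and the sign cancellation after reindexing $S \mapsto S \cup \{i\}$. Both arguments hinge on the same numerical coincidence, but yours is self-contained and exhibits an explicit linear splitting of the restriction map, whereas the paper's monomial-by-monomial decomposition is the form of the argument that upgrades directly to the $d$-excision statement for $\mc{P}_{S,n,d}$ that the rest of the paper relies on.
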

\begin{proof}
	This is equivalent to the assertion that the functor $G: \fset_*\to \on{Vect}_k$ is $d$-excisive, where $G$ sends a set $\{*\} \sqcup I$ to the space of polynomials of degree $\le d$ on $\BA^I_k$. The proof follows by considering one monomial function on $\BA^{d+1}_k$ at a time, see Lemma~\ref{rnd-cart}. 
\end{proof}

\begin{lem}
	Let $X$ be a proper, geometrically connected $k$-variety with a basepoint $x_0 \in X(k)$. Then $\Pic(X^3)$ is equivalent to the category whose objects are described as follows: 
	\begin{itemize}
		\item We have a one-dimensional $k$-vector space $\mc{F}$. 
		\item For each $i \in \{1, 2, 3\}$, we have a line bundle $\mc{E}_i \in \Pic(X)$ and an isomorphism 
		\[
		q_i : \mc{E}_i|_{x_0} \xrightarrow{\sim} \mc{F}.
		\]
		\item For each pair $i, j$ with $1 \le i < j \le 3$, we have a line bundle $\mc{L}_{i, j} \in \Pic(X^2)$ and isomorphisms 
		\e{
			\sigma_{i,j} : \mc{L}_{i, j}|_{X \times \{x_0\}} &\xrightarrow{\sim} \mc{E}_i \\
			\tau_{i,j} : \mc{L}_{i, j}|_{\{x_0\} \times X} &\xrightarrow{\sim} \mc{E}_j.
		}
		\item These data are subject to the condition that, for any $i < j$ as above, we have 
		\[
		q_i \circ \sigma_{i, j}|_{x_0} = q_j \circ \tau_{i, j}|_{x_0}
		\]
		as maps $\mc{L}_{i, j}|_{(x_0, x_0)} \xrightarrow{\sim} \mc{F}$. 
	\end{itemize}
\end{lem}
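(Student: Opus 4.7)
This lemma is the 2-excisiveness of the Picard-groupoid-valued functor $G : \fset_* \to \text{PicGpd}$ sending $\{*\} \sqcup I$ to $\Pic(X^I)$ with basepoint $x_0^I$: the target category $\mc{C}$ is precisely the limit of the punctured 3-cube $S \mapsto \Pic(X^{\{1,2,3\} \setminus S})$ over nonempty $S \subset \{1,2,3\}$, and the plan is to show that the canonical restriction functor $\Phi : \Pic(X^3) \to \mc{C}$ is an equivalence of Picard groupoids. The substantive input is the Theorem of the Cube.

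First, I would define $\Phi$ by sending $\mc{M}$ to the tuple of its restrictions to $(x_0, x_0, x_0)$, to each $X \times \{x_0\}^2$ (with $X$ in position $i$), and to each $X^2 \times \{x_0\}$ (with $X$'s in positions $i < j$), equipped with the isomorphisms $q_i, \sigma_{ij}, \tau_{ij}$ induced by transitivity of restriction. The compatibility condition holds tautologically since both sides of $q_i \circ \sigma_{ij}|_{x_0} = q_j \circ \tau_{ij}|_{x_0}$ reduce to the same canonical isomorphism $\mc{M}|_{(x_0, x_0, x_0)} \xrightarrow{\sim} \mc{F}$.

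For essential surjectivity, given data $(\mc{F}, \mc{E}_i, \mc{L}_{ij}, q_i, \sigma_{ij}, \tau_{ij})$, I would construct
\[
\mc{M} := p^*\mc{F} \otimes \bigotimes_{i=1}^{3} p_i^*\mc{E}_i^{-1} \otimes \bigotimes_{1 \le i < j \le 3} p_{ij}^*\mc{L}_{ij},
\]
where $p, p_i, p_{ij}$ denote the evident projections from $X^3$ to $\Spec k$, $X$, and $X^2$. A direct computation using the given trivializations verifies that $\Phi(\mc{M})$ is canonically isomorphic to the input tuple, with all of the $q, \sigma, \tau$ matching up.

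For fully faithfulness, the key input is the Theorem of the Cube. Given $\mc{M}, \mc{M}' \in \Pic(X^3)$ and an isomorphism $\alpha: \Phi(\mc{M}) \xrightarrow{\sim} \Phi(\mc{M}')$, the line bundle $\mc{N} := \mc{M} \otimes (\mc{M}')^{-1}$ acquires trivializations on the three loci $\{x_0\} \times X^2$, $X \times \{x_0\} \times X$, and $X^2 \times \{x_0\}$ that agree along the pairwise intersections (via the $\sigma, \tau$ components of $\alpha$) and at the common triple intersection (via the $q$ component). The Theorem of the Cube then lifts this compatible system to a trivialization of $\mc{N}$ itself, i.e.\ an iso $\mc{M} \xrightarrow{\sim} \mc{M}'$. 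Uniqueness of the lift is automatic: since $X$ is proper and geometrically connected, $H^0(X^n, \oh_{X^n}) = k$ by K\"unneth, so $\on{Aut}_{\Pic(X^3)}(\mc{M}) = k^\times = \on{Aut}_{\mc{C}}(\Phi(\mc{M}))$ with the induced map being the identity on $k^\times$. The main obstacle is bookkeeping the numerous canonical isomorphisms at the various basepoint loci, particularly in step two; all of the actual geometric content is packaged into the Theorem of the Cube.
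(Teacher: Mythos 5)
Your proof is correct. The paper's stated proof of this lemma is essentially a pointer: it says the lemma is a special case of the $2$-excisiveness established later in Corollary~\ref{cube-improved} (which is deduced from the $n$-excision formalism of Section~\ref{sec-poly} together with the d\'evissage in~\ref{van1}--\ref{van2} separating $\pi_0 = \on{Pic}$ from $\pi_1 = \Gamma(-, \oh^\times)$), and then remarks that ``alternatively, it is not difficult to deduce this lemma from the standard statement of the Theorem of the Cube.'' Your argument is precisely that second, direct route spelled out: identify the target category with the limit of the punctured $3$-cube, establish essential surjectivity via the inclusion--exclusion construction $\mc{M} = p^*\mc{F} \otimes \bigotimes_i p_i^*\mc{E}_i^{-1} \otimes \bigotimes_{i<j} p_{ij}^*\mc{L}_{ij}$ (whose exponents $(+1, -1, +1)$ do make the restrictions come out correctly), and handle morphisms by combining the Theorem of the Cube (for $\pi_0$-injectivity) with $\Gamma(X^n, \oh^\times) = k^\times$ (for $\pi_1$). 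This is lower-tech and more self-contained for the fixed $n=3$ case; the paper's route via Corollary~\ref{cube-improved} has the advantage of handling all $n \ge 3$ uniformly and of illustrating the `paring down the hypercube' machinery, which is needed elsewhere. One small expository point: the Theorem of the Cube by itself only produces \emph{some} trivialization of $\mc{N}$, not one \emph{a priori} compatible with the given slice trivializations; this is fine because, as you note, the ambiguity lives in $\on{Aut}(\mc{N}) = k^\times$ and can be absorbed --- but it would be cleaner to first conclude $\mc{M} \cong \mc{M}'$ via some $\beta_0$, then rescale $\beta_0$ by the constant in $k^\times$ that makes $\Phi(\beta_0)$ agree with the given $\alpha$.
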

\begin{proof}
	This is a special case of the assertion that the functor $G : \fset_* \to \on{Grpd}$ is 2-excisive,\footnote{Note that $G$ lands in the category of strictly commutative Picard groupoids, which identifies with the extension-closed subcategory of $\on{D}^b(\on{AbGrp})$ consisting of objects concentrated in cohomological degrees $[-1, 0]$. Thus, this example can be analyzed in the context of functors which land in the stable $(\infty, 1)$-category $\on{D}^b(\on{AbGrp})$. However, note that the composition of $G$ with the full embedding into $\on{D}^b(\on{AbGrp})$ need not be 2-excisive.} where $G$ sends a set $\{*\} \sqcup I$ to $\Pic(X^I)$. This 2-excision statement is deduced in~\ref{cube-improved}. Alternatively, it is not difficult to deduce this lemma from the standard statement of the Theorem of the Cube~\cite[\href{https://stacks.math.columbia.edu/tag/0BF4}{Tag 0BF4}]{stacks}. 
\end{proof}

Suitably amplified, these facts allow us to apply \textbf{(3)} and deduce the triviality of regular functions and line bundles on $\Ran(X)$. 

\begin{rmk}
	The notion of $n$-excisive functor on $\fset_*$ has been studied by Berger~\cite{berger}. Also, the interpretation of the Theorem of the Cube as saying that $I \mapsto \Pic(X^I)$ is a `quadratic' functor is well-known, see~\cite[Sect.\ 3]{cast} and \cite{kivimae} for example. We emphasize that this  notion of `quadratic functor' is really the same as the aforementioned notion of 2-excisive functor. 
\end{rmk}

\subsection{Overview of the paper} 

In Section~\ref{sec-poly}, we develop the notion of $n$-excisive functor $\fset_* \to \mc{C}$ and prove some vanishing results for limits of such functors over $\fsetsurjne$, including point \textbf{(3)}. We introduce various categories of finite sets~(\ref{ssec-fset}) and study $n$-excisive functors to abelian categories~(\ref{ssec-poly-ab}) and $(\infty, 1)$-categories~(\ref{ssec-poly-stab}). Lastly, we investigate analogous results for functors which are only defined on sets $I$ whose size is bounded above by some fixed integer $N$, see~\ref{ssec-fin}. One such `finite limit' result is needed for the proof of Lemma~\ref{func2}, which is an enhanced version of the triviality of functions on $\Ran(X)$. 

In Section~\ref{ran-def}, which is purely expository, we review the definition of the Ran space and of quasicoherent sheaves and line bundles on it. In~\ref{sec-defs}, we introduce various categories of prestacks and explain why several possible definitions of the Ran space coincide (\ref{ran-set}). In~\ref{sec-sheaves}, we define various categories of sheaves on $\Ran(X)$ and we explain how the derived and abelian categories relate to each other. 

In Section~\ref{sec-canon}, we prove point \textbf{(1)}. We define infinitesimal analogues of the Ran space~(\ref{ran-inf-def}) and prove that flat quasicoherent sheaves on the infinitesimal Ran space are canonically trivial (Proposition~\ref{prop-triv}). Next, in~\ref{ran-partial-def}, we use partially-labeled Ran spaces to bootstrap this result from the infinitesimal Ran space to the completed Ran space which Beilinson and Drinfeld introduced in order to prove the existence of a canonical connection on a unital factorization algebra (see~\ref{discuss-1}). Finally, we complete the proof in~\ref{ssec-canon-proof}. 

In Section~\ref{sec-pic-triv}, we prove point \textbf{(2)} in the case when $S$ is a field. In~\ref{ssec-pic-triv-1}, we use the Theorem of the Cube to show that \textbf{(3)} can be applied, and at the same time we deduce an improvement of the Theorem of the Cube, see Corollary~\ref{cube-improved}. Although the proof of this case of \textbf{(2)} uses the existence of a $k$-rational point and a smooth compactification of $X$, we use Galois descent in~\ref{ssec-pic-triv-2} to weaken these assumptions to the hypothesis (C) stated in point \textbf{(2)}. 

In Section~\ref{sec-rel-pic}, we finally prove point \textbf{(2)} when $S$ is any locally Noetherian $k$-scheme. In~\ref{rigid}, we introduce the notion of a rigidified line bundle, i.e.\ a line bundle equipped with trivialization at a basepoint. In~\ref{ssec-func}, we prove that functions on $S \times \Ran(X)$ are pulled back from $S$. In~\ref{formal-sec}, we use this to prove the result when $S$ is Artinian, and then in~\ref{final} we bootstrap to the case when $S$ is arbitrary. This relies on Beauville--Laszlo gluing for regular functions (rather than quasicoherent sheaves), which we discuss in~\ref{bl-sec}. In these subsections, we sometimes require the hypothesis that $X$ is affine, but we remove this hypothesis in~\ref{ssec-general} by bootstrapping from the affine case. This proof loosely follows the strategy of `reduction to the local Artinian case' introduced in~\cite[I, p.\ 8--9]{ega}. For more discussion of this point, see Remark~\ref{rmk-ega}.  

\subsection{Notations} 

\subsubsection{General notations} 

In this paper, $k$ is a field, and we never impose any assumptions on the characteristic of $k$. Let $\on{Vect}_k$ be the abelian category of vector spaces over $k$. 

We will consider an abelian category $\mc{A}$ and an $(\infty, 1)$-category $\mc{C}$. All derived categories in this paper will be indicated as such, e.g.\ $\on{D}^b(\on{AbGrp})$. In particular, $\QCoh(-)$ refers to the \emph{abelian} category of quasicoherent sheaves. 

All of our categories will be locally small. We say that a category is \emph{complete} if it admits limits indexed by arbitrary small (equivalently, essentially small) diagrams. A limit in an $(\infty, 1)$-category is always to be understood as a homotopy limit.

We employ the usual derived functor notation, e.g.\ $\lim^i$, $\on{R}^i\Gamma$. All derived functors and $t$-structures will be subject to \emph{cohomological} indexing, with the exception of $\pi_1, \pi_0, \ldots$ which has homological indexing. 

We consider a scheme $X$ over $k$, and we denote its base change to $\ol{k}$ by $X_{\ol{k}}$. By a `prestack over $k$' we mean a contravariant functor on $\on{Sch}_{k}^{\on{aff}, \on{ft}}$, which denotes the category of affine finite type schemes over $k$ -- see~\ref{prestacks} for more details. If a product $\times$ or tensor product $\otimes$ appears with no subscript, it is taken over $k$. 

A boldface $\Pic(-)$ refers to the (strictly commutative) Picard groupoid of line bundles, whereas an ordinary $\on{Pic}(-)$ indicates the Picard group. An underline $\ul{\Pic}(-)$ indicates a (relative) Picard functor. 

When an object in a category $c \in \mc{C}$ appears with an underline $\ul{c}$, this refers to a constant functor with value $c$. (Exception: in~\ref{prop-triv}, an underline $\ul{V}$ denotes a free quasicoherent sheaf with fiber $V$.) 

A hat $\wh{(-)}$ or $(-)^\wedge$ will always denote completion, and $(-)^{\wedge, \mf{a}}$ means completion with respect to the ideal $\mf{a}$. 

\subsubsection{Specialized notations} 

In~\ref{ssec-fset}, we will introduce various categories of finite sets, including $\fset_*$ and $\fsetsurjne$. We denote the faithful embedding $\fsetsurjne \hra \fset_*$ by $\iota$, so that $\iota(I) := \{*\} \sqcup I$. In~\ref{ssec-fin}, we introduce full subcategories consisting of finite sets bounded in size by some integer $N$, e.g.\ $\fset_{*, \le N}$ and $\fset^{\on{surj}}_{\on{n.e.}, \le N}$. Note that the `size' of a pointed set $\{*\} \sqcup I$ is defined to be $|I|$. The letters $I$ and $J$ will always refer to finite sets. We let $[n] := \{1, 2, \ldots, n\}$, so that $[0] = \emptyset$. For pointed finite sets, we have the smash product and wedge sum operations, which satisfy 
\e{
	(\{*\} \sqcup I) \wedge (\{*\} \sqcup J) &\simeq \{*\} \sqcup I \times J \\
	(\{*\} \sqcup I) \vee (\{*\} \sqcup J) &\simeq \{*\} \sqcup I \sqcup J. 
}

In this paper, any limit is taken over $\fsetsurjne, \fset^{\on{surj}}_{\on{n.e.}, \le N}$ for some $N$, $\BZ_{\ge 0}$, or a hypercube diagram. In the first two cases, we make the restriction to surjective maps between nonempty subsets explicit by precomposing the functor in question by $\iota$, whose domain is $\fsetsurjne$, see~\ref{ssec-fset}. There are no limits which are taken over $\fset_*$, although we often consider functors defined on this larger category. 

The notation $\Xi_{(I_b)_{b \in B}}$ refers to a `special' hypercube diagram in $\fset_*$, and it is introduced in~\ref{ssec-poly-stab}. 

In~\ref{ran-inf-def}, we introduce the infinitesimal Ran space $\raninf^n$, the Artinian Ran space $\ranf{d}^n$, and associated functors $\mc{P}_n, \mc{P}_{n, d} : \fset_* \to \on{Vect}_k$ corresponding to their sheaves of regular functions. In~\ref{ran-partial-def}, we define $\Ran^I(-), \Ran_{\wh{\Delta}}^I(-)$, and $(-)^I_{\wh{\Delta}}$. 

In~\ref{rigid}, we define the notion of `rigidified' object, and denote it with an $e$-superscript, e.g.\ $\Pic^e(-)$ and $\Gamma^e(-, \oh)$.

\subsection{Acknowledgments} 

The author would like to thank Dennis Gaitsgory for suggesting the problem of showing that $\Ran(X)$ is Pic-contractible, Yifei Zhao for providing a wealth of helpful comments on an earlier draft, and Charles Fu for providing comments and corrections on a later draft. Much of the paper in its current form was inspired by Yifei Zhao's suggestion that the proof of triviality of $\Pic(\Ran(X))$ could be interpreted as a vanishing theorem for limits of polynomial functors. This work was supported by the National Science Foundation Graduate Research Fellowship. 

\section{$n$-excisive functors on finite pointed sets} \label{sec-poly}

\subsection{Categories of finite sets} \label{ssec-fset}

Let $\fset_*$ denote the category of pointed finite sets. Let $\fsetsurj$ denote the category whose objects are (possibly empty) finite sets and whose morphisms are surjective maps. Note that $\fsetsurj = \{\emptyset\} \sqcup \fsetsurjne$ where the second term is the subcategory consisting of nonempty finite sets. We have a faithful embedding $\iota : \fsetsurjne \to \fset_*$ which sends $I \mapsto \{*\} \sqcup I$. The symbol $*$ will always denote the basepoint of an object in $\fset_*$.

\begin{rmk}
	Note that the functor $\iota$ changes the underlying set. To remedy this, it is perhaps better to think of $\fset_*$ as the category whose objects are (non-pointed) finite sets and whose morphisms are \emph{partially-defined} maps. We will not use this interpretation so as to avoid introducing separate notation for the phrase `the map $\psi : I \to J$ is not defined on the element $i \in I$.' Instead, we can just write $\psi(i) = *$. 
\end{rmk}

Define $[n] := \{1, 2, \ldots, n\}$ for integers $n \ge 0$.

\subsubsection{Inclusions and projections} 
For a subset $S \subset I$, define the maps 
\begin{cd}
	\{*\} \sqcup S \ar[r, shift left = 1.5, "\phi_{S, I}"] & \{*\} \sqcup I \ar[l, shift left = 0.5, "\psi_{S , I}"] 
\end{cd}
as follows: $\phi_{S , I}$ is induced by the inclusion $S \hra I$, and $\psi_{S , I}$ is the identity on $S$ and sends $I \setminus S$ to $\{*\}$. For $S \subset S' \subset I$, we clearly have the identities
\e{
	\psi_{S , I} \circ \phi_{S , I} &= \id_{\{*\} \sqcup S} \\
	\psi_{S, S'} \circ \psi_{S', I} &= \psi_{S,I} \\
	\phi_{S', I} \circ \phi_{S,S'} &= \phi_{S, I}. 
}

\subsection{Polynomial functors to an abelian category} \label{ssec-poly-ab}

In this subsection, we define the notion of a \emph{polynomial functor of degree $\le n$} taking values in an abelian category, and we prove a vanishing theorem for its limit over $\fsetsurjne$. Here are the main ideas: 

Given a functor $G : \fset_* \to \mc{A}$ where $\mc{A}$ is an abelian category, we use the retracts $\psi_{S, [d]} \circ \phi_{S, [d]} = \id_S$ to extract the `homogeneous parts' of $G$, which constitute the values of another functor $\on{Prim}(G) : \fsetsurj \to \mc{A}$. Roughly speaking, the $d$-th homogeneous part of $G$ consists of those elements of $G([d])$ which do not come from $G([d-1])$. This procedure bears a mild resemblance to the passage from the \emph{unnormalized chain complex} to the \emph{normalized chain complex} in the context of the Dold-Kan correspondence; compare \cite[Def.\ 1.2.3.9]{ha} with Lemma~\ref{lem-poly}(ii) below. 

We show that $G$ is determined by $\on{Prim}(G)$ (Proposition~\ref{prop-indprim}), and we define $G$ to be `polynomial of degree $\le n$' if it has no $d$-th homogeneous pieces for $d > n$ (Definition~\ref{def-poly}). The main result of this subsection is Proposition~\ref{prop1}, which shows that, if $G$ is polynomial of degree $\le n$ for some $n$, then the limit of $G$ over $\fsetsurjne$ is trivial. 

\subsubsection{} Let $\mc{A}$ be an abelian category. Define functors   \label{def-indprim} 
\begin{cd}
	\fun(\fsetsurj, \mc{A}) \ar[r, shift left = 1.5, "\on{Ind}"] & \fun(\fset_*, \mc{A}) \ar[l, shift left = 0.5, "\on{Prim}"] 
\end{cd}
as follows: 
\begin{itemize}
	\item Given $F : \fset^{\on{surj}} \to \mc{A}$, let $G = \on{Ind}(F) : \fset_* \to \mc{A}$ be defined as follows. 
	\begin{itemize}
		\item On objects $\{*\} \sqcup I \in \fset_*$, define $G(\{*\} \sqcup I) = \bigoplus_{S \subset I} F(S)$. 
		\item On morphisms $\xi : \{*\} \sqcup I \to \{*\} \sqcup J$, define the map $G(\xi) : \bigoplus_{S \subset I} F(S) \to \bigoplus_{T\subset J} F(T)$ such that the matrix coefficient $F(S) \to F(T)$ corresponding to $(S, T)$ is $F(S \xrightarrow{\xi} \xi(S))$ if $T = \xi(S)$ and zero otherwise. 
	\end{itemize}
	\item Given $G : \fset_* \to \mc{A}$, let $F = \on{Prim}(G) : \fsetsurj \to \mc{A}$ be defined as follows. 
	\begin{itemize}
		\item On objects $I\in \fsetsurj$, define $F(I) = \bigcap_{S \subsetneq I} \ker(G(\psi_{S, I}))$ as a subobject of $G(\{*\} \sqcup I)$. 
		\item On morphisms $\xi : I \sra J$, define $F(\xi)$ to be induced by $G(\{*\} \sqcup I) \xrightarrow{G(\{*\}\sqcup \xi)} G(\{*\} \sqcup J)$. 
	\end{itemize}
\end{itemize} 
The definition of $\on{Ind}$ and $\on{Prim}$ on morphisms (i.e.\ natural transformations) is obvious. 

\subsubsection{} We show that any functor $G$ is determined by its homogeneous pieces $\on{Prim}(G)$. 
\begin{prop} \label{prop-indprim}
	$\on{Ind}$ and $\on{Prim}$ are mutually inverse equivalences of abelian categories. 
\end{prop}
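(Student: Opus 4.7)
The plan is to construct mutually inverse natural isomorphisms in both directions. The composition $\on{Prim} \circ \on{Ind}$ is easy to analyze: for $F : \fsetsurj \to \mc{A}$, the map $\on{Ind}(F)(\psi_{S, I})$ projects $\bigoplus_{S' \subset I} F(S')$ onto the summands with $S' \subset S$, so intersecting kernels over all $S \subsetneq I$ picks out exactly the $F(I)$-summand. This gives $\on{Prim}(\on{Ind}(F))(I) \simeq F(I)$, with naturality in $\fsetsurj$ immediate from the definitions.

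For the other composition $\on{Ind} \circ \on{Prim}$, fix $G : \fset_* \to \mc{A}$ and a finite set $I$, and use the retraction $\psi_{S, I} \circ \phi_{S, I} = \id$ to produce idempotents $e_S := G(\phi_{S, I}) G(\psi_{S, I}) \in \on{End}(G(\{*\} \sqcup I))$. The factorization $\psi_{S, I} \circ \phi_{S', I} = \phi_{S \cap S', S} \circ \psi_{S \cap S', S'}$ (verified by tracing elements) yields the key relation $e_S e_{S'} = e_{S \cap S'}$. Mobius inversion over the Boolean lattice then defines
\[
p_T := \sum_{S \subset T} (-1)^{|T|-|S|} e_S, \qquad T \subset I,
\]
and an elementary sign-counting argument -- for any element of $T \setminus T'$ or $T' \setminus T$, contributions in the double sum cancel in pairs -- confirms $p_T p_{T'} = \delta_{T, T'} p_T$ and $\sum_T p_T = e_I = \id$. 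Since idempotents split in an abelian category, this yields the decomposition $G(\{*\} \sqcup I) = \bigoplus_{T \subset I} \on{Im}(p_T)$.

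The identification $\on{Im}(p_T) \simeq \on{Prim}(G)(T)$ comes from two parallel calculations. First, $p_T \circ G(\phi_{T, I}) = G(\phi_{T, I}) \circ p_T'$, where $p_T'$ is the analogous idempotent on $G(\{*\} \sqcup T)$; a sign calculation gives $G(\psi_{S, T}) \circ p_T' = 0$ for $S \subsetneq T$, and since $p_T'$ acts as the identity on the intersection $\bigcap_{S \subsetneq T} \ker G(\psi_{S, T})$, we conclude $\on{Im}(p_T') = \on{Prim}(G)(T)$. This makes $G(\phi_{T, I})$ into an injection $\on{Prim}(G)(T) \hookrightarrow \on{Im}(p_T)$. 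For the reverse inclusion, given $y \in \on{Im}(p_T)$, the relation $e_T \circ p_T = p_T$ forces $y = G(\phi_{T, I}) G(\psi_{T, I})(y)$, while $G(\psi_{S, I}) \circ p_T = 0$ for $S \subsetneq T$ shows $G(\psi_{T, I})(y) \in \on{Prim}(G)(T)$. Assembling, we obtain a natural isomorphism $G(\{*\} \sqcup I) \simeq \bigoplus_{T \subset I} \on{Prim}(G)(T) = \on{Ind}(\on{Prim}(G))(\{*\} \sqcup I)$.

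Finally, I would verify naturality in a morphism $\xi : \{*\} \sqcup I \to \{*\} \sqcup J$: on the $\on{Prim}(G)(T)$-summand, $G(\xi) \circ G(\phi_{T, I})$ equals $G(\phi_{\xi(T), J}) \circ \on{Prim}(G)(T \sra \xi(T))$ when $\xi|_T$ avoids the basepoint, and vanishes otherwise (because then $\xi \circ \phi_{T, I}$ factors through a strictly smaller projection $\psi_{T', T}$ with $T' \subsetneq T$, annihilating primitive elements). This matches $\on{Ind}(\on{Prim}(G))(\xi)$ by construction. Exactness of $\on{Ind}$ is automatic since finite biproducts are exact in $\mc{A}$, and any equivalence of abelian categories is exact, so $\on{Prim}$ is exact as well. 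The main obstacle is the construction of the orthogonal idempotent decomposition $\{p_T\}_{T \subset I}$ from the commuting family $\{e_S\}$ via Mobius inversion; once this decomposition is in hand, identifying the summands and checking naturality are essentially bookkeeping.
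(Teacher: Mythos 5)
Your proof is correct. For $\on{Prim} \circ \on{Ind} \simeq \id$, your argument matches the paper's exactly. For $\on{Ind} \circ \on{Prim} \simeq \id$, the underlying idea is shared -- both you and the paper recognize that the commuting family $e_{S,I} = \phi_{S,I} \circ \psi_{S,I}$, with $e_{S,I}e_{S',I} = e_{S\cap S',I}$, yields a canonical direct sum decomposition of $G(\{*\}\sqcup I)$ indexed by $T \subset I$, and both identify the $T$-summand with $\on{Prim}(G)(T)$. But the technical execution differs. You make the orthogonal idempotents explicit via Möbius inversion, $p_T = \sum_{S \subset T} (-1)^{|T|-|S|} G(e_{S,I})$, and verify orthogonality, completeness, and the intertwining $p_T \circ G(\phi_{T,I}) = G(\phi_{T,I}) \circ p_T'$ by direct inclusion-exclusion. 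The paper works only with the codimension-one idempotents $e_i := e_{I\setminus\{i\},I}$, obtains the decomposition as the common refinement of the kernel-image splittings for each $e_i$, and then recovers $\Im(G(e_{S,I}))$ as an intersection of images. The two are equivalent -- one checks that $\prod_{i\in S}(1-e_i)\prod_{i\notin S}e_i$ agrees with your $p_{\text{(complement of }S)}$ after unwinding notation -- but your route is more self-contained, since the paper's claim that ``commutativity implies the decompositions are compatible'' is exactly the content that your sign calculations verify. Your naturality check for a morphism $\xi$, distinguishing whether $\xi|_T$ avoids the basepoint (where the map is $\on{Prim}(G)(T\sra \xi(T))$ postcomposed with $G(\phi_{\xi(T),J})$) or not (where primitivity forces vanishing), is also spelled out in more detail than in the paper, which defers this to a final sentence.

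One small remark: the closing observation that exactness follows ``since any equivalence of abelian categories is exact'' is true but is not needed to prove the statement, which only asserts a mutually inverse pair of equivalences; you could drop it without loss.
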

\begin{proof}
	First, we discuss the composition $\on{Prim} \circ \on{Ind}$. Given $F : \fset^{\on{surj}} \to \mc{A}$, we get the functor $\on{Ind}(F)$ whose value on $\{*\} \sqcup I$ is $\bigoplus_{S \subset I} F(S)$. For a fixed $I' \subsetneq I$, the map 
	\[
		\on{Ind}(F)(\psi_{I', I}) : \bigoplus_{S \subset I} F(S) \to \bigoplus_{S' \subset I'} F(S')
	\]
	has matrix coefficient $F(S) \to F(S')$ given by $F(S) \xrightarrow{\id} F(S)$ if $S = S'$ and zero otherwise. Therefore, the kernel of this map is $\bigoplus_{S \subset I \text{ and } S \not\subset I'} F(S)$. Varying $I'$, this shows that 
	\[
		\bigcap_{I' \subsetneq I} \ker(\on{Ind}(F)(\psi_{I', I})) = F(I). 
	\]
	The resulting isomorphism $\on{Prim}(\on{Ind}(F))(I) \simeq F(I)$ extends to a natural isomorphism $\on{Prim} \circ \on{Ind} \simeq \id$. 
	
	Next, we discuss the composition $\on{Ind} \circ \on{Prim}$. We begin by finding a direct sum decomposition of $G(\{*\} \sqcup I)$. For each $S \subset I$, define the idempotent endomorphism $e_{S, I} = \phi_{S, I} \circ \psi_{S, I}$. By definition, $e_{S, I} \circ e_{S', I} = e_{S\cap S', I}$, so these endomorphisms pairwise commute. Restrict attention to $e_i := e_{I\setminus \{i\}, I}$. Each $e_i$ induces a direct sum decomposition 
	\[
		G(\{*\} \sqcup I) \simeq \ker(G(e_i)) \oplus \Im(G(e_i)), 
	\]
	and commutativity implies that these decompositions for various $i \in I$ are compatible with one another. Therefore we get a direct sum decomposition 
	\[
		G(\{*\} \sqcup I) \simeq \bigoplus_{S \subset I} \left( \bigcap_{i \in I} \Big(\ker(G(e_i)) \text{ if } i \in S \text{ and } \Im(G(e_i)) \text{ otherwise}\Big) \right). 
	\]
	
	For any $S \subset I$, we have $\bigcap_{i \in I \setminus S}(\Im(G(e_i))) = \Im(G(e_{S, I}))$. Furthermore, via the isomorphism $\Im(G(e_{S, I})) \underset{\sim}{\xrightarrow{G(\psi_{S, I})}} G(\{*\} \sqcup S)$, we obtain an isomorphism 
	\[
		\Im(G(e_{S, I})) \cap \left(\bigcap_{i \in S} \ker(e_i) \right) \simeq \bigcap_{i \in S} \ker(G(\psi_{S \setminus \{i\}, S})), 
	\]
	where the right hand side is a subobject of $G(\{*\} \sqcup S)$. 	This is because the idempotent $e_{S, I}$ splits as $\phi_{S, I} \circ \psi_{S, I}$, the maps $\psi_{S, I}$ and $\phi_{S, I}$ both intertwine the idempotent $e_i \in \End(\{*\} \sqcup I)$ with the idempotent $\phi_{S \setminus \{i\}, S} \circ \psi_{S \setminus \{i\}, S} \in \End(\{*\} \sqcup S)$ for all $i \in S$, and the latter idempotent splits as indicated. Thus, the direct sum decomposition rewrites as 
	\e{
		G(\{*\} \sqcup I) &\simeq \bigoplus_{S \subset I}  \left(\bigcap_{i \in S} \ker(G(\psi_{S \setminus \{i\}, S}))\right) \\
		&= \bigoplus_{S \subset I} \on{Prim}(G)(S) \\
		&= \on{Ind}(\on{Prim}(G))(\{*\} \sqcup I). 
	}
	This canonical isomorphism extends to a natural isomorphism $\on{Ind} \circ \on{Prim} \simeq \id$. 
\end{proof}

\subsubsection{} The equivalent conditions of this next lemma will explain what it means for a functor $G$ to be polynomial of degree $\le n$. 
\begin{lem}\label{lem-poly}
	Let $G : \fset_* \to \mc{A}$ be a functor, and let $n \ge 0$ be an integer. The following are equivalent: 
	\begin{enumerate}[label=(\roman*)]
		\item For every $I$ with $|I| > n$, we have 
		\[
			G(\{*\} \sqcup I) = \sum_{i \in I} \Im(G(\phi_{I \setminus \{i\}, I})).
		\]
		\item For every $I$ with $|I| > n$, we have 
		\[
			\bigcap_{i \in I} \ker(G(\psi_{I \setminus \{i\}, I})) = 0. 
		\]
		\item For every $I$ with $|I| > n$, this is an equalizer sequence: 
		\begin{cd}
			G(\{*\} \sqcup I) \ar[r, "f"] & \bigoplus_{i \in I} G(\{*\} \sqcup I \setminus \{i\}) \ar[r, shift left = 1.5, "g_1"] \ar[r, shift right = 0.5, swap, "g_2"] & \bigoplus_{\substack{i,j \in I \\ i \neq j}} G(\{*\} \sqcup I \setminus \{i, j\}).
		\end{cd}
		The sequence is defined with reference to a linear order on $I$. The matrix coefficients of $f$ are given by $G(\psi_{I \setminus \{i\},I})$. For $g_1$, the matrix coefficient
		\[
			G(\{*\} \sqcup I \setminus \{i\}) \to G(\{*\} \sqcup I \setminus \{i, j\})
		\]
		is given by $G(\psi_{I \setminus \{i, j\}, I \setminus \{j\}})$ if $i < j$, and all other matrix coefficients are zero. The matrix coefficients for $g_2$ are defined by the same rule, replacing $i < j$ with $i > j$. 
		\item The functor $\on{Prim}(G)$ sends $I \mapsto 0$ for all $|I| > n$.
	\end{enumerate}
\end{lem}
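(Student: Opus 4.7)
The strategy is to exploit the canonical direct-sum decomposition
\[
G(\{*\} \sqcup I) \simeq \bigoplus_{S \subset I} \on{Prim}(G)(S)
\]
obtained in the proof of Proposition~\ref{prop-indprim}, together with the explicit description of $\on{Ind}$ on morphisms. The idea is that each of (i)-(iii), when translated through this decomposition, becomes a statement about the vanishing of the top summand $\on{Prim}(G)(I)$, which is exactly condition (iv).

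First I would pin down how the structure maps appear under the decomposition. Using naturality of $G \simeq \on{Ind}(\on{Prim}(G))$ together with the formula defining $\on{Ind}$ on morphisms, one sees that $G(\phi_{I \setminus \{i\}, I})$ is the canonical inclusion of $\bigoplus_{S \subset I \setminus \{i\}} \on{Prim}(G)(S)$ into $\bigoplus_{S \subset I} \on{Prim}(G)(S)$, while $G(\psi_{I \setminus \{i\}, I})$ is the projection onto the summands indexed by $S \subset I \setminus \{i\}$; in particular, the kernel of $G(\psi_{I \setminus \{i\}, I})$ is $\bigoplus_{S \ni i} \on{Prim}(G)(S)$. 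With this dictionary, (ii)$\Leftrightarrow$(iv) is immediate: $\bigcap_{i \in I} \ker G(\psi_{I \setminus \{i\}, I})$ is the sum of those summands $\on{Prim}(G)(S)$ with $S \ni i$ for every $i \in I$, and the only such $S$ is $S = I$. Similarly (i)$\Leftrightarrow$(iv) follows, since $\sum_i \Im G(\phi_{I \setminus \{i\}, I}) = \bigoplus_{S \subsetneq I} \on{Prim}(G)(S)$ (a subset of $I$ is proper precisely when it misses some index), whose complementary summand in $G(\{*\} \sqcup I)$ is exactly $\on{Prim}(G)(I)$.

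For (iii)$\Leftrightarrow$(iv), the reverse implication is easy: any equalizer is a monomorphism, so (iii) implies (ii) and hence (iv). For the forward direction, assume $\on{Prim}(G)(I) = 0$ and translate the sequence through the decomposition. One checks that $f$ sends $\on{Prim}(G)(S)$ diagonally into the summands indexed by $i \notin S$; hence $\ker f = \on{Prim}(G)(I)$, which by assumption is zero. To conclude that $\Im f$ coincides with the equalizer of $g_1, g_2$, one verifies summand-by-summand that the compatibility imposed by $g_1 = g_2$ cuts out precisely the diagonal copy of $\on{Prim}(G)(S)$ inside $\bigoplus_{i \notin S} G(\{*\} \sqcup I \setminus \{i\})$; this is a direct matrix-coefficient calculation, using that each of the relevant $\psi$-maps, under the decomposition, is a projection onto summands indexed by subsets of $I \setminus \{i, j\}$. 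I expect this last Čech-style step to be the main (though mild) obstacle: it is the only place where one must compute with the explicit matrix coefficients rather than simply read off the result from Proposition~\ref{prop-indprim}. Everything else in the lemma is essentially a formal consequence of the primitive/induced decomposition.
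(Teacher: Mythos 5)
Your proposal is correct and takes essentially the same route as the paper: both pass to the canonical decomposition $G(\{*\}\sqcup I)\simeq\bigoplus_{S\subset I}\on{Prim}(G)(S)$ via Proposition~\ref{prop-indprim}, read off that $G(\phi_{I\setminus\{i\},I})$ and $G(\psi_{I\setminus\{i\},I})$ become the canonical inclusion/projection, and observe that each of (i)--(iii) translates into the vanishing of the top summand $\on{Prim}(G)(I)$. Your small extra observation that (iii)$\Rightarrow$(ii) follows directly because an equalizer is a monomorphism is a tidy shortcut, but the overall argument is the one in the paper.
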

\begin{proof}
	In view of Proposition~\ref{prop-indprim}, we have $G \simeq \on{Ind}(F)$ for some $F : \fset^{\on{surj}} \to \mc{A}$. We want to show that each of (i), (ii), and (iii) is equivalent to the assertion that $F(I) = 0$ for all $|I| > n$. 
	
	For (i), this follows because $G(\{*\} \sqcup I) = \bigoplus_{S \subset I} F(S)$ by definition, and the right hand side is the sum of $F(S)$ for $S \subsetneq I$. So the equation in (i) asserts that $F(I) = 0$. 
	
	For (ii), the left hand side is $F(I)$ by definition of $F = \on{Prim}(G)$. 
	
	For (iii), the term $G(\{*\} \sqcup I \setminus \{i\})$ has a summand $F(S)$ for each $S$ satisfying $S \subset I \setminus \{i\}$. Likewise, $G(\{*\} \sqcup I \setminus \{i, j\})$ has a summand $F(S)$ for each $S$ satisfying $S \subset I \setminus \{i, j\}$. Examining the maps $g_1$ and $g_2$ on these summands shows that the equalizer identifies with $\bigoplus_{S \subsetneq I} F(S)$. So (iii) asserts that $F(I) = 0$. 
\end{proof}

\subsubsection{} We arrive at the main definition and result of this subsection: 
\begin{defn} \label{def-poly}
	Let $G : \fset_* \to \mc{A}$ be a functor. We say that \emph{$G$ is polynomial of degree $\le n$} if $G$ satisfies the equivalent conditions of Lemma~\ref{lem-poly}. 
\end{defn}

\begin{prop}\label{prop1}
	Assume that $\mc{A}$ is complete. If $G : \fset_* \to \mc{A}$ is polynomial of degree $\le n$, for some $n$, then $\lim(G \circ \iota) \simeq G(\{*\})$.\footnote{The functor $\iota$ was defined in~\ref{ssec-fset} and it appears here to indicate that the limit is over $\fsetsurjne$, not $\fset_*$.}  
\end{prop}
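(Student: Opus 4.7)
My plan is to induct on the degree $n$, reducing via a filtration of $G$ to a vanishing statement for ``pure degree $n$'' functors, which will be proved by comparing the compatibility relations coming from a carefully chosen surjection evaluated at two different target subsets.

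By Proposition~\ref{prop-indprim} together with Lemma~\ref{lem-poly}(iv), I may replace $G$ by $\on{Ind}(F)$ for some $F : \fsetsurj \to \mc{A}$ vanishing on sets of size $> n$. The base case $n = 0$ is immediate: $\on{Ind}(F)(\{*\} \sqcup I) = F(\emptyset) = G(\{*\})$ is constant in $I$, so the limit equals $G(\{*\})$. For the inductive step, define $F_{\le n-1}(I) := F(I)$ if $|I| \le n-1$ and $0$ otherwise; this is a subfunctor of $F$ since $|\xi(S)| \le |S|$ for any map $\xi$, so $G' := \on{Ind}(F_{\le n-1})$ is a subfunctor of $G$ and is polynomial of degree $\le n-1$. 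The quotient $\bar G := G/G' \simeq \on{Ind}(F_n)$ is ``pure degree $n$'' (built from $F$ only at sets of size exactly $n$). The inductive hypothesis gives $\lim(G' \circ \iota) \simeq G(\{*\})$, and applying left-exactness of $\lim$ to the short exact sequence $0 \to G' \to G \to \bar G \to 0$ then reduces the problem to showing $\lim(\bar G \circ \iota) = 0$ when $n \ge 1$.

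For this vanishing, fix the order-preserving identifications $F(S) \simeq V := F([n])$ for each $|S| = n$. A compatible family $(a_I) \in \lim(\bar G \circ \iota)$ amounts to a system of functions $a_I : \binom{I}{n} \to V$. The $S_N$-equivariance arising from automorphisms of $[N]$ in $\fsetsurjne$, together with the $S_n$-invariance of $w_N := a_{[N]}([n])$ forced by the stabilizer of $[n]$ in $S_N$, shows that $a_{[N]}$ is constant with value $w_N \in V^{S_n}$. For $N \ge n+2$, consider the surjection $\xi : [N] \to [N-1]$ with $\xi^{-1}(n) = \{n, n+1\}$ and all other fibers singletons arranged order-preservingly; every relevant $\xi|_S$ is then an order-preserving bijection, so all matrix coefficients $F(\xi|_S)$ are the identity. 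Comparing the compatibility relations at $T_1 := [n]$ (which admits two transversals $[n]$ and $\{1,\ldots,n-1,n+1\}$) and at $T_2 := \{1,\ldots,n-1,n+1\}$ (which admits the single transversal $\{1,\ldots,n-1,n+2\}$) yields $w_{N-1} = 2w_N$ and $w_{N-1} = w_N$, respectively; subtracting forces $w_N = 0$ for all $N \ge n+2$. A backward-propagation step, applying the same type of surjection at $[n+2] \to [n+1]$ and $[n+1] \to [n]$, then yields $w_{n+1} = 2w_{n+2} = 0$ and $w_n = 2w_{n+1} = 0$, so $a_I = 0$ for all $I \in \fsetsurjne$.

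The main obstacle will be executing this combinatorial vanishing carefully: verifying that the chosen surjection and pair of target subsets produce only order-preserving matrix coefficients (so that $F(\xi|_S) = \id$ throughout, and the ``two-versus-one transversal'' count difference directly forces $w_N = 0$ via the bare subtraction $2w_N - w_N = w_N$), and handling the edge cases $N \in \{n, n+1\}$ via the backward step above so that no divisibility or characteristic hypothesis on $\mc{A}$ is required.
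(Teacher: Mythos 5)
Your proposal is correct and is essentially the paper's own elementary argument (Proof 2): both reduce via the $\on{Ind}/\on{Prim}$ decomposition to the homogeneous pieces of a compatible family and then kill them with a point-doubling surjection, comparing a target subset with two transversals against one with a single transversal to get $w = 2w$ alongside $w = w$. The only differences are organizational — you induct on the degree via the subfunctor filtration $\on{Ind}(F_{\le n-1}) \subset \on{Ind}(F)$ and left-exactness of $\lim$, and you double the point at position $n$ rather than at the top, whereas the paper runs a single downward induction on the size of the homogeneous piece — and these are cosmetic repackagings of the same cancellation trick (just remember the paper's Yoneda reduction to $\on{AbGrp}$ before speaking of elements in a general complete $\mc{A}$).
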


We give two proofs of this proposition. 

\begin{myproof}{Proof 1}
	This proposition can be deduced from the proof of Theorem~\ref{thm2}. One carries out that proof replacing all homotopy limits with ordinary limits, skipping the d\'evissage argument which is used to reduce to the case of a functor landing in an abelian category, and replicating the elementary part of the argument of Lemma~\ref{lem-induct}. This proof is not circular because this proposition is not used in~\ref{ssec-poly-stab}. 
\end{myproof}

\begin{myproof}{Proof 2} 
	Here, we give a direct and elementary proof. Since $\{*\} \in \fset$ is terminal, we have a natural transformation $G \to \ul{G(\{*\})}$ where underline denotes the constant functor. Taking limits over $\fsetsurjne$ yields a map $f : \lim(G \circ \iota) \to G(\{*\})$. By the Yoneda lemma, $f$ is an isomorphism if and only if $\Hom_{\mc{A}}(M, f)$ is an isomorphism for all $M \in \mc{A}$. In this way, we reduce to the case in which $\mc{A}$ is the category of abelian groups. 
	
	An element of the limit is a collection of elements $a_I \in G(\{*\} \sqcup I)$ for all nonempty $I$ which are compatible under the maps $\{*\} \sqcup I \to \{*\} \sqcup J$ induced by surjections $I \sra J$. By hypothesis, we have $G \simeq \on{Ind}(F)$ where $F(I) = 0$ for all $|I| > n$. By definition of $\on{Ind}$, the element $a_I$ corresponds to a tuple of elements $b_{S, I} \in F(S)$ for $S \subset I$. It suffices to show that $b_{S, I} = 0$ if $S$ is nonempty. 
	
	First, considering permutations of $I$ shows that $b_{S, I}$ depends only on the cardinality of $S$. Therefore we can write $b_{S, I}$ as $b_{m, I}$ when $|S| = m$. We shall use downward induction on $m$ to show that $b_{m, I} = 0$ for all $m \ge 1$. The claim for $m > n$ follows from our assumption that $F(I) = 0$ for all $|I| > n$. Assume that the claim holds for $m + 1$, and prove it for $m$ as follows. For any $d \ge m$, consider the map
	\[
		\{*\} \sqcup [d+1] \xrightarrow{\xi} \{*\} \sqcup [d]
	\]
	which sends $i \mapsto \min(i, d)$. Observe that
	\begin{itemize}
		\item If $J\subset [d]$ is an $m$-element subset which does not contain $d$, then there is exactly one $m$-element subset of $[d+1]$ which maps to $J$ under $\xi$. 
		\item If $J \subset [d]$ is an $m$-element subset which contains $d$, then there are exactly two $m$-element subsets of $[d+1]$ which map to $J$ under $\xi$. 
	\end{itemize}
	
	If $d > m$, then there exist $m$-element subsets $J \subset [d]$ which contain (resp.\ do not contain) $d$, so the definition of $\on{Prim}(F)(\xi)$ implies that 
	\[
		b_{m, [d]} = b_{m, [d+1]} = 2b_{m, [d+1]}. 
	\]
	We conclude that $b_{m, [d+1]} = b_{m, [d]} = 0$. 
	
	It remains to show that $b_{m, [m]} = 0$. If $d = m$, then the second bullet point implies 
	\[
		b_{m, [m]} = 2b_{m, [m+1]}, 
	\]
	but we already know that $b_{m, [m+1]} = 0$, so $b_{m, [m]} = 0$, as desired. 
	
	Now, applying the compatibility condition to any isomorphism $I \simeq [d]$ shows that $b_{m, I} = 0$ for all $I$, which proves the inductive step. 
\end{myproof}

\begin{rmk}
	Although the first proof is more efficient, we give the second proof for expository purposes. It will be imitated to prove Lemma~\ref{prop1-fin}, which is an analogue for finite limits. There, unlike here, we do not know how to deduce the result from an analogous result in the homotopic setting, see Remark~\ref{further-fin2}. 
\end{rmk}

\subsubsection{Remark} 

The notion of `polynomial functor' introduced in Definition~\ref{def-poly} is a special case of the notion of $n$-excisive functor to be introduced in Definition~\ref{def-excisive}. We chose a different terminology so as to distinguish between functors landing in abelian categories and $(\infty, 1)$-categories, since the term `excisive' suggests homotopy limits. However, we emphasize that there does not seem to be a relation between `polynomial functor' in our sense and the corresponding term in combinatorics, referring to endofunctors $\fset \to \fset$. 

\subsubsection{Remark}\label{counter} 

We give an example to show that a non-polynomial functor $G$ need not satisfy the vanishing result of Proposition~\ref{prop1}. In fact, the simplest example of a non-polynomial functor suffices. 

\begin{claim}
	There is a functor $G : \fset_* \to \on{AbGrp}$ such that the map $\lim G\iota \to G(\{*\})$ is not an isomorphism. 
\end{claim}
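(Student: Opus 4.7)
The plan is to exhibit a concrete non-polynomial functor $G$ and compute both sides of the map $\lim(G \circ \iota) \to G(\{*\})$ directly. I would take $G = \on{Ind}(\underline{\BZ})$, where $\underline{\BZ} : \fsetsurj \to \on{AbGrp}$ is the constant functor at $\BZ$. Concretely $G(\{*\} \sqcup I) = \bigoplus_{S \subset I} \BZ \cdot e_S$, with $G(\xi)(e_S) = e_{\xi(S)}$ when $\xi$ is total on $S$ (no element of $S$ maps to $*$) and $G(\xi)(e_S) = 0$ otherwise. By Proposition~\ref{prop-indprim}, $\on{Prim}(G) = \underline{\BZ}$ is nonzero on every finite set, so $G$ is not polynomial of any degree.

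The key observation is that the collection $(e_I)_{I \in \fsetsurjne}$ is a coherent family: for any surjection $\xi : I \twoheadrightarrow J$, we have $G(\xi)(e_I) = e_{\xi(I)} = e_J$. So this family together with its integer multiples defines a canonical embedding $\BZ \hookrightarrow \lim(G \circ \iota)$. To show this is surjective, I would use permutation-invariance to write any limit element as $a_I = \sum_k d_k^{(|I|)} \sum_{|S| = k} e_S$, then chase the relations coming from surjections $[n] \twoheadrightarrow [m]$; the single-merge surjection $[n] \twoheadrightarrow [n-1]$ gives the bulk recurrence $d_{k+1}^{(n)} = -d_k^{(n)}$ together with $d_k^{(n)} = d_k^{(n-1)}$, while multi-merge surjections such as $[n+2] \twoheadrightarrow [n]$ are needed to pin down the boundary values at $k \approx n$. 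The outcome is that every $d_k^{(n)} = c \cdot \delta_{k, n}$ for a single parameter $c \in \BZ$, so $\lim(G \circ \iota) \cong \BZ$.

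The map $f : \lim(G \circ \iota) \to G(\{*\}) = \BZ \cdot e_\emptyset$ is induced by the terminal map $\psi_{\emptyset, I}$, which is not total on any nonempty $S \subset I$; hence $G(\psi_{\emptyset, I})(e_S) = 0$ for every such $S$. Since the generator $e_I$ of $\lim(G \circ \iota)$ is concentrated at the top-degree summand $S = I$ with $|I| \geq 1$, we get $f = 0$ between two copies of $\BZ$, which is not an isomorphism. The delicate point of the proof is verifying that the limit really is generated by $(e_I)_I$; single-merge surjections alone leave the edge coefficients undetermined, and one must invoke the multi-merge surjections to close up the recursion — a subtlety worth highlighting because it hints at why the restriction to surjections between \emph{nonempty} sets is crucial in Proposition~\ref{prop1}.
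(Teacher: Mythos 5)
Your construction and your key element coincide exactly with the paper's: $G = \on{Ind}(\ul{\BZ})$, and the compatible family $(e_I)_I$, which lies in $\lim (G \circ \iota)$, is nonzero, and is killed by the map $f$ to $G(\{*\})$ because $G(\psi_{\emptyset, I})$ annihilates $e_S$ for every nonempty $S$. That much already proves the claim: $f$ fails to be injective while $G(\{*\}) \simeq \BZ \neq 0$, and no computation of the limit is needed. This is precisely the paper's argument.

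The extra step you lean on, however, is false: it is not true that the recursion forces $d_k^{(n)} = c\,\delta_{k,n}$, so the assertions $\lim(G\circ\iota) \cong \BZ$ and $f = 0$ are both wrong, and the ``delicate point'' you defer cannot be closed. Two explicit counterexamples: (1) the constant family $a_I = e_{\emptyset}$, i.e.\ $d_k^{(n)} = \delta_{k,0}$, is compatible (it is the pullback of $1 \in G(\{*\})$ along the basepoint inclusions $\phi_{\emptyset, I}$, which are natural for the maps $\{*\} \sqcup \xi$, since $\xi(\emptyset) = \emptyset$ for every surjection $\xi$); it maps to $1$ under $f$, so $f$ is in fact surjective, not zero. (2) The alternating family $b_{S, I} = (-1)^{|S|-1}$ for $S \neq \emptyset$ and $b_{\emptyset, I} = 0$ is also compatible: for a surjection $\xi : I \twoheadrightarrow J$ and nonempty $T \subset J$, the subsets $S$ with $\xi(S) = T$ are the disjoint unions of nonempty subsets of the fibers over $T$, whence $\sum_{S} (-1)^{|S|-1} = -\prod_{t \in T} \bigl( \sum_{\emptyset \neq A \subset \xi^{-1}(t)} (-1)^{|A|} \bigr) = (-1)^{|T|-1}$. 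So $\lim(G\circ\iota)$ has rank at least $3$, and the bookkeeping in your recurrence is off (the cross-row relation $d_k^{(n)} = d_k^{(n+1)}$ coming from the merge $[n+1] \twoheadrightarrow [n]$ only holds for $k \le n-1$, which is exactly the room these families exploit). The repair is simply to delete the second half: the claim only requires that $f$ not be an isomorphism, and the non-injectivity established by your first paragraph suffices.
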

\begin{proof}
	Take $G = \on{Ind}(\ul{\BZ})$ where $\ul{\BZ}$ denotes the constant functor $\fsetsurj \to \on{AbGrp}$. Thus, we have $G(\{*\} \sqcup I) \simeq \BZ^{\oplus 2^I}$, and for every map $\xi: \{*\} \sqcup I \to \{*\}\sqcup J$, the induced map 
	\[
		\bigoplus_{S \subset I} \BZ \simeq G(\{*\} \sqcup I ) \xrightarrow{G(\xi)} G(\{*\} \sqcup J) \simeq \bigoplus_{T \subset J} \BZ
	\]
	has $(S, T)$ matrix coefficient equal to $\id_{\BZ}$ if $\xi(S) = T$ and zero otherwise. 
	
	We will define an element $a_I \in G(\{*\} \sqcup I)$ for every nonempty finite set $I$ such that the $a_I$ are compatible with respect to maps of the form $\{*\} \sqcup f$ where $f : I \sra J$ is a surjection. Namely, define $a_I \in \bigoplus_{S \subset I} \BZ$ to be $1$ on the $S = I$ coordinate and 0 on all other coordinates. The claim that $G(\{*\} \sqcup f)(a_I) = a_J$ follows from the fact that the image of $I \subset \{*\} \sqcup I$ under $\{*\} \sqcup f$ is $J \subset \{*\} \sqcup J$ because $f$ is surjective. 
	
	This defines a nonzero element in $\lim G\iota$. The image of this element under the map to $G(\{*\})$ is zero, because the $S = \emptyset$ coordinate of each $a_I$ is zero. This shows that $\lim G\iota \to G(\{*\})$ is not injective. 		
\end{proof}

\subsubsection{} Here is one way to prove that a functor is polynomial: 
\begin{lem}\label{thicc} 
	In the abelian category $\fun(\fset_*, \mc{A})$, the full subcategory spanned by polynomial functors of degree $\le n$ is closed under subquotients and extensions. 
\end{lem}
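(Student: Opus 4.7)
The plan is to reduce the question to the equivalent category $\fun(\fsetsurj, \mc{A})$ via the equivalence of Proposition~\ref{prop-indprim}. Under this equivalence, by condition (iv) of Lemma~\ref{lem-poly}, the full subcategory of polynomial functors of degree $\le n$ in $\fun(\fset_*, \mc{A})$ corresponds to the full subcategory of $\fun(\fsetsurj, \mc{A})$ spanned by those $F$ such that $F(I) = 0$ for all $I$ with $|I| > n$. So it suffices to show that this latter subcategory is closed under subquotients and extensions.

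First I would note that, since $\on{Ind}$ and $\on{Prim}$ are mutually inverse equivalences of abelian categories, the functor $\on{Prim}$ is exact: a short exact sequence $0 \to G' \to G \to G'' \to 0$ in $\fun(\fset_*, \mc{A})$ yields a short exact sequence $0 \to \on{Prim}(G') \to \on{Prim}(G) \to \on{Prim}(G'') \to 0$ in $\fun(\fsetsurj, \mc{A})$, and evaluation at any fixed $I$ is exact in the latter (being computed pointwise).

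For closure under subquotients: if $G$ is polynomial of degree $\le n$ and $H$ is a subquotient of $G$, then $\on{Prim}(H)(I)$ is a subquotient of $\on{Prim}(G)(I) = 0$ for $|I| > n$, so $\on{Prim}(H)(I) = 0$ there, meaning $H$ is polynomial of degree $\le n$. For closure under extensions: given a short exact sequence with $G'$ and $G''$ polynomial of degree $\le n$, evaluating $0 \to \on{Prim}(G') \to \on{Prim}(G) \to \on{Prim}(G'') \to 0$ at any $I$ with $|I| > n$ gives a short exact sequence whose outer terms vanish, so the middle term vanishes, showing $G$ is polynomial of degree $\le n$.

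There is no real obstacle here; the content of the lemma has been packaged into Proposition~\ref{prop-indprim} and Lemma~\ref{lem-poly}(iv), after which the statement follows formally from exactness of the equivalence and the obvious fact that a pointwise vanishing condition on functors is closed under subquotients and extensions.
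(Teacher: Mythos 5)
Your proof is correct and follows essentially the same route as the paper: transport the problem through the equivalence of Proposition~\ref{prop-indprim} to functors on $\fsetsurj$, where polynomiality of degree $\le n$ becomes the pointwise vanishing condition of Lemma~\ref{lem-poly}(iv), and conclude using that (co)kernels in a functor category are computed pointwise. Your write-up just spells out the exactness bookkeeping that the paper leaves implicit.
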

\begin{proof}
	By Proposition~\ref{prop-indprim}, it suffices to show that the full subcategory of $\fun(\fsetsurj, \mc{A})$ spanned by functors which send $I \mapsto 0$ if $|I| > n$ is closed under subquotients and extensions. This is true because kernels and cokernels in a functor category are computed pointwise. 
\end{proof}

\subsection{$n$-excisive functors to an $(\infty, 1)$-category}  \label{ssec-poly-stab}

Our next goal is to develop material analogous to~\ref{ssec-poly-ab} for functors landing in an $(\infty, 1)$-category. Thus, let $\mc{C}$ be an $(\infty, 1)$-category. All limits in this subsection are to be understood as homotopy limits. 

For any finite tuple $(I_b)_{b\in B}$ of finite sets, the maps $\psi_{(\sqcup_{b_2 \in B_2} I_{b_2}), (\sqcup_{b_1 \in B_1} I_{b_1})}$ for $B_2 \subset B_1 \subset B$ combine to form a commutative hypercube, denoted $\Xi_{(I_b)_{b \in B}}$. We shall refer to this as a \emph{special hypercube} in $\fset_*$. 
The vertices of this hypercube are indexed by subsets $B' \subset B$, where $B'$ corresponds to the object $\{*\} \sqcup \left(\sqcup_{b' \in B'} I_{b'}\right)$. 

If a commutative hypercube realizes its initial vertex as the limit of the rest of the diagram, we say that it is \emph{weakly cartesian}. If every subsquare is cartesian, then we say that it is \emph{strongly cartesian}. Similar definitions apply for `cocartesian' in place of `cartesian.' Note that each $\Xi_{(I_b)_{b \in B}}$ is strongly cocartesian.

\begin{defn}\label{def-excisive} 
	We say that a functor $G : \fset_* \to \mc{C}$ is \emph{$n$-excisive} if, for any tuple $(I_b)_{b \in B}$ with $|B|>n$, the diagram $G(\Xi_{(I_b)_{b \in B}})$ is weakly cartesian. 
\end{defn}
\begin{rmk}
	This definition, which is due to Clemens Berger, is directly related to the notion of $n$-excisive functor in Goodwillie calculus. See~\cite{berger} for a precise relation between $n$-excisive functors $\fset_* \to \on{Spaces}_*$ in this sense and $n$-excisive functors $\on{Spaces}_* \to \on{Spaces}_*$. 
\end{rmk}

\subsubsection{Remark.} \label{rmk-compare}
	
	By applying the same definition with $\mc{C}$ replaced by an abelian 1-category $\mc{A}$, we obtain the notion of $n$-excisive functor $G : \fset_* \to \mc{A}$. However, this notion is one which we have already studied. Indeed, for a functor $G : \fset_* \to \mc{A}$, the condition of Lemma~\ref{lem-poly}(iii) is easily seen to be equivalent to the requirement that $G(\Xi_{(i)_{i \in I}})$ is weakly cartesian when $|I| > n$, and this is equivalent to the condition that $G(\Xi_{(I_b)_{b \in B}})$ is weakly cartesian for all tuples $(I_b)_{b \in B}$ with $|B| > n$. Therefore, $G$ is polynomial of degree $\le n$ if and only if it is $n$-excisive. 
	
	\vspace{-\mylength}
	
	In general, if $G : \fset_* \to \mc{A}$ is $n$-excisive, the composed functor $\fset_* \xrightarrow{G} \mc{A} \to \on{D}(\mc{A})$ need not be $n$-excisive, because the image in $\on{D}(\mc{A})$ of a weakly cartesian square in $\mc{A}$ need not be weakly cartesian. However, when $n \le 1$, this composed functor is $n$-excisive. Indeed, in this case $G$ is the direct sum of a constant functor $\fset_* \to \mc{A}$ with a monoidal functor $\fset_* \to \mc{A}$ (intertwining the monoidal structures $\vee$ on $\fset_*$ and $\oplus$ on $\mc{A}$), so the composed functor (from $\fset_*$ to $\on{D}(\mc{A})$) sends special cocartesian hypercubes in $\fset_*$ to \emph{strongly} cartesian hypercubes in $\on{D}(\mc{A})$. 
	
\subsubsection{Paring down the hypercube} 

If $G$ is $n$-excisive, then the value $G\iota(\sqcup_{b \in B} I_b)$ is determined by the diagram $G(\Xi_{(I_b)_{b \in B}})$ minus its initial vertex, provided that $|B| > n$. Now, we show that if $|B| > n+1$, then this diagram is partly redundant: to determine the value $G\iota(\sqcup_{b \in B}I_b)$ it suffices to remember only the part of the diagram which is of height $\le n$. This result will be used to deduce an improved version of the Theorem of the Cube, see Corollary~\ref{cube-improved}. 

For $\Xi_{(I_b)_{b \in B}}$ as above, let $\Xi_{(I_b)_{b \in B}}^{\le n}$ denote the full sub-diagram consisting only of those vertices indexed by subsets $B' \subset B$ with $|B'| \le n$. 

\begin{lem} \label{paring} 
	Assume that $G : \fset_* \to \mc{C}$ is $n$-excisive. Then 
	\[
		\lim G(\Xi_{(I_b)_{b \in B}}^{\le n}) \simeq G(\sqcup_{b \in B} I_b). 
	\]
\end{lem}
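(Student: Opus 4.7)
The plan is a level-by-level filtration argument that reduces the claim to the $n$-excision hypothesis applied to sub-hypercubes. For each integer $k$ with $0 \le k \le |B|$, let $\Xi^{\le k}$ denote the full sub-diagram of $\Xi := \Xi_{(I_b)_{b \in B}}$ spanned by the vertices $B' \subset B$ with $|B'| \le k$. These assemble into a nested chain $\Xi^{\le n} \hookrightarrow \Xi^{\le n+1} \hookrightarrow \cdots \hookrightarrow \Xi^{\le |B|} = \Xi$. Since $B' = B$ is the source of every arrow in $\Xi$ and is therefore initial in the indexing poset, one has $\lim G(\Xi) \simeq G(\{*\} \sqcup \sqcup_{b \in B} I_b)$ trivially. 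It therefore suffices to show that each successive inclusion $\Xi^{\le k} \hookrightarrow \Xi^{\le k+1}$ with $k \ge n$ induces an equivalence on limits of $G$.

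Fix such a $k$. The new vertices of $\Xi^{\le k+1}$ relative to $\Xi^{\le k}$ are exactly those $B'$ with $|B'| = k+1$; any two of these are incomparable under $\subset$, so the only arrows emanating from a new vertex land in $\Xi^{\le k}$, targeting the subsets $B'' \subsetneq B'$. The sub-diagram of $\Xi^{\le k+1}$ spanned by $\{B'' : B'' \subseteq B'\}$ coincides with $\Xi_{(I_b)_{b \in B'}}$, a special hypercube of dimension $k+1 > n$. Applying the $n$-excision hypothesis to this sub-hypercube yields the equivalence
\[
G(\{*\} \sqcup \sqcup_{b \in B'} I_b) \simeq \lim_{B'' \subsetneq B'} G(\{*\} \sqcup \sqcup_{b \in B''} I_b).
\]
Because $\{B'' : B'' \subsetneq B'\}$ is precisely the coslice of $B'$ along the inclusion $\Xi^{\le k} \hookrightarrow \Xi^{\le k+1}$, this identifies $G|_{\Xi^{\le k+1}}$ as the pointwise right Kan extension of $G|_{\Xi^{\le k}}$ along that inclusion. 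Since right Kan extension along a fully faithful inclusion preserves limits, we conclude $\lim G(\Xi^{\le k+1}) \simeq \lim G(\Xi^{\le k})$.

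Composing these equivalences for $k = n, n+1, \ldots, |B|-1$ (a vacuous chain when $|B| \le n$, in which case $\Xi^{\le n} = \Xi$ and the claim already follows from the initial-vertex observation) produces $\lim G(\Xi^{\le n}) \simeq \lim G(\Xi) \simeq G(\{*\} \sqcup \sqcup_{b \in B} I_b)$, as desired.

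The principal obstacle is combinatorial: one must confirm that the coslice of a new vertex $B'$ is precisely the indexing set of the punctured sub-hypercube to which $n$-excision applies, namely the proper subsets of $B'$. This matching depends on the observation that vertices within a single level are pairwise incomparable, so that the extension from one level to the next can be performed one vertex at a time without interference. With these observations in place, the argument becomes a clean filtration in which each step invokes $n$-excision verbatim.
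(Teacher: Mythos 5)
Your proposal is correct and takes essentially the same route as the paper: a level-by-level filtration $\Xi^{\le n} \subset \Xi^{\le n+1} \subset \cdots \subset \Xi_{(I_b)_{b\in B}}$, where at each stage $n$-excision applied to the sub-hypercube $\Xi_{(I_b)_{b \in B'}}$ spanned by each new top-dimensional vertex $B'$ shows that passing to the next level leaves the limit unchanged. The only difference is packaging: where you invoke the pointwise right Kan extension along the fully faithful inclusion $\Xi^{\le k} \hra \Xi^{\le k+1}$ (and the standard fact that the limit of a right Kan extension agrees with the limit of the original diagram), the paper proves the same fact directly via a mapping-space computation (its Claim: adjoining a cone point valued at the limit of a full sub-diagram does not change the limit), attaching the new vertices one at a time.
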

\begin{proof}
	First, if $|B| \le n$, then the statement is trivial because $\Xi_{(I_b)_{b \in B}}^{\le n} = \Xi_{(I_b)_{b \in B}}$ in this case, and $G(\sqcup_{b \in B} I_b)$ is the initial object of the diagram $G(\Xi_{(I_b)_{b \in B}}^{\le n})$. Therefore, in what follows, we assume that $|B| > n$. 	
	
	We shall prove the following statement by downward induction on $m$: 
	\begin{itemize}
		\item[($\cdot$)] For $m \ge n$, we have $\lim G(\Xi_{(I_b)_{b \in B}}^{\le m}) \simeq G(\sqcup_{b \in B} I_b)$. 
	\end{itemize}
	The case of $m \ge |B|-1$ follows because $G(\Xi_{{(I_b)_{b \in B}}})$ is a limit diagram, by hypothesis. Fix an integer $m \ge n$, and assume that the statement has been proven for $m+1$. We will apply the following observation:
	\begin{claim}
		Let $\mc{D}$ be a small diagram, and let $\mc{D} \xrightarrow{F} \mc{C}$ be a functor. Let $\mc{D}_0 \hra \mc{D}$ be a full sub-diagram of $\mc{D}$, let $\mc{D}_0^{\triangleleft}$ be the left cone\footnote{This is the diagram obtained from $\mc{D}_0$ by freely adjoining an initial object.} over $\mc{D}_0$, and consider the diagram $\mc{D}_1 := \mc{D} \cup_{\mc{D}_0} \mc{D}_0^{\triangleleft} \xrightarrow{F_1} \mc{C}$ which sends the cone point to $\lim_{\mc{D}_0} F|_{\mc{D}_0}$. Then $\lim F \simeq \lim F_1$. 
	\end{claim}
	\begin{proofofclaim} 
		The object $\lim F_1$ represents the functor which sends an object $c \in \mc{C}$ to $\on{Maps}_{\fun(\mc{D}_1, \mc{C})}(\ul{c}, F_1)$. But we have
		\e{
			\on{Maps}_{\fun(\mc{D}_1, \mc{C})}(\ul{c}, F_1) &\simeq \on{Maps}_{\fun(\mc{D}, \mc{C})}(\ul{c}, F) \underset{\on{Maps}_{\fun(\mc{D}_0, \mc{C})}(\ul{c}, F|_{\mc{D}_0})}{\times} \on{Maps}_{\fun(\mc{D}_0^{\triangleleft}, \mc{C})}(\ul{c}, F_1|_{\mc{D}_0^{\triangleleft}}) \\
			&\simeq  \on{Maps}_{\fun(\mc{D}, \mc{C})}(\ul{c}, F) \underset{\on{Maps}_{\fun(\mc{D}_0, \mc{C})}(\ul{c}, F|_{\mc{D}_0})}{\times} \on{Maps}_{\mc{C}}(c, \lim_{\mc{D}_0} F|_{\mc{D}_0}) \\
			&\simeq \on{Maps}_{\fun(\mc{D}, \mc{C})}(\ul{c}, F). 
		} 
		So $\lim F_1$ and $\lim F$ represent the same functor $\mc{C}^{\on{op}} \to \on{\infty}\!\on{-Grpd}$, as desired. 
	\end{proofofclaim}
	By the inductive hypothesis, $\lim G(\Xi_{(I_b)_{b \in B}}^{\le m+1}) \simeq G(\sqcup_{b \in B} I_b)$. The partial hypercube $\Xi^{\le m+1}$ is obtained from the partial hypercube $\Xi^{\le m}$ by adjoining one point for each subset $B' \subset B$ with $|B'| = m+1$. For each such $B'$, the corresponding vertex attaches as a cone point over the full sub-diagram of $\Xi^{\le m}$ given by the vertices indexed by $B''$ with $B'' \subset B'$. Therefore, by the claim, it suffices to prove that each such cone (for a fixed $B'$) maps under $G$ to a limit diagram. But this holds because each sub-diagram (with its limit point $B'$) is the full hypercube $G(\Xi_{(I_b)_{b \in B'}})$, where $|B'| = m+1 > n$ (since $m \ge n$), and $G$ is $n$-excisive. This proves the inductive step. 
\end{proof}
	
\subsubsection{} We now state the main vanishing result for limits of $n$-excisive functors: 

\begin{thm}\label{thm2} 
	Let $\mc{C}$ be a complete stable $(\infty, 1)$-category with a right complete $t$-structure $\mc{C}^{\le 0}$, and let $G : \fset_* \to \mc{C}$ be an $n$-excisive functor. We assume: 
	\begin{itemize}
		\item[\emph{($\bullet$)}] The essential image of $G$ lies in $\mc{C}^{\ge m}$ for some $m$. 
	\end{itemize}
	Then the natural map $\lim (G \circ \iota) \to G(\{*\})$ is an isomorphism. 
\end{thm}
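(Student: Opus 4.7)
The plan is to combine a $t$-structure d\'evissage with an elementary vanishing argument modelled on Proof 2 of Proposition~\ref{prop1}.

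\emph{Step 1: Reduction to $G(\{*\}) \simeq 0$.} Replace $G$ by $\tilde G := \on{fib}(G \to \ul{G(\{*\})})$. The constant functor $\ul{G(\{*\})}$ is $n$-excisive, and in a stable $(\infty, 1)$-category the fiber of a natural transformation between $n$-excisive functors is itself $n$-excisive, since fibers are finite limits and therefore commute with the iterated pullbacks defining $n$-excisiveness. Hence $\tilde G$ is $n$-excisive with $\tilde G(\{*\}) \simeq 0$, and the claim becomes $\lim(\tilde G \circ \iota) \simeq 0$.

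\emph{Step 2: D\'evissage to the heart.} The truncation $\tau^{\ge N+1}\tilde G$ is $n$-excisive because $\tau^{\ge N+1}$, being right adjoint to the inclusion $\mc{C}^{\ge N+1} \hookrightarrow \mc{C}$, preserves limits. The complementary truncation $\tau^{\le N}\tilde G \simeq \on{cofib}(\tau^{\ge N+1}\tilde G \to \tilde G)$ is also $n$-excisive, since in a stable category a cofiber is a shift of a fiber, which preserves $n$-excisiveness by the same argument. Taking $\lim$ over $\fsetsurjne$ of the fiber sequence $\tau^{\ge N+1}\tilde G \to \tilde G \to \tau^{\le N}\tilde G$, and using that $\mc{C}^{\ge N+1}$ is closed under limits, one sees that vanishing of $\lim(\tau^{\le N}\tilde G \circ \iota)$ for every $N$ would place $\lim(\tilde G \circ \iota)$ in $\bigcap_N \mc{C}^{\ge N+1} = 0$ by right completeness. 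So we may assume $\tilde G$ has bounded amplitude. An induction on the amplitude via the fiber sequence $\tau^{\ge m+1}\tilde G \to \tilde G \to H^m(\tilde G)[-m]$ then reduces to the case when $\tilde G$ takes values in a single shift of the heart $\mc{A} := \mc{C}^{\heartsuit}$; after shifting, we may assume $\tilde G$ lands in $\mc{A}$.

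\emph{Step 3: The base case.} This is the main obstacle: one must show $\lim_{\mc{C}}(G \circ \iota) \simeq 0$ for a stable-$n$-excisive functor $G : \fset_* \to \mc{A}$ with $G(\{*\}) = 0$. One has $H^i(\lim_{\mc{C}}(G\iota)) \simeq \lim^i_{\fsetsurjne}(G\iota)$; the vanishing of $\lim^0$ follows from Proposition~\ref{prop1} (since, by Remark~\ref{rmk-compare}, stable-$n$-excisive implies polynomial of degree $\le n$), but the derived limits $\lim^i$ for $i \ge 1$ require the full stable-$n$-excisive hypothesis. To handle these, I would adapt the downward induction of Proof 2 of Proposition~\ref{prop1}: use the idempotents $e_{S,I} = \phi_{S,I}\circ\psi_{S,I}$ to decompose $G$ into derived homogeneous components supported on subsets of size $\le n$ (a stable analogue of $\on{Prim}(G)$), and then apply the map $\{*\}\sqcup[d+1] \to \{*\}\sqcup[d]$ sending $i \mapsto \min(i,d)$ to obtain the doubling identities that force each $\lim^i$-class to vanish degree by degree. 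The stable $n$-excisive hypothesis enters essentially because it controls the higher Ext-obstructions that govern the $\lim^i$, not just the polynomial structure visible to $\lim^0$. This is presumably the content of Lemma~\ref{lem-induct}, cited in Proof~1 of Proposition~\ref{prop1}.
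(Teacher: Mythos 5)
There are two genuine gaps here. First, the d\'evissage in Step 2 does not go through as stated: with the paper's cohomological conventions, $\tau^{\ge N+1}$ is the \emph{left} adjoint of the inclusion $\mc{C}^{\ge N+1}\hra\mc{C}$, so it does not preserve limits, and there is no reason for $\tau^{\ge N+1}\wt{G}$ to be $n$-excisive. Dually, $\tau^{\le N}$ does preserve limits, but it produces limit diagrams only in $\mc{C}^{\le N}$, and the inclusion $\mc{C}^{\le N}\hra\mc{C}$ is itself a left adjoint which does not preserve them; so neither $\tau^{\le N}\wt{G}$ nor the layers $H^m(\wt{G})[-m]$ are known to be $n$-excisive, and your induction on amplitude has nothing to stand on. This is precisely the pitfall flagged in the footnote to Lemma~\ref{lem-induct}: the paper's induction works only because the inductive hypothesis is that $\tau^{\le d-1}M$ is \emph{constant} (hence automatically a limit diagram on the punctured cubes, since those index categories are weakly contractible), not merely excisive.

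Second, even granting a reduction to a heart-valued functor, your Step 3 is a hope rather than an argument. The doubling trick of Proof 2 of Proposition~\ref{prop1} manipulates compatible families $(a_I)$, i.e.\ elements of $\lim^0$; classes in $\lim^i$ for $i\ge 1$ are not represented by such families, and no mechanism is given for killing them. The paper's mechanism is different in kind: it first takes the homotopy limit in the second smash factor, forming $M(\{*\}\sqcup S):=\lim_{I\in\fsetsurjne}G\iota(S\times I)$, and the decisive point is that $M$ carries functoriality invisible at any finite stage, namely the maps $\phi_S : M(\{*\}\sqcup I\sqcup\{1\})\to M(\{*\}\sqcup I\sqcup S)$ of Lemma~\ref{lem-act} (produced via Lemma~\ref{lims}), split by $M(\id\sqcup c)$ — the Ran-space semigroup. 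The cancellation ($v\mapsto 2v$ plus symmetric-group invariance) is then applied to the heart-valued functor $N=\tau^{\ge d}\tau^{\le d}M$ through these $\phi_S$, and that is what makes each Postnikov layer of $M$ constant; the original $G$ is never truncated at all. Your idempotents $e_{S,I}$ only reproduce the $\on{Prim}$ decomposition already used for $\lim^0$ and provide no substitute for the $\phi_S$, so the degree-by-degree vanishing you sketch cannot be carried out; completing the plan would force you to import the $M$-construction and its semigroup maps, at which point it becomes the paper's proof.
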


\begin{rmk}
	Before embarking on the proof, which will occupy the rest of this subsection, let us first describe the motivation. This proof follows the same formal pattern as the proof in \cite[Sect.\ 6]{g} of the vanishing of the de Rham cohomology of $\Ran(X)$, see \ref{discuss-2}(a). To see the analogy, suppose $G(\{*\} \sqcup I)$ is of the form $F(X^I)$ for some functor $F : \on{Sch}_{k}^{\on{aff}, \op} \to \mc{C}$. Then the functor $M$ introduced below is given by $M(\{*\} \sqcup S) = F(\Ran(X)^S)$.\footnote{The value of $F : \on{Sch}_{k}^{\on{aff}, \op} \to \mc{C}$ on a prestack is defined by right Kan extension along the fully faithful embedding $\on{Sch}_{k}^{\on{aff}} \hra \fun(\on{Sch}_{k}^{\on{aff, op}}, \on{Set}) = \on{PreStk}_{k}$, so we have \[F(\Ran(X)^S) \simeq \lim_{I \in \fsetsurjne} F(X^{I \times S}) \simeq \lim_{I \in \fsetsurjne} G(\{*\} \sqcup (I \times S)) =: M(\{*\} \sqcup S).\] For the first isomorphism, see~\ref{explain}.} Since $\Ran(X)$ admits a semigroup multiplication $\Ran(X)^S \xrightarrow{\on{mult}} \Ran(X)$, we could apply $F$ to deduce a map $M(\{*, 1\}) \xrightarrow{F(\on{mult})} M(\{*\} \sqcup S)$ for every $S$. This is the map constructed in Lemma~\ref{lem-act}(i) (except there we carry around $I$ `useless' copies of $\Ran(X)$). The assertion of Lemma~\ref{lem-act}(ii) is a consequence of the fact that the composition 
	\[
		\Ran(X) \xrightarrow{\Delta} \Ran(X)^S \xrightarrow{\on{mult}} \Ran(X)
	\]
	is the identity. Lastly, after a d\'evissage argument to reduce to the situation of a functor landing in an abelian category, Lemma~\ref{lem-induct} performs the same cancellation trick as in \cite[Sect.\ 6]{g}. The main difference is that, whereas Gaitsgory used the K\"unneth formula to inductively reduce to dealing with a degree $\le 1$ functor, we need to deal with degree $\le n$ functors. The $I$ `useless' copies mentioned before are used to pin down $(n-1)$ elements of an $n$-element set, thereby reducing essentially to the case of a degree $\le 1$ functor. 
\end{rmk}

\begin{myproof}{Proof of Theorem~\ref{thm2}}
	Consider the smash product functor $\wedge : \fset_* \times \fset_* \to \fset_*$ which satisfies $\iota(I) \wedge \iota(J) = \iota(I \times J)$. We have the composition 
	\begin{cd}
		\fset_* \times \fsetsurjne \ar[r, "\id \times \iota"] & \fset_* \times \fset_* \ar[r, "\wedge"] & \fset_* \ar[r, "G"] & \mc{C}, 
	\end{cd}
	and we take its limit with respect to the second coordinate to get a functor $M : \fset_* \to \mc{C}$. 
	
	By definition, the functor $M$ has the following behavior on objects: 
	\[
		M(\{*\} \sqcup S) \simeq \lim_{I \in \fsetsurjne} G\iota(S \times I). 
	\]
	Applying $M$ to the map $\{*, 1\} \to \{*\}$ yields the map $\lim (G \circ \iota) \to G(\{*\})$ which appears in the statement of the theorem. 
	
	\subsubsection{} The first step is to show the following: 	
	\begin{lem}\label{m-excisive}
		The functor $M$ is $n$-excisive. 
	\end{lem}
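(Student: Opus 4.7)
The plan is to exhibit $M$ as a limit of $n$-excisive functors and then use that limits of weakly cartesian diagrams are weakly cartesian. Concretely, for each fixed $I \in \fsetsurjne$, define
\[
M_I : \fset_* \to \mc{C}, \qquad M_I(X) := G(X \wedge \iota(I)),
\]
so that by construction $M = \lim_{I \in \fsetsurjne} M_I$ pointwise. It therefore suffices to show (a) each $M_I$ is $n$-excisive, and (b) a pointwise limit of $n$-excisive functors is $n$-excisive.

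For (a), the key combinatorial observation is that smashing with the fixed pointed set $\iota(I) = \{*\} \sqcup I$ carries special cocartesian hypercubes in $\fset_*$ to special cocartesian hypercubes of the same dimension. Indeed, for any tuple $(S_b)_{b \in B}$ and subset $B' \subset B$, the vertex of $\Xi_{(S_b)_{b \in B}}$ indexed by $B'$ is $\iota(\sqcup_{b' \in B'} S_{b'})$, and smashing with $\iota(I)$ gives
\[
\iota\bigl(\sqcup_{b' \in B'} S_{b'}\bigr) \wedge \iota(I) \;\simeq\; \iota\Bigl(\bigl(\sqcup_{b' \in B'} S_{b'}\bigr) \times I\Bigr) \;\simeq\; \iota\bigl(\sqcup_{b' \in B'} (S_{b'} \times I)\bigr),
\]
since $\times$ distributes over $\sqcup$. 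Moreover these identifications are natural in $B'$, so the whole hypercube $\Xi_{(S_b)_{b \in B}} \wedge \iota(I)$ is canonically identified with the special hypercube $\Xi_{(S_b \times I)_{b \in B}}$. Therefore, when $|B| > n$, the $n$-excisiveness of $G$ implies that $M_I(\Xi_{(S_b)_{b \in B}}) = G(\Xi_{(S_b \times I)_{b \in B}})$ is weakly cartesian.

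For (b), weakly cartesian-ness is itself a limit condition, which commutes with the pointwise limit defining $M$. Explicitly, for $|B| > n$,
\[
\lim_{B' \subsetneq B} M\bigl(\iota(\sqcup_{b' \in B'} S_{b'})\bigr) \;\simeq\; \lim_{B' \subsetneq B} \lim_{I \in \fsetsurjne} M_I\bigl(\iota(\sqcup_{b' \in B'} S_{b'})\bigr) \;\simeq\; \lim_{I \in \fsetsurjne} \lim_{B' \subsetneq B} M_I\bigl(\iota(\sqcup_{b' \in B'} S_{b'})\bigr),
\]
and by (a) each inner limit identifies canonically with $M_I(\iota(\sqcup_{b \in B} S_b))$, so the whole expression identifies with $M(\iota(\sqcup_{b \in B} S_b))$. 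Hence $M(\Xi_{(S_b)_{b \in B}})$ is weakly cartesian, proving that $M$ is $n$-excisive.

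The only real content is the combinatorial identification $(-) \wedge \iota(I)$ preserves special cocartesian hypercubes; everything else is formal commutation of limits. I do not anticipate a genuine obstacle, since the crucial commutation $\lim \circ \lim = \lim \circ \lim$ in $\mc{C}$ holds because $\mc{C}$ is complete, and no positivity or $t$-structure hypothesis is needed at this stage — those will come into play only later in the proof of Theorem~\ref{thm2}.
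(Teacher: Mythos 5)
Your proof is correct and follows essentially the same route as the paper: both rest on the identification $\Xi_{(S_b)_{b\in B}} \wedge \iota(I) \simeq \Xi_{(S_b\times I)_{b\in B}}$ (which you phrase as $\times$ distributing over $\sqcup$, the paper as $\wedge$ distributing over $\vee$), followed by commuting the limit over $\fsetsurjne$ with the limit defining weak cartesianness. Your packaging via the intermediate functors $M_I$ is a small presentational reorganization of the same argument, not a different approach.
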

	\begin{proof}
		By definition of $M$, for any tuple $(I_b)_{b \in B}$ with $|B| > n$, the diagram $M(\Xi_{(I_b)_{b\in B}})$ in $\mc{C}$ is the limit of the diagram 
		\begin{equation}\label{diag1}
			G(\Xi_{(I_b)_{b \in B}} \wedge \iota(I))  \tag{$\dagger$}
		\end{equation}
		over $I \in \fsetsurjne$. Since the smash product $\wedge$ distributes over the wedge sum $\vee$, we have a natural isomorphism of diagrams 
		\[
			\Xi_{(I_b)_{b \in B}} \wedge \iota(I) \simeq \Xi_{(I_b \times I)_{b \in B}}. 
		\]
		Since $G$ is $n$-excisive, this implies that the diagram (\ref{diag1}) is weakly cartesian. Thus $M(\Xi_{(I_b)_{b\in B}})$ is also weakly cartesian, because limits commute with limits. 	
	\end{proof}
	
	\subsubsection{} We need to use \cite[1.5.3]{g}, which we restate here: 
	
	\begin{lem}\label{lims} 
		Let $\mc{C}$ be a complete category, and let $\mc{D}_1, \mc{D}_2$ be essentially small categories. Given a functor $\mc{D}_2 \xrightarrow{G} \mc{C}$, we obtain a functor $\fun(\mc{D}_1, \mc{D}_2) \to \mc{C}_{(\lim G)/}$ which sends a functor $\mc{D}_1 \xrightarrow{F} \mc{D}_2$ to a map 
		\[
			\lim G \xrightarrow{\phi_{F}} \lim (G \circ F_1). 
		\] 
	\end{lem}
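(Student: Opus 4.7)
The plan is to construct the desired functor via the evaluation-and-limit pattern, and then build the distinguished initial map out of $\lim G$ from the universal property of limits. I would first observe that there is an evaluation functor $\on{ev} : \fun(\mc{D}_1, \mc{D}_2) \times \mc{D}_1 \to \mc{D}_2$ sending $(F, d_1) \mapsto F(d_1)$. Composing with $G$ gives a functor $G \circ \on{ev} : \fun(\mc{D}_1, \mc{D}_2) \times \mc{D}_1 \to \mc{C}$, and taking its limit over the $\mc{D}_1$-variable (which exists pointwise because $\mc{C}$ is complete and $\mc{D}_1$ is essentially small) produces a functor
\[
L : \fun(\mc{D}_1, \mc{D}_2) \to \mc{C}, \qquad L(F) \simeq \lim_{d_1 \in \mc{D}_1}(G \circ F)(d_1).
\]
This already gives the target of $\phi_F$ functorially in $F$: a natural transformation $\eta : F \Rightarrow F'$ pastes with $G$ to give $G\eta : G \circ F \Rightarrow G \circ F'$, whence the induced map $L(F) \to L(F')$ on limits.

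Next I would construct a natural transformation from the constant functor $\ul{\lim G}$ to $L$. For any $F : \mc{D}_1 \to \mc{D}_2$, the tautological cone $\{\lim G \to G(d_2)\}_{d_2 \in \mc{D}_2}$ restricts along $F$ to a cone $\{\lim G \to G(F(d_1))\}_{d_1 \in \mc{D}_1}$, which by the universal property of $L(F) = \lim(G \circ F)$ determines a unique map $\phi_F : \lim G \to L(F)$. To see that this is natural in $F$, fix a natural transformation $\eta : F \Rightarrow F'$ and note that both composites $\lim G \xrightarrow{\phi_F} L(F) \to L(F')$ and $\lim G \xrightarrow{\phi_{F'}} L(F')$ are obtained from the restricted cone $\{\lim G \to G(F'(d_1))\}$, and hence agree by the uniqueness clause of the universal property. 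Equivalently, both are the unique map fitting into the cone over $G \circ F'$ supplied by the tautological cone of $\lim G$.

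Having exhibited a natural transformation $\ul{\lim G} \Rightarrow L$ in the $1$-category $\fun(\fun(\mc{D}_1, \mc{D}_2), \mc{C})$, I would conclude by noting that a functor $\fun(\mc{D}_1, \mc{D}_2) \to \mc{C}_{(\lim G)/}$ is exactly the datum of a functor $\fun(\mc{D}_1, \mc{D}_2) \to \mc{C}$ together with a natural transformation from $\ul{\lim G}$ to that functor. This produces the desired $F \mapsto (\phi_F : \lim G \to \lim(G \circ F))$.

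The main obstacle, such as it is, is just bookkeeping: checking that the cone pulled back along $F$ really does satisfy the naturality condition in $F$, which in the ordinary-categorical case reduces to the uniqueness of maps into a limit, and in the $(\infty,1)$-categorical case to the contractibility of the relevant space of factorizations. Once the evaluation-then-limit construction is set up, everything else is a direct application of universal properties.
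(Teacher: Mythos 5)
Your construction produces the same functor as the paper, but by a genuinely different route. The paper's proof is purely formal: it writes down a strictly commutative square whose arrows are the diagonal functors $\Delta$ and the composition/projection maps, passes to right adjoints along the horizontal arrows (the right adjoint to $\Delta$ being $\lim$), which automatically produces a \emph{lax} commutative square, and then extracts the statement by restricting along $F \mapsto (G, F)$. The payoff of that route is that it is setting-agnostic: the fact that a strictly commutative square of left adjoints yields a lax commutative square of right adjoints, coherence data included, holds just as well for $(\infty,1)$-categories as for ordinary ones. Your route instead constructs the target $L(F) = \lim(G \circ F)$ explicitly via evaluation-then-limit (a perfectly valid step in either setting), and then builds the transformation $\ul{\lim G} \Rightarrow L$ by specifying components $\phi_F$ from restricted cones and checking naturality against each $1$-morphism $\eta : F \Rightarrow F'$. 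In the $1$-categorical case this is complete, correct, and arguably more illuminating than the paper's abstract argument.

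The issue is that in the paper the lemma is applied with $\mc{C}$ a complete stable $(\infty,1)$-category (inside the proof of Theorem~\ref{thm2}, and again in Lemma~\ref{thm2-fin}), and there specifying components and checking commutativity against $1$-morphisms does not by itself define a morphism in $\fun\bigl(\fun(\mc{D}_1, \mc{D}_2), \mc{C}\bigr)$; you must produce the entire tower of higher coherences. You flag this yourself (``contractibility of the relevant space of factorizations'') but leave it unaddressed, and for the intended application that is a real gap. The cleanest way to close it is to avoid pointwise constructions altogether: either retreat to the paper's right-adjoint argument, or observe that the cone from $\lim G$ is itself encoded by the counit of $\Delta \dashv \lim$ and chase that unit/counit formally, so that the natural transformation $\ul{\lim G} \Rightarrow L$ is constructed all at once rather than vertex-by-vertex.
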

	\begin{proof}
		Let $\mc{C} \xrightarrow{\Delta} \fun(\mc{D}_1, \mc{C})$ be the functor sending $d$ to the constant functor with value $d$, and similarly for $\mc{D}_2$ in place of $\mc{D}_1$. We have a strictly commutative diagram of functors 
		\begin{cd}[column sep = 0.7in]
			\fun(\mc{D}_2, \mc{C}) \times \fun(\mc{D}_1, \mc{D}_2) \ar[d, swap, "\on{compose}"] & \mc{C} \times \fun(\mc{D}_1, \mc{D}_2) \ar[l, swap, "\Delta \times \id"] \ar[d, "\on{pr}_1"] \\
			\fun(\mc{D}_1, \mc{C}) & \mc{C} \ar[l, swap, "\Delta"] 
		\end{cd} 
		Passing to right adjoints along the horizontal maps, we obtain a \emph{lax} commutative diagram 
		\begin{cd}[column sep = 0.7in]
			\fun(\mc{D}_2, \mc{C}) \times \fun(\mc{D}_1, \mc{D}_2) \ar[d, swap, "\on{compose}"] \ar[r, "\lim \times \id"] & \mc{C} \times \fun(\mc{D}_1, \mc{D}_2) \ar[d, "\on{pr}_1"] \ar[draw=none]{ld}[auto=false, rotate=-45]{\Downarrow}\\
			\fun(\mc{D}_1, \mc{C}) \ar[r, "\lim"] & \mc{C} 
		\end{cd} 
		This yields the desired functor upon precomposing by the embedding 
		\[
			\fun(\mc{D}_1, \mc{D}_2) \hra \fun(\mc{D}_2, \mc{C}) \times \fun(\mc{D}_1, \mc{D}_2)
		\]
		which sends $F \mapsto (G, F)$. 
	\end{proof}
	
	\subsubsection{} Using the previous lemma, we discover that $M$ satisfies some new functoriality which was not visible prior to taking the limit. Morally, this functoriality corresponds to the semigroup multiplication $\Ran(X)^S \xrightarrow{\on{mult}} \Ran(X)$, which is not visible at any finite stratum of $\Ran(X)$. 
	
	\begin{lem}\label{lem-act} 
		Let $I$ be a finite set. 
		\begin{enumerate}[label=(\roman*)]
			\item There is a functor $\fset_{\on{n.e.}}^{\on{surj}} \to \mc{C}_{M(\{*\} \sqcup I \sqcup \{1\})/}$ which sends a set $S$ to a map
			\[
				M(\{*\} \sqcup I \sqcup \{1\}) \xrightarrow{\phi_S} M(\{*\} \sqcup I \sqcup S). 
			\]
			\item For any $S$, let $c : S \to \{1\}$ be the unique map. The composition 
			\begin{cd}[column sep = 0.7in] 
				M(\{*\} \sqcup I \sqcup \{1\}) \ar[r, "\phi_S"] & M(\{*\} \sqcup I \sqcup S) \ar[r, "M(\id \sqcup c)"] & M(\{*\} \sqcup I \sqcup \{1\})
			\end{cd}
			is homotopic to the identity map, and these data are compatible with the automorphisms of $M(\{*\} \sqcup I \sqcup S)$ induced by permutations of $S$.  
		\end{enumerate}
	\end{lem}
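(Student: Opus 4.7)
The plan is to construct $\phi_S$ in two stages using Lemma~\ref{lims} together with a natural combinatorial map in $\fset_*$, and then to verify (ii) by projecting to each component of the limit defining $M$.

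First, unravelling definitions gives $M(\{*\} \sqcup I \sqcup \{1\}) \simeq \lim_{J \in \fsetsurjne} G((\{*\} \sqcup I \sqcup \{1\}) \wedge \iota(J))$, and analogously for $M(\{*\} \sqcup I \sqcup S)$. Set $H(J) := G((\{*\} \sqcup I \sqcup \{1\}) \wedge \iota(J))$ and let $F_S : \fsetsurjne \to \fsetsurjne$ be $F_S(J) := S \times J$. Lemma~\ref{lims} applied to $H$ and $F_S$ yields a map $\lim H \to \lim(H \circ F_S)$, functorial in $S$. Next, I write down a natural morphism $\alpha_{S,J} : (\{*\} \sqcup I \sqcup \{1\}) \wedge \iota(S \times J) \to (\{*\} \sqcup I \sqcup S) \wedge \iota(J)$ in $\fset_*$ which is the identity on the $S \times J$ summand and sends $(i,s,j) \mapsto (i,j)$ on the $I \times S \times J$ summand (morally, an instance of the semigroup multiplication on the Ran space); this is natural in $J$ and functorial in $S$ with respect to surjections. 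Applying $G$ and taking $\lim_J$ produces $\lim(H \circ F_S) \to M(\{*\} \sqcup I \sqcup S)$, and composing with the previous arrow defines $\phi_S$, which gives the required functor of~(i).

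For~(ii), by the construction of $M$ on morphisms, $M(\id \sqcup c)$ is induced by the pointed-set morphisms $\beta_{S,J} : (\{*\} \sqcup I \sqcup S) \wedge \iota(J) \to (\{*\} \sqcup I \sqcup \{1\}) \wedge \iota(J)$ which are the identity on $I \times J$ and send $(s,j) \mapsto j$ on $S \times J$. To verify $M(\id \sqcup c) \circ \phi_S = \id$, I project to the $J_0$-component of $\lim H$ for each $J_0 \in \fsetsurjne$; by the universal property of the limit, this is sufficient. Tracing through the construction of $\phi_S$, this composition factors as the $(S \times J_0)$-projection $\lim H \to H(S \times J_0)$ followed by $G(\beta_{S,J_0} \circ \alpha_{S,J_0})$. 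The key calculation is that $\beta_{S,J_0} \circ \alpha_{S,J_0}$ coincides with $\id_{\{*\} \sqcup I \sqcup \{1\}} \wedge \iota(\pi_{J_0})$, where $\pi_{J_0} : S \times J_0 \sra J_0$ is the projection; consequently $G(\beta_{S,J_0} \circ \alpha_{S,J_0}) = H(\pi_{J_0})$ is the structure map of $\lim H$ for the morphism $\pi_{J_0}$, so the composite of the $(S \times J_0)$-projection with $H(\pi_{J_0})$ is precisely the $J_0$-projection from $\lim H$, as required. Compatibility with permutations of $S$ is automatic because $c$ is permutation-invariant and every intermediate construction is natural in $S$.

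The main obstacle is bookkeeping rather than conceptual: carefully keeping track of the three factors $I$, $S$, $J$ inside the smash products, and organising $\alpha$ and $\beta$ so that their composition matches exactly $\id \wedge \iota(\pi_{J_0})$. Once this identification is in place, both parts follow formally from the universal property of the limit defining $M$ and from Lemma~\ref{lims}.
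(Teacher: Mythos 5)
Your part (i) is, after unwinding, the same construction as the paper's: applying Lemma~\ref{lims} to $H(J)=G\iota((I\sqcup\{1\})\times J)$ and precomposing with $S\mapsto S\times(-)$ gives the first arrow, and your map $\alpha_{S,J}$, after applying $G$ and taking $\lim_J$, is exactly $M$ evaluated on the morphism $(\id_I\times c)\sqcup\id_S:\{*\}\sqcup(I\times S)\sqcup S\to\{*\}\sqcup I\sqcup S$ (under the canonical identification $(\{*\}\sqcup I\sqcup\{1\})\wedge\iota(S\times J)\simeq(\{*\}\sqcup(I\times S)\sqcup S)\wedge\iota(J)$), which is precisely how the paper defines $\phi_S$. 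So (i) is correct and essentially identical in route.

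The gap is in your verification of (ii). You check that $M(\id\sqcup c)\circ\phi_S$ agrees with the identity after composing with each projection $\lim H\to H(J_0)$, and you claim that ``by the universal property of the limit, this is sufficient.'' In the setting where this lemma is used (Theorem~\ref{thm2}), $\mc{C}$ is an $(\infty,1)$-category and $\lim H$ is a homotopy limit, so $\on{Maps}(X,\lim H)\simeq\lim_{J}\on{Maps}(X,H(J))$: a map into $\lim H$ is a \emph{coherent} family, and two maps whose compositions with each projection are individually homotopic need not be homotopic. Worse, the content of (ii) is exactly a piece of coherence data (a homotopy to the identity, compatible with the permutation action on $S$) that is consumed later in Lemma~\ref{lem-induct}, so it cannot be established by a componentwise check. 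Fortunately your own computation contains the repair: the identification $\beta_{S,J}\circ\alpha_{S,J}=\id_{\{*\}\sqcup I\sqcup\{1\}}\wedge\iota(\pi_J)$ is a strict equality in $\fset_*$, natural in $J$, so the composite in question is literally $\bigl(\lim_J H(\pi_J)\bigr)$ composed with the Lemma~\ref{lims} map $\lim H\to\lim(H\circ(S\times(-)))$, where $\pi:S\times(-)\Rightarrow\id_{\fsetsurjne}$ is the projection viewed as a natural transformation of reindexing functors. The required homotopy to the identity then comes from the fact that Lemma~\ref{lims} produces a \emph{functor} on $\fun(\fsetsurjne,\fsetsurjne)$ with values in $\mc{C}_{\lim H/}$, so the natural transformation $\pi$ (equivalently, in the paper's phrasing, the morphism $c:S\to\{1\}$ fed into the functor of part (i)) supplies a coherent identification of the composite with $\phi_{\{1\}}\simeq\id$, functorially in $S$; this same functoriality is what gives the compatibility with permutations of $S$, which in your write-up is asserted but should be anchored to that functor rather than to naturality in the abstract.
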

	\begin{proof}
		Applying Lemma~\ref{lims} to the functor $G\iota((I \sqcup \{1\}) \times (-))$ yields a functor 
		\[
			\fun(\fsetsurjne, \fsetsurjne) \to \mc{C}_{M(\{*\} \sqcup I \sqcup \{1\})/}. 
		\]
		Precompose this by the functor $\fset_{\on{n.e.}}^{\on{surj}} \to \fun(\fsetsurjne, \fsetsurjne)$ which sends $S \mapsto S \times (-)$. (The functor $S \times (-)$ lands inside $\fsetsurjne$ only if $S$ is nonempty.) We obtain a functor 
		\[
			\fset_{\on{n.e.}}^{\on{surj}} \to \mc{C}_{M(\{*\} \sqcup I \sqcup \{1\})/}
		\]
		which sends a nonempty finite set $S$ to a map 
		\e{
			M(\{*\} \sqcup I \sqcup \{1\}) \to & M(\{*\} \sqcup ((I \sqcup \{1\}) \times S)) \\
			&\quad = M(\{*\} \sqcup (I \times S) \sqcup S). 
		} 
		Because $\{1\}$ is terminal in $\fset_{\on{n.e.}}^{\on{surj}}$, the maps $M(\{*\} \sqcup (I \times S) \sqcup S) \xrightarrow{(\id_I \times c) \sqcup \id_S} M(\{*\} \sqcup I \sqcup S)$ are functorial in $S$, and post-composing the above map by this one yields the map $\phi_S$. 
		
		To prove part (ii), note that part (i) identifies the composition in question with $\phi_{\{1\}}$, and it does so functorially in $S$. The claim follows because $\phi_{\{1\}} \simeq \id$. 
	\end{proof}

	\subsubsection{} We are now ready to complete the proof of Theorem~\ref{thm2}: 
	
	\begin{lem}\label{lem-induct} 
		For every $d$, the functor $\tau^{\le d} M$ is isomorphic to the constant functor with value $\tau^{\le d} M(\{*\})$. 
	\end{lem}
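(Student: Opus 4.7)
The plan is an induction on $d$, dévissed at each step to a calculation in the abelian heart. Since $M(\{*\} \sqcup S) \simeq \lim_{I \in \fsetsurjne} G\iota(S \times I)$ is a limit of objects in $\mc{C}^{\ge m}$, and limits in a stable $\infty$-category preserve lower $t$-bounds, $M$ itself takes values in $\mc{C}^{\ge m}$, so the base case $d < m$ holds trivially. For the inductive step, set $C := \on{cofib}(\ul{M(\{*\})} \to M)$. Then $C$ is $n$-excisive (cofibers of $n$-excisive functors are $n$-excisive in a stable category), $C(\{*\}) \simeq 0$, and $C$ inherits the action of Lemma~\ref{lem-act}: composing $\phi^I_S$ with the basepoint inclusion $M(\{*\}) \to M(\{*\} \sqcup I \sqcup \{1\})$ agrees with the basepoint inclusion $M(\{*\}) \to M(\{*\} \sqcup I \sqcup S)$, as one verifies by unfolding the construction of $\phi^I_S$ via Lemma~\ref{lims}. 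Showing that $\tau^{\le d} M$ is constant is equivalent to showing $\tau^{\le d} C = 0$, and the inductive hypothesis (applied to the fiber sequence $\ul{M(\{*\})} \to M \to C$) gives $C \in \mc{C}^{\ge d}$; so it suffices to prove $H^d C = 0$ as a functor to $\mc{C}^\heartsuit$.

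For the dévissage, $\tau^{\le d}$ preserves limits (it is right adjoint to $\mc{C}^{\le d} \hookrightarrow \mc{C}$), so the $n$-excisiveness of $C$ yields the $n$-excisiveness of $\tau^{\le d} C \simeq H^d C[-d]$ in the sense of Definition~\ref{def-excisive}; by Remark~\ref{rmk-compare} this is the same as saying $N := H^d C : \fset_* \to \mc{C}^\heartsuit$ is polynomial of degree $\le n$. Moreover $N(\{*\}) = 0$, and $N$ inherits the action structure: for each $I$, maps $\phi^I_S : N(\{*\} \sqcup I \sqcup \{1\}) \to N(\{*\} \sqcup I \sqcup S)$, functorial in $S \in \fsetsurjne$, satisfying $N(\id \sqcup c) \circ \phi^I_S = \id$ for the unique map $c : S \to \{1\}$. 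The remaining claim is that any such $N$ is identically zero.

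Write $N \simeq \on{Ind}(F)$ for $F := \on{Prim}(N) : \fsetsurj \to \mc{C}^\heartsuit$ via Proposition~\ref{prop-indprim}, so $F(\emptyset) = 0$ and $F(J) = 0$ for $|J| > n$. I prove $F \equiv 0$ by downward induction on $|J|$. Assume $F(J) = 0$ whenever $|J| > k$ for some $k \ge 1$; fix $J_0$ of size $k$, pick $j_0 \in J_0$, set $I := J_0 \setminus \{j_0\}$, and identify $J_0 \simeq I \sqcup \{1\}$. Given $f \in F(I \sqcup \{1\}) \subset N(\{*\} \sqcup I \sqcup \{1\})$ and $|S| \ge 2$, expand $\phi^I_S(f) = \sum_T f^S_T$ in the direct sum decomposition $N(\{*\} \sqcup I \sqcup S) = \bigoplus_{T \subset I \sqcup S} F(T)$. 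The inductive hypothesis kills all summands with $|T| > k$, so after applying $N(\id \sqcup c)$ and reading off the $F(I \sqcup \{1\})$-component, only the contributions from $T = I \sqcup \{s\}$ for $s \in S$ survive. Equivariance of $\phi^I_S$ under permutations of $S$ identifies these contributions with a single element $g^S \in F(I \sqcup \{1\})$, yielding the relation $f = |S| \cdot g^S$.

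The final cancellation compares two values of $|S|$ via a non-bijective surjection. Take $S = \{s_1, s_2, s_3\}$, $S' = \{s'_1, s'_2\}$, and $\alpha : S \sra S'$ sending $s_1, s_3 \mapsto s'_1$ and $s_2 \mapsto s'_2$; functoriality of $\phi^I_{(-)}$ over $\fsetsurjne$ gives $\phi^I_{S'}(f) = N(\id \sqcup \alpha) \circ \phi^I_S(f)$. Tracking the $F(I \sqcup \{s'\})$-components and normalizing to $F(I \sqcup \{1\})$, the merged element $s'_1$ receives $2 g^S$ while the preserved element $s'_2$ receives $g^S$; swap symmetry on $S'$ requires these to agree, so $g^S = 0$, hence $f = 0$. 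This completes the downward induction: $N \equiv 0$, $H^d C = 0$, $\tau^{\le d} C = 0$, finishing the induction on $d$. The main obstacle is the combinatorial bookkeeping in this cancellation step: the $k-1$ ``useless'' coordinates in $I$ pin down all but one variable and effectively reduce the problem to a degree-$\le 1$ situation, where the retract of Lemma~\ref{lem-act}(ii) together with permutation symmetry is rigid enough to force vanishing, replacing the K\"unneth-based cancellation from \cite[Sect.\ 6]{g}.
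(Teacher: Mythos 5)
Your proof is correct, and its combinatorial core (the downward induction pinning $|I| = k-1$ ``useless'' coordinates, reading off the weight-$k$ components $g^S$, and forcing $g^{S'} = 2g^S = g^S$ from the two-to-one merge $|S| = 3 \to |S'| = 2$, then closing with the retract identity from Lemma~\ref{lem-act}(ii)) is the same argument as the paper's. The d\'evissage, however, is reorganized. The paper works directly with $N := \tau^{\ge d}\tau^{\le d}M$ and shows $\on{Prim}(N)(J) = 0$ for $|J| \ge 1$; to establish that $N$ is polynomial of degree $\le n$, it uses the triangle $\tau^{\le d-1}M \to M \to \tau^{\ge d}M$ together with the observation that $\tau^{\le d-1}M(\Xi)$ is a \emph{constant}, hence limit, diagram in $\mc{C}$, which is precisely how it sidesteps the failure of $\mc{C}^{\le d-1}\hookrightarrow\mc{C}$ to preserve limits. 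You instead exploit that $\ul{M(\{*\})}$ is a retract of $M$, so $M \simeq \ul{M(\{*\})}\oplus C$, and reduce to $H^dC \equiv 0$: the cofiber $C$ is $n$-excisive in $\mc{C}$ outright, and applying the limit-preserving $\tau^{\le d}$ gives $n$-excisiveness of $H^dC[-d]$ in $\mc{C}^{\le d}$ directly, without the constant-diagram trick. This is conceptually a bit cleaner, but you incur an extra obligation that the paper avoids: the maps $\phi^I_S$ of Lemma~\ref{lem-act} must descend to $C$, i.e.\ commute (up to coherent homotopy) with the basepoint inclusions $M(\{*\}) \to M(\{*\}\sqcup J)$. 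Your appeal to unfolding Lemma~\ref{lims} is in fact correct---the map produced there is natural in the input functor, and on a constant input functor it is the identity---but this is the one step that deserves to be fleshed out rather than left parenthetical, since the paper deliberately keeps $F(\emptyset)$ in play as an unknown precisely in order to skip this check.
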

	\begin{proof}
		Since limits are left exact, our assumption ($\bullet$) implies that the essential image of $M$ lies in $\mc{C}^{\ge m}$. This proves the lemma for $d < m$. By induction, we may assume that $d \ge m$ and that the result has been proven for all smaller $d$. We have a map of exact triangles of functors $\fset_* \to \mc{C}$ as follows: 
		\begin{cd}
			\tau^{\le d-1} M \ar[r] \ar{d}[swap, rotate=90, anchor=south]{\sim} & \tau^{\le d} M \ar[r] \ar[d] & \tau^{\ge d}\tau^{\le d} M \ar[d] \\
			\tau^{\le d-1} \ul{M(\{*\})} \ar[r] & \tau^{\le d} \ul{M(\{*\})} \ar[r] & \tau^{\ge d} \tau^{\le d} \ul{M(\{*\})}
		\end{cd}
		Our desired statement is that the middle vertical arrow is a natural isomorphism. Therefore, it suffices to prove that the right vertical arrow is a natural isomorphism. In this way, we reduce to studying functors landing in the abelian 1-category $\mc{A} := \mc{C}^{\heartsuit}$. 
		
		Consider the functor $N : \fset_* \to \mc{A}$ which corresponds to $\tau^{\ge d} \tau^{\le d} M$. We prove that $N$ is a polynomial functor of degree $\le n$ in the sense of Definition~\ref{def-poly}. Let $\Xi := \Xi_{(I_b)_{b \in B}}$ be any special hypercube in $\fset_*$ with $|B| > n$, so that $M(\Xi)$ is a limit diagram by Lemma~\ref{m-excisive}. Consider the exact triangle of functors $\fset_* \to \mc{C}$ as follows: 
		\begin{cd}
			\tau^{\le d-1} M(\Xi) \ar[r] & M(\Xi) \ar[r] & \tau^{\ge d} M(\Xi)
		\end{cd}
		By the inductive hypothesis, $\tau^{\le d-1} M(\Xi)$ is a constant diagram, so it is a limit diagram.\footnote{If it were not a constant diagram, the proof would break at this point because, although it is a limit diagram in $\mc{C}^{\le d-1}$, the inclusion $\mc{C}^{\le d-1} \hra \mc{C}^{\le d}$ is a left adjoint and does not preserve limits.} Since $\mc{C}$ is stable, and limits commute with limits, we conclude that $\tau^{\ge d} M(\Xi)$ is also a limit diagram. Since $\tau^{\le d}$ preserves limits, this implies that $\tau^{\le d} \tau^{\ge d} M(\Xi)$ is a limit diagram in $\mc{C}^{\le d}$. Now it follows from Remark~\ref{rmk-compare} that $N$ is polynomial of degree $\le n$. 
		
		By Lemma~\ref{lem-poly}, we have $N = \on{Ind}(F)$ for some $F : \fset^{\on{surj}} \to \mc{A}$ which sends $J \mapsto 0$ if $|J| > n$. We shall prove the following statement by downward induction on $w \ge 1$: if $|J| = w$, then $F(J) = 0$. Thus, we fix $w \ge 1$ and assume that the statement has been proved for all integers larger than $w$. 
		
		Let $I$ be a finite set with $|I| = w-1$. In view of Lemma~\ref{lem-act}, we know that for $S \in \fset_{\on{n.e.}}^{\on{surj}}$ there is a map 
		\[
			N(\{*\} \sqcup I \sqcup \{1\}) \xrightarrow{\phi_S} N(\{*\} \sqcup I \sqcup S)
		\]
		which is functorial in $S$. By definition of $\on{Ind}(F)$, this map rewrites as 
		\[
			\bigoplus_{T \subset I \sqcup \{1\}} F(T) \xrightarrow{\phi_{S}} \bigoplus_{T' \subset I \sqcup S} F(T'). 
		\]
		Define $\phi_{S'} := \pr \circ \phi_{S} \circ i$ as shown: 
		\begin{cd}
			\displaystyle\bigoplus_{T \subset I \sqcup \{1\}} F(T) \ar[r, "\phi_{S}"]  & \displaystyle\bigoplus_{T' \subset I \sqcup S} F(T') \ar[d, twoheadrightarrow, "\pr"]\\
			F(I \sqcup \{1\}) \ar[r, "\phi_{S}'"] \ar[u, hookrightarrow, "i"] & \displaystyle\bigoplus_{\substack{I \subset T' \subset I \sqcup S \\ |T'| = w}} F(T') 
		\end{cd}
		In this digram, $i$ is the inclusion of the summand $F(I \sqcup \{1\})$, and $\pr$ is the projection onto the indicated summands. 
		
		In order to formulate an analogous functoriality of $\phi_S'$ with respect to $S$, we first explain the functoriality of the target of $\phi_{S'}$ with respect to $S$. Given a surjection $\xi : S_1 \sra S_2$, we claim that there is a commutative diagram 
		\begin{cd}[column sep = 0.7in]
			N(\{*\} \sqcup I \sqcup S_1) \ar[dash]{d}[rotate=90, anchor=north]{\sim} \ar[r, "{N(\id_I \sqcup \xi)}"] & N(\{*\} \sqcup I \sqcup S_2) \ar[dash]{d}[rotate=90, anchor=north]{\sim} \\
			\displaystyle\displaystyle\bigoplus_{T' \subset I \sqcup S_1} F(T') \ar[r] \ar[d, twoheadrightarrow, "\pr"] & \displaystyle\bigoplus_{T'' \subset I \sqcup S_2} F(T'') \ar[d, twoheadrightarrow, "\pr"] \\
			\displaystyle\bigoplus_{\substack{I \subset T' \subset I \sqcup S_1 \\ |T'| = w}} F(T') \ar[r, "\eta_\xi"] & \displaystyle\bigoplus_{\substack{I \subset T'' \subset I \sqcup S_2 \\ |T''| = w}} F(T'')
		\end{cd}
		I.e., the map $N(\id_I \sqcup \xi)$ descends to a map $\eta_\xi$ between the quotients. This follows from the definition of $\on{Ind}(F)$. Indeed, the $T''$-coordinate of the map $N(\id_I \sqcup \xi)$ only depends on the summands $F(T')$ with $(\id_I \sqcup \xi)(T') = T''$, and if $T''$ satisfies the conditions $I \subset T''$ and $|T''| = w$, then $(\id_I \sqcup \xi)(T') = T''$ implies that $I \subset T'$ and $|T'| \ge w$. Since the summands for $|T'| > w$ are zero (by the inductive hypothesis), the claim follows. The upshot is that these quotients are functorial in $S$. 
		
		Now, the functoriality of the maps $\phi_S$ with respect to $S$ implies the functoriality of the maps $\phi_{S}'$ with respect to $S$. The previous paragraph explains how the target of $\phi_S'$ is functorial with respect to $S$. 
		
		\begin{rmk}
			Note that the bottom row of the previous diagram rewrites as 
			\[
			\bigoplus_{s_1 \in S_1} F(I \sqcup \{s_1\}) \xrightarrow{\eta_\xi} \bigoplus_{s_2 \in S_2} F(I \sqcup \{s_2\})
			\]
			because the subsets $T' \subset I \sqcup S_1$ with $I\subset T'$ and $|T'| = w$ are in bijection with elements of $S_1$ (this uses that $|I| = w-1$) and similarly for $S_2$ in place of $S_1$. Furthermore, the definition of $\on{Ind}(F)$ shows that the matrix coefficient $F(I \sqcup \{s_1\}) \to F(I \sqcup \{s_2\})$ of $\eta_\xi$ is an isomorphism if $s_2 = \xi(s_1)$ and zero otherwise.
		\end{rmk}
		 		
		Next, for notational convenience, we write $V := F(w\text{-element set})$. This is safe because the subsequent manipulations do not involve any permutations on the $w$-elements sets in question; indeed, $w-1$ of the elements are `pinned down' by their identification with elements of $I$. Thus, the map $\phi_S'$ rewrites as 
		\[
			V \xrightarrow{\phi_S'} V^{\oplus S}
		\]
		By the remark, permutations of $S$ act on $V^{\oplus S}$ by permuting the summands, so the functoriality of $\phi_S'$ with respect to $S$ implies that $\phi_{S}'$ lands inside the subgroup $(V^{\oplus S})^{\Sigma_S}$ of symmetric group invariants. 
		
		Next, let $r \ge 3$ and consider the surjection $\xi : [r] \to [r-1]$ defined by $\xi(q) = \min(r-1, q)$. In this case, functoriality of $\phi_{S}'$ with respect to $S$ yields this commutative diagram:  
		\begin{cd}
			V \ar[r, "\phi_{[r]}'"] \ar[rd, swap, "\phi_{[r-1]}'"] & V^{\oplus r} \ar[d, "\id \oplus \cdots \oplus \id \oplus \on{sum}"] \\
			& V^{\oplus (r-1)}
		\end{cd}
		The description of the vertical map follows from the remark. By the previous paragraph, the rightward maps land inside $(V^{\oplus r})^{\Sigma_r}$ and $(V^{\oplus (r-1)})^{\Sigma_{r-1}}$, respectively. These facts imply that the maps $\phi_{[r]}'$ and $\phi_{[r-1]}'$ are equal to zero. This is because any $v \in V$ maps to some $(v_1, \ldots, v_1) \in V^{\oplus r}$ and then to $(v_1, \ldots, v_1, 2v_1) \in V^{\oplus (r-1)}$, and since this is cyclically invariant we conclude that $v_1 = 0$. 
		
		Lastly, we apply the special case of functoriality summarized in Lemma~\ref{lem-act}(ii). This says that the composition
		\[
			N(\{*\} \sqcup I \sqcup \{1\}) \xrightarrow{\phi_{[r]}} N(\{*\} \sqcup I \sqcup [r]) \xrightarrow{N(\id \sqcup c)} N(\{*\} \sqcup I \sqcup \{1\})
		\]
		is equal to the identity map. Passing to $\phi_{[r]}'$ as before, this yields a diagram 
		\[
			V \xrightarrow{\phi_{[r]}'} V^{\oplus r} \xrightarrow{\on{sum}} V
		\]
		where the description of the second map as the addition map follows from the remark, and where the composition is $\phi_{[1]}' = \on{id}_V$. On the other hand, assuming $r \ge 2$, this composition equals zero because $\phi_{[r]}' = 0$ (by the previous paragraph), so we conclude that $V = 0$. This implies that $F$ vanishes on all $w$-element sets, so the inductive step is proved. 
		
		We have just shown that $N$ is isomorphic to the constant functor with value $N(\{*\}) \simeq F(\emptyset)$, so the analogous statement for $\tau^{\ge d} \tau^{\le d} M$ also holds.  
	\end{proof}
	
	By Lemma~\ref{induct}, the map $M(\{*, 1\}) \to M(\{*\})$ becomes an isomorphism upon applying $\tau^{\le d}$ for arbitrarily large $d$. Since the $t$-structure on $\mc{C}$ is right-complete, we conclude that this map is itself an isomorphism. This concludes the proof of Theorem~\ref{thm2} 
\end{myproof}

\subsection{Analogues for finite limits}  \label{ssec-fin}

We prove two analogous results about `vanishing of the limit' which apply to functors defined only on the subcategory of pointed finite sets whose size is bounded above by some integer $N$. Although only the first result Lemma~\ref{prop1-fin} will be used in this paper, the second result Lemma~\ref{thm2-fin} may be useful in other situations. In general, even if one has a functor defined on all of $\fset_*$, one may nevertheless want to restrict to looking at finite limits in order to use commutativity with filtered colimits. 

A heuristic reason why one should expect analogues for finite limits is that any elementary proof of `vanishing of the limit,' such as the second proof of Lemma~\ref{prop1}, will show that each term in a compatible system of elements is zero by using compatibilities which come in the form of equations. And it should only require finitely many equations to show that a particular element is zero, because linear algebra does not allow taking infinite linear combinations. Therefore, the same proof would apply to any \emph{partially defined} system of elements as long as the partial system includes those elements which appear in the finitely many equations; this would allow one to deduce a partial vanishing result for finite limits. Lemma~\ref{thm2-fin} is included to illustrate that a partial vanishing result can hold even in the absence of an elementary proof. 

\subsubsection{}

Define $\fset_{*, \le N}$ to be the full subcategory of $\fset_*$ consisting of sets $\{*\} \sqcup I$ for which $|I| \le N$. (Note that the set $\{*\} \sqcup I$ is considered to have size $|I|$, not $|I|+1$. This agrees with the interpretation of $\fset_*$ mentioned in Remark~\ref{ssec-fset}.) Similarly, let $\fset_{\le N}$ be the full subcategory of $\fset$ consisting of sets $I$ with $|I| \le N$. 

\subsubsection{} Let $\mc{A}$ be an abelian category. 

\begin{prop} \label{prop-fin1} 
	Let $N \ge 0$ be a fixed integer. 
	\begin{enumerate}[label=(\roman*)]
		\item We have inverse equivalences $\on{Ind} : \fun(\fsetsurj_{\le N}, \mc{A}) \rightleftarrows \fun(\fset_{*, \le N}, \mc{A}) : \on{Prim}$. 
		\item These functors fit into a commuting diagram with the ones from~\ref{def-indprim}: 
		\begin{cd}
			\fun(\fsetsurj, \mc{A}) \ar[r, shift left = 1.5, "\on{Ind}"]  \ar[d, "\on{Res}"] & \fun(\fset_{*}, \mc{A}) \ar[l, shift left = 0.5, "\on{Prim}"]  \ar[d, "\on{Res}"] \\
			\fun(\fsetsurj_{\le N}, \mc{A}) \ar[r, shift left = 1.5, "\on{Ind}"] & \fun(\fset_{*, \le N}, \mc{A}) \ar[l, shift left = 0.5, "\on{Prim}"] 
		\end{cd}
		\item Let $G : \fset_{*, \le N} \to \mc{A}$ be a functor, and let $n \ge 0$ be an integer. Then the four conditions of Lemma~\ref{lem-poly} apply to $G$ (after restricting to  $I$ such that $|I| \le N$) and they are equivalent. 
	\end{enumerate} 
\end{prop}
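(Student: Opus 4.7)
The plan is to observe that every construction appearing in \ref{def-indprim}, Proposition~\ref{prop-indprim}, and Lemma~\ref{lem-poly} is ``local'' in the sense that, to compute anything associated to a given finite set $I$, one only uses values of the functor in question on (pointed versions of) subsets $S \subset I$. Since $|S| \le |I|$, none of these constructions forces us to leave $\fset_{*, \le N}$ or $\fsetsurj_{\le N}$, so the prior arguments transport over to the bounded setting with essentially no modification.

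For (i), the first step is to verify that the definitions of $\on{Ind}$ and $\on{Prim}$ given in~\ref{def-indprim} directly restrict. The formula $\on{Ind}(F)(\{*\} \sqcup I) = \bigoplus_{S \subset I} F(S)$ only evaluates $F$ on subsets of $I$, and the formula $\on{Prim}(G)(I) = \bigcap_{S \subsetneq I} \ker(G(\psi_{S,I}))$ only evaluates $G$ on pointed versions of subsets of $I$; so both make sense for $|I| \le N$. With that in hand, the proof of Proposition~\ref{prop-indprim} applies verbatim, because the two natural isomorphisms $\on{Prim} \circ \on{Ind} \simeq \id$ and $\on{Ind} \circ \on{Prim} \simeq \id$ are constructed by manipulating the idempotents $e_{S,I}$ and the direct-sum decomposition of $G(\{*\} \sqcup I)$, which are internal to a fixed $I$ of size $\le N$.

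Part (ii) then amounts to a formal check: the restriction functor $\on{Res}$ simply forgets values on sets of size $> N$, and both $\on{Ind}$ and $\on{Prim}$ are defined by formulas that only reference values on subsets of the input, so the two squares commute on the nose. For (iii), I would proceed as in Lemma~\ref{lem-poly}: using (i), write $G \simeq \on{Ind}(F)$ for some $F : \fsetsurj_{\le N} \to \mc{A}$, and then check that each of the four conditions, restricted to $n < |I| \le N$, is equivalent to the statement that $F(I) = 0$ for all $I$ with $n < |I| \le N$. Each such equivalence (involving a sum of images, an intersection of kernels, an equalizer diagram with two layers of faces, or the vanishing of $\on{Prim}(G)$) is a condition purely local to sets of size $\le |I|$, so no step of the original argument requires a set of size $> N$.

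The main obstacle is really just bookkeeping: one must confirm that no intermediate object (a summand, a kernel, an image, an equalizer term, or a $\on{Prim}$-value) secretly depends on a set of size exceeding $N$. Inspection of each formula shows that this never happens, because in all cases the relevant indexing is over subsets of a fixed $I$ with $|I| \le N$. Consequently, nothing deeper than the original proofs is required, and the proposition follows by transporting those arguments into the bounded setting.
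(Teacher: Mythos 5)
Your proposal is correct and is essentially identical to the paper's proof: both observe that $\on{Ind}$, $\on{Prim}$, the natural isomorphisms of Proposition~\ref{prop-indprim}, and the equivalences of Lemma~\ref{lem-poly} are all computed by comparing a fixed $I$ with its subsets, so the arguments transport verbatim to the bounded setting.
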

\begin{proof}
	The proofs are entirely similar to the analogous statements for unbounded finite sets. This follows by noting that the definitions of Ind and Prim (\ref{def-indprim}), the proof that they are mutually inverse equivalences (\ref{prop-indprim}), and the proof of Lemma~\ref{lem-poly} are carried out by considering a fixed set $I$ and comparing it with strictly smaller subsets. 
\end{proof}

\subsubsection{} \label{def-poly-fin}

The definitions of `polynomial' and `$n$-excisive' can be applied to a functor which is defined only on $\fset_{*, \le N}$ because those definitions relate the value of the functor on a given set $I$ to its value on smaller sets (subsets of $I$), but not larger sets. 

\begin{defn}
	Let $N \ge 0$ be a fixed integer. 
	\begin{enumerate}[label=(\roman*)]
		\item Let $\mc{A}$ be an abelian category, and let $G : \fset_{*, \le N} \to \mc{A}$ be a functor. We say that $G$ is \emph{polynomial of degree $\le n$} if it satisfies the equivalent conditions of Proposition~\ref{prop-fin1}(iii) for that $n$. 
		\item Let $\mc{C}$ be an $(\infty, 1)$-category, and let $G : \fset_{*, \le N} \to \mc{C}$ be a functor. We say that $G$ is \emph{$n$-excisive} if $G(\Xi_{(I_b)_{b \in B}})$ is weakly cartesian, for any tuple $(I_b)_{b \in B}$ of sets such that their disjoint union is $\le N$. 
	\end{enumerate} 
\end{defn}

\begin{rmk}
	Any functor $G : \fset_{*, \le N} \to \mc{A}$  (resp.\  $\mc{C}$) is polynomial of degree $\le N$ (resp.\ $N$-excisive) for trivial reasons. 
\end{rmk}

\subsubsection{} We arrive at our first vanishing result for finite limits. Note that the first proof of Proposition~\ref{prop1} does not obviously apply here.

\begin{lem}\label{prop1-fin}
	Let $\mc{A}$ be a complete abelian category, and let $n, \ell, N$ be nonnegative integers with $n + 2 \le \ell \le N$. If $G : \fset_{*, \le N} \to \mc{A}$ is polynomial of degree $\le n$, then the map 
	\[
		\lim_{\fset^{\on{surj}}_{\on{n.e.}, \le \ell}} G \iota \to G(\{*\})
	\]
	is an isomorphism. 
\end{lem}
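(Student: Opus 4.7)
The plan is to imitate the second (elementary) proof of Proposition~\ref{prop1}, as flagged in the remark following it. First, by the Yoneda lemma applied to the finite limit (which is representable in any complete $\mc{A}$), reduce to the case $\mc{A} = \on{AbGrp}$. Applying Proposition~\ref{prop-fin1}(i) and (iii), write $G \simeq \on{Ind}(F)$ for some $F : \fsetsurj_{\le N} \to \on{AbGrp}$ with $F(I) = 0$ whenever $|I| > n$. An element of $\lim_{\fset^{\on{surj}}_{\on{n.e.}, \le \ell}} G\iota$ is then a compatible family $a_I \in \bigoplus_{S \subset I} F(S)$, indexed by nonempty $I$ with $|I| \le \ell$; decompose it as $a_I = (b_{S,I})_{S \subset I}$ with $b_{S,I} \in F(S)$. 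It suffices to show that $b_{S,I} = 0$ whenever $S$ is nonempty. Using the automorphisms of $I$ (which are morphisms in $\fsetsurjne_{\le \ell}$ since $|I| \le \ell$), the value $b_{S,I}$ depends only on $|S|$, so I will write $b_{m,I}$ with $m = |S|$.

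Next I would carry out downward induction on $m \ge 1$. The case $m > n$ is immediate. For fixed $m \le n$, the key input is the surjection $\xi_d : [d+1] \sra [d]$ with $\xi_d(i) = \min(i,d)$, which is a morphism in $\fsetsurjne_{\le \ell}$ whenever $d+1 \le \ell$. For such $d$, the two combinatorial observations from the proof of Proposition~\ref{prop1} on preimages of $m$-element subsets of $[d]$ under $\xi_d$ yield $b_{m,[d]} = b_{m,[d+1]}$ and, when additionally $d > m$, also $b_{m,[d]} = 2\,b_{m,[d+1]}$. Subtracting gives $b_{m,[d+1]} = 0$, hence $b_{m,[d]} = 0$, for every $d$ with $m < d$ and $d+1 \le \ell$. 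In particular $b_{m,[m+1]} = 0$, valid precisely because $m+2 \le \ell$; this is where the hypothesis $\ell \ge n+2$ is used, at the worst case $m = n$. Finally, applying $\xi_m$ (which needs only $m+1 \le \ell$) together with the second bullet gives $b_{m,[m]} = 2\,b_{m,[m+1]} = 0$. Transporting along any bijection $I \simeq [|I|]$ (a morphism in $\fsetsurjne_{\le \ell}$) then gives $b_{m,I} = 0$ for every $I$ with $m \le |I| \le \ell$, completing the inductive step.

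The only real obstacle is a careful bookkeeping of sizes: every morphism used in the argument must have source and target of cardinality $\le \ell$. The bound $\ell \ge n+2$ is sharp for this bookkeeping, since the induction at the extreme value $m = n$ requires invoking the surjection $[n+2] \sra [n+1]$ in order to force $b_{n,[n+1]} = 0$, which is in turn needed to handle $b_{n,[n]}$. Beyond this, no new conceptual ingredient is needed; in fact, as noted in Remark~\ref{further-fin2} of the paper, we do \emph{not} expect this to be deducible from a homotopical analogue, precisely because the argument relies essentially on the elementary cancellation trick rather than on a d\'evissage like the one used in Theorem~\ref{thm2}.
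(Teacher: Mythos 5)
Your proof is correct and is essentially the paper's own argument: the paper likewise copies the elementary second proof of Proposition~\ref{prop1}, writing $G=\on{Ind}(F)$, running the same downward induction on $m$ with the surjections $[d+1]\sra[d]$, and doing the same size bookkeeping that makes $\ell\ge n+2$ suffice. The only quibble is cosmetic: you swap which combinatorial observation requires $d>m$ --- in fact $b_{m,[d]}=2\,b_{m,[d+1]}$ (from $m$-subsets of $[d]$ containing $d$) holds for all $d\ge m$, while $b_{m,[d]}=b_{m,[d+1]}$ needs $d>m$ --- but since you subtract only in the range $d>m$ and invoke the correct relation $b_{m,[m]}=2\,b_{m,[m+1]}$ at $d=m$, the argument is unaffected.
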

\begin{proof}
	Since a functor is polynomial of degree $\le n$ only if it is polynomial of degree $\le \ell - 2$, we may replace $n$ by $\ell-2$ and thereby assume that $\ell = n+2$. The assumption that $n +2 \le N$ is still in force. 	
	
	We copy the second proof of Proposition~\ref{prop1}. As before, we reduce to the case in which $\mc{A} = \on{AbGrp}$, so an element of the limit is a compatible family of elements $a_I \in G(\{*\} \sqcup I)$ for $|I| \le n+2$. Writing $G = \on{Ind}(F)$ as in Proposition~\ref{prop-fin1}, the element $a_I$ corresponds to a tuple of elements $b_{S, I} \in F(S)$ for $S \subset I$. Our goal is to show that $b_{S, I} = 0$ if $S$ is nonempty. 
	
	Considering permutations of $I$ shows that $b_{S, I}$ depends only on the cardinality of $S$, so we write $b_{S, I}$ as $b_{m, I}$ when $|S| = m$. We use downward induction on $m$ to show that $b_{m, I} = 0$ for all $m \ge 1$ and $|I| \le n+2$. The claim for $m \ge n+1$ follows from our assumption that $G$ is polynomial of degree $\le n$ because this means that $F(S) = 0$ for $|S| \ge n+1$. Therefore, we may assume that $1 \le m \le n$ and that the claim has been proven for all larger $m$. For any $d \le n+1$, consider the map 
	\[
	\{*\} \sqcup [d+1] \xrightarrow{\xi} \{*\} \sqcup [d]
	\]
	which sends $i \mapsto \min(i, d)$. Since $d +1 \le n+2 \le N$, we may apply $G$ to these sets.  If $d > m$, then there exist $m$-element subsets of $[d]$ which do (resp.\ do not) contain $d$, and the same argument as in Proposition~\ref{prop1} shows that 
	\[
	b_{m, [d]} = b_{m, [d+1]} = 2b_{m, [d+1]}, 
	\]
	so $b_{m, [d+1]} = b_{m, [d]} = 0$. Take $d = m+1, \ldots, n+1$ (since $m \le n$, this list is nonempty) to conclude that $b_{m, I} = 0$ for all $I$ with $m < |I| \le n+2$. In particular, $b_{m, [m+1]} = 0$. It remains to show that $b_{m, [m]} = 0$. From the same map for $d = m$, we conclude that $b_{m, [m]} = 2b_{m, [m+1]}$, and the claim follows. 
\end{proof}

\subsubsection{} It is interesting to ask whether the (non-elementary) proof of Theorem~\ref{thm2} can be modified to give a vanishing result for finite limits. This is indeed possible: 

\begin{lem} \label{thm2-fin}
	Let $\mc{C}$ be a complete stable category. Let $G : \fset_{*, \le N} \to \mc{C}$ be a $1$-excisive functor with $G(\{*\}) = 0$. Then this map is zero:\footnote{The notation $\le N/3$ is shorthand for $\le \lfloor N/3 \rfloor$.}  
	\[
		\lim_{\fset^{\on{surj}}_{\on{n.e.}, \le N}} G \iota  \to \lim_{\fset^{\on{surj}}_{\on{n.e.}, \le N/3}} G \iota
	\]
	The map is the one obtained by applying Lemma~\ref{lims} to $\fset^{\on{surj}}_{\on{n.e.}, \le N/3} \hra \fset^{\on{surj}}_{\on{n.e.}, \le N}$. 
\end{lem}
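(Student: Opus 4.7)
The plan is to carry out a finite-size analogue of the proof of Theorem~\ref{thm2}. Set $L := \lfloor N/3 \rfloor$ and define a functor $M : \fset_{*, \le 3} \to \mc{C}$ by
\[ M(\{*\} \sqcup S) := \lim_{J \in \fset^{\on{surj}}_{\on{n.e.}, \le L}} G\iota(S \times J), \]
which is well-defined because $|S \times J| \le 3L \le N$. First, I would verify that $M$ is $1$-excisive on $\fset_{*, \le 3}$ and that $M(\{*\}) = G(\{*\}) = 0$; the $1$-excisivity reduces, by swapping the order of limits, to the $1$-excisivity of $G$ applied pointwise in $J$, which is valid because the relevant products stay within the size bound $N$. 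In the stable $(\infty, 1)$-category $\mc{C}$, these two properties together yield a canonical direct sum decomposition $M(\{*\} \sqcup S) \simeq V_M^{\oplus S}$ for $|S| \le 3$, where $V_M := M(\{*, 1\})$ is naturally identified with the target $\lim_{\fset^{\on{surj}}_{\on{n.e.}, \le N/3}} G\iota$ of the map in the statement.

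Next, for each $S \in \fset^{\on{surj}}_{\on{n.e.}, \le 3}$, the functor $S \times (-) : \fset^{\on{surj}}_{\on{n.e.}, \le L} \to \fset^{\on{surj}}_{\on{n.e.}, \le N}$ is well-defined, and applying Lemma~\ref{lims} to $G\iota : \fset^{\on{surj}}_{\on{n.e.}, \le N} \to \mc{C}$ along this functor produces a map
\[ \psi_S : \lim_{\fset^{\on{surj}}_{\on{n.e.}, \le N}} G\iota \to M(\{*\} \sqcup S) \]
depending functorially on $S$. For $S = \{1\}$, the functor $\{1\} \times (-)$ is the inclusion, so $\psi_{\{1\}}$ agrees with the restriction map of the statement; the goal is thus to show $\psi_{\{1\}} \simeq 0$.

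The hard part is to imitate the computation at the heart of the proof of Lemma~\ref{lem-induct} in this finite, stable setting. Pick two different surjections $\sigma_1, \sigma_2 : [3] \to [2]$ (say with fiber sizes $(1, 2)$ and $(2, 1)$ over the two elements of $[2]$); functoriality of $\psi$ gives $M(\sigma_1) \circ \psi_{[3]} \simeq \psi_{[2]} \simeq M(\sigma_2) \circ \psi_{[3]}$, so $(M(\sigma_1) - M(\sigma_2)) \circ \psi_{[3]} \simeq 0$. Under the direct sum decomposition, each $M(\sigma_k)$ is the ``sum over fibers'' morphism, and the difference $M(\sigma_1) - M(\sigma_2) : V_M^{\oplus 3} \to V_M^{\oplus 2}$ factors as a single coordinate projection $V_M^{\oplus 3} \to V_M$ followed by a map $V_M \to V_M^{\oplus 2}$ that admits a left inverse (e.g., a coordinate projection of $V_M^{\oplus 2}$). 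This forces the corresponding component of $\psi_{[3]}$ to be null-homotopic, and functoriality of $\psi$ under $\Sigma_3 = \on{Aut}_{\fset^{\on{surj}}_{\on{n.e.}}}([3])$ shows that the three components of $\psi_{[3]}$ are pairwise homotopic, hence all null-homotopic. Since the collapse $c : [3] \to \{1\}$ satisfies $M(c) = \text{sum} : V_M^{\oplus 3} \to V_M$ and $\psi_{\{1\}} = M(c) \circ \psi_{[3]}$, we conclude $\psi_{\{1\}} \simeq 0$. The main obstacle is carefully translating the elementary matrix manipulation of Lemma~\ref{lem-induct} into the stable $(\infty, 1)$-categorical setting, verifying that the coherences implicit in $1$-excisivity plus $M(\{*\}) = 0$ genuinely produce a decomposition that supports the componentwise vanishing argument.
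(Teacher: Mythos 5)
Your proposal is correct and follows essentially the same route as the paper: the auxiliary functor $M$ on $\fset_{*,\le 3}$, the maps $\psi_S$ from Lemma~\ref{lims} precomposed with $S \times (-)$, the direct sum decomposition forced by $1$-excisivity together with $M(\{*\})=0$, and a symmetry-based cancellation. The only difference is cosmetic: you cancel by subtracting two surjections $[3]\twoheadrightarrow[2]$ and then spread the vanishing by $\Sigma_3$-equivariance, whereas the paper first symmetrizes $\phi_{[3]}$ and $\phi_{[2]}$ and deduces $\psi_3 \simeq 2\psi_3$ along the chain $[3]\to[2]\to[1]$; both are the same homotopy-category computation.
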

\begin{proof}
	We emulate the proof of Theorem~\ref{thm2}. Since $G(\{*\}) = 0$, the $1$-excisive condition says that $G$ is monoidal, in the sense that it sends wedge products (of pointed finite sets) to direct sums (in $\mc{C}$). First, consider the composition 
	\begin{cd}
		\fset_{*, \le 3} \times \fset^{\on{surj}}_{\on{n.e.}, \le N/3} \ar[r, "\id \times \iota"] & \fset_{*, \le 3} \times \fset_{*, \le N/3} \ar[r, "\wedge"] & \fset_{*, \le N} \ar[r, "G"] & \mc{C}
	\end{cd}
	and take its limit with respect to the second coordinate to get a functor $M : \fset_{*, \le 3} \to \mc{C}$. For $S$ such that $|S|\le 3$, we have 
	\e{
		M(\{*\} \sqcup S) &\simeq \lim_{I \in \fset^{\on{surj}}_{\on{n.e.}, \le N/3}} G\iota(S \times I) \\
		&\simeq \lim_{I \in \fset^{\on{surj}}_{\on{n.e.}, \le N/3}} (G\iota(I))^{\oplus S} \\
		&\simeq \left(\lim_{I \in \fset^{\on{surj}}_{\on{n.e.}, \le N/3}} G\iota(I)\right)^{\oplus S}
	} 
	where we have used the 1-excisive property of $G$. In particular, $M(\{*\}) = 0$. In this way, we see that $M$ is 1-excisive (cf.\ Lemma~\ref{m-excisive}). In particular, this implies that the map $M(\{*\} \sqcup S) \to M(\{*, 1\})$ corresponding to the unique map $S \sra \{1\}$ identifies with the map 
	\begin{equation}\label{diamond}
		\left(\lim_{I \in \fset^{\on{surj}}_{\on{n.e.}, \le N/3}} G\iota(I)\right)^{\oplus S} \xrightarrow{\on{sum}} \lim_{I \in \fset^{\on{surj}}_{\on{n.e.}, \le N/3}} G\iota(I)\tag{$\diamondsuit$}
	\end{equation}
	
	To ease the notation, we write $\lim_{\le N} G\iota := \lim_{\fset^{\on{surj}}_{\on{n.e.}, \le N}} G \iota$. Applying Lemma~\ref{lims} to the functor $G\iota$ yields a functor 
	\[
		\fun(\fset^{\on{surj}}_{\on{n.e.}, \le N/3}, \fset^{\on{surj}}_{\on{n.e.}, \le N}) \to \mc{C}_{(\lim_{\le N} G\iota) /}
	\]
	Precomposing by the functor $\fset^{\on{surj}}_{\on{n.e.}, \le 3} \to \fun(\fset^{\on{surj}}_{\on{n.e.}, \le N/3}, \fset^{\on{surj}}_{\on{n.e.}, \le N})$ which sends $S \mapsto S \times (-)$, we obtain a functor 
	\[
		\fset^{\on{surj}}_{\on{n.e.}, \le 3} \to \mc{C}_{(\lim_{\le N} G\iota) /}
	\]
	which sends a finite set $S$ with $|S| \le 3$ to a map 
	\[
		\lim_{\le N} G\iota \xrightarrow{\phi_S} M(\{*\} \sqcup S),
	\]
	and this is functorial in $S$ (cf.\ Lemma~\ref{lem-act}). 
	
	We want to prove that the map $\lim_{\le N} G\iota \xrightarrow{\phi_{[1]}} M(\{*, 1\}) \simeq \lim_{\le N/3} G \iota$ is homotopic to zero. We apply the functor of the previous paragraph to the sequence of maps 
	\[
		[3] \xrightarrow{\xi_3} [2] \xrightarrow{\xi_2} [1]
	\]
	where $\xi_3(i) = \min(i, 2)$. The resulting diagram is 
	\begin{cd}
		\lim_{\le N} G\iota \ar[r, "\phi_{[3]}"] \ar[rd, swap, "\phi_{[2]}"]  \ar[rdd, bend right = 20, swap, "\phi_{[1]}"]  & (\lim_{\le N/3} G \iota)^{\oplus 3} \ar[d, " M(\xi_3)"] \\
		& (\lim_{\le N/3} G \iota)^{\oplus 2} \ar[d, "M(\xi_2)"] \\
		& \lim_{\le N/3} G \iota
	\end{cd}
	From the reasoning of (\ref{diamond}), we know that $M(\xi_3) = \id \oplus \on{sum}$ and $M(\xi_2) = \on{sum}$.  By considering permutations on the set $S = [3]$, we conclude that $\phi_{[3]} = (\psi_3, \psi_3, \psi_3)$ for some map $\psi_3$. Therefore $\phi_{[2]} = (2\psi_3, \psi_3)$. On the other hand, considering permutations on the set $S = [2]$ shows that $\phi_{[2]} = (\psi_2, \psi_2)$ for some map $\psi_2$. This yields a homotopy from $\psi_3$ to $2\psi_3$, which shows that $\psi_3$ is homotopic to zero. Thus, $\psi_2$ is also homotopic to zero. Thus $\phi_{[1]} = 2\psi_{2}$ is homotopic to zero, as desired. 
\end{proof}

\subsubsection{Remark} \label{further-fin}

We explain a sense in which the result of Lemma~\ref{prop1-fin} is the best possible. In that lemma, the main hypothesis is that degree $n$ of the functor is two less than the bound $\ell$ for the finite limit. Therefore, it is natural to ask if any vanishing result is possible for $\ell \le n$. (We shall not discuss the case $n = \ell-1$.) For sake of definiteness, we restrict attention to the following template:
\begin{itemize}
	\item[(Q1)] Fix positive integers $\ell'\le \ell \le n \le N$. For any abelian category $\mc{A}$, if $G : \fset_{*, \le N} \to \mc{A}$ is a polynomial functor of degree $\le n$ with $G(\{*\}) = 0$, then the map 
	\[
		\lim_{\fset^{\on{surj}}_{\on{n.e.}, \le \ell}} G \iota \to \lim_{\fset^{\on{surj}}_{\on{n.e.}, \le \ell'}} G \iota
	\]
	is equal to zero. 
\end{itemize}
Our question, then, is whether there exist $\ell', \ell, n, N$ such that (Q1) is true. 

The main observation is that of Remark~\ref{def-poly-fin}: every functor $\fset_{*, \le \ell} \to \mc{A}$ is polynomial of degree $\ell$. This can be amplified to prove the following: 

\begin{claim}
	For fixed $\ell', \ell, n, N$, the statement (Q1) is equivalent to the following statement (for the same $\ell', \ell$): 
	\begin{itemize}
		\item[(Q2)] Consider positive integers $\ell' \le \ell$. For any abelian category $\mc{A}$ and any functor $G : \fset_{*, \le \ell} \to \mc{A}$ with $G(\{*\}) = 0$, the map 
		\[
		\lim_{\fset^{\on{surj}}_{\on{n.e.}, \le \ell}} G\iota \to \lim_{\fset^{\on{surj}}_{\on{n.e.}, \le \ell'}} G\iota
		\]
		is equal to zero. 
	\end{itemize}
\end{claim}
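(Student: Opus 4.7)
The plan is to prove the equivalence by showing both implications separately, leveraging the equivalence $\on{Ind} : \fun(\fsetsurj_{\le M}, \mc{A}) \rightleftarrows \fun(\fset_{*, \le M}, \mc{A}) : \on{Prim}$ of Proposition~\ref{prop-fin1}(i) applied with varying bounds $M$. The central construction will be an ``extension by zero'' operation on the $\on{Prim}$-side.

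For (Q2)$\Rightarrow$(Q1), I would observe that both limits appearing in (Q1) depend only on the restriction of $G$ to $\fset_{*, \le \ell}$, since every object of $\fset^{\on{surj}}_{\on{n.e.}, \le \ell}$ (and hence of the smaller $\fset^{\on{surj}}_{\on{n.e.}, \le \ell'}$) is sent by $\iota$ into $\fset_{*, \le \ell}$. Since $G(\{*\}) = 0$ by hypothesis, the restriction $G|_{\fset_{*,\le\ell}}$ satisfies the hypothesis of (Q2), which then gives the desired vanishing. Notice this direction does not use the polynomial-degree hypothesis at all; it just uses that $\ell \le N$.

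For the nontrivial direction (Q1)$\Rightarrow$(Q2), I would construct a polynomial extension of a given $G$. Fix $G : \fset_{*, \le \ell} \to \mc{A}$ with $G(\{*\}) = 0$. By Proposition~\ref{prop-fin1}(i), there exists $F : \fsetsurj_{\le \ell} \to \mc{A}$ with $\on{Ind}(F) \simeq G$. Extend $F$ to a functor $\wt{F} : \fsetsurj_{\le N} \to \mc{A}$ by declaring $\wt{F}(I) := F(I)$ if $|I| \le \ell$ and $\wt{F}(I) := 0$ otherwise (all morphisms involving the newly adjoined objects must be zero, which is consistent since surjections between nonempty sets cannot decrease cardinality). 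Then set $G' := \on{Ind}(\wt{F}) : \fset_{*, \le N} \to \mc{A}$.

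I would then check two properties of $G'$. First, $G'$ is polynomial of degree $\le n$: since $\wt{F}$ vanishes on sets of size $> \ell$ and $\ell \le n$, the condition in Proposition~\ref{prop-fin1}(iii) corresponding to Lemma~\ref{lem-poly}(iv) applies. Second, for any $I$ with $|I| \le \ell$ we have
\[
G'(\{*\} \sqcup I) = \bigoplus_{S \subset I} \wt{F}(S) = \bigoplus_{S \subset I} F(S) = G(\{*\} \sqcup I),
\]
and similarly for morphisms, so the restriction $G'|_{\fset_{*, \le \ell}}$ agrees with $G$. In particular $G'(\{*\}) = 0$, and the two limits of $G'\iota$ appearing in (Q1) coincide with the corresponding two limits of $G\iota$ appearing in (Q2). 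Thus (Q1) applied to $G'$ yields the conclusion of (Q2). The only real obstacle is the bookkeeping of restriction/extension between the bounded categories, which is entirely formal once the $\on{Ind}$/$\on{Prim}$ framework of Proposition~\ref{prop-fin1} is in hand.
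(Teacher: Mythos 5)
Your proof is correct and takes essentially the same route as the paper: the forward direction is the easy restriction observation, and for (Q1)$\Rightarrow$(Q2) you extend $F = \on{Prim}(G)$ by zero to $\wt{F} : \fsetsurj_{\le N} \to \mc{A}$ and apply (Q1) to $\on{Ind}(\wt{F})$, which restricts back to $G$. One wording slip: the extension by zero is consistent because surjections cannot \emph{increase} cardinality (no set of size $\le \ell$ surjects onto a set of size $> \ell$, so the potentially problematic factorizations never occur), not because they cannot decrease it.
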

\begin{proof}
	We clearly have (Q2) $\Rightarrow$ (Q1), so we focus on proving the converse. Assume (Q1) holds. Let $G : \fset_{*, \le \ell} \to \mc{A}$ be any functor, which we may express as $G = \on{Ind}(F)$ for some functor $F : \fset^{\on{surj}}_{\le \ell} \to \mc{A}$. Let $\wt{F} : \fset^{\on{surj}}_{\le N} \to \mc{A}$ be the extension of $F$ which sends $I \mapsto 0$ whenever $|I| > \ell$. Then $\on{Ind}(\wt{F}) : \fset_{*, \le N} \to \mc{A}$ extends $G$. Furthermore, $\on{Ind}(\wt{F})$ is polynomial of degree $\le \ell$ which is $\le n$, so we may apply (Q1) to conclude that the map
	\[
		\lim_{\fset^{\on{surj}}_{\on{n.e.}, \le \ell}} \on{Ind}(\wt{F})\iota \to \lim_{\fset^{\on{surj}}_{\on{n.e.}, \le \ell'}} \on{Ind}(\wt{F})\iota
	\]
	is zero. But $\on{Ind}(\wt{F})$ restricted to $\fset_{*, \le \ell}$ coincides with $G$, and the claim follows. 
\end{proof}

The example of Remark~\ref{counter} is easily modified to give a counterexample to (Q2) for any $\ell', \ell$. Indeed, one ensures $G(\{*\}) = 0$ by taking $G = \on{Ind}(F)$ where $F : \fset^{\on{surj}} \to \on{AbGrp}$ is given by $\emptyset \mapsto 0$ and $F|_{\fset^{\on{surj}}_{\on{n.e.}}} = \ul{\BZ}$. The element of $\lim G\iota$ constructed in the remark maps to a nonzero element of $\lim_{\le m} G\iota$ for every integer $m \ge 1$. Taking $m = \ell, \ell'$, we obtain a commutative diagram 
\begin{cd}
	\lim G \iota \ar[d] \ar[rd] \\
	\lim_{\le \ell} G \iota \ar[r] & \lim_{\le \ell'} G\iota
\end{cd}
which implies that the horizontal map cannot be zero.

By the claim, we also conclude that (Q1) is not true for any positive integers $\ell' \le \ell \le n \le N$. Therefore, whenever $\ell \le n$, it is not reasonable to expect any general vanishing result for limits over $\fset^{\on{surj}}_{\on{n.e.}, \le \ell}$ of arbitrary polynomial functors of degree $\le n$. In particular, the vanishing statement considered in Remark~\ref{exercise} does not follow from a general theorem; the fact that it is true when $R$ is a ring over a characteristic zero field is a phenomenon specific to that functor.

\subsubsection{Remark}\label{further-fin2} 

One could ask whether there is an analogue of Theorem~\ref{thm2} for $n$-excisive functors with $n \ge 2$. The proof of Theorem~\ref{thm2} proceeds by reducing to the case of functors landing in an abelian category and then carrying out an elementary argument which shows that each successive homogeneous piece of a polynomial functor is zero. This is a poor man's version of the method, employed in Goodwillie calculus, of understanding a functor by studying its homogeneous pieces. If one could prove Theorem~\ref{thm2} without resorting to $t$-structures, then it is likely that one could answer this question affirmatively. 

\section{Interlude: Sheaves on the Ran space} \label{ran-def} 

In this short section, we review several equivalent definitions of the Ran space which appear in the literature,  discuss the relation between various definitions of `structure on the Ran space,' and explain what conventions we use in this paper. The definitions which we actually use appear in~\ref{prestacks} and \ref{defs}. 

\subsection{Definitions of $\Ran(X)$} \label{sec-defs} 

\subsubsection{Prestacks} \label{prestacks}

Let $\on{Sch}^{\on{aff}, \on{ft}}_{k}$ denote the category of affine finite-type schemes over $k$. This category is essentially small. There are at least four notions of `prestack' which appear in the literature: 
\begin{enumerate}[label=(\roman*)]
	\item Prestacks valued in sets: $\fun(\on{Sch}^{\on{aff, ft, op}}_{k}, \on{Set})$
	\item Prestacks valued in groupoids: $\fun(\on{Sch}^{\on{aff, ft, op}}_{k}, \on{Grpd})$
	\item Prestacks valued in $\infty$-groupoids: $\fun(\on{Sch}^{\on{aff, ft, op}}_{k}, \infty\!\on{-Grpd})$
	\item Prestacks valued in $(\infty, 1)$-categories: $\fun(\on{Sch}^{\on{aff, ft, op}}_{k}, (\infty, 1)\!\on{-Cat})$ 
\end{enumerate}
Here (iv) means the $(\infty, 2)$-category of functors (i.e. pseudofunctors) from $\on{Sch}^{\on{aff, ft}}_k$ to the $(\infty, 2)$-category of $(\infty, 1)$-categories. The datum of such a functor is equivalent to the datum of a functor from $\on{Sch}^{\on{aff, ft}}_k$ to the $(\infty, 1)$-category of $(\infty, 1)$-categories (obtained by discarding all natural transformations which are not natural isomorphisms). 

In this paper, we shall denote (iii) by $\on{PreStk}_k$ and call its objects `prestacks.' Point (i) is the usual notion of `presheaf on the big Zariski site of finite type schemes over $k$' which is used in~\cite[0.3.1]{z}. Point (iii) is the notion of `prestack' used in~\cite[0.4.1]{g}. And point (iv) is the notion of `prestack' used in~\cite[2.3.8]{gl}. Note that there are canonical functors (i) $\to$ (ii) $\to$ (iii) $\to$ (iv). 

Let $X^{(-)} : \fsetsurjneop \to \on{Sch}^{\on{aff, ft}}_k$ be the functor which sends $I \mapsto X^I$ and which sends a map $\xi : I \sra J$ to the generalized diagonal map $\Delta_\xi : X^J \hra X^I$ whose $i$-th coordinate is the $\xi(i)$-th coordinate projection of $X^J$, for all $i \in I$. 

One could reasonably define $\Ran^{(a)}(X) := \colim_{\fsetsurjneop} X^{(-)}$ where the colimit is evaluated in the category $(a)$ for any choice of $a = $ i, ii, iii. However, we shall see in~\ref{ran-set} that the versions for (i), (ii), and (iii) are isomorphic as prestacks valued in $\infty$-groupoids. In view of this, we define $\Ran(X)$ to be any of these three equivalent prestacks. On the other hand, see~\ref{iv} for a discussion about evaluating the colimit in the category (iv). 

\subsubsection{} \label{ran-set} 
The colimit definition of $\Ran(X)$ does not depend on whether the colimit is evaluated in the category (i), (ii), or (iii). This follows from the next lemma. 
\begin{lem}
	The colimit $\colim_{\fsetsurjneop} X^{(-)}$ evaluated in $\fun(\on{Sch}^{\on{aff, ft, op}}_{k}, \infty\!\on{-Grpd})$ is a functor whose values in $\infty\!\on{-Grpd}$ are equivalent to sets. 
\end{lem}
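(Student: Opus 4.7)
The plan is to exploit the fact that colimits in the $\infty$-category $\fun(\on{Sch}^{\on{aff, ft, op}}_{k}, \infty\!\on{-Grpd})$ are computed pointwise. Thus, fixing $T \in \on{Sch}^{\on{aff, ft}}_{k}$ and setting $S := \on{Maps}(T, X)$, it suffices to show that the colimit $\colim_{I \in \fsetsurjneop} S^I$, computed in $\infty\!\on{-Grpd}$, is equivalent to a set. In fact, I expect it to be identified with the discrete set $R$ of nonempty finite subsets of $S$.

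To compute this homotopy colimit of a set-valued functor, I would use the standard identification with the nerve of the Grothendieck construction. Let $\mc{G}$ denote the $1$-category with objects $(I, f : I \to S)$, where $I$ is a nonempty finite set, and with a morphism $(I_1, f_1) \to (I_2, f_2)$ being a surjection $\pi : I_2 \sra I_1$ such that $f_1 \circ \pi = f_2$. (Note the direction: a morphism $I_1 \to I_2$ in $\fsetsurjneop$ is a surjection $I_2 \sra I_1$, and the induced map $S^{I_1} \to S^{I_2}$ is precomposition by $\pi$.) The standard result then gives $\colim_{I \in \fsetsurjneop} S^I \simeq |N(\mc{G})|$ in $\infty\!\on{-Grpd}$.

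Next, introduce the functor $\Phi : \mc{G} \to R$ sending $(I, f) \mapsto \Im(f)$, where $R$ is regarded as a discrete category. Because $R$ is discrete, $|N(\mc{G})|$ decomposes as the coproduct, over $T \in R$, of the classifying spaces of the fibers $\Phi^{-1}(T) \subset \mc{G}$. It therefore suffices to show that each fiber has contractible nerve.

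The key observation is that each fiber $\Phi^{-1}(T)$ admits an initial object, namely the pair $(T, \on{id}_T : T \hookrightarrow S)$. Indeed, for any object $(I, f : I \sra T) \in \Phi^{-1}(T)$, a morphism $(T, \on{id}_T) \to (I, f)$ is precisely a surjection $\pi : I \sra T$ satisfying $\on{id}_T \circ \pi = f$, which forces $\pi = f$ uniquely. Since a category with an initial object has contractible nerve, this completes the argument. The only step that requires care is keeping the variances straight in the Grothendieck construction (since $\fsetsurjneop$ is the opposite category); once that is set up correctly, the initial-object verification is routine.
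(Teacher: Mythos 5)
Your proof is correct. Both you and the paper reduce to the pointwise statement, reinterpret the set-valued colimit as the classifying space of the category of elements of $I \mapsto S^I$ (the paper calls this Thomason's theorem; you use the equivalent Grothendieck-construction formulation), and then reduce to a contractibility statement whose geometric content is that the image of a map $I \to S$ is an invariant. The remaining details differ: the paper adjoins a basepoint to replace $\fsetsurjneop$ by $\fsetsurjop$ (so that the whole power set $\on{Pow}(S)$ becomes available), then shows the inclusion of the discrete category $\on{Pow}(S)$ is final by verifying Lurie's criterion via terminal objects in comma categories. You instead stay with nonempty sets and observe that ``take the image'' defines a functor $\Phi$ from the element category to the discrete set $R$; this splits the nerve as a coproduct over $R$, and each fiber is contractible because it has an initial object $(T, T \hra S)$. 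Your version is a bit more elementary and self-contained — it avoids both the adjoin-a-point maneuver and the appeal to a finality criterion — while the paper's version is perhaps slightly more uniform in its use of standard $\infty$-categorical machinery. Either reaches the conclusion that the colimit is the discrete set of nonempty finite subsets of $S$.

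One point worth making explicit, which you handle correctly but only implicitly: $\Phi$ is constant on morphisms because in your convention a morphism $(I_1, f_1) \to (I_2, f_2)$ is a surjection $\pi : I_2 \sra I_1$ with $f_1 \circ \pi = f_2$, so $\Im(f_2) = f_1(\pi(I_2)) = f_1(I_1) = \Im(f_1)$ precisely because $\pi$ is surjective. Had the indexing category allowed non-surjective maps, this would fail and the decomposition over $R$ would not exist.
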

\begin{proof}
	Since colimits in functor categories are computed pointwise, we want to show that
	\[
		\colim_{I \in \fsetsurjneop} \on{Maps}_{\on{Sch}_k}(Y, X^I), 
	\]
	when evaluated in $\infty$-Grpd, is equivalent to a set. To see this, write $S = \on{Maps}_{\on{Sch}_k}(Y, X)$ and note that 
	\e{
		\{\on{pt}\} \sqcup \colim_{I \in \fsetsurjneop} \on{Maps}_{\on{Sch}_k}(Y, X^I) &\simeq \{\on{pt}\} \sqcup \colim_{I \in \fsetsurjneop} S^I \\
		&\simeq \colim_{I \in \fsetsurjop} S^I. 
	} 
	Let $\mc{C}$ be the category whose objects are pairs $(I, \phi)$ where $\phi : I \to S$ is any map (not necessarily surjective) and whose morphisms from $(I_1, \phi_1)$ to $(I_2, \phi_2)$ are commutative diagrams 
	\begin{cd}
		I_1 \ar[rr, twoheadleftarrow] \ar[rd, swap, "\phi_1"] & & I_2 \ar[ld, "\phi_1"] \\
		& S
	\end{cd}
	where the horizontal map is surjective.\footnote{Remark: $\mc{C}$ is the fiber over $Y$ of the category defined in~\cite[Def.\ 2.4.9]{gl} and denoted `$\Ran(X)$' therein.} The forgetful functor $F : \mc{C} \to \fsetsurjop$ which sends $(I, \phi) \mapsto I$ is a cocartesian fibration whose fiber $\mc{C}|_I$ over $I \in \fsetsurjop$ identifies with the set $S^I$. The previous colimit rewrites as follows: 
	\e{
		\colim_{I \in \fsetsurjop} S^I &\simeq \colim_{I \in \fsetsurjop} \colim_{\mc{C}|_I} \on{pt} \\
		&\simeq \colim_{\mc{C}} \on{pt}, 
	} 
	where the last line uses Thomason's theorem for colimits, see~\cite[Prop.\ 26.5]{w} for example. 
	
	Next, view the power set $\on{Pow}(S)$ as a discrete category and define the functor $G : \on{Pow}(S) \to \mc{C}$ by sending $S_0 \subset S$ to the pair $(S_0, S_0 \hra S)$. We claim that $G$ is final, in the $(\infty, 1)$-categorical sense. By~\cite[Thm.\ 4.1.3.1]{htt}, it suffices to show that, for any $(I, \phi) \in \mc{C}$, the comma category $\on{Pow}(S)_{(I, \phi)/}$ is weakly contractible. But this category has a terminal object corresponding to $\phi(I) \in \on{Pow}(S)$, so the claim follows. 
	
	Now we conclude that $\colim_{\mc{C}} \on{pt} \simeq \on{Pow}(S)$, so that the original colimit is isomorphic to $\on{Pow}(S) \setminus \{\emptyset\}$, as desired. 
\end{proof}

\subsubsection{A second definition of $\Ran(X)$} \label{second-def} 
The following concrete description, which appears as~\cite[Def.\ 3.3.1]{z}, follows from the previous lemma: 

\begin{cor}
	$\Ran(X)$ is equivalent to the Set-valued presheaf on $\on{Sch}^{\on{aff, ft}}_{k}$ which sends $S \mapsto \on{Pow}(\on{Maps}_{\on{Sch}_k}(S, X)) \setminus \{\emptyset\}$. 
\end{cor}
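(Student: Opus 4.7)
The plan is to unwind the computation already carried out in the proof of the previous lemma. Since colimits in the functor category $\on{PreStk}_k = \fun(\on{Sch}^{\on{aff, ft, op}}_{k}, \infty\!\on{-Grpd})$ are computed pointwise, the presheaf $\Ran(X) = \colim_{\fsetsurjneop} X^{(-)}$ sends a test scheme $Y \in \on{Sch}^{\on{aff, ft}}_k$ to the $\infty$-groupoid $\colim_{I \in \fsetsurjneop} \on{Maps}_{\on{Sch}_k}(Y, X^I) \simeq \colim_{I \in \fsetsurjneop} S^I$, where $S := \on{Maps}_{\on{Sch}_k}(Y, X)$. The previous lemma guarantees that this is equivalent to a set, so it only remains to identify that set.

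The identification was essentially accomplished en route to the lemma: one adjoins a basepoint to rewrite the indexing category as $\fsetsurjop$, applies Thomason's theorem to replace the colimit of constant point diagrams over the cocartesian fibration $\mc{C} \to \fsetsurjop$ (with fiber $S^I$ over $I$) by $\colim_{\mc{C}} \on{pt}$, and exhibits a cofinal functor $\on{Pow}(S) \to \mc{C}$ sending $S_0 \subset S$ to $(S_0, S_0 \hookrightarrow S)$. Thus $\{\on{pt}\} \sqcup \colim_{I \in \fsetsurjneop} S^I \simeq \on{Pow}(S)$, and removing the adjoined basepoint yields $\on{Pow}(S) \setminus \{\emptyset\}$, as required.

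To conclude, I would note that the assignment $Y \mapsto \on{Pow}(\on{Maps}(Y, X)) \setminus \{\emptyset\}$ is manifestly functorial in $Y$ (a morphism $Y' \to Y$ induces $\on{Maps}(Y, X) \to \on{Maps}(Y', X)$ and hence a map on nonempty power sets by pushforward of subsets), and the isomorphism constructed above is natural in $Y$ because every step (smashing with a basepoint, applying Thomason, the cofinal functor from $\on{Pow}(S)$) is natural in $S$. Since the target of the map is a set-valued presheaf and matches the pointwise values of $\Ran(X)$, the two presheaves agree.

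There is no real obstacle here beyond bookkeeping: the substance of the corollary is already in the proof of the lemma, and the only thing to check is that the computation from the lemma patches together into a natural transformation of presheaves. This is automatic since the whole argument was carried out functorially in $Y$.
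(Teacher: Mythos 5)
Your proposal is correct and follows the same route as the paper, which simply notes that the corollary follows from the previous lemma: the pointwise identification of the colimit with $\on{Pow}(\on{Maps}(Y,X))\setminus\{\emptyset\}$ was already established there, and the remaining naturality in $Y$ is the routine bookkeeping you describe.
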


This is also the definition of the `unlabeled Ran space' $\Ran^u(X)$ in \cite[Def.\ 2.4.2]{gl}, so $\Ran^u(X)$ in that paper coincides with the $\Ran(X)$ considered here. 

\subsubsection{What about version (iv)?} \label{iv} 

When considering prestacks valued in $(\infty, 1)$-categories, which is category (iv) defined in~\ref{prestacks}, one could consider two versions of $\Ran(X)$: 
\begin{itemize}
	\item Define $\Ran^{(\on{iv, strict})}(X) := \colim_{\fsetsurjneop} X^{(-)}$ where the colimit is the \emph{strict} colimit in the $(\infty, 2)$-category (iv). The strict colimit is equivalent to the ordinary colimit in the $(\infty, 1)$-category obtained from (iv) by discarding the noninvertible 2-morphisms. 
	\item Define $\Ran^{(\on{iv, lax})}(X) := \colim_{\fsetsurjneop}^{\on{lax}} X^{(-)}$ where the colimit is the \emph{lax} colimit in the $(\infty, 2)$-category (iv). 
\end{itemize}
There is a map $\Phi: \Ran^{(\on{iv, lax})}(X) \to \Ran^{(\on{iv, strict})}(X)$. 

\begin{lem}
	The prestack $\Ran^{(\on{iv, strict})}$ is equivalent to the image of $\Ran(X)$ under $\infty\!\on{-Grpd} \hra (\infty, 1)\!\on{-Cat}$. 
\end{lem}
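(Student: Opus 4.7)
The plan is to reduce the statement to the previous lemma by computing the strict colimit pointwise and exploiting the fact that the inclusion $j: \infty\!\on{-Grpd} \hookrightarrow (\infty, 1)\!\on{-Cat}$ preserves colimits.

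First, per the parenthetical in the definition, $\Ran^{(\on{iv, strict})}(X)$ is the colimit of $X^{(-)}$ in the $(\infty, 1)$-category obtained from (iv) by discarding noninvertible 2-morphisms, i.e.\ the functor $(\infty, 1)$-category $\fun(\on{Sch}^{\on{aff, ft, op}}_{k}, (\infty, 1)\!\on{-Cat})$ with pseudonatural equivalences as morphisms. Colimits in such functor categories are computed pointwise, so for every test scheme $Y$,
\[
\Ran^{(\on{iv, strict})}(X)(Y) \;\simeq\; \colim_{I \in \fsetsurjneop} \on{Maps}_{\on{Sch}_k}(Y, X^I),
\]
the colimit now being taken in $(\infty, 1)\!\on{-Cat}$. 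Each term $\on{Maps}_{\on{Sch}_k}(Y, X^I)$ is a set, viewed as a discrete $(\infty, 1)$-category (equivalently, a $0$-truncated $\infty$-groupoid).

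The key input is that $j$ preserves colimits, which I would verify by exhibiting a right adjoint: the core functor $\on{core}: (\infty, 1)\!\on{-Cat} \to \infty\!\on{-Grpd}$. Indeed, any functor out of an $\infty$-groupoid must send invertible morphisms to invertible morphisms, and therefore factors through the maximal sub-$\infty$-groupoid of its target; this yields a natural equivalence $\on{Maps}(j(G), \mc{C}) \simeq \on{Maps}(G, \on{core}(\mc{C}))$ for every $\infty$-groupoid $G$ and $(\infty, 1)$-category $\mc{C}$. Being a left adjoint, $j$ preserves all small colimits. Consequently, the colimit displayed above, computed in $(\infty, 1)\!\on{-Cat}$, agrees with the analogous colimit computed in $\infty\!\on{-Grpd}$, which by the preceding lemma is the set $\on{Pow}(\on{Maps}_{\on{Sch}_k}(Y, X)) \setminus \{\emptyset\} \simeq \Ran(X)(Y)$. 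Assembling these identifications pointwise in $Y$ produces the desired equivalence of prestacks.

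I do not anticipate a serious obstacle. The only point requiring care is the existence of $\on{core}$ as a right adjoint to $j$ in the $(\infty, 1)$-categorical setting, but this is standard and amounts to unpacking the adjunction formula above.
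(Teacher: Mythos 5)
Your argument is correct and is essentially the paper's own proof: both rest on the adjunction $j \dashv \on{core}$ (the inclusion $\infty\!\on{-Grpd} \hra (\infty,1)\!\on{-Cat}'$, with noninvertible $2$-morphisms discarded, is left adjoint to the core functor), so $j$ preserves colimits and the strict colimit agrees with the image of $\Ran(X)$. Your extra pointwise reduction and the appeal to the explicit description $\on{Pow}(S)\setminus\{\emptyset\}$ are harmless but not needed once colimit preservation is in hand.
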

\begin{proof}
	This follows from the fact that $\infty\!\on{-Grpd} \hra (\infty, 1)\!\on{-Cat}'$ is a left adjoint and therefore preserves colimits. Here $(\infty, 1)\!\on{-Cat}'$ is the $(\infty, 1)$-category obtained from $(\infty, 1)\!\on{-Cat}$ by discarding the noninvertible 2-morphisms. The right adjoint is given by sending an $(\infty, 1)$-category $\mc{C}$ to the $\infty$-groupoid obtained by discarding all noninvertible 1-morphisms of $\mc{C}$. 
\end{proof}

However, the map $\Phi$ is not an equivalence. In fact, $\Ran^{(\on{iv, lax})}(X)$ coincides with the prestack defined in~\cite[Def.\ 2.4.9]{gl} for which they use the notation `$\Ran(X)$.' We emphasize that the prestack $\Ran(X)$ studied in the present paper is what they denote `$\Ran^u(X)$.' 

\subsection{Sheaves on $\Ran(X)$} \label{sec-sheaves} 

\subsubsection{} \label{structures} 

Let $\mc{C}$ be any complete $(\infty, 1)$-category, and let $F : \on{Sch}^{\on{aff, ft, op}}_{k} \to \mc{C}$ be any functor. To extend the definition of $F$ to $\on{PreStk}_k$, one takes the right Kan extension along the Yoneda embedding $i : \on{Sch}^{\on{aff, ft, op}}_{k} \hra \on{PreStk}_k$. 

This allows us to define quasicoherent sheaves (and functions, $\D$-modules, etc.)\ on $\Ran(X)$. For definiteness, we take up the problem of defining the derived category of quasicoherent sheaves on $\Ran(X)$, which we shall denote $\on{D}_{\QCoh}(\Ran(X))$. 

Let $\mc{C}$ be the $(\infty, 1)$-category of presentable stable $(\infty, 1)$-categories, and let $F$ be the functor $S \mapsto \on{D}(\QCoh(S))$, with functoriality given by (derived) $*$-pullbacks. Then we define $\on{D}_{\QCoh}(\Ran(X)) := \on{RKE}_i(F)(\Ran(X))$. By the construction of right Kan extensions via limits, and the construction of limits of categories as sections of the corresponding cartesian fibration which send arrows to cartesian arrows (see~\cite[Cor.\ 3.3.3.2]{htt}), we arrive at a more concrete description of the objects of this category: 
\begin{itemize}
	\item For every map $S \to \Ran(X)$ where $S \in \on{Sch}^{\on{aff, ft, op}}_k$, we have a complex $\mc{F}_S \in \on{D}(\QCoh(S))$. 
	\item For every diagram of maps 
	\begin{cd}
		S_1 \ar[rr, "f"] \ar[rd] & & S_2 \ar[ld] \\
		& \Ran(X)
	\end{cd}
	we have an isomorphism $\sigma_f : f^* \mc{F}_{S_2} \xrightarrow{\sim} \mc{F}_{S_1}$. 
	\item These isomorphisms $\sigma_f$ satisfy (higher) cocycle conditions. 
\end{itemize}

One could alternatively define $\on{D}_{\QCoh}(\Ran(X))$ to be the limit 
\[
	\lim_{I \in \fsetsurjne} \on{D}(\QCoh(X^I)). 
\]
Concretely, an object of this category is described as follows: 
\begin{itemize}
	\item For every nonempty finite set $I$, we have a complex $\mc{F}_I \in \on{D}(\QCoh(X^I))$. 
	\item For every surjective map $f : I \sra J$, giving rise to the generalized diagonal map $\Delta_f : X^J \hra X^I$, we have an isomorphism $\sigma_f : (\Delta_f)^*\mc{F}_I \xrightarrow{\sim} \mc{F}_J$. 
	\item These isomorphisms $\sigma_f$ satisfy (higher) cocycle conditions. 
\end{itemize}
(For example, the definition of `category of $\D$-modules on $\Ran(X)$' used in~\cite[Sect.\ 2.1]{fg} follows this pattern.) Since it is this second definition that will be directly used in this paper, we provide an explanation of why it is equivalent to the first one:

\subsubsection{} \label{explain} 
Let $\mc{C}, F, i$ be as before. Define a functor $G$ by the composition 
\[
G : \mc{C} \xrightarrow{\on{Yoneda}} \fun(\mc{C}, \infty\!\on{-Grpd}) \xrightarrow{(-)\, \circ F} \on{PreStk}_k. 
\]
\begin{lem}
	The functors $G, \on{RKE}_i(F)$ are an adjoint pair. 
\end{lem}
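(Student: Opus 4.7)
The plan is to establish the natural equivalence
\[
\on{Maps}_{\on{PreStk}_k}(Y, G(c)) \simeq \on{Maps}_{\mc{C}}(c, \on{RKE}_i(F)(Y))
\]
for $Y \in \on{PreStk}_k$ and $c \in \mc{C}$, which is the hom-space formulation of the asserted adjunction. Unwinding the composition defining $G$, we have $G(c)(S) \simeq \on{Maps}_{\mc{C}}(c, F(S))$ for each $S \in \on{Sch}^{\on{aff, ft}}_k$.

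First, I would verify the equivalence on representable prestacks $Y = i(S_0)$. On the left, the Yoneda lemma gives $\on{Maps}_{\on{PreStk}_k}(i(S_0), G(c)) \simeq G(c)(S_0) \simeq \on{Maps}_{\mc{C}}(c, F(S_0))$. On the right, right Kan extension along the fully faithful embedding $i$ recovers $F$ on representables---equivalently, $(S_0, \id_{i(S_0)})$ is terminal in the indexing comma category $(i \downarrow i(S_0))$---so $\on{RKE}_i(F)(i(S_0)) \simeq F(S_0)$, and thus $\on{Maps}_{\mc{C}}(c, \on{RKE}_i(F)(i(S_0))) \simeq \on{Maps}_{\mc{C}}(c, F(S_0))$. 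The two sides agree naturally in $S_0$ and $c$.

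Second, I would bootstrap to arbitrary $Y$ via the density of representables: any prestack is canonically the colimit of its representables, $Y \simeq \colim_{(S, i(S) \to Y) \in (i \downarrow Y)} i(S)$ in $\on{PreStk}_k$. Applying this presentation, the left-hand side becomes
\[
\on{Maps}_{\on{PreStk}_k}(Y, G(c)) \simeq \lim_{(i(S) \to Y)} \on{Maps}_{\on{PreStk}_k}(i(S), G(c)) \simeq \lim_{(i(S) \to Y)} \on{Maps}_{\mc{C}}(c, F(S)),
\]
where the first equivalence uses that mapping out of a colimit is a limit of mapping spaces and the second uses the representable case. For the right-hand side, the defining formula gives $\on{RKE}_i(F)(Y) \simeq \lim_{(i(S) \to Y)} F(S)$ in $\mc{C}$, and since $\on{Maps}_{\mc{C}}(c, -)$ preserves limits, one obtains $\on{Maps}_{\mc{C}}(c, \on{RKE}_i(F)(Y)) \simeq \lim_{(i(S) \to Y)} \on{Maps}_{\mc{C}}(c, F(S))$. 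The two expressions coincide, naturally in $Y$ and $c$.

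The hard part will be the homotopy-coherent bookkeeping: one must promote the pointwise equivalence of mapping spaces above to a genuine $(\infty, 1)$-categorical adjunction, not merely a level-wise equivalence of hom-spaces. This can be handled by explicitly constructing the unit (via the maps $Y \to G(\on{RKE}_i(F)(Y))$ obtained from the universal property of the defining limit) and verifying the triangle identities, or, more cleanly, by invoking the standard criterion that a natural equivalence of functors $\mc{C}^{\on{op}} \times \on{PreStk}_k^{\on{op}} \to \infty\!\on{-Grpd}$ representing a bifunctor in two ways determines the associated adjunction up to contractible choice.
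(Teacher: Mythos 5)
Your argument is correct, and it supplies in full the standard argument that the paper itself does not spell out: the paper's proof is a one-line citation ("a standard fact from category theory," deduced from the proof of \cite[Prop.\ 5.2.6.3]{htt}), whereas you reprove that fact directly. Your route --- identifying both sides of $\on{Maps}_{\on{PreStk}_k}(Y, G(c)) \simeq \on{Maps}_{\mc{C}}(c, \on{RKE}_i(F)(Y))$ with the same limit, via the Yoneda lemma and full faithfulness of $i$ on representables, the presentation of an arbitrary prestack as a colimit of representables, the pointwise limit formula for the right Kan extension (valid because $\mc{C}$ is complete and $\on{Sch}^{\on{aff, ft}}_k$ is essentially small), and the fact that $\on{Maps}_{\mc{C}}(c, -)$ preserves limits --- is essentially the content behind that citation, so what it buys is self-containedness, at the cost of the homotopy-coherence bookkeeping that you correctly flag; either of your proposed fixes (constructing the unit and invoking the unit-transformation criterion of \cite[5.2.2]{htt}, or the two-variable representability/pairing criterion) is the standard way to promote the levelwise equivalences to an adjunction. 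One bookkeeping remark: the paper is loose about variances (its ``Yoneda embedding $i : \on{Sch}^{\on{aff, ft, op}}_k \hra \on{PreStk}_k$'' and its description of $G$ should be read with both $G$ and $\on{RKE}_i(F)$ contravariant), and your formulas --- the limit indexed by maps $i(S) \to Y$ and the bifunctor on $\mc{C}^{\op} \times \on{PreStk}_k^{\op}$ --- implement exactly the reading the paper needs, since they reproduce the concrete description of $\on{D}_{\QCoh}(\Ran(X))$ in~\ref{structures} and yield the corollary that $\on{RKE}_i(F)$ carries colimits of prestacks to limits in $\mc{C}$.
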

\begin{proof}
	This is a standard fact from category theory. The version for $(\infty, 1)$-categories is deduced from the proof of~\cite[Prop.\ 5.2.6.3]{htt}. 
\end{proof}
\begin{cor}
	The functor $\on{RKE}_i(F)$ preserves limits. 
\end{cor}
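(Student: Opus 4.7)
The plan is to deduce this immediately from the preceding lemma, which exhibits $\on{RKE}_i(F)$ as the right adjoint in the adjoint pair $(G, \on{RKE}_i(F))$. In $(\infty,1)$-category theory, as in ordinary category theory, right adjoints preserve all limits that exist in the source, so there is essentially nothing to do beyond citing this general fact (for instance \cite[Prop.\ 5.2.3.5]{htt}).

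More concretely, I would phrase the argument as follows. Given a small diagram $\mc{D} \to \on{PreStk}_k$, $d \mapsto \mc{Y}_d$, and any object $c \in \mc{C}$, one has a chain of natural equivalences
\e{
	\on{Maps}_{\mc{C}}(c, \on{RKE}_i(F)(\lim_d \mc{Y}_d)) &\simeq \on{Maps}_{\on{PreStk}_k}(G(c), \lim_d \mc{Y}_d) \\
	&\simeq \lim_d \on{Maps}_{\on{PreStk}_k}(G(c), \mc{Y}_d) \\
	&\simeq \lim_d \on{Maps}_{\mc{C}}(c, \on{RKE}_i(F)(\mc{Y}_d)) \\
	&\simeq \on{Maps}_{\mc{C}}(c, \lim_d \on{RKE}_i(F)(\mc{Y}_d)),
}
where the first and third equivalences use the adjunction of the previous lemma, and the second and fourth use that $\on{Maps}(-, -)$ turns limits in its second argument into limits of $\infty$-groupoids. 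By the Yoneda lemma, this produces the desired equivalence $\on{RKE}_i(F)(\lim_d \mc{Y}_d) \simeq \lim_d \on{RKE}_i(F)(\mc{Y}_d)$.

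There is no real obstacle here; the content has already been packaged into the preceding lemma. The only thing to verify is that one is in a setting where the standard ``right adjoints preserve limits'' principle applies to $(\infty,1)$-categories, which is handled by the cited result from \cite{htt}.
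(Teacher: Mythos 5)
Your proposal is correct and matches the paper's (implicit) argument: the corollary is stated with no separate proof precisely because, once the preceding lemma exhibits $\on{RKE}_i(F)$ as the right adjoint of $G$, the standard fact that right adjoints preserve limits (e.g.\ via the Yoneda/mapping-space computation you wrote out, or the cited result in \cite{htt}) finishes it. Nothing further is needed.
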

Since $\Ran(X) = \colim_{\fsetsurjneop} X^{(-)}$ by definition, where the colimit is taken in $\on{PreStk}_k$, the corollary implies that the two definitions of $\on{D}_{\QCoh}(\Ran(X))$ given in~\ref{structures} are equivalent. This is parallel to the discussion in~\cite[2.1.3]{crys} for $\D$-modules. 

\subsubsection{} \label{defs} 

In this paper, we work in the underived setting, i.e.\ we define 
\e{
	\QCoh(\Ran(X)) &= \lim_{S \to \Ran(X)} \QCoh(S) \simeq \lim_{I \in \fsetsurjne} \QCoh(X^I) \\
	\Pic(\Ran(X)) &= \lim_{S \to \Ran(X)} \Pic(S) \simeq \lim_{I \in \fsetsurjne} \Pic(X^I), 
} 
where the isomorphisms follow from~\ref{explain}. The limits in these definitions involve 1-categories, so the cocycle conditions (see~\ref{structures}) are easy to describe: there is one cocycle condition for every commutative triangle, and no higher cocycle conditions. For the reason, line bundles and quasicoherent sheaves on $\Ran(X)$ are very concrete objects. 

We also define 
\[
	\QCoh(\Ran(X))_{\on{flat}} = \lim_{S \to \Ran(X)} \QCoh(S)_{\on{flat}} \simeq \lim_{I \in \fsetsurjne} \QCoh(X^I)_{\on{flat}}, 
\]
where $\QCoh(S)_{\on{flat}}$ is the category of quasicoherent sheaves which are flat over $S$. This is the full subcategory of $\QCoh(\Ran(X))$ consisting of objects $(\mc{F}_S, \sigma_f)$ for which each $\mc{F}_S$ is flat over $S$.

\subsubsection{} 

One could ask how $\QCoh(\Ran(X))$ relates to the category $\on{D}_{\QCoh}(\Ran(X))$ considered in~\ref{structures}. There is no map in either direction; all one can say in general is that both categories map to a third one: 
\begin{cd}
	\QCoh(\Ran(X)) \ar{d}[anchor=south, rotate=90]{\sim} & \on{D}_{\QCoh}(\Ran(X)) \ar{d}[anchor=south, rotate=90]{\sim}\\
	\lim_{I \in \fsetsurjne} \QCoh(X^I) \ar[d] & \lim_{I \in \fsetsurjne} \on{D}(\QCoh(X^I)) \ar[d] \\
	\lim_{I \in \fsetsurjne}^{\on{lax}} \QCoh(X^I) \ar[r] & \lim_{I \in \fsetsurjne}^{\on{lax}} \on{D}(\QCoh(X^I))
\end{cd}
where `lax' denotes lax colimits in the $(\infty, 2)$-category of $(\infty, 1)$-categories.\footnote{Remark: These lax colimits admit a less \emph{ad hoc} interpretation. Namely, $\lim_{I \in \fsetsurjne}^{\on{lax}} \QCoh(X^I)$ is equivalent to the category of natural transformations (i.e.\ pseudonatural transformations)  $\Ran^{\on{(iv, lax)}}(X) \Rightarrow \QCoh(-)$ of functors $\on{Sch}^{\on{aff, ft, op}}_k \to (\infty, 1)\!\on{-Cat}$. One can replace $\QCoh(-)$ by another functor to $(\infty, 1)\!\on{-Cat}$.} The horizontal map arises from the fact that $\on{R}^\cdot f^*$ is right-exact.  

Informally, suppose we are given an object of $\lim_{I \in \fsetsurjne}^{\on{lax}} \QCoh(X^I)$ which is represented by data $(\mc{F}_I, \sigma_\xi)$ as in~\ref{structures}, where lax-ness means that $\sigma_\xi$ need not be an isomorphism. For any map $\xi : I \sra J$, the map $\sigma_\xi : (\Delta_\xi)^*\mc{F}_I \to \mc{F}_J$ yields a map $\on{R}^\cdot (\Delta_\xi)^* \mc{F}_i \to \mc{F}_J$, and in this way one obtains an object of $\lim_{I \in \fsetsurjne}^{\on{lax}} \on{D}(\QCoh(X^I))$. 

\subsubsection{Remark}  \label{delta-flat}
	On a full subcategory of $\lim_{I \in \fsetsurjne} \QCoh(X^I)$, one can say more. If each $\mc{F}$ is flat along the diagonals $\Delta_\xi$ and each $\sigma_\xi$ is an isomorphism, then the maps $\on{R}^\cdot (\Delta_\xi)^* \mc{F}_I \to \mc{F}_J$ constructed in the previous paragraph are isomorphisms, so the resulting object upgrades to one in the strict limit $\lim_{I \in \fsetsurjne} \on{D}(\QCoh(X^I))$. Thus, there is a map
	\[
		\lim_{I \in \fsetsurjne} \QCoh(X^I)_{\Delta\!\on{-flat}} \to \lim_{I \in \fsetsurjne} \on{D}(\QCoh(X^I))
	\]
	where the subscript `$\Delta$-flat' indicates objects which are flat on all the diagonals in $X^I$. This is the generality adopted in Beilinson and Drinfeld's discussion of factorization algebras (see~\cite[3.4.2 and Lem.\ 3.4.3]{bd}), and it is motivated by the observation that these are the quasicoherent sheaves on $\Ran(X)$ in the abelian sense which nevertheless make sense as objects in the derived category. 
	
	Of course, this hypothesis of being flat on all the diagonals is satisfied by flat quasicoherent sheaves and in particular by line bundles. Therefore, the sheaves which are actually studied in this paper do not leave the generality adopted by Beilinson and Drinfeld.

\section{Canonical connection on quasicoherent sheaves over $\Ran(X)$} \label{sec-canon}

The goal of this section is to prove the following theorem, which says that a flat quasicoherent sheaf on $\Ran(X)$ has a unique $\D$-module structure.

\begin{thm} \label{thm-dr} 
	Let $X$ be a smooth $k$-scheme. The pullback functor $\QCoh(\Ran(X)_\dr)_{\on{flat}} \to \QCoh(\Ran(X))_{\on{flat}}$ is an equivalence of categories. 
\end{thm}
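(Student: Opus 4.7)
The plan is to produce, for any flat $\mc{F} \in \QCoh(\Ran(X))_{\on{flat}}$, a canonical and unique descent datum along the projection $\Ran(X) \to \Ran(X)_\dr$, which is equivalent to specifying a $\D$-module structure. Concretely, by the definition of $\QCoh(\Ran(X)_\dr)$ as a right Kan extension, providing such a datum amounts to giving a canonical identification of the two pullbacks of $\mc{F}$ along the source and target maps of the de Rham groupoid on $\Ran(X)$, i.e.\ along the source and target maps out of the formal completion of the diagonal of $\Ran(X) \times \Ran(X)$, satisfying a cocycle condition. I would construct this identification using the infinitesimal analysis of Section~\ref{sec-canon} in the outline, with Proposition~\ref{prop1} as the main engine.

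The first step is to handle a single infinitesimal order at a time. For each $n \ge 0$, form the infinitesimal Ran space $\raninf^n$, built from the $n$-th infinitesimal neighborhoods of the small diagonal $X \hra X^I$ as $I$ varies. Locally on $X$, one reduces to $X = \BA^d_k$ (this is where smoothness enters), and by a direct computation the sheaf of functions on the $n$-th infinitesimal neighborhood, viewed as a functor $\mc{P}_n : \fset_* \to \on{Vect}_k$ on $\{*\} \sqcup I$, decomposes via monomials into a polynomial functor of degree $\le n$ in the sense of Definition~\ref{def-poly}. Applying Proposition~\ref{prop1} to $\mc{P}_n$ — or, more precisely, to the tensor product of $\mc{P}_n$ with the fiber of $\mc{F}$ at the basepoint map $X \to \Ran(X)$, using flatness to keep this tensor product exact — yields a canonical isomorphism between the restriction of $\mc{F}$ to $\raninf^n$ and the pullback of its fiber from $X$; this is Proposition~\ref{prop-triv}.

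The next step is the bootstrap from the $n$-th infinitesimal Ran spaces to the full formal completion $\Ran_{\wh\Delta}^I(X)$ controlling descent to $\Ran(X)_\dr$. Using the partially-labeled Ran spaces introduced in \ref{ran-partial-def}, one presents the formal completion of the diagonal of $\Ran(X) \times \Ran(X)$ in terms of the $\raninf^n$ and the schemes $X^I_{\wh{\Delta}}$; flatness of $\mc{F}$ ensures that the relevant pullbacks commute with the inverse limit over $n$, so the canonical trivializations on each $\raninf^n$ assemble into a canonical trivialization on the formal completion. Uniqueness of these trivializations makes the resulting descent datum automatically satisfy the cocycle condition, so it defines an object of $\QCoh(\Ran(X)_\dr)_{\on{flat}}$; uniqueness also shows that this construction is inverse to the pullback functor, giving the desired equivalence.

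The main obstacles I anticipate are twofold. First, showing that $\mc{P}_n$ is genuinely polynomial of degree $\le n$ requires a careful concrete description of the sheaf of regular functions on the $n$-th infinitesimal neighborhood of the diagonal in $X^I$ and an honest use of smoothness (via local coordinates); without smoothness the functor fails to be polynomial of any finite degree, which is why the hypothesis cannot be weakened in the quasicoherent setting. Second, the passage from each individual $\raninf^n$ to the completed Ran space requires a compatibility check as $n$ varies and a justification that flatness is enough to exchange the limit in $n$ with the various pullbacks and with the limit over $\fsetsurjne$; this is where the Artinian Ran space $\ranf{d}^n$ and the functor $\mc{P}_{n,d}$ enter, and managing these two directions of limits simultaneously is the most delicate bookkeeping in the argument.
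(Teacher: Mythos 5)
Your first step matches the paper's Proposition~\ref{prop-triv}: trivialize the flat sheaf on the infinitesimal Ran spaces $\raninf^n$ (and $\ranf{d}^n$) by applying the polynomial-degree-$\le d$ analysis of $\mc{P}_{n,d}$ to Proposition~\ref{prop1} (in fact to its derived version, Theorem~\ref{thm2}). You also correctly locate the role of smoothness there.

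However, the second half of your proposal does not articulate the crux of the argument, and I think as written it would not go through. The difficulty is not about exchanging the inverse limit over $n$ with pullbacks, nor about ``assembling trivializations on each $\raninf^n$.'' The difficulty is a mismatch between the functorialities: an object $\mc{F} \in \QCoh(\Ran(X))_{\on{flat}}$ is, by definition, a compatible family over $\fsetsurjne$, i.e.\ one only has identifications along the generalized diagonal maps $\Delta_\xi : X^J \hra X^I$ for surjections $\xi : I \sra J$. To supply the descent datum for $X^I \to (X^I)_\dr$ one must instead give identifications along the two \emph{projection} maps out of $(X^I \times X^I)_{\wh{\Delta}}$, together with the simplicial structure indexed by all maps of finite sets. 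Projections and diagonals are genuinely different pieces of functoriality --- the paper emphasizes this in \ref{ran-partial-def} (``deletion vs.\ doubling of points'') and again in the displayed ``naive attempt'' remark inside the proof of Lemma~\ref{lem-psi}. Your proposal restricts $\mc{F}$ to a family on the $X^I_{\wh{\Delta}}$ and then appears to assume this family is already functorial for projections; it is not.

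The missing idea is the one packaged in the square of \ref{rani-p} and in Lemma~\ref{lem-twist}. Passing to the partially labeled prestacks $\Ran^I_{\wh{\Delta}}(Y)$, the functor $I \mapsto \Ran^I_{\wh{\Delta}}(Y)$ is naturally defined on all of $\fset$ (not just surjections) using only the semigroup multiplication of $\Ran(Y)$, and the natural transformation $p^I : \Ran^I_{\wh{\Delta}}(Y) \to Y^I_{\wh{\Delta}}$ intertwines this full $\fset$-functoriality with the projection maps on the $Y^I_{\wh{\Delta}}$. Lemma~\ref{lem-twist} then shows that for flat quasicoherent sheaves, $(p^I)^*$ is an equivalence; this is where Proposition~\ref{prop-triv}/Corollary~\ref{cor-triv} is actually invoked, via a local choice of \'etale coordinates (Lemma~\ref{subtract}) that trivializes $\Ran^I_{\wh{\Delta}}(Y)$ as $Y \times (\disk^n)^{I \setminus \{i\}} \times \raninf^n$. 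It is this equivalence, not a limit interchange, that upgrades $\mc{F}$'s surjection-only functoriality to functoriality on $\fsetne$, after which precomposition by $\Delta \to \fsetne$ produces the infinitesimal groupoid action (Lemma~\ref{lem-psi}). Relatedly, flatness enters the paper's proof to make each pullback locally free at every infinitesimal level (so that the automorphism group computation in Proposition~\ref{prop-triv} applies) and to make Lemma~\ref{lem-twist} an equivalence --- not to commute a pullback with an inverse limit. Without supplying the $p^I$/Lemma~\ref{lem-twist} mechanism for manufacturing projection functoriality, the bootstrap from $\raninf^n$ to a genuine descent datum does not close.
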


As this statement indicates, we adopt the interpretation of $\D$-modules as quasicoherent sheaves on the de Rham prestack set forth in~\cite{crys}. The category $\QCoh(\ran(X))_{\on{flat}}$ was defined in~\ref{defs}, and $\QCoh(\Ran(X)_{\dr})_{\on{flat}}$ is defined in a similar way. 

\subsection{Infinitesimal Ran space}  \label{ran-inf-def}
To place $\D$-module structures on sheaves on $\Ran(X)$, we will need to introduce analogues of $\Ran(X)$ which are defined using formal neighborhoods of diagonal maps. 

\subsubsection{} \label{inf-ran1}
Recall that the category of formal schemes over $k$ is a full subcategory of $\on{PreStk}_k$ (see~\ref{prestacks}) which is closed under products. 

Fix an integer $n \ge 0$. Let $\disk^n$ be the formal neighborhood of $\BA^n$ at the origin, which has structure sheaf $k\bb{x_1, \ldots, x_n}$.\footnote{As the notation suggests, we have $\disk^n \simeq (\disk)^{\times n}$.} For any finite set $I$, let $(\disk^n)^I$ be the $I$-fold product. This assignment is functorial in $\{*\} \sqcup I \in \fset_*^\op$ because $\{*\} \sqcup I \mapsto (\BA^n)^I$ defines a functor to $k$-schemes, and taking formal neighborhoods at the origin is functorial. Therefore, we may define 
\[
	\Ran^n_{\on{inf}} := \colim_{I \in \fsetsurjneop} (\disk^n)^I 
\]
where the colimit is taken in the category of prestacks. 

We also introduce some notation for the rings of functions on these formal neighborhoods. Let $S = \Spec R$, and define the functor 
\[
	\mc{P}_{S, n} : \fset_* \to R\!\on{-mod}
\]
by sending 
\[
	\{*\} \sqcup I \mapsto \oh(S \times (\disk^n)^I) \simeq R\bb{(x_{m, i})_{m \in [n], i \in I}}. 
\]
The map $\xi : \{*\} \sqcup I \to \{*\} \sqcup J$ induces the map on complete rings given by $x_{m, i} \mapsto x_{m, \xi(i)}$ if $\xi(i) \neq *$ and $x_{m, i} \mapsto 0$ otherwise. 

When $S = \Spec k$, the subscript $S$ will be omitted from the notation $\mc{P}_{S, n}$. 

\subsubsection{Artinian Ran space}  \label{art-ran}
We introduce finite-length analogues of the constructions made in~\ref{inf-ran1}. Retain the integer $n > 0$ from before, and fix another integer $d \ge 0$. Let $\disk_d^n$ be the $d$-th infinitesimal neighborhood of $\BA^n$ at the origin, which has structure sheaf $k[x_1, \ldots, x_n] / (x_1, \ldots, x_n)^{d+1}$. (Note that $\disk_d^n$ is \emph{not} isomorphic to $(\disk_d^1)^{\times n}$.) Similarly, define $\disk^{n, I}_d$ to be the $d$-th infinitesimal neighborhood of $(\BA^n)^I$ at the origin. (Note that $\disk^{n, I}_d$ is \emph{not} isomorphic to $(\disk_d^n)^I$.) We define 
\[
	\Ran^n_{\la d \ra} := \colim_{I \in \fsetsurjneop} \disk^{n, I}_d, 
\]
where the colimit is taken in the category of prestacks. 

For $S = \Spec R$, we define the functor 
\[
	\mc{P}_{S, n, \le d} : \fset_* \to R\!\on{-mod}
\]
by sending 
\[
	\{*\} \sqcup I \mapsto \oh(S \times \disk^{n, I}_d) \simeq R[(x_{m, i})_{m \in [n], i \in I}] / \mf{m}^{d+1}, 
\]
where $\mf{m}$ is the maximal ideal at the origin. The right hand side consists of polynomials of degree $\le d$ in the variables $x_{m, i}$. Similarly, there is the functor $\mc{P}_{S, n, d}$ whose values are spaces of homogeneous polynomials of degree $d$.

\begin{lem} \label{limran}
	We have $\raninf^n \simeq \colim_d \Ran_{\la d \ra}^n$
\end{lem}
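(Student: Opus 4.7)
The plan is to observe that both sides are iterated colimits—one over $\fsetsurjneop$, the other over $\BZ_{\ge 0}$—so the equivalence will reduce to (a) a pointwise identification $(\disk^n)^I \simeq \colim_d \disk^{n, I}_d$ in $\on{PreStk}_k$, and (b) the fact that colimits commute with colimits in $\on{PreStk}_k$ (which is automatic, as colimits in a functor category to sets/$\infty$-groupoids are computed pointwise).

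For step (a), I would evaluate both prestacks on a test object $\Spec R \in \on{Sch}^{\on{aff,ft}}_k$ using the functor of points. A map $\Spec R \to \disk^{n, I}_d$ is a $k$-algebra map $k[(x_{m, i})]/\mf{m}^{d+1} \to R$, i.e.\ an $nI$-tuple $(a_{m, i}) \in R^{nI}$ such that every degree $d+1$ monomial in the $a_{m,i}$ vanishes in $R$. On the other hand, a map $\Spec R \to (\disk^n)^I$ corresponds to an $nI$-tuple $(a_{m, i})$ with each $a_{m, i}$ nilpotent (a map of formal schemes $\Spec R \to \disk^n$ sends $x_m$ to a nilpotent, since $R$ has no topology and $\Spec R$ is a closed point of $\disk^n$ in the functor-of-points sense). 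The key arithmetic is that, because $R$ is a finite type (hence Noetherian) $k$-algebra, its nilradical $\on{Nil}(R)$ is itself nilpotent. So given nilpotent elements $a_{m,i}$, the finitely generated ideal $(a_{m, i}) \subset R$ is contained in $\on{Nil}(R)$ and hence has some nilpotency index $d+1$, meaning $(a_{m,i})$ defines a point of $\disk^{n,I}_d$ for some $d$. Conversely, any point of $\disk^{n,I}_d$ consists of nilpotent elements (since $a_{m,i}^{d+1} = 0$). This yields
\[
(\disk^n)^I(R) \;=\; \bigcup_d \disk^{n, I}_d(R) \;=\; \Bigl(\colim_d \disk^{n, I}_d\Bigr)(R),
\]
functorially in $R$, giving the desired isomorphism of prestacks.

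Having established (a), the proof concludes by the chain
\[
\raninf^n \;=\; \colim_{I \in \fsetsurjneop} (\disk^n)^I \;\simeq\; \colim_{I \in \fsetsurjneop} \colim_d \disk^{n, I}_d \;\simeq\; \colim_d \colim_{I \in \fsetsurjneop} \disk^{n, I}_d \;=\; \colim_d \ranf{d}^n,
\]
where the second isomorphism is commutation of colimits with colimits in $\on{PreStk}_k$.

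The only genuine content is step (a), and within it the only substantive point is that $\on{Nil}(R)$ is nilpotent for finite-type $k$-algebras $R$ (this is exactly why the framework of prestacks on $\on{Sch}^{\on{aff,ft}}_k$ adopted in \ref{prestacks} is well-suited here—we never have to confront topologies on the $R$'s). I do not expect any serious obstacle.
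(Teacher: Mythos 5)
Your proof is correct and follows essentially the same route as the paper, whose proof is exactly the two ingredients you name: the pointwise identification $(\disk^n)^I \simeq \colim_d \disk^{n,I}_d$ together with commutation of colimits with colimits (you merely spell out the first ingredient via the functor of points). One small remark: the Noetherian/finite-type hypothesis is not even needed there, since a finitely generated ideal whose generators are nilpotent is automatically nilpotent.
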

\begin{proof}
	Since colimits commute with colimits, this follows from the fact that $(\disk^n)^I \simeq \colim_d \disk_d^{n, I}$ for all $I$. 
\end{proof}

\subsubsection{} 
As one would expect, the functor $\mc{P}_{S, n, d}$ of `homogeneous polynomials of degree $d$' is polynomial of degree $d$ in the sense of Definition~\ref{def-poly}. Moreover, we prove that $\mc{P}_{S, n, d}$ is polynomial of degree $d$, even in the derived sense (Definition~\ref{def-excisive}): 
\begin{lem}\label{rnd-cart} 
	Let $S = \Spec R$ be an affine $k$-scheme. The functor $\mc{P}_{S, n, d}$, viewed as a functor to $\on{D}(\on{AbGrp})$, sends special hypercubes $\Xi_{(I_b)_{b \in B}}$ with $|B| > d$ to \emph{strongly} cartesian diagrams. In particular, $\mc{P}_{S, n, d}$ is $d$-excisive. 
\end{lem}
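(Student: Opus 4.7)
The plan is to reduce the claim to an elementary verification on each monomial by exploiting the fact that $\mc{P}_{S,n,d}(\{*\} \sqcup I)$ is a free $R$-module on a combinatorially described basis. Specifically, for any pointed finite set $\{*\} \sqcup I$, the $R$-module $\mc{P}_{S,n,d}(\{*\} \sqcup I)$ has basis the degree-$d$ monomials $x_{m_1,i_1}^{a_1}\cdots x_{m_r,i_r}^{a_r}$ with $a_1 + \cdots + a_r = d$, and each such monomial $\mu$ has a well-defined support $\on{supp}(\mu) := \{i_1, \ldots, i_r\} \subset I$ of size at most $d$. Under a map $\psi_{S,I}$, a basis monomial is preserved if $\on{supp}(\mu) \subset S$ and killed otherwise; more generally, a map $\xi : \{*\} \sqcup I \to \{*\} \sqcup J$ in $\fset_*$ sends $\mu$ to itself (relabeled) if $\xi$ is injective on $\on{supp}(\mu)$ and sends $\mu$ to $0$ otherwise.

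First, I would fix a special hypercube $\Xi = \Xi_{(I_b)_{b \in B}}$ with $|B| > d$, and set $I = \bigsqcup_{b \in B} I_b$. For each monomial $\mu$ with $\on{supp}(\mu) \subset I$, define its $B$-support $B^\mu := \{b \in B : I_b \cap \on{supp}(\mu) \ne \emptyset\}$; since $|B^\mu| \le |\on{supp}(\mu)| \le d < |B|$, we have $B^\mu \subsetneq B$. The entire hypercube $\mc{P}_{S,n,d}(\Xi)$ in $\on{D}(\on{AbGrp})$ splits, monomial by monomial, as a direct sum of ``indicator sub-cubes'' $C^\mu$, where $C^\mu(B') = R$ if $B^\mu \subset B'$ and $C^\mu(B') = 0$ otherwise, with identity maps between consecutive nonzero vertices and zero maps to $0$-vertices. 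Because finite direct sums in $\on{D}(\on{AbGrp})$ commute with limits, it suffices to verify the strong cartesian property for each $C^\mu$.

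Second, for each such $C^\mu$, I would analyze an arbitrary 2-face of $\Xi$ at coordinates $(B_2; c_0, c_1)$ (varying presence of $c_0, c_1 \in B$ with the remaining coordinates fixed to $B_2 \subset B \setminus \{c_0, c_1\}$), and case-split according to whether $c_0, c_1 \in B^\mu$ and whether $B^\mu \setminus \{c_0, c_1\} \subset B_2$. In every case one reads off the pattern of $R$'s and $0$'s at the four corners directly from the defining condition ``$B^\mu \subset B'$,'' and verifies cartesian-ness in $\on{D}(\on{AbGrp})$ by inspection; the crucial input is that $|B^\mu| \le d < |B|$ implies $B^\mu$ is always a proper subset of $B$, so that the indicator up-set $\{B' \supset B^\mu\}$ is itself a hypercube of positive dimension sitting inside the $|B|$-cube. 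The weakly cartesian statement (and hence the $d$-excisive conclusion) then drops out of the same analysis, either by the 2-face reduction or by computing the limit of $C^\mu$ over the punctured cube directly: the vertex $B' = B^\mu \subsetneq B$ sits in the punctured cube, the tuple is forced to be constant on $\{B' \supset B^\mu, B' \subsetneq B\}$ with value any $\alpha \in R$, and the top vertex $C^\mu(B) = R$ matches this $\alpha$ under the identity map.

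The main obstacle is the bookkeeping in the 2-face case analysis: one must carefully track how the four values $C^\mu(B_2), C^\mu(B_2 \cup \{c_0\}), C^\mu(B_2 \cup \{c_1\}), C^\mu(B_2 \cup \{c_0, c_1\})$ depend on the combinatorics of $B^\mu$ relative to $B_2, \{c_0, c_1\}$, and check that each resulting configuration is cartesian in the derived sense. Once this is in hand, the monomial decomposition bundles everything into a clean proof.
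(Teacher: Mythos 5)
Your overall strategy is the same as the paper's: expand $\mc{P}_{S,n,d}(\Xi_{(I_b)_{b\in B}})$ monomial by monomial into indicator cubes $C^\mu$ supported on the up-set $\{B' \subset B : B^\mu \subset B'\}$, observe $B^\mu \subsetneq B$ because $|B^\mu|\le d<|B|$, and use that direct sums in $\on{D}(\on{AbGrp})$ preserve cartesianness. The gap is in the step where you assert that the 2-face case analysis verifies cartesianness ``in every case.'' It does not: take $c_0,c_1\in B^\mu$ with $B^\mu\setminus\{c_0,c_1\}\subset B_2$ (possible as soon as $|B^\mu|\ge 2$, e.g.\ $d=2$, $n=1$, all $I_b$ singletons, $\mu=x_{1,i_1}x_{1,i_2}$ with $i_1\in I_{c_0}$, $i_2\in I_{c_1}$). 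The resulting 2-face of $C^\mu$ has corners $R,0,0,0$ with initial corner $R$, and $R\not\simeq 0\times^h_0 0$, so it is not cartesian. One can also see the failure without decomposing: for $\mc{P}_{S,1,2}$ on the $3$-cube with singleton blocks, the face varying $\{c_0,c_1\}$ with $c_2$ present compares a free module of rank $6$ against a fiber product of rank $5$. So indicator cubes with $|B^\mu|\ge 2$ are not strongly cartesian; you are in good company here, since the paper's own proof makes the identical assertion (``every square of $\Xi_m$ is cartesian''), and the ``strongly cartesian'' clause of the lemma is likewise overstated for $d\ge 2$ --- what is true, and what is actually used later in the paper, is only that the full cubes with $|B|>d$ are \emph{weakly} cartesian, i.e.\ the $d$-excision statement.

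That weaker statement is exactly what your fallback argument aims at, but as written you compute only the $1$-categorical limit over the punctured cube (``the tuple is forced to be constant with value $\alpha$''), whereas in $\on{D}(\on{AbGrp})$ you must identify the \emph{homotopy} limit, equivalently show the total fiber of $C^\mu$ vanishes. This is easy to repair: $C^\mu$ is the external tensor product over $b\in B$ of two-term complexes, namely $(R\xrightarrow{\id}R)$ for $b\notin B^\mu$ and $(R\to 0)$ for $b\in B^\mu$, so its total complex is $\bigotimes_{b\notin B^\mu}(R\xrightarrow{\id}R)\otimes\bigotimes_{b\in B^\mu}(R\to 0)$; since $B\setminus B^\mu\neq\emptyset$, at least one acyclic factor $(R\xrightarrow{\id}R)$ of free modules occurs, so the total complex is acyclic and $C^\mu$ is weakly cartesian. (Equivalently: the support of $C^\mu$ is a full sub-cube of dimension $|B\setminus B^\mu|\ge 1$ with identity maps.) Summing over monomials then gives that $\mc{P}_{S,n,d}(\Xi_{(I_b)_{b\in B}})$ is weakly cartesian for $|B|>d$, which is the $d$-excisiveness needed downstream; with this adjustment your proposal proves the correct (weak) form of the lemma by essentially the paper's argument, but the claimed strong cartesianness should be dropped from both your write-up and, arguably, from the statement itself.
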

\begin{proof}
	For any tuple $(I_b)_{b \in B}$ of finite sets with $|B| > d$, we will decompose the diagram $\mc{P}_{S, n, d}(\Xi_{(I_b)_{b \in B}})$ as the direct sum of several strongly cartesian diagrams, each corresponding to one monomial basis element in $\mc{P}_{S, n, d}(\{*\} \sqcup I)$, where $I := \sqcup_{b \in B} I_b$. To start, recall that $\mc{P}_{S, n, d}(\{*\} \sqcup I)$ is a free $R$-module of finite rank with basis given by monomials of degree $\le d$ in the variables $x_{m, i}$, where $m \in [n]$ and $i\in I$, see~\ref{art-ran}. Each such monomial $m$ splits uniquely as a product $m = \prod_{b \in B} m_b$ where $m_b$ consists of all the variables $x_{m, i}$ for which $i \in I_b$. Let $B_m \subset B$ be the subset of $b$ for which $m_b \neq 1$. Since $|B| > d$, we have $B_m \subsetneq B$. 
	
	By definition, the \emph{hypercube diagram spanned by $m$}, denoted $\Xi_m$, is the sub-diagram of $\mc{P}_{S, n, d}(\Xi_{(I_b)_{b \in B}})$ spanned by $m$ in $\mc{P}_{S, n, d}(\{*\} \sqcup I)$ and the images of $m$ in the other vertices of $\mc{P}_{S, n, d}(\Xi_{(I_b)_{b \in B}})$. Recall that vertices of the hypercube are parameterized by subsets $B' \subset B$. Observe that $\Xi_m$ consists of $k$ at the vertices for which $B_m \subset B'$ and $0$ at all other vertices, and the map between any two copies of $k$ is the identity map. This description shows that every square of $\Xi_m$ is cartesian, so it is a strongly cartesian diagram. 
	
	To finish, note that the original hypercube $\mc{P}_{S, n, d}(\Xi_{(I_b)_{b \in B}})$ is the direct sum of $\Xi_m$ for all degree $d$ monomials $m \in \mc{P}_{S, n, d}(\{*\} \sqcup I)$. 
\end{proof}

The lemma immediately implies the analogous $d$-excision statement for the functor $\mc{P}_{S, n, \le d}$.

\subsubsection{Remark} \label{exercise} 

Consider the following statement: 
\begin{itemize}
	\item The map $\lim_{\fset^{\on{surj}}_{\on{n.e.}, \le 3}} \mc{P}_{S, 1, d} \circ \iota \to \mc{P}_{S, 1, d}(\{*, 1\})$ is zero. 
\end{itemize}
For $S = \Spec R$, the statement is equivalent to the following: 
\begin{itemize}
	\item Let $f(x, y, z) \in R[x, y, z]$ be a symmetric polynomial which is homogeneous of degree $d$. Assume that $g(x, y) := f(x, x, y)$ is symmetric (i.e.\ $f(x, x, y) = f(x, y, y)$). Then $h(x) := f(x, x, x)$ is zero. 
\end{itemize}

This statement in the special case $R = \BC$ appears as~\cite[Exercise~3.4.2]{bd}, and it is stated for arbitrary $R$ as~\cite[Lem.\ 4.3.11]{z}. Unfortunately, this statement does not hold when the characteristic of the field $k$ is nonzero. We give a counterexample when $\on{char}(k) = p$ and $d = p$, for any prime $p \ge 5$. Let 
\[
	f(x, y, z) := xyz \left( x^{p-3} + y^{p-3} + z^{p-3} + \left( \frac{p+1}{2} \right) \sum_{\on{cyc}} (x^{p-4}y + \cdots + xy^{p-4}) \right), 
\]
where the subscript cyc denotes cyclic summation with respect to $x, y, z$. We have 
\e{
	f(x, x, y) &= x^2 y \left( 2x^{p-3} + y^{p-3} + 2 \left( \frac{p+1}{2} \right) (x^{p-4}y + \cdots + xy^{p-4})  + \left( \frac{p+1}{2} \right) (p-4)x^{p-3} \right) \\
	&= x^{p-2}y^2 + x^{p-3}y^3 + \cdots + x^{2} y^{p-2}, 
} 
so $g(x, y)$ is symmetric. Then 
\[
	g(x, x)= (p-1)x^p
\]
is nonzero, i.e.\ $h(x)$ is nonzero. This yields our desired counterexample. 

We do not know whether this statement becomes true (in characteristic $p$) if $\le 3$ is replaced by $\le 4$, although it seems doubtful. As discussed in Remark~\ref{further-fin}, it is unlikely that such a statement could be deduced from a `vanishing of the limit' result which holds for arbitrary polynomial functors.

\subsubsection{}
Consider the maps 
\begin{cd}
	\raninf^n \ar[r, shift left = 1.5, "p"] & \Spec k =: \on{pt} \ar[l, shift left = 0.5, "q"] 
\end{cd}
which satisfy $p \circ q = \id$. For $V \in \on{Vect}_k$, write $\ul{V} := p^*V$ for notational convenience. The next result says that quasicoherent sheaves on the infinitesimal Ran space are canonically trivial. 
\begin{prop} \label{prop-triv} 
	We have mutually inverse equivalences of symmetric monoidal categories 
	\begin{cd}
		\QCoh(\raninf^n)_{\on{flat}} \ar[r, shift right = 0.5, swap, "q^*"] & \QCoh(\on{pt}) = \Vect_{k} \ar[l, shift right = 1.5, swap, "p^*"] 
	\end{cd}
	(The left hand side category is defined in analogy with~\ref{defs}.) 
\end{prop}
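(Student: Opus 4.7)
The plan is to reduce to the Artinian case $\ranf{d}^n$ via Lemma~\ref{limran}, proving $\QCoh(\ranf{d}^n)_{\on{flat}} \simeq \Vect_k$ for each $d$ compatibly, and then passing to the limit (which is simply $\Vect_k$ since all the transition functors end up being ``fiber at the origin''). So fix $d$. Since $q^* p^* = \id$ is automatic from $p \circ q = \id$, it suffices to show that $p^*$ is fully faithful and essentially surjective. Both will follow from the central technical claim: for any flat $\mc{F} \in \QCoh(\ranf{d}^n)$ with $V := q^* \mc{F}$, writing $M_I := \mc{F}|_{\disk^{n, I}_d}$, the canonical map $\lim_{I \in \fsetsurjne} M_I \to V$ (restriction to the origin) is an isomorphism.

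To prove this central claim, I would use that each $M_I$ is flat, hence free of rank $\dim V$, over the Artinian local ring $\oh(\disk^{n, I}_d)$; the maximal ideal $\mf{m}_I$ at the origin is preserved by the diagonal transition maps, so $\mf{m}_I^j M_I$ assembles into a filtration of $\mc{F}$ by sub-sheaves on $\ranf{d}^n$, and flatness identifies the graded pieces canonically as
\[
\mf{m}_I^j M_I / \mf{m}_I^{j+1} M_I \simeq V \otimes_k \mc{P}_{n, j}(\{*\} \sqcup I).
\]
The key observation is that each $V \otimes \mc{P}_{n, j}$, viewed in $\on{D}(\Vect_k)$, is $j$-excisive (and in fact sends special hypercubes to strongly cartesian diagrams, by Lemma~\ref{rnd-cart}). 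Theorem~\ref{thm2} then yields $\on{R}\lim_I V \otimes \mc{P}_{n, j}(\{*\} \sqcup I) \simeq V \otimes \mc{P}_{n, j}(\{*\})$, which vanishes for $j \ge 1$ and equals $V$ for $j = 0$. A downward induction on $j$, using the distinguished triangles $\on{R}\lim \mf{m}^{j+1}M \to \on{R}\lim \mf{m}^j M \to \on{R}\lim (V \otimes \mc{P}_{n, j})$ and the base case $\mf{m}_I^{d+1} = 0$, yields $\on{R}\lim_I \mf{m}_I^j M_I = 0$ for $j \ge 1$, and hence $\on{R}\lim_I M_I \simeq V$; in particular $\lim_I M_I = V$ canonically.

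Applying the central claim with $\mc{F} = \ul{W}$ gives $\Hom_{\QCoh(\ranf{d}^n)}(\ul{V}, \ul{W}) = \lim_I \Hom_k(V, W \otimes \oh(\disk^{n, I}_d)) = \Hom_k(V, \lim_I W \otimes \oh) = \Hom_k(V, W)$, proving fully faithfulness. With general flat $\mc{F}$, the identity in $\Hom(\ul{V}, \mc{F}) \simeq \Hom_k(V, V)$ furnishes a canonical morphism $\alpha : \ul{V} \to \mc{F}$ that restricts to $\id_V$ on the origin fiber; at each level $I$, $\alpha_I$ is a map between free $\oh(\disk^{n, I}_d)$-modules that is an iso on the fiber, hence (by the usual Nakayama argument applied via the $\mf{m}_I$-adic filtration) an iso, so $\alpha$ is an iso. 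The symmetric monoidal enhancement is automatic since $p^*$ is pullback along a morphism of prestacks.

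The hard part will be the derived-limit computation: although the graded pieces realize pleasantly as polynomial functors $V \otimes \mc{P}_{n, j}$, a naive application of Proposition~\ref{prop1} in the abelian setting would leave the $\lim^1$ terms from the filtration spectral sequence uncontrolled, so the argument genuinely requires the derived vanishing statement of Theorem~\ref{thm2} rather than Proposition~\ref{prop1} alone. A related subtlety is that the functor $I \mapsto M_I$ does not obviously extend to $\fset_*$ (the Ran structure only provides compatibility along diagonals, not along the zero-sections needed to handle the $\psi_{S, I}$ morphisms), so these theorems are applied only to the graded pieces, not to $M_I$ directly.
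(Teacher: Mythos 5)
Your proposal is correct, and it reaches the statement by a route that is parallel to but organized differently from the paper's. The paper makes the same reduction via Lemma~\ref{limran} to the Artinian spaces $\ranf{d}^n$, but then runs an induction on $d$: the datum of an extension to $\ranf{d}^n$ of a trivialization given on $\ranf{d-1}^n$ is interpreted as an object of $\lim_I \mc{C}_d(I)$, where each $\mc{C}_d(I)$ is a one-object groupoid with automorphism group $\mc{P}_{n,d}(\{*\}\sqcup I)\otimes_k \End_k(V)$ (a square-zero computation), so that isomorphism classes and automorphisms are controlled by $\lim^1$ and $\lim^0$ of that abelian group functor, both of which are killed by Lemma~\ref{rnd-cart} plus Theorem~\ref{thm2}. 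You instead fix $d$, filter the module system $I \mapsto M_I$ $\mf{m}$-adically, identify the graded pieces with $V\otimes_k\mc{P}_{n,j}\circ\iota$ using flatness, and compute the derived limit of sections directly, then extract full faithfulness and essential surjectivity by a Hom computation and a Nakayama argument. The key inputs are identical (flat $\Rightarrow$ free over the Artinian local rings, Lemma~\ref{rnd-cart}, and the genuinely derived vanishing of Theorem~\ref{thm2} rather than Proposition~\ref{prop1} alone -- your remark on this, and on the fact that $I\mapsto M_I$ does not extend to $\fset_*$ so that the theorem may only be applied to the graded pieces, are both exactly the right cautions). What each approach buys: yours yields vanishing of all higher derived limits of the section systems and packages uniqueness of trivializations into full faithfulness in one stroke; the paper's deformation-theoretic induction directly produces the statement ``a trivialization of the fiber extends uniquely,'' which is the form later bootstrapped in~\ref{ran-partial-def}. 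One wording caveat: $\{\mf{m}_I^j M_I\}$ is not literally a quasicoherent subsheaf on $\ranf{d}^n$ in the sense of~\ref{defs}, since its restriction maps along the diagonals $\Delta_\xi$ are surjective but not isomorphisms (already for $\mc{F}=\oh$); what your argument actually uses, and all it needs, is that these are subfunctors of the diagram $I\mapsto M_I$ of sections on $\fsetsurjne$, with termwise exact sequences of diagrams whose derived limits form the triangles you invoke.
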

\begin{proof}
	Let $V$ be a fixed vector space over $k$. It suffices to show that any pair $(\mc{E}, \sigma)$ with $\mc{E} \in \QCoh(\raninf^n)_{\on{flat}}$ and $\sigma : q^*\mc{E} \simeq V$ admits a \emph{unique} trivialization $\tau : \mc{E} \simeq \ul{V}$ such that $q^*\tau = \sigma$. In view of Lemma~\ref{limran}, it suffices to prove this analogous statement for all $d$: 
	\begin{itemize}
		\item[(P)] For any pair $(\mc{F}, \sigma)$ with $\mc{F} \in \QCoh(\ranf{d}^n)_{\on{flat}}$ and $\sigma : q^*\mc{F} \simeq V$, there exists a unique trivialization $\tau : \mc{F} \simeq \ul{V}$ such that $q^*\tau = \sigma$. 
	\end{itemize}
	The statement is trivial for $d = 0$, so we fix $d \ge 1$ and assume that it has been proven for all smaller $d$. Thus, we start with $(\mc{F}, \sigma)$ as above, and we know that $(\mc{F}|_{\ranf{d-1}^n}, \sigma)$ admits a unique trivialization $\ol{\tau}$ (by the inductive hypothesis). We want to show that there is a unique extension of $\ol{\tau}$ to $\ranf{d}^n$, i.e.\ that the category of such pairs $(\mc{F}, \ol{\tau})$ is trivial. 
	
	For any finite set $I$, any flat quasicoherent sheaf on $\disk^{n, I}_d$ is (noncanonically) free, and any trivialization on $\disk^{n,I}_{d-1}$ (noncanonically) extends to one on $\disk^{n, I}_d$. Let $\mc{C}_d(I)$ be the category of pairs $(\mc{G}, \pi)$ where $\mc{G} \in \QCoh(\disk^{n, I}_d)_{\on{flat}}$ and $\pi$ is an isomorphism of $\mc{G}$ with $\ul{V}$ on $\disk^{n,I}_{d-1}$. The first sentence of this paragraph implies that $\mc{C}_d(I)$ is equivalent to a groupoid with one object. Its automorphism group is the group of automorphisms of $\ul{V}$ on $\disk^{n, I}_d$ which are trivial on $\disk^{n,I}_{d-1}$. 
	
	\begin{claim}
		The automorphism group $\pi_1(\mc{C}_d(I))$ identifies with the abelian group $\mc{P}_{n, d}(\{*\} \sqcup I) \otimes_k \End_k(V)$ where $\End_k(V)$ is regarded as a group via its \emph{additive} structure. 
	\end{claim}
	\begin{proof}
		The endomorphism $k$-algebra of $\ul{V}$ on $\disk^{n, I}_d$ is 
		\e{
			\End_{\mc{P}_{n, \le d}(\{*\} \sqcup I)}(V \otimes_k \mc{P}_{n, \le d}(\{*\} \sqcup I)) &\simeq \Hom_k (V, V \otimes_k \mc{P}_{n, \le d}(\{*\} \sqcup I)) \\
			&\simeq \Hom_k (V, V) \otimes_k \mc{P}_{n, \le d}(\{*\} \sqcup I), 
		} 
		where the last line follows because $\mc{P}_{n, \le d}(\{*\} \sqcup I)$ is finite dimensional. The restriction of endomorphisms to $\disk^{n,I}_{d-1}$ is given by the surjection 
		\[
			\Hom_k (V, V) \otimes_k \mc{P}_{n, \le d}(\{*\} \sqcup I) \sra \Hom_k (V,V) \otimes_k \mc{P}_{n, \le d-1}(\{*\} \sqcup I)
		\]
		induced by the quotient map $\mc{P}_{n, \le d}(\{*\} \sqcup I) \sra \mc{P}_{n, \le d-1}(\{*\} \sqcup I)$. The desired automorphism group is the preimage of $1$ under this map, with the group structure induced by ring multiplication. The kernel clearly identifies with $\Hom_k (V, V) \otimes_k \mc{P}_{n, d}(\{*\} \sqcup I)$, and the group structure is as described because the ideal $\mc{P}_{n, d}(\{*\} \sqcup I) \subset \mc{P}_{n, \le d}(\{*\} \sqcup I)$ is square-zero. 
	\end{proof}
	
	We have a functor $G : \fset_* \to \on{AbGrp}$ defined by $\{*\} \sqcup I \mapsto \mc{P}_{n, d}(\{*\} \sqcup I) \otimes_k \End_k(V)$ with the additive group structure. The category of pairs $(\mc{F}, \ol{\tau})$ as above\footnote{Namely, $\mc{F}$ is a sheaf on $\ranf{d}^n$ and $\ol{\tau}$ is an isomorphism with $\ul{V}$ on $\ranf{d-1}^n$.} is equivalent to $\lim_{I \in \fsetsurjne} \mc{C}_d(I)$ for tautological reasons, so the previous claim implies that the set of isomorphism classes of objects is in bijection with $\lim_{\fsetsurjne}^1 G\iota$ and the zero object has automorphism group $\lim_{\fsetsurjne}^0 G\iota$. To finish the proof, it suffices to show that both of these groups are trivial. 
	
	\begin{claim}
		The functor $G$, viewed as a functor to $\on{D}(\on{AbGrp})$, is $d$-excisive. 
	\end{claim}
	\begin{proof}
		Since $\End_k(V)$ splits as the (infinite) direct sum of one-dimensional $k$-vector spaces, and tensor products commute with direct sums, the functor $G$ is the direct sum of copies of $\mc{P}_{n, d}$. Thus, the claim follows from Lemma~\ref{rnd-cart} and the fact that infinite direct sums preserve cartesian squares because we are working in a stable category. 
	\end{proof}
	
	Now Theorem~\ref{thm2} applies (because $G$ takes values in cohomological degrees $\ge 0$), and we conclude that $\lim_{\fsetsurjne}^i G\iota$ is zero for all $i$. This shows that the category of pairs $(\mc{F}, \ol{\tau})$ is equivalent to a point, which completes the proof of the inductive step. 
\end{proof}

\subsubsection{} The benefit of working in the generality of quasicoherent sheaves rather than coherent sheaves is that, by considering algebra objects, we can deduce an analogous statement relative to an arbitrary scheme over $k$. 
\begin{cor}\label{cor-triv}
	Let $Y$ be a $k$-scheme. Then we have mutually inverse equivalences of categories 
	\begin{cd}[column sep = 0.7in] 
		\QCoh(Y \times \raninf^n)_{\on{fl.}} \ar[r, shift right = 0.5, swap, "(\id \times q)^*"] & \QCoh(Y) \ar[l, shift right = 1.5, swap, "(\id \times p)^*"]
	\end{cd}
	where the subscript $\on{fl.}$ indicates quasicoherent sheaves flat over $\raninf^n$. 
\end{cor}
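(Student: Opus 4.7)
My plan is to reduce to the case $Y = \Spec A$ affine, and then deduce the corollary from Proposition~\ref{prop-triv} by exploiting the \emph{symmetric monoidal} nature of that equivalence.

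\textbf{Reduction to the affine case.} Both sides of the proposed equivalence satisfy Zariski descent in $Y$. On the right, $\QCoh(-)$ is evidently a Zariski sheaf in $Y$. On the left, we use the presentation
\[
	\QCoh(Y \times \raninf^n)_{\on{fl.}/\raninf^n} \simeq \lim_{I \in \fsetsurjne} \QCoh(Y \times (\disk^n)^I)_{\on{fl.}/(\disk^n)^I},
\]
the fact that $\QCoh$ of each $Y \times (\disk^n)^I$ is a Zariski sheaf in $Y$, the fact that flatness over $(\disk^n)^I$ is local on the source in $Y$, and the commutation of limits. The functors $(\id \times p)^*$ and $(\id \times q)^*$ are plainly compatible with restriction to affine opens of $Y$, so the equivalence for general $Y$ assembles from the affine case.

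\textbf{Module objects over algebras.} Assume now $Y = \Spec A$. For each nonempty finite set $I$, a quasicoherent sheaf on $Y \times (\disk^n)^I$ which is flat over $(\disk^n)^I$ is the same datum as an $A$-module object in the symmetric monoidal category $\QCoh((\disk^n)^I)_{\on{flat}}$, where $A \in \on{CAlg}(\Vect_k)$ is viewed as an algebra object via the symmetric monoidal pullback along $(\disk^n)^I \to \Spec k$. (Concretely, the algebra object is the sheaf of rings $A \hat{\otimes}_k \oh_{(\disk^n)^I}$, and the module condition encodes the $\oh_{Y \times (\disk^n)^I}$-action.) Passing to the limit over $I$ yields a natural identification
\[
	\QCoh(Y \times \raninf^n)_{\on{fl.}/\raninf^n} \simeq (p^*A)\on{-Mod}\bigl(\QCoh(\raninf^n)_{\on{flat}}\bigr).
\]

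\textbf{Applying Proposition~\ref{prop-triv}.} Since $p^* : \Vect_k \xrightarrow{\sim} \QCoh(\raninf^n)_{\on{flat}}$ is a symmetric monoidal equivalence, it induces an equivalence on module categories over corresponding commutative algebras:
\[
	A\on{-Mod}(\Vect_k) \xrightarrow{\sim} (p^*A)\on{-Mod}\bigl(\QCoh(\raninf^n)_{\on{flat}}\bigr).
\]
The left-hand side is $\QCoh(\Spec A) = \QCoh(Y)$, and the right-hand side is $\QCoh(Y \times \raninf^n)_{\on{fl.}/\raninf^n}$ by the previous step. Chasing the construction, the resulting equivalence sends an $A$-module $M$ to $p^*M$ equipped with the induced $p^*A$-action, which is precisely $(\id \times p)^*M$; the inverse is $(\id \times q)^*$, since $p \circ q = \id$ implies $q^* \circ p^* = \id$.

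\textbf{Expected obstacle.} The only nontrivial step is the identification in the middle paragraph of quasicoherent sheaves on $Y \times (\disk^n)^I$ with module objects in $\QCoh((\disk^n)^I)$. This is the standard affine-over-(formal)-scheme statement, and it is compatible with the generalized diagonal transition maps indexed by surjections $I \sra J$, so the limit over $I$ causes no genuine trouble. Everything else is formal once one has Proposition~\ref{prop-triv} packaged as a symmetric monoidal equivalence, which the proof above does supply.
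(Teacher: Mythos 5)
Your proposal is correct and follows essentially the same route as the paper: reduce to affine $Y=\Spec A$ by Zariski descent, then use the fact that the equivalence of Proposition~\ref{prop-triv} is symmetric monoidal to transfer $A$-module objects (which, over each $(\disk^n)^I$ and hence in the limit, are exactly the sheaves flat over $\raninf^n$) across the equivalence. The paper states this in one sentence; your write-up simply spells out the same argument.
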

\begin{proof}
	Since quasicoherent sheaves form a stack in the Zariski topology, we reduce to the case in which $Y = \Spec A$ is affine. Now the claim follows from Proposition~\ref{prop-triv} because the concept of $A$-module is expressed in terms of the monoidal structure on QCoh.  
\end{proof}

\subsection{Partially labeled Ran spaces}  \label{ran-partial-def}

Given a $k$-scheme $Y$, let $Y^2_{\wh{\Delta}}$ be the formal neighborhood of $Y^2$ along the diagonal. Roughly speaking, a $\D$-module on $Y$ consists of a quasicoherent sheaf $\mc{F}$ on $Y$, along with an identification of the pullbacks of $\mc{F}$ to $Y^2_{\wh{\Delta}}$ along the two projections $\pr_1, \pr_2 : Y^2_{\wh{\Delta}} \rightrightarrows Y$. There are two ways in which these projection maps differ from the generalized diagonal maps $\Delta_{I \sra J} : Y^J \hra Y^I$ which appear in the colimit definition of $\Ran(Y)$: 
\begin{itemize}
	\item The spaces $Y^J$ and $Y^I$ are not completed along the diagonal. This issue can be easily resolved by pulling back to a completed version of the Ran space, defined as the colimit over the maps $\Delta_{I \sra J} : Y^J_{\wh{\Delta}} \to Y^I_{\wh{\Delta}}$, see~\ref{rani-d}. 
	\item In terms of labeled subsets of $Y$, the projection maps correspond to \emph{deletion} of points, while the generalized diagonal maps correspond to doubling of points. This issue is much more serious, and overcoming it is the crux of the proof. The main idea is to find a way to delete points (in some sense) using only the generalized diagonal maps. See Remark~\ref{rani-p} for further discussion of this intuition. 
\end{itemize}

In this subsection, $Y$ is a fixed $k$-scheme. 

\subsubsection{} \label{rani} 

Define the functor $\Ran^{(-)}(Y) : \fset^{\op} \to \on{PreStk}_{k}$ by 
\[
	\Ran^I(Y) := Y^I \times \Ran(Y)
\]
for $I \in \fset$, and for a map $I_1 \xrightarrow{\xi} I_2$ the corresponding map 
\[
	\Ran^{I_2}(Y)  \xrightarrow{\Ran^\xi(Y)} \Ran^{I_1}(Y)
\]
is given by the composition 
\begin{cd}
	\Ran^{I_2}(Y) =  Y^{I_2} \times \Ran(Y) \ar[r, "\sim"] & Y^{\Im(\xi)} \times Y^{I_2 \setminus \Im(\xi)} \times \Ran(Y) \ar[d, "{\Delta_\xi \times i \times \id_{\Ran(Y)}}"] \\
	& Y^{I_1} \times \Ran(Y) \times \Ran(Y) \ar[d, "\id \times \on{mult}"] \\
	& Y^{I_1} \times \Ran(Y)
\end{cd}
Here $\Delta_{\xi} = \Delta_{I_1 \xrightarrow{\xi} \Im(\xi)}$ is the diagonal map $Y^{\Im(\xi)} \to Y^{I_1}$ associated to $\xi$, and $i : Y^{I_2 \setminus \Im(\xi)} \to \Ran(X)$ is the standard map given by the colimit expression for $\Ran(X)$. In particular, if $I_1 = \emptyset$, then $\Im(\xi)= \emptyset$ so the map $\Delta_\xi$ is just the map $\on{pt} \to \on{pt}$. 

\subsubsection{} \label{rani-p}
We have a map $\Ran^I(Y) \xrightarrow{p^I} Y^I$ given by projecting onto the first factor, and this is functorial in $I \in \fset^{\op}$, so $p^{(-)}$ gives a natural transformation $\Ran^{(-)}(Y) \Longrightarrow Y^{(-)}$. Because this is the most important construction in this subsection, let us spell it out in more detail. For any map $I_1 \xrightarrow{\xi} I_2$, we get a commuting square 
\begin{cd}
	\Ran^{I_2}(Y) \ar[r, "p^{I_2}"] \ar[d, swap, "\ran^\xi(Y)"] & Y^{I_2} \ar[d, "Y^\xi"] \\
	\Ran^{I_1}(Y) \ar[r, "p^{I_1}"] & Y^{I_1}
\end{cd}
This is not as trivial as it looks because $\Ran^\xi(Y)$ is not equal to the map $Y^\xi \times \id_{\Ran(Y)}$. 

\begin{rmk}
	We explain the significance of this commuting square. Let $\xi$ be the inclusion $\{1\} \hra \{1, 2\}$. A $k$-point of $\Ran^{\{1, 2\}}(Y)$ is a triple $(y_1, y_2, C)$ where $y_1, y_2$ are points of $Y$ and $C$ is some finite set of points of $Y$. The mapping diagram is as follows: 
	\begin{cd}
		(y_1, y_2, C) \ar[r, mapsto] \ar[d, mapsto] & (y_1, y_2) \ar[d, mapsto] \\
		(y_1, y_2 \cup C) \ar[r, mapsto] & y_1
	\end{cd}
	The right vertical map is the projection $\pr_1 : Y^2 \to Y$, while the left vertical map is defined solely in terms of the generalized diagonal maps. (In particular, the semigroup multiplication on $\Ran(X)$ is ultimately defined using the generalized diagonal maps; this is made more explicit in~\ref{rani-d}.) 
\end{rmk}

\subsubsection{} \label{rani-p2} 

For any finite set $I$, there is also a map $d^I : Y^I \to \Ran^I(Y)$ defined as 
\[
	Y^I \xrightarrow{\on{diag}} Y^I \times Y^I \xrightarrow{\id_{Y^I} \times i} Y^I\times \Ran(Y) = \Ran^I(Y)
\]
where `diag' is the diagonal map and $i : Y^I \hra \Ran(Y)$ is the map in the colimit diagram for $\Ran(Y)$. The maps $d^I$ are \emph{not} functorial in $I$, but they  satisfy two useful properties: 
\begin{enumerate}[label=(\roman*)]
	\item For any $I$, the following diagram commutes: 
	\begin{cd}
		\Ran^I(Y) \ar[rd, swap, "\Ran^{c}(Y)"] & & Y^I \ar[ll, swap, "d^I"] \ar[ld, "i"]\\
		& \Ran(Y)
	\end{cd}
	Here $c : \emptyset \to I$ is the unique map, and the target of $\Ran^c(Y)$ is $\Ran^{\emptyset}(Y) = \Ran(Y)$. 
	\item For any $I$, we have $p^I \circ d^I = \id_{Y^I}$. 
\end{enumerate}

\subsubsection{Completions along the diagonal} \label{rani-d}

Define 
\[
\Ran_{\wh{\Delta}}(Y) = \colim_{J \in \fsetsurjneop} Y^J_{\wh{\Delta}}, 
\]
where the subscript $\wh{\Delta}$ denotes the formal neighborhood of the (small) diagonal $Y \hra Y^J$. 

\begin{rmk}
	This version of the Ran space was used by Beilinson and Drinfeld in establishing the existence of the canonical connection on a factorization algebra on $\Ran(X)$, see \cite[Prop.\ 3.4.7]{bd}. They denote $Y^J_{\wh{\Delta}}$ by $Y^{<J>}$. 
\end{rmk}

Similarly, define 
\[
	\Ran_{\wh{\Delta}}^I(Y) = \colim_{J \in \fsetsurjneop} Y^{I \sqcup J}_{\wh{\Delta}}.
\]
We enhance the assignment $I \rightsquigarrow \Ran_{\wh{\Delta}}^I(Y)$ into a functor for $I \in \fset^{\op}$ in analogy with $\Ran^I(Y)$ as defined in~\ref{rani}. Namely, given a map $I_1 \xrightarrow{\xi} I_2$, the corresponding map 
\[
	\Ran_{\wh{\Delta}}^{I_2}(Y) \xrightarrow{\Ran_{\wh{\Delta}}^\xi (Y) } \Ran_{\wh{\Delta}}^{I_1}(Y)
\]
is induced by the map 
\e{
	Y^{I_2 \sqcup J}_{\wh{\Delta}} &\simeq (Y^{\Im(\xi)} \times Y^{(I_2 \setminus \Im(\xi)) \sqcup J})_{\wh{\Delta}} \\
	&\to (Y^{I_1} \times Y^{(I_2 \setminus \Im(\xi)) \sqcup J})_{\wh{\Delta}} \\
	&\simeq Y^{I_1 \sqcup \big((I_2 \setminus \Im(\xi)) \sqcup J\big)}_{\wh{\Delta}}
} 
by taking the colimit with respect to $J$, and using the colimit version of Lemma~\ref{lims} for the functor $\fsetsurjne \to \fsetsurjne$ given by $J \mapsto (I_2 \setminus \Im(\xi)) \sqcup J$. When $I_1 = \emptyset$, the last sentence of~\ref{rani} explains how to define the above map. 

\subsubsection{} \label{rani-d2} 

As in~\ref{rani-p}, we have a natural transformation $p^{(-)} : \Ran^{(-)}_{\wh{\Delta}}(Y) \to Y^{(-)}_{\wh{\Delta}}$ of functors $\fset^{\op}\to \on{PreStk}_k$. The map $p^I$ is obtained from the projection map $Y^{I \sqcup J}_{\wh{\Delta}} \to Y^I_{\wh{\Delta}}$ by taking colimits with respect to $J \in \fsetsurjneop$ and applying the colimit version of Lemma~\ref{lims}. 

As in~\ref{rani-p2}, we have maps $d^I : Y^I_{\wh{\Delta}} \to \Ran^{I}_{\wh{\Delta}}(Y)$ satisfying properties analogous to (i) and (ii) in~\ref{rani-p2}. The map $d^I$ is defined to be the composition $Y^I_{\wh{\Delta}} \xrightarrow{\on{diag}} Y^{I \sqcup I}_{\wh{\Delta}} \xrightarrow{i} \Ran^I_{\wh{\Delta}}(Y)$ where $i$ is part of the colimit diagram for $\Ran^I_{\wh{\Delta}}(Y)$. 

\subsubsection{} We prove a technical result which will be used in Lemma~\ref{lem-twist}. The idea is that, given a coordinate system on $Y$, the datum of $|I|+1$ nearby points on $Y$ is the same as the datum of the first point, along with an $|I|$-tuple of displacements. 
\begin{lem}\label{subtract} 
	Let $Y$ be a $k$-scheme equipped with an \'etale map $f : Y \to \BA^n$. Then, for each $I \in \fset$ there is an isomorphism 
	\[
		Y^{\{0\} \sqcup I}_{\wh{\Delta}} \simeq Y \times (\disk^n)^I, 
	\]
	and this is functorial in $I$. 
\end{lem}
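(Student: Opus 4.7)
The plan is to exhibit mutually inverse maps between the two prestacks at the level of functors of points, relying on the infinitesimal lifting property of the étale morphism $f$. Recall that for an affine finite-type test scheme $T = \Spec R$ (which is Noetherian, so its nilradical $\mf{n}$ is nilpotent), a $T$-point of $Y^{\{0\}\sqcup I}_{\wh\Delta}$ is a tuple $(y_0, (y_i)_{i \in I})$ of $T$-points of $Y$ which reduce to a common $T_{\on{red}}$-point of $Y$ (equivalently, the tuple lies on the small diagonal $\Delta(Y) \hra Y^{\{0\}\sqcup I}$ modulo nilpotents), and a $T$-point of $Y \times (\disk^n)^I$ is a pair $(y_0, (v_i)_{i \in I})$ with $y_0 \in Y(T)$ and each $v_i : T \to \BA^n$ factoring through $\mf{n}$.

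First I would define $\beta : Y^{\{0\}\sqcup I}_{\wh\Delta}(T) \to (Y \times (\disk^n)^I)(T)$ by sending $(y_0, (y_i)_{i \in I})$ to $(y_0, (f(y_i) - f(y_0))_{i \in I})$, with the difference taken in the additive group structure on $\BA^n$. Because $y_i$ and $y_0$ agree modulo $\mf{n}$, each coordinate of $f(y_i) - f(y_0)$ is nilpotent, so the difference lands in $\disk^n(T)$. In the other direction I would define $\alpha : (Y \times (\disk^n)^I)(T) \to Y^{\{0\}\sqcup I}_{\wh\Delta}(T)$ by sending $(y_0, (v_i)_{i \in I})$ to $(y_0, (y_i)_{i \in I})$, where $y_i : T \to Y$ is the unique morphism lifting the $T_{\on{red}}$-point $y_0|_{T_{\on{red}}}$ and satisfying $f(y_i) = f(y_0) + v_i$. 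This $y_i$ exists and is unique by the infinitesimal lifting property of étale morphisms, applied inductively along the finite filtration $R \supset \mf{n} \supset \mf{n}^2 \supset \cdots \supset 0$ of square-zero extensions.

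The composition $\beta \circ \alpha$ is equal to the identity by direct inspection of the construction. For $\alpha \circ \beta$, the same uniqueness clause of étale lifting implies the result: given $(y_0, (y_i)_{i \in I})$, both $y_i$ itself and the component $\alpha(\beta(\cdot))_i$ are $T$-points of $Y$ that reduce to $y_0|_{T_{\on{red}}}$ and map to $f(y_i)$ under $f$, so they must coincide. Functoriality in $I \in \fset$ is then immediate: a map $\xi : I_1 \to I_2$ induces reindexing maps $Y^{\{0\}\sqcup I_2}_{\wh\Delta} \to Y^{\{0\}\sqcup I_1}_{\wh\Delta}$ and $(\disk^n)^{I_2} \to (\disk^n)^{I_1}$, and the definitions of $\alpha, \beta$ manifestly intertwine them, since only the indexing of the tuple changes.

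The only mildly subtle point is verifying that these pointwise definitions assemble into a well-defined isomorphism of prestacks on all of $\on{Sch}^{\on{aff, ft}}_k$ — in particular that the étale lifting is natural in $T$ — but this is standard once we note that $R$ being Noetherian makes $\mf{n}$ nilpotent, so the construction reduces to finitely many applications of the universal property of étale morphisms on square-zero thickenings. Conceptually, the lemma is just the statement that $f^{\{0\}\sqcup I}: Y^{\{0\}\sqcup I} \to (\BA^n)^{\{0\}\sqcup I}$, being étale, identifies the formal neighborhood of $\Delta(Y)$ with the pullback of the formal neighborhood of $\Delta(\BA^n)$, and the latter trivially splits as $\BA^n \times (\disk^n)^I$ by the translation-invariant change of coordinates $v_i = t^{(i)} - t^{(0)}$.
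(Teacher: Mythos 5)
Your proof is correct and is essentially the paper's own argument unwound on $T$-points: the paper uses the \'etale map $\id_Y \times (f)^I$ to identify the formal neighborhood of the diagonal with that of the graph $\Gamma_{(f)^I}$ inside $Y \times (\BA^n)^I$ and then straightens it by the translation automorphism coming from the group structure of $\BA^n$, which is exactly your map $(y_0,(y_i)) \mapsto (y_0,(f(y_i)-f(y_0)))$, with unique infinitesimal lifting along $T_{\on{red}} \hra T$ supplying the inverse and the naturality in $T$ and in $I$. The only inaccuracy is your closing heuristic: the completion of $Y^{\{0\}\sqcup I}$ along $\Delta(Y)$ is not the full pullback of the completion of $(\BA^n)^{\{0\}\sqcup I}$ along its diagonal (the preimage of that diagonal can be strictly larger than $\Delta(Y)$ when $f$ is not a monomorphism), but your actual argument never relies on that remark.
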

\begin{proof}
	Consider the commutative diagram 
	\begin{cd}
		& Y^{\{0\} \sqcup I} \ar[d, "\id_Y \times (f)^I"] \\
		Y \ar[ru, "\Delta"] \ar[r, swap, "\Gamma_{(f)^I}"] & Y \times (\BA^n)^I
	\end{cd}
	Since the vertical map is \'etale, the formal neighborhood of $Y^{\{0\} \sqcup I}$ along $Y$ (via the diagonal map) is isomorphic to the formal neighborhood of $Y \times (\BA^n)^I$ along $Y$ (via the horizontal map). The graph $\Gamma_{(f)^I}$ defines an automorphism of $Y \times (\BA^n)^I$ via the group structure of $\BA^n$, and this automorphism allows us to replace the horizontal map by 
	\[
		Y \xrightarrow{(\id_Y, 0, \ldots, 0)} Y \times (\BA^n)^I. 
	\]
	But the formal neighborhood associated to this map is evidently $Y \times (\disk^n)^I$. 
\end{proof}

\subsubsection{} We are now in a position to deduce from Corollary~\ref{cor-triv} a triviality result which relates flat quasicoherent sheaves on the two sides of the crucial diagram of~\ref{rani-p}. 
\begin{lem}\label{lem-twist}
	Assume that $Y$ is smooth over $k$. For any $I \in \fsetne$, the functors
	\begin{cd}
		\QCoh(\Ran_{\wh{\Delta}}^{I}(Y))_{\on{flat}} \ar[r, shift right = 0.5, swap, "(d^I)^*"] & \QCoh(Y^I_{\wh{\Delta}})_{\on{flat}} \ar[l, shift right = 1.5, swap, "(p^I)^*"] 
	\end{cd}
	yield mutually inverse equivalences of categories, where the subscript `\emph{flat}' indicates quasicoherent sheaves flat over $\Ran_{\wh{\Delta}}^{I}(Y)$ and $Y^I_{\wh{\Delta}}$, respectively. 
\end{lem}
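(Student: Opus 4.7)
My plan is to reduce to Corollary~\ref{cor-triv} via a product decomposition of $\Ran^I_{\wh{\Delta}}(Y)$ coming from Lemma~\ref{subtract}. First, by Zariski-locality of $\QCoh(-)_{\on{flat}}$ and the evident compatibility of $(p^I)^*, (d^I)^*$ with restriction to open subsets of $Y$, I may assume $Y$ is affine and, using smoothness, that it admits an \'etale map $f : Y \to \BA^n$. Fix a basepoint $i_0 \in I$ (possible because $I \in \fsetne$) and set $I' := I \setminus \{i_0\}$. Applying Lemma~\ref{subtract} with its `$0$' identified with $i_0$ to the finite set $I' \sqcup J$ yields isomorphisms
\[
	Y^{I \sqcup J}_{\wh{\Delta}} \simeq Y \times (\disk^n)^{I' \sqcup J} \simeq Y^I_{\wh{\Delta}} \times (\disk^n)^J,
\]
functorial in $J \in \fsetsurjneop$, since the transition map for a surjection $J_1 \sra J_2$ is the identity on the $i_0$- and $I'$-coordinates and the corresponding generalized diagonal on the $J$-coordinates.

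Since prestacks are set-valued and products of sets distribute over colimits ($\on{Set}$ being cartesian closed), passing to the colimit over $J$ yields
\[
	\Ran^I_{\wh{\Delta}}(Y) \simeq Y^I_{\wh{\Delta}} \times \raninf^n,
\]
under which chasing through the definitions of~\ref{rani-d} and~\ref{rani-d2} shows that $p^I$ becomes the projection $\pr_1$ onto $Y^I_{\wh{\Delta}}$ and $d^I$ becomes $\id_{Y^I_{\wh{\Delta}}} \times q$, where $q : \Spec k \to \raninf^n$ is the basepoint. The lemma then reduces to the statement that pullback along $\pr_1$ and $\id \times q$ are mutually inverse equivalences between $\QCoh(Y^I_{\wh{\Delta}} \times \raninf^n)_{\on{flat}}$ and $\QCoh(Y^I_{\wh{\Delta}})_{\on{flat}}$. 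This is precisely Corollary~\ref{cor-triv} with its `$Y$' replaced by the formal scheme $Y^I_{\wh{\Delta}}$.

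Since that corollary is stated for $k$-schemes, I extend it by writing $Y^I_{\wh{\Delta}} = \colim_d Y^I_{\Delta_d}$ as the filtered colimit of its infinitesimal neighborhoods (each a $k$-scheme), applying Corollary~\ref{cor-triv} with $Y$ replaced by $Y^I_{\Delta_d}$ for each $d$, and taking the inverse limit of the resulting equivalences using that $\QCoh(\colim_d Z_d) \simeq \lim_d \QCoh(Z_d)$ for such filtered systems. The main obstacle is matching up the flatness hypotheses: on the Ran side we require level-wise flatness over $Y^{I \sqcup J}_{\wh{\Delta}}$, whereas Corollary~\ref{cor-triv} imposes flatness along $\raninf^n$. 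Under the product decomposition $Y^{I \sqcup J}_{\wh{\Delta}} \simeq Y^I_{\wh{\Delta}} \times (\disk^n)^J$, the two conditions correspond once one observes that $\mc{O}_{(\disk^n)^J}$ is flat over $k$ and that flatness can be checked factor-by-factor. Verifying this compatibility, together with the extension of Corollary~\ref{cor-triv} to the Ind-scheme $Y^I_{\wh{\Delta}}$, constitutes essentially all the technical work.
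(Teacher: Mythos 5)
Your overall route is essentially the paper's: reduce by Zariski-locality to the case of an \'etale map $Y \to \BA^n$, pick a basepoint $i_0 \in I$, apply Lemma~\ref{subtract} to decompose $Y^{I \sqcup J}_{\wh{\Delta}}$ functorially in $J$, pass to the colimit to exhibit $\Ran^I_{\wh{\Delta}}(Y)$ as a product with $\raninf^n$ under which $p^I$ becomes a projection, and then invoke Corollary~\ref{cor-triv} after truncating the formal directions and passing to the limit (the paper truncates the $(\disk^n)^{I\setminus\{i\}}$ factor rather than the diagonal neighborhood, but this is the same device).

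There is, however, one incorrect step: under the identification coming from Lemma~\ref{subtract}, the map $d^I$ does \emph{not} become $\id_{Y^I_{\wh{\Delta}}} \times q$ when $|I| \ge 2$. Tracing through the definitions, $d^I$ is the composite $Y^I_{\wh{\Delta}} \xrightarrow{\on{diag}} Y^{I \sqcup I}_{\wh{\Delta}} \to \Ran^I_{\wh{\Delta}}(Y)$, and under the identification $Y^{I\sqcup I}_{\wh{\Delta}} \simeq Y^I_{\wh{\Delta}} \times (\disk^n)^I$ the second component records the displacements $f(y_j) - f(y_{i_0})$ for $j \in I$; on the diagonal these vanish only in the coordinate $j = i_0$, so the induced map $Y^I_{\wh{\Delta}} \to \raninf^n$ is the (generally non-constant) displacement map, not the constant map to the origin. (Already for $Y = \BA^1$ and $I = \{1,2\}$ the second component is the class of $(0, y_2 - y_1)$, which is not the basepoint of $\raninf^1$.) Consequently your reduction of the lemma to ``$\pr_1^*$ and $(\id \times q)^*$ are mutually inverse'' is not justified as stated. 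The gap is easily repaired, and the paper does exactly this: only the identification $p^I \simeq \pr_1$ is needed, since Corollary~\ref{cor-triv} then shows $(p^I)^*$ is an equivalence, and the retraction identity $p^I \circ d^I = \id_{Y^I_{\wh{\Delta}}}$ from~\ref{rani-d2} forces $(d^I)^*$ to be its inverse. With that substitution (and a word on matching the flatness conditions, which you already flag), your argument goes through and coincides with the paper's proof.
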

\begin{proof}
	This proof will show that $(p^I)^*$ is an equivalence of categories. This implies the statement about $(d^I)^*$ because we have $p^I \circ d^I = \id_{Y^I_{\wh{\Delta}}}$. 	
	
	First, we argue that both sides are global sections of Zariski sheaves of categories on $Y$. 
	\begin{enumerate}
		\item For any scheme $\wt{Y}$ equipped with an isomorphism $\sigma : \wt{Y}^{\on{red}} \simeq Y$, the category $\QCoh(\wt{Y})$ is the global sections of a Zariski sheaf of categories on $Y$ whose value on an open subscheme $U \subset Y$ is $\QCoh(\wt{U})$, where $\wt{U} \subset \wt{Y}$ is the unique open subscheme of $\wt{Y}$ whose underlying reduced scheme identifies with $U$ via $\sigma$. 
		\item This statement remains true when $\wh{Y}$ is replaced by any prestack $\mc{Z}$ of the form $\colim_{c \in \mc{C}} Z_c$, where $Z_{(-)}$ is a functor from some category $\mc{C}$ to the category of pairs $(\wt{Y}, \sigma)$ as in point (1). This follows from (1) because $\QCoh(\mc{Z}) = \lim_c \QCoh(Z_c)$ and limits of sheaves are computed pointwise. 
		\item Finally, observe that $\Ran^I_{\wh{\Delta}}(Y)$ and $Y^I_{\wh{\Delta}}$ are prestacks of the form considered in (2). 
	\end{enumerate}
	Since the functor $(p^I)^*$ arises from a map of sheaves in this sense, it suffices to prove the lemma after replacing $Y$ by $U$, where $U$ ranges over a Zariski open cover of $Y$. In this way, we may assume that $Y$ has a globally-defined coordinate system, i.e.\ an \'etale map $Y \to \BA^n$. 
	
	Although the functor $(p^I)^*$ is canonical, we shall prove that it is an equivalence by doing something noncanonical. Since $I$ is assumed to be nonempty, we may pick an element $i \in I$ and thereby obtain an isomorphism 
	\e{
		Y^{I \sqcup J}_{\wh{\Delta}} &\simeq Y \times (\disk^n)^{(I \setminus \{i\}) \sqcup J} \\
		&\simeq Y \times (\disk^n)^{I \setminus \{i\}} \times (\disk^n)^J
}
 by Lemma~\ref{subtract}. Taking colimits with respect to $J \in \fsetsurjneop$, we obtain an isomorphism 
	\[
		\phi : \Ran^I_{\wh{\Delta}}(Y) \simeq Y \times (\disk^n)^{I \setminus \{i\}} \times \raninf^n. 
	\]
	Similarly, we get an isomorphism $\phi' : Y^I_{\wh{\Delta}} \simeq Y \times (\disk^n)^{I \setminus \{i\}}$, and it is not difficult to see that this diagram commutes: 
	\begin{cd}
		\Ran^I_{\wh{\Delta}}(Y) \ar[r, "p^I"] \ar{d}[anchor=north, rotate=90]{\sim}[swap]{\phi} & Y^I_{\wh{\Delta}} \ar{d}[anchor=north, rotate=90]{\sim}[swap]{\phi'} \\
		Y \times (\disk^n)^{I \setminus \{i\}} \times \raninf^n \ar[r, "\pr_{12}"] & Y \times (\disk^n)^{I \setminus \{i\}}
	\end{cd}
	Therefore, it suffices to show that $(\pr_{12})^*$ induces an isomorphism on categories of flat quasicoherent sheaves. Since limits preserve isomorphisms, it suffices to show the analogous statement for the maps 
	\[
		Y \times (\disk^n_d)^{I \setminus \{i\}} \times \raninf^n \xrightarrow{\pr_{12}} Y \times (\disk^n_d)^{I \setminus \{i\}}
	\]
	for integers $d \ge 0$. But this statement follows from Corollary~\ref{cor-triv}. 
\end{proof}

\subsubsection{} Following through with the plan articulated at the start of this subsection, we interpret~\ref{rani-p} and Lemma~\ref{lem-twist} as saying that, as far as flat quasicoherent sheaves are concerned, projection maps can be realized in terms of generalized diagonal maps. This allows us to begin constructing $\D$-module structures: 
\begin{lem} \label{lem-psi}
	There is a functor $\Psi: \QCoh(\ran(Y))_{\on{flat}} \to \QCoh(Y_\dr)_{\on{flat}}$ such that this diagram strictly commutes: 
	\begin{cd}
		\QCoh(\ran(Y)_\dr)_{\on{flat}} \ar[r] \ar[d] & \QCoh(\ran(Y))_{\on{flat}} \ar[d] \ar[ld, "\Psi"] \\
		\QCoh(Y_{\dr})_{\on{flat}} \ar[r] & \QCoh(Y)_{\on{flat}}
	\end{cd}
	The four unlabeled functors are induced by pullback along $Y \hra \ran(Y)$ and along the map from a prestack to its associated de Rham prestack. 
\end{lem}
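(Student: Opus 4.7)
\emph{Plan.} The construction of $\Psi$ is based on Lemma~\ref{lem-twist}. For each nonempty finite set $I$, there is a canonical map $\rho^I : \Ran^I_{\wh{\Delta}}(Y) \to \Ran(Y)$, induced on the colimit by the compositions $Y^{I \sqcup J}_{\wh{\Delta}} \hra Y^{I \sqcup J} \to \Ran(Y)$. Given $\mc{F} \in \QCoh(\Ran(Y))_{\on{flat}}$, I will set $\Psi(\mc{F})_I := (d^I)^* (\rho^I)^* \mc{F}$ for each $I \in \fsetne$; by Lemma~\ref{lem-twist} this lies in $\QCoh(Y^I_{\wh{\Delta}})_{\on{flat}}$.

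\emph{Compatibility.} To assemble the $\Psi(\mc{F})_I$ into an object of $\QCoh(Y_\dr)_{\on{flat}} \simeq \lim_{I \in \fsetne} \QCoh(Y^I_{\wh{\Delta}})_{\on{flat}}$ (where the limit is over all morphisms in $\fsetne$, not just surjections), I need, for each $\xi : I_1 \to I_2$ in $\fset$, a canonical isomorphism $\Psi(\mc{F})_{I_2} \simeq (Y^\xi)^* \Psi(\mc{F})_{I_1}$, functorial in $\xi$. The key inputs are: \textbf{(a)} $\rho^{(-)}$ is a natural transformation $\Ran^{(-)}_{\wh{\Delta}}(Y) \Rightarrow \ul{\Ran(Y)}$ of functors $\fset^{\op} \to \on{PreStk}_k$, which follows directly from the colimit structure of $\Ran(Y)$; \textbf{(b)} $p^{(-)}$ is a natural transformation by~\ref{rani-d2}; and \textbf{(c)} $(p^I)^* (d^I)^* \simeq \id$ on flat sheaves, by Lemma~\ref{lem-twist}. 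Combining these:
\begin{align*}
	\Psi(\mc{F})_{I_2} &= (d^{I_2})^* (\Ran^\xi_{\wh{\Delta}})^* (\rho^{I_1})^* \mc{F} \\
	&= (d^{I_2})^* (\Ran^\xi_{\wh{\Delta}})^* (p^{I_1})^* \Psi(\mc{F})_{I_1} \\
	&= (d^{I_2})^* (p^{I_2})^* (Y^\xi)^* \Psi(\mc{F})_{I_1} \\
	&= (Y^\xi)^* \Psi(\mc{F})_{I_1},
\end{align*}
where the first equality uses naturality of $\rho^{(-)}$, the second and fourth use Lemma~\ref{lem-twist}, and the third uses naturality of $p^{(-)}$.

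\emph{Diagram.} For the lower triangle, I must show $\Psi(\mc{F})|_Y \simeq \mc{F}|_Y$. Since $Y^{\{*\}}_{\wh{\Delta}} = Y$, this reduces to the observation that $\rho^{\{*\}} \circ d^{\{*\}} : Y \to \Ran(Y)$ unpacks to the standard inclusion. For the upper triangle, given $\mc{G} \in \QCoh(\Ran(Y)_\dr)_{\on{flat}}$, both $\Psi(\mc{G}|_{\Ran(Y)})$ and $\mc{G}|_{Y_\dr}$ restrict on each $Y^I_{\wh{\Delta}}$ to pullbacks of $\mc{G}$ along maps $Y^I_{\wh{\Delta}} \to \Ran(Y)_\dr$ that factor through $Y_\dr \hra \Ran(Y)_\dr$ and agree on underlying reduced schemes. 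Since any map to a de Rham prestack is determined by its restriction to the underlying reduced scheme, the two maps coincide, so the sheaves agree.

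\emph{Main obstacle.} The crux is that although $d^{(-)}$ fails to be a natural transformation (see~\ref{rani-p2}), its action on flat sheaves coincides with the inverse of $(p^{(-)})^*$ by Lemma~\ref{lem-twist}, and $p^{(-)}$ \emph{is} natural. Together with the naturality of $\rho^{(-)}$, this transfers the desired functoriality to $\Psi(\mc{F})$. The remaining checks (naturality of $\rho^{(-)}$, commutativity of the two triangles) are routine verifications at the level of prestacks.
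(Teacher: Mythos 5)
Your proof is correct and takes essentially the same approach as the paper's: you pull $\mc{F}$ back to the partially-labeled completed Ran spaces $\Ran^I_{\wh{\Delta}}(Y)$, use Lemma~\ref{lem-twist} (the fact that $(p^I)^*$ and $(d^I)^*$ are mutually inverse) to trade these for sheaves on $Y^I_{\wh{\Delta}}$, and exploit the naturality of $p^{(-)}$ to endow the result with functoriality with respect to all morphisms of $\fsetne$ (not just surjections) so that it descends to $\QCoh(Y_\dr)$. The only difference is one of packaging: the paper phrases step two as a natural equivalence of functors $\QCoh\circ\Ran^{(-)}_{\wh\Delta}(Y)\simeq\QCoh\circ Y^{(-)}_{\wh\Delta}$ on $\fsetne$ and keeps the precomposition by $\tau:\Delta\to\fsetne$ explicit (so it only needs a map $\lim_{\fsetne}\to\lim_{\Delta}$, not the cofinality of $\tau$ that your stated equivalence $\QCoh(Y_\dr)_{\on{flat}}\simeq\lim_{\fsetne}\QCoh(Y^I_{\wh\Delta})_{\on{flat}}$ would require), whereas you unwind the equivalence into the explicit four-step chain of isomorphisms, which makes the role of each naturality input ($\rho$, $p$, and Lemma~\ref{lem-twist}) transparent but leaves the cocycle compatibility of your $\sigma_\xi$'s (needed to actually land in the limit category) as an unstated routine check.
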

\begin{proof}
	Given $\mc{F} \in \QCoh(\ran(Y))_{\on{flat}}$, we need to equip $\mc{F}_Y \in \QCoh(Y)_{\on{flat}}$ with the datum of descent along $Y \to Y_{\dr}$, i.e.\ an action of the infinitesimal groupoid of $Y$. Recall that the infinitesimal groupoid of $Y$ is a functor $L_Y : \Delta^{\op} \to \on{PreStk}_{k}$ for which $L_Y([0]) = Y$, $L_Y([0 \to 1]) = Y_{\wh{\Delta}}^{2}$, and the two maps $[0] \rightrightarrows [0 \to 1]$ correspond to the two projections $Y_{\wh{\Delta}}^2 \rightrightarrows Y$.\footnote{It is unfortunate that the notation for the simplicial category $\Delta$ clashes with the notation for the small diagonal $\Delta : Y \hra Y^I$. In this proof, the latter meaning is intended only when $\Delta$ appears as a subscript $(-)_{\wh{\Delta}}$ indicating completion.} More precisely, if $\tau : \Delta \to \fsetne$ is the functor sending $[0 \to \cdots \to n]$ to the underlying set $\{0, \ldots, n\}$, then we have $L_Y \simeq Y_{\wh{\Delta}}^{(-)} \circ \tau$. 
	
	(In the rest of this proof, we omit the subscript `flat' for notational convenience. Every instance of $\QCoh$ is meant to be $\QCoh_{\on{flat}}$.) 
	
	\begin{rmk}
		The category of quasicoherent sheaves equipped with action by the infinitesimal groupoid is $\lim (\QCoh \circ L_Y)$ by definition. Here, as motivation, we explain a naive attempt to produce an object in this category and why it fails. By restricting $\mc{F}$ to $Y_{\wh{\Delta}}^I$ for each $I \in \fsetsurjne$, we obtain an object 
		\[
			\mc{F}' \in \lim (\QCoh \circ Y_{\wh{\Delta}}^{(-)} \circ \iota). 
		\]
		(This is just the restriction of $\mc{F}$ along $\Ran_{\wh{\Delta}}(Y) \to \Ran(Y)$, and the category displayed above is just $\QCoh(\Ran_{\wh{\Delta}}(Y))$ by definition.) If we had an object in $\lim (\QCoh \circ Y_{\wh{\Delta}}^{(-)})$, then we could precompose by $\tau$ in the limit and thereby obtain an object in the desired category. The presence of $\iota$ above indicates that $\mc{F}'$ does not have sufficient functoriality for this to work: it defines a family of sheaves on the $Y_{\wh{\Delta}}^I$ which is not equipped with functoriality with respect to nonsurjective maps $I \to J$. The next construction fixes this problem.
	\end{rmk} 
	
	Since $\emptyset \in \fset$ is initial, we get maps $\Ran_{\wh{\Delta}}^I(Y) \to \Ran_{\wh{\Delta}}^{\emptyset}(Y) \simeq \Ran_{\wh{\Delta}}(Y)$ which are functorial in $I \in \fset$. Composing with the map $\Ran_{\wh{\Delta}}(Y) \to \Ran(Y)$ and pulling back $\mc{F}$ to each $\Ran_{\wh{\Delta}}^I(Y)$ yields an object
	\[
		\mc{F}' \in \lim_{\fset}(\QCoh \circ \Ran_{\wh{\Delta}}^{(-)}(Y)). 
	\]
	Since Lemma~\ref{lem-twist} is functorial in $I \in \fsetne$, it gives a natural equivalence between $\QCoh \circ \Ran_{\wh{\Delta}}^{(-)}(Y)$ and $\QCoh \circ Y_{\wh{\Delta}}^{(-)}$ as functors on $\fsetne$. Under this equivalence, $\mc{F}'$ corresponds to an object
	\[
		\mc{F}'' \in \lim_{\fsetne}(\QCoh \circ Y_{\wh{\Delta}}^{(-)} ). 
	\]
	Precomposing by $\tau$, we obtain an object 
	\[
		\mc{F}''' \in \lim_{\Delta}(\QCoh \circ Y_{\wh{\Delta}}^{(-)} \circ \tau). 
	\]
	By the first paragraph of this proof, $\QCoh \circ Y_{\wh{\Delta}}^{(-)} \circ \tau$ coincides with $\QCoh \circ L_Y$, so $\mc{F}'''$ is a quasicoherent sheaf on $Y$ equipped with action by $L_Y$, and we set $\Psi(\mc{F}) := \mc{F}'''$. It is straightforward to amplify this to yield a definition of $\Psi$ as a functor. 
	
	Let us equip the lower-right triangle with the datum of commutativity. This is done via the commutative diagram 
	\begin{cd}
		\Ran_{\wh{\Delta}}^{\{1\}} \ar[rd, swap, "\Ran^c_{\wh{\Delta}}(Y)"] & & Y \ar[ll, swap, "d^{\{1\}}"] \ar[ld, "i"] \\
		& \Ran(Y)
	\end{cd}
	which results from the property (ii) mentioned in~\ref{rani-d2}. By the construction of $\Psi$, the pullback of $\Psi(\mc{F})$ to $Y$ is obtained by pulling back $\mc{F}$ along $\Ran^c_{\wh{\Delta}}(Y)$ and then applying the equivalence of Lemma~\ref{lem-twist}. Since that equivalence occurs via $(d^{\{1\}})^*$, we obtain a natural isomorphism from this sheaf to $i^*\mc{F} =: \mc{F}_Y$, as desired. 
	
	For the upper-left triangle, we assume that $\mc{F}$ is the pullback to $\ran(Y)$ of some sheaf $\mc{G}$ on $\ran(Y)_{\dr}$. We have to show that the crystal structure on $\mc{F}_Y$ arising from $\Psi$ is equal to the crystal structure coming from $\mc{G}$. Since the map from a prestack to its de Rham prestack is functorial, we have a natural transformation 
	\[
		\QCoh \circ \Ran_{\wh{\Delta}}^{(-)}(Y)_{\dr} \Longrightarrow \QCoh \circ \Ran_{\wh{\Delta}}^{(-)}(Y). 
	\]
	Moreover, all the steps in the construction of $\Psi$ can be carried through for the de Rham versions of the prestacks involved, and the two versions are related by a natural transformation as above. In more detail, we have a commutative diagram 
	\begin{cd}
		\QCoh(\ran(Y)_{\dr}) \ar{d} \ar[r] & \QCoh(\ran(Y)) \ar[d] \\
		\displaystyle\lim_{\fset}(\QCoh \circ \Ran_{\wh{\Delta}}^{(-)}(Y)_{\dr}) \ar[r]  \ar{d}[rotate=90, anchor=south]{\sim} & \displaystyle\lim_{\fset}(\QCoh \circ \Ran_{\wh{\Delta}}^{(-)}(Y)) \ar[d] \\
		\displaystyle\lim_{\fsetne}(\QCoh \circ Y^{(-)}_{\wh{\Delta}}) \ar[r]  \ar{d}[rotate=90, anchor=south]{\sim} & \displaystyle\lim_{\fsetne}(\QCoh \circ Y^{(-)}_{\wh{\Delta}}) \ar[d] \\
		\displaystyle\lim_{\Delta}( \QCoh \circ L_{Y_{\dr}}) \ar[r, "\sim"] & \displaystyle\lim_{\Delta}( \QCoh \circ L_Y) 
	\end{cd}
	Tracing $\mc{G} \in \QCoh(\ran(Y)_{\dr})$ through the upper composition yields $\Psi(\mc{F})$ by definition. For the lower composition, note that the functors $\Ran_{\wh{\Delta}}^{(-)}(Y)_\dr$, $Y^{(-)}_{\wh{\Delta}}$, and $L_{Y_{\dr}}$ are constant with value $Y_{\dr}$, so the lower three terms in the left column are just $\QCoh(Y_{\dr})$, and with these identifications the labeled arrows become identity functors. Hence, $\mc{G}$ maps under the lower composition to $\mc{G}_{Y_{\dr}} \in \QCoh(Y_{\dr})$, so $\Psi(\mc{F}) \simeq \mc{G}_{Y_{\dr}}$ as desired. 
\end{proof}

\subsection{Proof of Theorem~\ref{thm-dr}}  \label{ssec-canon-proof}

We construct an inverse functor $\Phi : \QCoh(\ran(X))_{\on{flat}} \to \QCoh(\Ran(X)_{\dr})_{\on{flat}}$ as follows. Since Lemma~\ref{lem-psi} is functorial with respect to $Y$, we may take $Y = X^I$ to obtain a diagram 
\begin{cd}
	\QCoh(\ran(X^I)_\dr)_{\on{flat}} \ar[r] \ar[d] & \QCoh(\ran(X^I))_{\on{flat}} \ar[d] \ar[ld, "\Psi_{I}"] \\
	\QCoh(X^I_{\dr})_{\on{flat}} \ar[r] & \QCoh(X^I)_{\on{flat}}
\end{cd}
which is functorial with respect to $I \in \fsetsurjne$. We have an identification $(X^{I})^{(-)} \simeq X^{(I \times (-))}$ of functors $\fsetsurjne \to \on{PreStk}_{k}$, and combining this with the colimit version of Lemma~\ref{lims} yields a map  
\begin{align*}
	\ran(X^I) &= \colim (X^{I})^{(-)}  \\
	&\simeq \colim X^{(I \times (-))} \\
	& \quad \to \colim X^{(-)} \\
	& \quad \phantom{\to\ } = \ran(X) 
\end{align*}
which is functorial with respect to $I$. There is a similar map for the correpsonding de Rham prestacks, and pulling back along these maps allows us to expand the previous diagram: 
\begin{cd}
	\QCoh(\ran(X)_{\dr})_{\on{flat}} \ar[r] \ar[d] & \QCoh(\ran(X))_{\on{flat}} \ar[d] \\
	\QCoh(\ran(X^I)_\dr)_{\on{flat}} \ar[r] \ar[d] & \QCoh(\ran(X^I))_{\on{flat}} \ar[d] \ar[ld, "\Psi_{I}"] \\
	\QCoh(X^I_{\dr})_{\on{flat}} \ar[r] & \QCoh(X^I)_{\on{flat}}
\end{cd}
In fact, we focus solely on the outer square: 
\begin{cd}
	\QCoh(\ran(X)_{\dr})_{\on{flat}} \ar[r] \ar[d] & \QCoh(\ran(X))_{\on{flat}} \ar[d] \ar[ld, "\Phi_I"]  \\
	\QCoh(X^I_{\dr})_{\on{flat}} \ar[r] & \QCoh(X^I)_{\on{flat}}
\end{cd}
Here $\Phi_I$ is the composition of $\Psi_I$ with the upper-right vertical map. Taking the limit of this diagram over $I \in \fsetsurjne$, and using that $\lim_{\fsetsurjne}(\QCoh(X^{(-)})_{\on{flat}}) \simeq \QCoh(\Ran(X)_{\on{flat}})$, we obtain a diagram 
\begin{cd}
	\QCoh(\ran(X)_{\dr})_{\on{flat}} \ar[r] \ar[d] & \QCoh(\ran(X))_{\on{flat}} \ar[d] \ar[ld, "\Phi"]  \\
	\QCoh(\ran(X)_{\dr})_{\on{flat}} \ar[r] & \QCoh(\Ran(X))_{\on{flat}}
\end{cd}
where the vertical maps are the identity functors and the horizontal maps are both given by pullback along $\Ran(X) \to \Ran(X)_{\dr}$. Since this diagram is strictly commutative, we conclude that $\Phi$ is an inverse to the horizontal maps. This concludes the proof of Theorem~\ref{thm-dr}.

\section{Triviality of $\Pic(\Ran(X))$} \label{sec-pic-triv}

\subsection{The Picard groupoid as a quadratic functor} \label{ssec-pic-triv-1}
We use the results of Section~\ref{sec-poly} to show that line bundles on $\Ran(X)$ are trivial. For this, we need $I \rightsquigarrow \Pic(X^I)$ to be an $n$-excisive functor defined on $\fset_*$. In Proposition~\ref{prop-pic-1}, hypothesis (i) will allow us to pick a basepoint on $X$, which ensures this functor is defined on $\fset_*$. Hypothesis (ii) will allow us to apply the Theorem of the Cube to conclude that this functor is quadratic. Unfortunately for our purposes, this theorem is usually stated for the Picard group rather than the Picard groupoid, so we will need to discuss the $\pi_1$ and $\pi_0$ terms of the groupoid separately. However, as a corollary of this discussion, we deduce the corresponding statement for Picard groupoids, which can be interpreted as an improved version of the Theorem of the Cube, see Remark~\ref{cube-improved}. 

\subsubsection{} Here is the main result of this subsection: 
\begin{prop}\label{prop-pic-1} 
	Let $k$ be a field, and let $X$ be an algebraic variety over $k$ which satisfies the following properties: 
	\begin{enumerate}[label=(\roman*)]
		\item $X(k)$ is nonempty. 
		\item $X$ admits an open embedding into a smooth proper geometrically integral $k$-variety. 
	\end{enumerate} 
	Then the pullback functor $\Pic(\Spec k) \to \Pic(\Ran(X))$ is an equivalence. 
\end{prop}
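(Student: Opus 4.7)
The plan is to realize the assignment $I \mapsto \Pic(X^I)$ as a functor $G : \fset_* \to \Pic\text{-gpds}$ and then invoke Theorem~\ref{thm2}. Using the $k$-point $x_0 \in X(k)$ provided by hypothesis~(i), for any partial map $\xi : \{*\} \sqcup I \to \{*\} \sqcup J$ in $\fset_*$ I obtain a morphism $X^J \to X^I$ whose $i$-th coordinate is the $\xi(i)$-th coordinate projection of $X^J$ when $\xi(i) \ne *$ and the constant map at $x_0$ otherwise; pullback of line bundles along these morphisms makes $I \mapsto \Pic(X^I)$ into a functor $G : \fset_* \to \Pic\text{-gpds}$ with $G(\{*\}) = \Pic(\Spec k)$. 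Since $\Pic(\Ran(X)) \simeq \lim_{I \in \fsetsurjne} \Pic(X^I) = \lim(G \circ \iota)$ by~\ref{defs}, the proposition becomes the statement $\lim(G\circ\iota) \simeq G(\{*\})$, which Theorem~\ref{thm2} will deliver as soon as $G$ is shown to be $2$-excisive.

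Viewing $G$ via its image in $\on{D}^b(\on{AbGrp})^{[-1,0]}$ (see the footnote to the second lemma in the introduction), $2$-excisiveness reduces by exactness of the $t$-structure to $2$-excisiveness of the two cohomology functors $\pi_1 G(\{*\} \sqcup I) = \Gamma(X^I, \oh^\times)$ and $\pi_0 G(\{*\} \sqcup I) = \on{Pic}(X^I)$ separately. For $\pi_1 G$, I would use hypothesis~(ii) together with the Rosenlicht-type description of units on smooth varieties admitting a smooth proper compactification to obtain a canonical splitting $\Gamma(X^I, \oh^\times) \simeq k^\times \oplus \bigoplus_{i \in I} U(X)$, where $U(X) := \Gamma(X,\oh^\times)/k^\times$ is a finitely generated free abelian group; this makes $\pi_1 G$ visibly the direct sum of a constant functor and a $1$-excisive (monoidal) functor on $\fset_*$, hence $1$-excisive and a fortiori $2$-excisive.

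For $\pi_0 G$, I would first apply the classical Theorem of the Cube to the smooth proper geometrically integral variety $\ol{X}$ provided by hypothesis~(ii), which gives that the functor $H : I \mapsto \on{Pic}(\ol{X}^I)$ is $2$-excisive. To pass from $\ol{X}$ to $X$, I invoke the excision sequence
\e{
K(I) \longrightarrow \on{Pic}(\ol{X}^I) \longrightarrow \on{Pic}(X^I) \longrightarrow 0,
}
where $K(I)$ is the subgroup generated by the classes of irreducible components of $(\ol{X}\setminus X)^I$-type boundary divisors on $\ol{X}^I$. Because each boundary divisor on $\ol{X}^I$ is a pullback of a boundary divisor along a single coordinate projection, $K$ is easily seen to be $1$-excisive as a subfunctor of $H$. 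By Lemma~\ref{thicc}, the subquotient $\pi_0 G = H/K$ is then $2$-excisive as well, and the corresponding statement for the Picard groupoid $G$ follows.

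With $G$ established to be $2$-excisive, Theorem~\ref{thm2} applied to $\on{D}^b(\on{AbGrp})$ with its standard $t$-structure (hypothesis $(\bullet)$ being satisfied since $G$ is concentrated in degrees $[-1, 0]$) yields the desired isomorphism $\lim(G\circ\iota) \simeq G(\{*\})$, and unwinding the definitions this is precisely the statement that pullback $\Pic(\Spec k) \to \Pic(\Ran(X))$ is an equivalence. As a byproduct of the $2$-excisiveness of $G$, Corollary~\ref{cube-improved} (the improved Theorem of the Cube for $\Pic$ as a groupoid) will drop out. The main obstacle is the transition from $\ol{X}$ to $X$: the Theorem of the Cube is only available in the proper case, so the $2$-excisiveness for $X$ must be extracted by carefully organizing the boundary classes on all products $\ol{X}^I$ into a $1$-excisive sub-functor and then applying the closure property of Lemma~\ref{thicc}.
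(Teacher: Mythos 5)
Your route differs from the paper's at its central step, and that is where the gap lies. To invoke Theorem~\ref{thm2} you need the Picard-groupoid functor $G$, regarded as landing in a \emph{stable} category such as $\on{D}(\on{AbGrp})$, to be $2$-excisive, and you assert that this ``reduces by exactness of the $t$-structure'' to the separate statements that $\pi_1 G(\{*\}\sqcup I)=\Gamma(X^I,\oh_{X^I}^\times)$ is $1$-excisive and $\pi_0 G(\{*\}\sqcup I)=\on{Pic}(X^I)$ is quadratic. That reduction is never proved, and it is exactly the point the paper refuses to rely on: Remark~\ref{rmk-compare} establishes ``abelian $n$-excisive $\Rightarrow$ $n$-excisive into $\on{D}(\mc{A})$'' only for $n\le 1$ and warns that it need not hold for larger $n$, while the introduction footnote you cite, together with the remark preceding Corollary~\ref{cube-improved}, says in so many words that $I\mapsto\Pic(X^I)$, although $2$-excisive into Picard groupoids $\simeq\on{D}(\on{AbGrp})^{[-1,0]}$, is not thereby $2$-excisive into $\on{D}(\on{AbGrp})$. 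The reason the levelwise reduction is not formal: for a special $3$-cube $\Xi$, weak cartesianness of $G(\Xi)$ in $\on{D}(\on{AbGrp})$ amounts to acyclicity of the full cube complex $G(\{*\}\sqcup I)\to\bigoplus_i G(\{*\}\sqcup I\setminus\{i\})\to\bigoplus_{i<j}G(\{*\}\sqcup I\setminus\{i,j\})\to G(\{*\})$, whereas ``polynomial of degree $\le 2$'' in the sense of Lemma~\ref{lem-poly} only records exactness at the first two spots; the missing exactness is precisely what controls $H^1$ of the homotopy limit of the punctured cube, and it does not follow merely from $\pi_0 G$ and $\pi_1 G$ being polynomial without a further argument. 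Note also that Theorem~\ref{thm2} cannot be applied with target the (non-stable) category of Picard groupoids, and $\on{D}^b(\on{AbGrp})$ is not complete, so you would in any case have to work in $\on{D}(\on{AbGrp})$ or $\on{D}^{\ge-1}(\on{AbGrp})$ and then compare the truncated limit defining $\Pic(\Ran(X))$ with the homotopy limit there.

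The paper's proof is organized precisely to avoid any derived excision statement in degree $2$: it writes $\Pic(\Ran(X))$ as $\tau^{\le 0}$ of the homotopy limit of $\on{R}^\cdot\Gamma(X^I,\oh_{X^I}^\times)[1]$ and applies the Bousfield--Kan spectral sequence \cite{bk}, so that only $\lim^0$ and $\lim^1$ of the units functor (degree $\le 1$, where Remark~\ref{rmk-compare} legitimately upgrades to $\on{D}(\on{AbGrp})$ and Theorem~\ref{thm2} applies) and $\lim^0$ of $I\mapsto\on{Pic}(X^I)$ (where the purely abelian Proposition~\ref{prop1} suffices) are needed. Your treatment of the two layers themselves agrees with the paper's: linearity of units on products, and quadraticity of $\on{Pic}(X^I)$ from the Theorem of the Cube for $\ol{X}$ plus surjectivity of $\on{Pic}(\ol{X}^I)\to\on{Pic}(X^I)$ and Lemma~\ref{thicc} (your analysis of the boundary-divisor subfunctor $K$ is superfluous; only the surjectivity of the restriction map is used). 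If you wish to keep your more direct route, you must supply the missing step, for instance by proving via the $\on{Ind}/\on{Prim}$ decomposition of Proposition~\ref{prop-indprim} that an abelian-valued polynomial functor of degree $\le n$ sends special cubes of dimension $>n$ to cubes with vanishing total fiber in $\on{D}(\on{AbGrp})$, and then running a d\'evissage along $\tau^{\le-1}G\to G\to\tau^{\ge0}G$; that is a genuine additional argument, not a consequence of $t$-exactness, and it is not contained in the paper.
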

\begin{proof}
	In~\ref{defs}, we noted the equivalence
	\[
		\Pic(\Ran(X)) \simeq \lim_{I \in \fsetsurjne} \Pic(X^I). 
	\]
	Recall that the category of strictly commutative Picard groupoids is equivalent to $\on{D}(\on{AbGrp})^{[-1, 0]}$, which is the full subcategory consisting of complexes in degrees $-1$ and $0$.\footnote{For details regarding this point, see~\cite[Sect.\ 1.4]{sga43}.} In $\on{D}(\on{AbGrp})$, we have 
	\[
		\lim_{I \in \fsetsurjne} \Pic(X^I) \simeq \tau^{\le 0} \left(\holim_{I \in \fsetsurjne} \on{R}^\cdot \Gamma(X^I, \oh_{X^I}^\times)[1] \right), 
	\]
	and similarly $\Pic(\Spec k) \simeq k^\times[1]$. Therefore, to prove the proposition it suffices to show the following two statements:   
	\begin{itemize}
		\item $\mathrm{H}^0 \holim_{I \in \fsetsurjne} \on{R}^\cdot\Gamma(X^I, \oh_{X^I}^\times)  \simeq k^\times$
		\item $\mathrm{H}^1  \holim_{I \in \fsetsurjne} \on{R}^\cdot\Gamma(X^I, \oh_{X^I}^\times) \simeq 0$ 
	\end{itemize}
	Applying the spectral sequence of \cite[7.1]{bk}, we reduce to proving these statements: 
	\begin{enumerate}
		\item[(A1)] $\nlim{0}_{I \in \fsetsurjne} \on{R}^0\Gamma(X^I, \oh_{X^I}^\times) \simeq k^\times$. 
		\item[(A2)] $\nlim{1}_{I \in \fsetsurjne} \on{R}^0\Gamma(X^I, \oh_{X^I}^\times) \simeq 0$. 
		\item[(B)] $\nlim{0}_{I \in \fsetsurjne} \on{R}^1\Gamma(X^I, \oh_{X^I}^\times) \simeq 0$
	\end{enumerate}
	This is accomplished in~\ref{van1} and \ref{van2}. 
\end{proof}

\subsubsection{Setup} By assumption (i), we may choose a basepoint $x_0 \in X(k)$. This gives a functor $\fset_*^{\op} \to \on{Sch}_{k}$ which sends $\{*\} \sqcup I \mapsto X^I$. 

\subsubsection{Proof of (A1) and (A2)} \label{van1} 
Define the functor $G : \fset_* \to \on{AbGrp}$ by the assignment $\{*\} \sqcup I \mapsto \Gamma(X^I, \oh_{X^I}^\times)$. 

\begin{lem} \label{want-linear} 
	The functor $G$ is linear in the sense of Definition~\ref{def-poly}. 
\end{lem}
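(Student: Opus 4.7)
The plan is to apply Rosenlicht's lemma on invertible functions on a product variety to decompose $G$ explicitly, and then recognize the resulting functor as an extension of degree $\le 1$ pieces. Hypothesis (ii) ensures that $X$ is geometrically integral (being open in a geometrically integral variety), and hypothesis (i) provides a rational point $x_0 \in X(k)$, which is precisely the basepoint pinning the extension of $I \rightsquigarrow X^I$ to a functor on $\fset_*^{\op}$. Iterated application of Rosenlicht's lemma then yields an isomorphism
\[
	\Gamma(X^I, \oh_{X^I}^\times) \simeq k^\times \oplus \bigoplus_{i \in I} U_X, \qquad U_X := \Gamma(X, \oh_X^\times)/k^\times,
\]
with the summand $k^\times$ singled out by evaluation at $(x_0, \ldots, x_0)$, giving a decomposition of functors $G \simeq \ul{k^\times} \oplus F$, where $F(\{*\} \sqcup I) := \bigoplus_{i \in I} U_X$.

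To verify naturality, note that a morphism $\xi \colon \{*\} \sqcup I \to \{*\} \sqcup J$ corresponds to the scheme map $X^J \to X^I$, $(y_j)_{j \in J} \mapsto (y_{\xi(i)})_{i \in I}$ (with the convention $y_* = x_0$). Pulling $c \cdot \prod_{i \in I} \pr_i^* g_i$ back along this map produces
\[
	\Big(c \cdot \prod_{i \in \xi^{-1}(*)} g_i(x_0)\Big) \cdot \prod_{j \in J} \pr_j^* \Big(\prod_{i \in \xi^{-1}(j)} g_i\Big),
\]
which matches the formula one expects from writing $F \simeq \on{Ind}(F_0)$ where $F_0 \colon \fsetsurj \to \on{AbGrp}$ is defined to be $U_X$ on singletons and $0$ on all other finite sets. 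Since $F_0(S) = 0$ for $|S| > 1$, Lemma~\ref{lem-poly}(iv) shows that $F$ is polynomial of degree $\le 1$. The constant summand $\ul{k^\times}$ is polynomial of degree $0$, and Lemma~\ref{thicc} (closure under extensions) lets us conclude that $G$ is polynomial of degree $\le 1$, i.e.\ linear.

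The main obstacle is invoking Rosenlicht's lemma at the required level of generality, since the classical statement is over an algebraically closed field. I would handle this by flat base change to $\ol{k}$ followed by Galois descent: the isomorphism over $\ol{k}$ is $\on{Gal}(\ol{k}/k)$-equivariant because the splitting into the $U_{X}$ summands is defined by naturality in $X$ and by the $k$-rational point $x_0$, and passing to Galois invariants is exact on the abelian groups that appear. Alternatively, one could verify condition (ii) of Lemma~\ref{lem-poly} directly: if $|I|>1$ and $f\in\Gamma(X^I,\oh^\times)$ restricts to $1$ along each hyperplane $\{x_i = x_0\}$, then writing $f = c\prod_i g_i(x_i)$ via Rosenlicht and restricting one variable at a time forces each $g_i$ to be constant and the total constant to equal $1$.
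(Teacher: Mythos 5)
Your proposal is correct and rests on the same key input as the paper: the Rosenlicht-type fact that invertible functions on a product of varieties split as external products (the paper cites~\cite[Lem.\ 5.1.15]{cg} and then simply checks criterion (i) of Lemma~\ref{lem-poly}, whereas you unwind the splitting into an explicit $\on{Ind}$-decomposition and verify criterion (iv), resp.\ (ii)). Your extra care about the ground field not being algebraically closed, handled by Galois descent using geometric integrality from (ii) and the rational point from (i), is a legitimate and welcome elaboration of a point the paper leaves implicit, but it does not change the approach.
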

\begin{proof}
	By the criterion of Lemma~\ref{lem-poly}(i), it suffices to show that, for every $I$ with $|I| > 1$, every function in $\Gamma(X^I, \oh_{X^I}^\times)$ is a product of pullbacks of functions in $\Gamma(X^{I \setminus i}, \oh_{X^{I \setminus i}}^\times)$ for $i \in I$. This follows from the well-known fact that an invertible function on a product of algebraic varieties is an external product of invertible functions on each factor, see~\cite[Lem.\ 5.1.15]{cg}.
\end{proof}

Now Remark~\ref{rmk-compare} tells us that $G$ is $1$-excisive, when viewed as a functor to $\on{D}(\on{AbGrp})$. Applying Theorem~\ref{thm2}, we conclude that 
\e{
	\holim_{I \in \fsetsurjne} \on{R}^0 \Gamma(X^I, \oh_{X^I}^\times)&\simeq \on{R}^0\Gamma(\Spec k, \oh^\times) \\
	&\simeq k^\times
} 
as complexes of abelian groups. This proves (A1) and (A2). 

\subsubsection{Proof of (B)} \label{van2}

Let $X \hra \ol{X}$ be a compactification of $X$ where $\ol{X}$ is smooth and geometrically integral, as guaranteed by the assumption (ii). Define a functor $\wt{F} : \fset_* \to \on{AbGrp}$ by $I \mapsto \on{Pic}(\ol{X}^I)$. 

\begin{lem}
	The functor $\wt{F}$ is quadratic in the sense of Definition~\ref{def-poly}. 
\end{lem}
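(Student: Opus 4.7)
The plan is to verify the criterion of Lemma~\ref{lem-poly}(ii) with $n = 2$: for every finite set $I$ with $|I| > 2$, one must show that $\bigcap_{i \in I} \ker\bigl(\wt{F}(\psi_{I \setminus \{i\}, I})\bigr) = 0$. Unwinding the definitions of Section~\ref{ssec-fset}, the map $\wt{F}(\psi_{I \setminus \{i\}, I})$ is the pullback $\on{Pic}(\ol{X}^I) \to \on{Pic}(\ol{X}^{I \setminus \{i\}})$ along the closed immersion inserting $x_0$ in the $i$-th coordinate. Writing $H_i \subset \ol{X}^I$ for the image of this immersion, the condition to verify reads: \emph{any line bundle $\mc{L}$ on $\ol{X}^I$ whose restriction to each $H_i$ is trivial must itself be trivial}.

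To prove this, I would apply the Theorem of the Cube directly. Fix two distinct elements $i, j \in I$ and factor
\[ \ol{X}^I \simeq \ol{X}_i \times \ol{X}_j \times \ol{X}^{I \setminus \{i, j\}}, \]
where, by hypothesis (ii), each of the three factors is proper, smooth, and geometrically integral over $k$, and each comes equipped with a distinguished $k$-point induced by $x_0$. The Theorem of the Cube~\cite[\href{https://stacks.math.columbia.edu/tag/0BF4}{Tag 0BF4}]{stacks} then says that $\mc{L}$ is trivial provided its restrictions to the three ``basepoint faces'' of this triple product are trivial. The first two such faces coincide with $H_i$ and $H_j$, so their restrictions are trivial by hypothesis. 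The third face, $\ol{X}_i \times \ol{X}_j \times \{(x_0, \ldots, x_0)\}$, is contained in $H_\ell$ for any choice of $\ell \in I \setminus \{i, j\}$ (which is nonempty since $|I| > 2$), and hence $\mc{L}$ restricts trivially there as well. This verifies condition (ii) of Lemma~\ref{lem-poly}.

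The principal (and essentially the only substantive) ingredient is the Theorem of the Cube; everything else is an elementary unwinding of the notations $\phi_{S, I}$ and $\psi_{S, I}$ from~\ref{ssec-fset}. When $|I| = 3$ the ``third face'' is itself a single $H_\ell$, and the argument specializes to the classical Theorem of the Cube applied to the three-fold self-product of $\ol{X}$; for larger $|I|$, the third face is a strictly smaller locus, but it is still contained in some $H_\ell$, so the hypothesis gives triviality on it for free. The only conceptual step is organizing the triple product so that, for any $|I|>2$, the Theorem of the Cube can be invoked on a genuinely three-fold decomposition whose factors remain proper and geometrically integral.
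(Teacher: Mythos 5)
Your proof is correct and follows essentially the same route as the paper: both verify the criterion of Lemma~\ref{lem-poly}(ii) for $n=2$ by applying the Theorem of the Cube to the decomposition $\ol{X}^I \simeq \ol{X} \times \ol{X} \times \ol{X}^{I \setminus \{i,j\}}$, observing that the third face lies inside some $H_\ell$ with $\ell \in I \setminus \{i,j\}$ (which exists since $|I|>2$). The only difference is cosmetic: the paper writes $I = \{1,2\} \sqcup J$ and lists the three restriction targets explicitly, while you phrase the same containment argument for the third face.
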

\begin{proof}
	Fix $I$ with $|I| > 2$. We can write $I = \{1, 2\} \sqcup J$ for nonempty $J$. Applying the Theorem of the Cube~ \cite[\href{https://stacks.math.columbia.edu/tag/0BF4}{Tag 0BF4}]{stacks} to the product decomposition $\ol{X}^I \simeq \ol{X} \times \ol{X} \times \ol{X}^J$, we conclude that an element of $\on{Pic}(\ol{X}^I)$ is zero if and only if its images in 
	\[
		\begin{array}{c}
		\on{Pic}(\Spec k \times \ol{X} \times \ol{X}^J) \\
		\on{Pic}(\ol{X} \times \Spec k \times \ol{X}^J) \\
		\on{Pic}(\ol{X} \times \ol{X} \times \Spec k)
		\end{array}
	\]
	under pullback are all zero. (This is where we use the hypothesis that $\ol{X}$ is proper and geometrically integral.) This implies the criterion of Lemma~\ref{lem-poly}(ii) for $n=2$. 
\end{proof}

Let $F : \fset_* \to \on{AbGrp}$ be defined by $I \mapsto \on{Pic}(X^I)$. 

\begin{cor} \label{want-quadratic} 
	The functor $F$ is quadratic. 
\end{cor}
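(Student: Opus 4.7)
The strategy is to realize $F$ as a quotient of $\wt{F}$ in the functor category $\fun(\fset_*, \on{AbGrp})$ and then invoke Lemma~\ref{thicc}. Concretely, I will construct a natural transformation $r : \wt{F} \to F$ given pointwise by the restriction map on Picard groups associated to the open embedding $X^I \hookrightarrow \ol{X}^I$, and show that each component $r_I : \on{Pic}(\ol{X}^I) \to \on{Pic}(X^I)$ is surjective.

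First I would check naturality. The functoriality of $F$ (and of $\wt{F}$) on $\fset_*$ is built, as described in~\ref{ssec-fset}, out of the two types of basic morphisms $\phi_{S,I}$ and $\psi_{S,I}$; under $F$, the map $\phi_{S,I}$ acts by pullback along the projection $X^I \to X^S$ onto the $S$-coordinates, while $\psi_{S,I}$ acts by pullback along the closed embedding $X^S \hookrightarrow X^I$ obtained by inserting the basepoint $x_0$ in the remaining coordinates. Since $x_0 \in X(k) \subset \ol{X}(k)$, the analogous maps for $\ol{X}^{(-)}$ fit into obvious commuting squares with the open immersions $X^{(-)} \hookrightarrow \ol{X}^{(-)}$, so restriction of line bundles commutes with the pullback operations defining $F$ and $\wt{F}$. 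This delivers the natural transformation $r : \wt{F} \to F$.

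Next I would show that $r$ is pointwise surjective. Since $\ol{X}$ is smooth over $k$, each product $\ol{X}^I$ is regular, and $X^I \hookrightarrow \ol{X}^I$ is an open immersion. For any regular Noetherian scheme $V$ and open subscheme $U \subset V$, the localization sequence for divisors on the complement gives a surjection $\on{Pic}(V) \twoheadrightarrow \on{Pic}(U)$ (take any line bundle on $U$, pick a rational section, and extend the resulting Cartier divisor to a Weil divisor on $V$, which is Cartier by regularity). Applying this with $V = \ol{X}^I$ and $U = X^I$ shows that $r_I$ is surjective.

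Finally, since kernels and cokernels in $\fun(\fset_*, \on{AbGrp})$ are computed pointwise, $F$ is a quotient of $\wt{F}$ in that functor category. By the previous lemma $\wt{F}$ is quadratic, so Lemma~\ref{thicc} (subquotient closure of polynomial functors of degree $\le 2$) yields that $F$ is quadratic, completing the proof. The only nontrivial point is the extension of line bundles across the boundary $\ol{X}^I \setminus X^I$, which is handled uniformly by the regularity of $\ol{X}^I$; everything else is formal naturality and Lemma~\ref{thicc}.
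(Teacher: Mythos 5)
Your proposal is correct and follows essentially the same route as the paper: restriction along $X^I \hookrightarrow \ol{X}^I$ gives a natural surjection $\wt{F} \Rightarrow F$ (the paper cites smoothness of $\ol{X}$ for the extension of line bundles, which you justify via regularity of $\ol{X}^I$ and the divisor argument), and then Lemma~\ref{thicc} finishes. The extra detail on naturality and on extending line bundles is a fleshed-out version of what the paper leaves implicit, not a different argument.
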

\begin{proof}
	Restriction of line bundles from $\ol{X}^I$ to $X^I$ defines a natural transformation $\wt{F} \Rightarrow F$. Because $\ol{X}$ is smooth, every line bundle on $X^I$ extends to one on $\ol{X}^I$. In other words, this natural transformation is a surjection, and Lemma~\ref{thicc} proves the corollary. 
\end{proof}

Now Proposition~\ref{prop1} implies that $\nlim{0} (F \circ \iota) \simeq \on{Pic}(\Spec k) = 0$, and this proves (B). 

\subsubsection{Remark} 

By a one-step d\'evissage argument similar to that of Lemma~\ref{lem-induct}, we conclude from Lemma~\ref{van1} and Corollary~\ref{van2} that the functor 
\[
	\fset_* \to (\text{strictly commutative Picard groupoids}) \simeq \on{D}(\on{AbGrp})^{[-1, 0]}
\]
defined by $I \mapsto \Pic(X^I)$ is $2$-excisive. (But this does not imply that the functor is $2$-excisive if its target is taken to be $\on{D}(\on{AbGrp})$.) Applying Proposition~\ref{paring} yields the following improvement of the Theorem of the Cube, under the hypotheses of Proposition~\ref{prop-pic-1}: 
\begin{cor} \label{cube-improved} 
	Let $n \ge 3$. The category of line bundles on $X^n$ is equivalent to the category consisting of the following data: 
	\begin{itemize}
		\item We have a one-dimensional $k$-vector space $\mc{F}$. 
		\item For each $i \in [n]$, we have a line bundle $\mc{E}_i \in \Pic(X)$ and an isomorphism 
		\[
			q_i : \mc{E}_i|_{x_0} \xrightarrow{\sim} \mc{F}.
		\]
		\item For each pair $i, j$ with $1 \le i < j \le n$, we have a line bundle $\mc{L}_{i, j} \in \Pic(X^2)$ and isomorphisms 
		\e{
			\sigma_{i,j} : \mc{L}_{i, j}|_{X \times \{x_0\}} &\xrightarrow{\sim} \mc{E}_i \\
			\tau_{i,j} : \mc{L}_{i, j}|_{\{x_0\} \times X} &\xrightarrow{\sim} \mc{E}_j.
		}
		\item These data are subject to the condition that, for any $i < j$ as above, we have 
		\[
			q_i \circ \sigma_{i, j}|_{x_0} = q_j \circ \tau_{i, j}|_{x_0}
		\]
		as maps $\mc{L}_{i, j}|_{(x_0, x_0)} \xrightarrow{\sim} \mc{F}$. 
	\end{itemize}
\end{cor}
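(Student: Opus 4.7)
The plan is to deduce the corollary directly from Lemma~\ref{paring} applied to the functor $G : \fset_* \to \on{D}(\on{AbGrp})^{[-1,0]}$ sending $\{*\} \sqcup I \mapsto \Pic(X^I)$, where the basepoint structure on the source corresponds to inserting $x_0 \in X(k)$ in the forgotten coordinates (using hypothesis (i)). The remark immediately preceding the corollary tells us that $G$ is $2$-excisive: this is obtained by a one-step dévissage, analogous to Lemma~\ref{lem-induct}, from the $1$-excision of $\pi_1(G) = \Gamma(X^{(-)}, \oh^\times)$ (Lemma~\ref{want-linear}) and the $2$-excision of $\pi_0(G) = \on{Pic}(X^{(-)})$ (Corollary~\ref{want-quadratic}). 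Granting this, since $n \ge 3 > 2$, Lemma~\ref{paring} applied to the special cocartesian hypercube $\Xi_{(\{i\})_{i \in [n]}}$ yields a canonical equivalence
\[
\Pic(X^n) \;\simeq\; \lim G\bigl(\Xi_{(\{i\})_{i \in [n]}}^{\le 2}\bigr),
\]
and the bulk of the proof consists in unpacking the right-hand side.

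The pared-down hypercube $\Xi^{\le 2}$ has vertices indexed by subsets $B' \subset [n]$ with $|B'| \le 2$, and its arrows are the maps $\psi_{S, I}$ of~\ref{ssec-fset}. Under $G$, the vertex $B'$ is sent to $\Pic(X^{B'})$ (with the convention $\Pic(X^\emptyset) = \Pic(\Spec k)$), and the arrow $\psi_{S, I}$ is sent to the pullback along the closed immersion $X^S \hookrightarrow X^I$ that fills the $I \setminus S$ coordinates with the basepoint $x_0$. Computing the limit as a strictly commutative Picard groupoid therefore amounts to specifying an object at each vertex together with an isomorphism for each arrow, satisfying a cocycle condition for each 2-cell.

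Running through the vertices in order of size gives exactly the data in the statement. The vertex $\emptyset$ contributes the one-dimensional $k$-vector space $\mc{F}$; each vertex $\{i\}$ contributes the line bundle $\mc{E}_i \in \Pic(X)$, and the single arrow to $\emptyset$ (corresponding to $\psi_{\emptyset, \{i\}}$, i.e.\ restriction along $\Spec k \xrightarrow{x_0} X$) supplies the isomorphism $q_i : \mc{E}_i|_{x_0} \xrightarrow{\sim} \mc{F}$. Each vertex $\{i, j\}$ with $i < j$ contributes $\mc{L}_{i,j} \in \Pic(X^2)$; the two arrows down to $\Pic(X)$ (given by $\psi_{\{i\}, \{i,j\}}$ and $\psi_{\{j\}, \{i,j\}}$, which restrict to $X \times \{x_0\}$ and $\{x_0\} \times X$ respectively) supply the isomorphisms $\sigma_{i,j}$ and $\tau_{i,j}$. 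Finally, the unique 2-cell in $\Xi^{\le 2}$ of the form $\{i,j\} \to \{i\} \to \emptyset$ versus $\{i,j\} \to \{j\} \to \emptyset$ translates into the coherence condition $q_i \circ \sigma_{i,j}|_{x_0} = q_j \circ \tau_{i,j}|_{x_0}$.

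The main point requiring care is the first step: namely that the target $\on{D}(\on{AbGrp})^{[-1,0]}$ of Picard groupoids is well-behaved enough for Lemma~\ref{paring} to apply. The issue (flagged in Remark~\ref{rmk-compare}) is that the inclusion $\on{D}(\on{AbGrp})^{[-1,0]} \hra \on{D}(\on{AbGrp})$ does not preserve weakly cartesian diagrams in general, so $G$ need not be $2$-excisive after post-composition with this inclusion. However, $\on{D}(\on{AbGrp})^{[-1,0]}$ is itself closed under the limits appearing in the definition of weakly cartesian (a truncation of cartesian), and the dévissage of the preceding remark shows directly that $G(\Xi_{(I_b)_{b \in B}})$ is weakly cartesian \emph{in} $\on{D}(\on{AbGrp})^{[-1,0]}$ for $|B|>2$; Lemma~\ref{paring} then goes through verbatim in that subcategory. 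Once this is verified, the remainder of the argument is the essentially combinatorial bookkeeping described above.
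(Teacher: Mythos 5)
Your proposal is correct and takes essentially the same route as the paper: the $2$-excision of $\{*\}\sqcup I \mapsto \Pic(X^I)$ as a functor to strictly commutative Picard groupoids $\simeq \on{D}(\on{AbGrp})^{[-1,0]}$ is exactly the one-step d\'evissage from Lemma~\ref{want-linear} and Corollary~\ref{want-quadratic} recorded in the remark preceding the corollary, and the statement then follows by applying Lemma~\ref{paring} to $\Xi_{(\{i\})_{i \in [n]}}$ and unpacking the limit over $\Xi^{\le 2}$ vertex by vertex, as you do. Your attention to the fact that excision holds in the truncated category $\on{D}(\on{AbGrp})^{[-1,0]}$ but not after inclusion into $\on{D}(\on{AbGrp})$ matches the paper's own parenthetical caveat.
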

These conditions correspond to the $\binom{n}{2}$ square facets in a partial hypercube $\Xi^{\le 2}$ in dimension $n$. 

It is straightforward to modify the proof of this corollary to apply to a product $\prod_{i=1}^n X_i$ of possibly different varieties $X_i$, each satisfying the hypotheses of Proposition~\ref{prop-pic-1}. This yields an improved Theorem of the Cube for noncompact smooth varieties. We explain the idea below: 

\begin{myproof}{Sketch of the proof}
	Let $\mc{C}$ be the category whose objects are pairs $(\{*\} \sqcup I, \phi)$ where the first element is a pointed finite set and the second element is an injective map $I \overset{\phi}{\hra} [n]$. A morphism from $(\{*\} \sqcup I_1, \phi_1)$ to $(\{*\} \sqcup I_2, \phi_2)$ is a map $\xi : \{*\} \sqcup I_1 \to \{*\} \sqcup I_2$ satisfying the following property: 
	\begin{itemize}
		\item For every $i \in I_1$, if $\xi(i) \neq *$, then $\phi_1(i) = \phi_2(\xi(i))$. 
	\end{itemize}
	The category $\mc{C}$ along with the object $(\{*\} \sqcup [n], \id_{[n]}) \in \mc{C}$ is the universal example of a category with an object which has $n$ commuting split idempotent endomorphisms. There is a faithful embedding $\mc{C} \hra \fset_{*, \le n}$ given by forgetting $\phi$. 
	
	The datum of the varieties $(X_1, \ldots, X_n)$ and their chosen basepoints yields a functor $\mc{C}^{\op} \to \on{Sch}_{k}$ which sends 
	\[
		(\{*\} \sqcup I, \phi) \mapsto \prod_{i \in I} X_{\phi(i)}. 
	\]
	Post-composing with $\Pic$, we obtain a functor $G : \mc{C} \to \on{D}(\on{AbGrp})^{[-1,0]}$. 
	
	One can formulate the notion of $n$-excision for functors defined on $\mc{C}$ by transferring Definition~\ref{def-poly-fin}. The arguments of~\ref{van1} and \ref{van2} work just as well in this setting, because the usual Theorem of the Cube does not require the varieties to be the same, and this proves that $G$ is 2-excisive. Then the analogue of Proposition~\ref{paring} finishes the proof. The reason these definitions and results transfer to the setting of functors defined on $\mc{C}$ is because they only use the split endomorphisms in $\fset_{*, \le n}$ and not the permutations.
\end{myproof}

These comments also apply when each $X_i$ is proper and geometrically integral over $k$ but not necessarily smooth (these are the usual hypothesis for the Theorem of the Cube). Moreover, in this case, we can take one of the $X_i$ to be an arbitrary $k$-variety. 

\subsection{The hypothesis (C)} \label{ssec-pic-triv-2}
Using Galois descent for line bundles, we weaken the hypotheses of Proposition~\ref{prop-pic-1} as follows: 

\begin{cor}\label{cor-pic-1} 
	Let $k$ be a field, and let $X$ be an algebraic variety over $k$ which satisfies the following property: 
	\begin{itemize}
		\item[(C)] $X_{\ol{k}}$ admits an open embedding into a smooth proper connected $\ol{k}$-variety. 
	\end{itemize}
	Then the pullback functor $\Pic(\Spec k) \to \Pic(\Ran(X))$ is an equivalence. 
\end{cor}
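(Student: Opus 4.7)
The plan is to reduce to Proposition~\ref{prop-pic-1} by Galois descent over a suitable finite Galois extension of $k$. First, hypothesis (C) forces $X$ to be geometrically integral over $k$: a smooth connected scheme over an algebraically closed field is integral, and an open subscheme of an integral scheme is integral, so $X_{\ol{k}}$ is integral. Moreover, by spreading out, the open immersion $X_{\ol{k}} \hra \ol{X}_{\ol{k}}$ into a smooth proper connected $\ol{k}$-variety descends over some finite subextension $k_1 \subset \ol{k}$, yielding an open immersion $X_{k_1} \hra \ol{X}_{k_1}$ into a smooth proper geometrically integral $k_1$-variety. Since $X$ is geometrically integral, it has a closed point whose residue field is a finite extension $k_2/k$. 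Let $k' \subset \ol{k}$ be a finite Galois extension of $k$ containing both $k_1$ and $k_2$, and set $G := \on{Gal}(k'/k)$. Then $X_{k'}$ satisfies hypotheses (i) and (ii) of Proposition~\ref{prop-pic-1}, which accordingly produces a $G$-equivariant equivalence $\Pic(\Spec k') \xrightarrow{\sim} \Pic(\Ran(X_{k'}))$.

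Next, I would apply Galois descent for the Picard groupoid. The map $\Spec k' \to \Spec k$ is finite \'etale, hence an fpqc cover, and its \v{C}ech nerve satisfies $\Spec(k'^{\otimes_k (n+1)}) \simeq \bigsqcup_{G^n} \Spec k'$. For any prestack $\mc{Y}$ over $k$, faithfully flat descent therefore yields
\[
    \Pic(\mc{Y}) \simeq \lim_{[n] \in \Delta} \Pic(\mc{Y}_{k'^{\otimes_k (n+1)}}) \simeq \Pic(\mc{Y}_{k'})^{hG},
\]
where the last expression denotes homotopy $G$-fixed points. Applied to $\mc{Y} = \Spec k$ this gives $\Pic(\Spec k) \simeq \Pic(\Spec k')^{hG}$; applied to $\mc{Y} = \Ran(X)$, and using that base change to $k'$ commutes with the colimit defining $\Ran$, so that $\Ran(X)_{k'} \simeq \Ran(X_{k'})$, it gives $\Pic(\Ran(X)) \simeq \Pic(\Ran(X_{k'}))^{hG}$. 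For $\Ran(X)$, which is not representable, one reduces to the schematic case via the formula $\Pic(\Ran(X)) \simeq \lim_{I \in \fsetsurjne} \Pic(X^I)$ from~\ref{defs}, using that limits commute with limits to interchange the limit over $\fsetsurjne$ with the (homotopy) limit defining descent.

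Finally, these two descent identifications fit into a commutative square together with the pullback functors $\Pic(\Spec k) \to \Pic(\Ran(X))$ and $\Pic(\Spec k') \to \Pic(\Ran(X_{k'}))$. The right-hand vertical arrow $\Pic(\Spec k')^{hG} \to \Pic(\Ran(X_{k'}))^{hG}$ is the $G$-homotopy-fixed-points functor applied to an equivalence of $G$-objects (by the $k'$-level statement), hence itself an equivalence. This forces the left-hand vertical arrow $\Pic(\Spec k) \to \Pic(\Ran(X))$ to be an equivalence as well.

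The crux of the argument is the setup of Galois descent for the Picard groupoid on the non-representable prestack $\Ran(X)$, together with the verification that the equivalence of Proposition~\ref{prop-pic-1} is genuinely $G$-equivariant so that taking $hG$-fixed points is legitimate. Both points are ultimately routine, reducing to faithfully flat descent on each $X^I$ and to the naturality of the construction in Proposition~\ref{prop-pic-1}, but the bookkeeping must be done at the level of strictly commutative Picard groupoids (equivalently, complexes in $\on{D}(\on{AbGrp})^{[-1,0]}$) rather than just Picard groups, since it is the groupoid-valued pullback that needs to be an equivalence.
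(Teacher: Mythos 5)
Your proposal is correct and follows essentially the same route as the paper: spread out the compactification and a rational point to a finite extension of $k$, apply Proposition~\ref{prop-pic-1} over that extension, and descend back to $k$ by descent for the Picard groupoid, using that the map being descended is the canonical pullback functor and hence automatically compatible with the descent data. The only real difference is the descent mechanism. The paper phrases the argument as an equivalence of fppf sheaves of Picard groupoids, $\psi : \ul{\Pic}(\Spec k) \to \lim_{I \in \fsetsurjne} (\pi^I)_*\ul{\Pic}(X^I)$, checked on the \v{C}ech nerve of $\Spec E \to \Spec k$, whose terms are identified with disjoint unions of $\Spec E'$ for finite extensions $E' \supseteq E$; this requires no Galois group and no equivariance bookkeeping, only the field-level Proposition~\ref{prop-pic-1} over each $E'$. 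You instead insist on a finite \emph{Galois} extension $k'/k$ and take homotopy $G$-fixed points; your observation that equivariance is automatic because the pullback functor is defined over $k$ is the right one, and the interchange of the limit over $\fsetsurjne$ with the descent limit, together with $\Ran(X)_{k'} \simeq \Ran(X_{k'})$, is fine. What this buys you is a cleaner-looking fixed-point statement, but at the cost of an extra requirement you do not justify: that the field of definition $k_1$ of the compactification and the residue field $k_2$ of a closed point lie in a common finite Galois (hence separable) extension. In characteristic zero this is automatic, and $k_2$ can always be arranged separable since hypothesis (C) makes $X$ smooth; but separability of $k_1$ over an imperfect field needs an argument, whereas the paper's fppf formulation does not require the auxiliary extension to be Galois at all. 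If you simply run your descent along $\Spec E \to \Spec k$ for the compositum $E = k_1 k_2$, checking sections over the \v{C}ech nerve as in the paper rather than forming $hG$-fixed points, this issue disappears and the two proofs coincide.
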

\begin{proof}
	Let $i : X_{\ol{k}} \hra Y$ be the open embedding guaranteed by (C), where $Y$ is a smooth, proper, and connected $\ol{k}$-variety. Since $i$ is a map between finite type $\ol{k}$-schemes, there exists a finite sub-extension $E/k$ (where $E \subset \ol{k}$), a variety $Y_0$ over $E$, and an open embedding $i_0 : X_{E} \hra Y_0$ such that the base change of $i_0$ along $E \hra \ol{k}$ identifies with $i$. Furthermore, $Y_0$ is smooth, proper, and geometrically connected over $E$. Since $X(\ol{k})$ is nonempty (by the Nullstellensatz), we can ensure that $X(E)$ is nonempty by choosing $E$ large enough. 
	
	For each finite set $I$, let $(\pi^I)_* \ul{\Pic}(X^I)$ denote the fppf sheaf of groupoids over $\Spec k$ which sends a $k$-scheme $S$ to the Picard groupoid $\Pic(S \times X^I)$. The case $I = \emptyset$ is $\ul{\Pic}(\Spec k)$. We have a map 
	\[
		\psi : \ul{\Pic}(\Spec k) \to \lim_{I \in \fset^{\on{surj}}_{\on{n.e.}}} (\pi^I)_* \ul{\Pic}(X^I), 
	\]
	and taking global sections yields the map $\Pic(\Spec k) \to \Pic(\Ran(X))$. This is because the functor of global sections commutes with limits. 
	
	We now show that $\psi$ is an equivalence. By fppf descent for the cover $\Spec E \to \Spec k$, it suffices to show that $\psi$ induces an equivalence on spaces of sections over any fppf cover of $\Spec k$ of the form 
	\[
		\Spec E \underset{\Spec k}{\times} \Spec E \underset{\Spec k}{\times} \cdots \underset{\Spec k}{\times} \Spec E.
	\]
	Each such cover is isomorphic to a disjoint union of covers of the form $\Spec E'$ where $E' \subset \ol{k}$ is a finite extension of $k$ which contains $E$. Thus, it suffices to show that the map 
	\[
		\Gamma(\Spec E', \psi) : \Pic(\Spec E') \to \lim_{I \in \fset^{\on{surj}}_{\on{n.e.}}} \Pic(X_{E'}^I) \simeq \Pic(\Ran_{E'}(X_{E'}))
	\]
	is an equivalence. (Here $\Ran_{E'}(X_{E'})$ denotes the Ran space construction taken over $E'$ rather than $k$.) The constructions of the first paragraph can be base-changed along $E \hra E'$, and this shows that the hypotheses of Proposition~\ref{prop-pic-1} apply to $X_{E'}$ as a variety over $E'$, so $\Gamma(\Spec E', \psi)$ is an equivalence, as desired. 
\end{proof}

\section{Relative Pic-contractibility of $\Ran(X)$}  \label{sec-rel-pic}

Our goal is to prove the following relative version of Corollary~\ref{cor-pic-1}: 

\begin{thm} \label{main2} 
	Let $k$ be a field, and let $X$ be an algebraic variety over $k$ which satisfies the following property: 
	\begin{enumerate}[label=(\roman*)] 
		\item[(C)] The base change $X_{\ol{k}}$ admits an open embedding into a smooth proper $\ol{k}$-variety.
	\end{enumerate} 
	Then, for any locally Noetherian $k$-scheme $S$, the pullback functor $\Pic(S) \to \Pic(S \times \Ran(X))$ is an equivalence. 
\end{thm}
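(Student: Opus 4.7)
The plan is a d\'evissage to reduce arbitrary locally Noetherian $S$ back to the field case of Corollary~\ref{cor-pic-1}. Since $\Pic$ is Zariski local on $S$, I would immediately reduce to $S = \Spec A$ with $A$ Noetherian, and by Galois descent (as in the proof of Corollary~\ref{cor-pic-1}) I may further assume $X$ has a $k$-rational basepoint $x_0$. Following~\ref{rigid}, I would work with the rigidified Picard groupoid $\Pic^e(S \times \Ran(X))$ of line bundles trivialized along $S \times \{x_0\} \hra S \times \Ran(X)$; the discrepancy between $\Pic$ and $\Pic^e$ is entirely absorbed by $\Pic(S)$ together with the group of rigidified units $\Gamma^e(S \times \Ran(X), \oh)$, so the theorem reduces to the triviality of $\Pic^e(S \times \Ran(X))$ together with $\Gamma^e(S \times \Ran(X), \oh) = 0$. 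For the latter (treated in~\ref{ssec-func}), I would unwind the limit $\Gamma(S \times \Ran(X), \oh) \simeq \lim_{I \in \fsetsurjne} \Gamma(S \times X^I, \oh)$ and consider the quotient functor $G : \fset_* \to A\!\on{-mod}$ sending $\{*\} \sqcup I$ to $\Gamma(S \times X^I, \oh)/\Gamma(S, \oh)$. Using a splitting of not-necessarily-invertible functions on products of suitable varieties -- more delicate in positive characteristic than the unit version of Lemma~\ref{want-linear}, as witnessed by Remark~\ref{exercise} -- one sees $G$ is polynomial of bounded degree, and the vanishing of its limit over $\fsetsurjne$ follows from Lemma~\ref{prop1-fin} (or Proposition~\ref{prop1}).

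Next I would handle the Artinian local case (\ref{formal-sec}) by induction on length. Given a square-zero extension $0 \to \mf{a} \to A \to A'' \to 0$ of Artinian local $k$-algebras with residue field $\kappa$, the short exact sequence of sheaves $0 \to \mf{a} \otimes_\kappa \oh \to \oh^\times_{S \times \Ran(X)} \to \oh^\times_{S'' \times \Ran(X)} \to 1$ (where the leftmost map is $a \mapsto 1+a$, an isomorphism onto its image because $\mf{a}^2 = 0$) identifies the kernel and cokernel of $\Pic^e(S \times \Ran(X)) \to \Pic^e(S'' \times \Ran(X))$ with rigidified $\mathrm{H}^0$ and $\mathrm{H}^1$ of $\mf{a} \otimes_\kappa \oh_{\Ran(X_\kappa)}$. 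Both vanish by applying the function-level result of the previous paragraph with $\mf{a}$ as coefficients (noting that $\mf{a}$ is a finite-dimensional $\kappa$-vector space), and the base case is Corollary~\ref{cor-pic-1} after Galois descent. Passing to the limit over Artinian quotients $A/\mf{m}_s^n$ then yields triviality of $\Pic^e$ on $\Spec \wh{A}_s \times \Ran(X)$ for any closed point $s$ of a general Noetherian $S$.

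The bootstrap to general locally Noetherian $S$ (\ref{final}) proceeds by Noetherian induction on the support of a potential obstruction, combined with Beauville--Laszlo gluing for regular functions (\ref{bl-sec}). Given a line bundle $\mc{L}$ on $S \times \Ran(X)$, I would trivialize it on $\Spec \wh{A}_s \times \Ran(X)$ for each closed point $s$ via the Artinian step, and on $(S \setminus \{s\}) \times \Ran(X)$ via the inductive hypothesis, then patch the two trivializations by analyzing the ambiguity on the punctured formal neighborhood. The crucial simplification, which makes classical Beauville--Laszlo applicable, is that by the function-level vanishing of step one this ambiguity is a unit pulled back from the Noetherian scheme $\Spec \wh{A}_s \setminus \{s\}$ along the projection away from $\Ran(X)$, so the patching reduces to a statement about units on a Noetherian scheme rather than on the non-Noetherian prestack $S \times \Ran(X)$. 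Finally, the affineness hypothesis on $X$ used along the way is removed (\ref{ssec-general}) by covering $X$ by affine opens $U_\alpha \ni x_0$ and running a \v{C}ech-type descent using the natural maps $\Ran(U_\alpha) \to \Ran(X)$. The principal obstacle I expect is precisely this Beauville--Laszlo step: the classical formulation is for quasicoherent sheaves on Noetherian schemes, and making a clean `for functions' version work on the product with $\Ran(X)$ requires exactly the reduction to scalar gluing data provided by the function-level vanishing.
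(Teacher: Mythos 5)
Your overall skeleton (rigidify at a basepoint obtained by Galois descent, treat functions first, do the Artinian case by square-zero extensions, then a Noetherian-induction gluing step, and finally remove affineness of $X$) does match the paper's, but two load-bearing steps are wrong as stated. First, the functor $G(\{*\}\sqcup I)=\Gamma(S\times X^I,\oh)/\Gamma(S,\oh)$ is \emph{not} polynomial of bounded degree: already for $S=\Spec k$ and $X=\BA^1$ with basepoint $0$, the primitive part $\on{Prim}(G)(I)$ contains the monomial $\prod_{i\in I}x_i$, hence is nonzero for every $I$; and for non-polynomial functors the limit need not vanish (Remark~\ref{counter}), so neither Proposition~\ref{prop1} nor Lemma~\ref{prop1-fin} applies to $G$. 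The paper's actual route is different: one restricts to the formal neighborhood of the basepoint, where each homogeneous piece $\mc{P}_{S,n,d}$ \emph{is} $d$-excisive (Lemma~\ref{rnd-cart}), applies Proposition~\ref{prop1} degree by degree to kill $\Gamma^e$ of the infinitesimal Ran space, and then propagates to all of $S\times X^I$ by the associated-points argument of Lemmas~\ref{basic} and~\ref{supp} (this is exactly where smoothness of $X$ and Noetherianity of $S$ enter). Moreover, the gluing step needs the function statement not on $S\times\Ran(X)$ but on the punctured completion $\mc{Y}=\colim_{}(S\times X^n)^{\wedge}_{fT}$, which is not of the form $S'\times\Ran(X)$ for any Noetherian $S'$; this is Lemma~\ref{func2}, whose proof genuinely requires the finite-limit result Lemma~\ref{prop1-fin} plus the non-zerodivisor bookkeeping of~\ref{reduction}, so you cannot simply quote your step one to identify the gluing ambiguity with a unit pulled back from $\Spec\wh{A}_s\setminus\{s\}$.

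Second, in the Artinian induction the cokernel of $\Pic^e(S\times\Ran(X))\to\Pic^e(S''\times\Ran(X))$ is controlled by $H^1(\Ran(X_\kappa),\mf{a}\otimes\oh)$, and this does not follow from the function-level ($H^0$) statement: the functor $I\mapsto \on{R}^\cdot\Gamma(X^I,\oh)$ is multiplicative (K\"unneth), not $n$-excisive for any $n$, so the excision machinery is silent about $H^1$. The paper proves this vanishing separately (Lemma~\ref{ga-trivial}) by running the argument of \cite[Sect.\ 6]{g}, i.e.\ the semigroup/contractibility argument using the K\"unneth theorem for coherent cohomology together with $H^0(\Ran_K(X_K),\oh)=K$. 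Two further points: your ``Noetherian induction'' removes a closed point and invokes the hypothesis on the open $S\setminus\{s\}$, which is not a well-founded induction — the paper inducts over closed subschemes of $S$, trivializes over the Artinian localization at the \emph{generic} points, spreads out to a principal open $S_{f_n}$, and uses the inductive hypothesis on $V(f_n^m)$; and passing from the compatible trivializations over the Artinian quotients to an honest section over the completion $(S\times X^n)^\wedge$ requires Grothendieck's existence theorem, which is precisely where the affineness of $X$ is used and why the reduction of~\ref{ssec-general} is needed at the very end rather than a \v{C}ech argument over $\Ran(U_\alpha)$ (these do not cover $\Ran(X)$; the paper instead glues the $S\times U$-parts of the unique trivializations, Lemmas~\ref{affine-y} and~\ref{last-triv}).
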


\subsubsection{Remark} \label{rmk-ega}

Our strategy is a variation on the method of `reduction to the case of an Artinian local ring with separably closed residue field.' Let us explain this method in more detail. When proving a statement about a morphism $f : Y \to Z$, one makes the following reductions: 
\begin{enumerate}
	\item If the statement is suitably local on the base, one can replace $Z$ by an affine open subscheme, and subsequently by $\Spec A$ where $(A, \mf{m})$ is a local ring. 
	\item First, study the problem when $A$ is an Artinian local ring with separably closed residue field. In this case, one can use deformation theory. 
	\item Via the theory of formal schemes, extend from the case of an Artinian local ring to that of a complete local ring (with separably closed residue fields). 
	\item If $A$ is an arbitrary local ring, descend the result of step (3) to the strict henselization $A^{\on{sh}}$ and subsequently to some \'etale $A$-algebra. 
\end{enumerate}
These steps are paraphrased from~\cite[p.\ 8--9]{ega}. Our morphism of interest is the projection 
\[
	\pr_1 : S \times \Ran(X) \to S,
\]
but this lies outside the usual realm of application of the method in several ways: 
\begin{itemize}
	\item This method is suited to proving statements that are `finitely presented' in the sense that they assert the existence of solutions to finitely many polynomial equations. Such statements are nice because if they are true in a filtered colimit (of rings) then they are true at a finite stage in that colimit. Our situation involves an infinite set of data (a section on $X^I$ for each $I$) so this principle does not apply. We circumvent this issue by judiciously considering finite limits or focusing attention on $X^n$ for one $n$ at a time.  
	\item A more serious obstruction is that this method is adapted to proving \emph{properties}, whereas we want to construct \emph{data} consisting of trivializations of some line bundles. An identity of functions on a Noetherian scheme can be checked on formal neighborhoods of closed points, but it is not \emph{a priori} clear how to similarly localize the problem of constructing a function. For this reason, given a closed subscheme $Z' \subset Z$, we set up a framework for gluing two functions defined on the formal neighborhood of $Z'$ in $Z$ and on the complement $Z \setminus Z'$, respectively (see~\ref{bl-sec}). The ability to glue functions along formal neighborhoods allows us to dispense with step (4) entirely. 
	\item The method is usually applied when $f$ is proper, which assists in carrying out step (3) because tools like Grothendieck's existence theorem become available for controlling coherent sheaves on $Y$, see~\cite[Thm.\ 8.4.2]{fga}. In our situation, suppose $S$ is affine, let $S' \subset S$ be a closed subscheme, let $S_{\on{inf}}$ be the formal neighborhood of $S'$ in $S$, and let $\wh{S}$ be the Spec of the completed local ring, so there are maps $S_{\on{inf}} \to \wh{S} \to S$. If $X$ were proper, the Grothendieck existence theorem would imply that a section on $S_{\on{inf}} \times X^I$ automatically descends to one on $\wh{S} \times X^I$. 
	
	Since we do not assume that $X$ is proper, this technique is not available. Instead, we assume that $X$ is affine, which allows us to work with the Spec of the ring of functions of $S_{\on{inf}} \times X^I$, denoted $(S \times X^I)^\wedge$. Then a weaker form of Grothendieck's existence theorem implies that a section on $S_{\on{inf}} \times X^I$ descends to one on $(S \times X^I)^\wedge$. The price we pay is that we have to work harder to prove the triviality of functions on the completed version of $S \times \Ran(X)$ (see Lemma~\ref{func2}) and we have to use a covering argument to reduce to the case of affine $X$ (see~\ref{ssec-general}). 	
\end{itemize}

In what follows, one could identify step (1) with Corollary~\ref{sheaf}, step (2) with Lemma~\ref{formal}, step (3) with the application of Grothendieck's existence theorem in Proposition~\ref{induct}, and step (4) does not appear as explained in the second bullet point above. The passage to a separably closed residue field corresponds to the Galois descent trick which was used in Corollary~\ref{cor-pic-1} and which appears again in~\ref{based-based}. 

\subsection{Basepoints and rigidification} \label{based} 

\subsubsection{Basepoints} \label{based-based}

In this section, we will be studying functions and line bundles, both of which satisfy fppf descent. Therefore, by the same trick used in the proof of Corollary~\ref{cor-pic-1}, we may assume that $X(k)$ is nonempty by passing to a finite extension of $k$. 

Now, if $X(k)$ is nonempty, we can pick a basepoint $x_0 \in X(k)$, which gives basepoints $x_0^I \in X^I(k)$ and $x_0^{\on{Ran}} \in \Ran(X)(k)$. 

\subsubsection{Rigidification} \label{rigid}

Upon choosing these basepoints, the association $\{*\} \sqcup I \mapsto X^I$ becomes a functor from $\fset_*^{\op}$ to the category of $k$-schemes equipped with a $k$-rational point. In other words, these basepoints $x_0^I$ are compatible with all generalized diagonal maps, projection maps, and inclusion maps. 

Pulling back along the inclusion maps of the basepoints of $X^I$ gives retracts to the pullback maps $\Pic(\Spec k) \to \Pic(X^I)$ which are functorial in $I$. We define $\Pic^e(X^I)$ to be the kernel of this retract.\footnote{The $e$-superscript notation for rigidified line bundles is inspired by~\cite[Sect.\ 3.4]{z}.} Concretely, $\Pic^e(X^I)$ is the Picard groupoid of line bundles on $X^I$ equipped with trivialization at $x_0^I$. We refer to such a datum as a \emph{rigidified line bundle}. Taking the limit over $\fsetsurjne$, we obtain a retract of the map $\Pic(\Spec k) \to \Pic(\Ran(X))$ and a Picard groupoid $\Pic^e(\Ran(X))$ of rigidified line bundles. 

We also make analogous definitions for the functor $\{*\} \sqcup I \mapsto \Gamma(X^I, \oh)$. Namely, for each $I$, the subspace $\Gamma^e(X^I, \oh) \subset \Gamma(X^I, \oh)$ consists of functions which are zero on $x_0^I$, and similarly for the subspace $\Gamma^e(\Ran(X), \oh) \subset \Gamma(\Ran(X), \oh)$. We refer to functions vanishing on the basepoint as \emph{rigidified functions}. 

If $S$ is any $k$-scheme, we can similarly define $\Pic^e(S \times X^I), \Pic^e(S \times \Ran(X)), \Gamma^e(S \times X^I, \oh)$, and $\Gamma^e(S \times \Ran(X), \oh)$. For example, a rigidified line bundle on $S \times X^I$ is a line bundle on $S \times X^I$ equipped with a trivialization on $S \times \{x_0^I\}$. 

It is clear that $\Pic(S) \to \Pic(S \times \Ran(X))$ is an equivalence if and only if $\Pic^e(S \times \Ran(X))$ is trivial, and there is the analogous statement for $\Gamma(S \times \Ran(X))$. 

\subsection{Triviality of functions} \label{ssec-func}

Our plan is to show that line bundles on $S \times \Ran(X)$ are trivial by first trivializing them over certain completions and then gluing these trivializations. To make this gluing manageable, we need to first prove that the trivializations over the completions are unique. Since automorphisms of the trivial line bundle are given by nonvanishing functions, this amounts to showing that there is a paucity of such functions on $S \times \Ran(X)$ (or a completion thereof).  

\subsubsection{} We need a basic commutative algebra lemma. \label{basic}

\begin{lem}
	Let $S$ be a locally Noetherian $k$-scheme, and let $Y$ be a smooth $k$-scheme. Then the associated points of $S \times Y$ are exactly the generic points of the fibers of $S \times Y \xrightarrow{\pr_1} S$ over the associated points of $S$. 
\end{lem}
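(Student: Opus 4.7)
The plan is to work affine-locally. Since the statement is local on both $S$ and $Y$, I would reduce to the case $S = \Spec A$ with $A$ Noetherian and $Y = \Spec B$ with $B$ a smooth $k$-algebra; the claim then becomes that the associated primes of $R := A \otimes_k B$ are exactly the primes $\mathfrak{q}$ such that $\mathfrak{p} := \mathfrak{q} \cap A$ lies in $\on{Ass}(A)$ and the image of $\mathfrak{q}$ in the fiber $\kappa(\mathfrak{p}) \otimes_k B$ is a minimal prime there.

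The main tool is the behavior of associated primes under flat ring maps. Since $Y$ is smooth over $k$, $B$ is flat over $k$, so the base change $A \to R$ is flat. Applying the standard flat-base-change formula for associated primes (e.g.\ Matsumura, \emph{Commutative Ring Theory}, Thm.~23.2, or Stacks Project, Tag~05BZ), I obtain
\[
\on{Ass}(R) \;=\; \bigcup_{\mathfrak{p} \in \on{Ass}(A)} \on{Ass}_R(M_{\mathfrak{p}}), \qquad \text{where } M_{\mathfrak{p}} := (A/\mathfrak{p}) \otimes_k B.
\]
Next I would show that every associated prime $\mathfrak{q}$ of $M_{\mathfrak{p}}$ contracts to $\mathfrak{p}$ in $A$. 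Indeed, $M_{\mathfrak{p}}$ is flat over the domain $A/\mathfrak{p}$ (since $B$ is flat over $k$), so any element of $A \setminus \mathfrak{p}$ acts as a nonzerodivisor on $M_{\mathfrak{p}}$ and therefore cannot belong to $\mathfrak{q}$. Consequently, localizing at $A \setminus \mathfrak{p}$ identifies $\on{Ass}_R(M_{\mathfrak{p}})$ with $\on{Ass}(\kappa(\mathfrak{p}) \otimes_k B)$.

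To finish, I would invoke that smoothness is preserved under base change, so the fiber $\kappa(\mathfrak{p}) \otimes_k B$ is smooth over the field $\kappa(\mathfrak{p})$, hence regular, and in particular reduced. A reduced Noetherian ring has no embedded primes, so its associated primes coincide with its minimal primes---precisely the generic points of the irreducible components of $Y_{\kappa(\mathfrak{p})}$. Combining the three steps yields the claim.

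I do not expect a real obstacle: the argument is a direct application of flat-base-change for associated primes together with the observation that smoothness of $Y/k$ forces every fiber of $\pr_1$ to be reduced. The only mildly technical point is the localization step confirming $\on{Ass}_R(M_{\mathfrak{p}}) \simeq \on{Ass}(\kappa(\mathfrak{p}) \otimes_k B)$, but this is a standard consequence of the compatibility of $\on{Ass}$ with localization.
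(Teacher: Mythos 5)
Your argument is correct, but it follows a genuinely different route from the paper. You invoke the flat base-change formula for associated primes (Matsumura, Thm.\ 23.2): writing $R = A \otimes_k B$ with $A$ Noetherian and $B$ a smooth (hence finite-type, so $R$ is again Noetherian) $k$-algebra, you get $\on{Ass}(R) = \bigcup_{\mf{p} \in \on{Ass}(A)} \on{Ass}_R\big((A/\mf{p})\otimes_k B\big)$, then show each such associated prime contracts to $\mf{p}$ (flatness over the domain $A/\mf{p}$ makes $A \setminus \mf{p}$ act by nonzerodivisors, and $\mf{p} \subseteq \mf{q}$ is automatic), localize to identify these with the associated primes of the fiber ring $\kappa(\mf{p})\otimes_k B$, and finish by noting that smoothness makes the fibers regular, hence reduced, so their associated primes are exactly their minimal primes. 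The paper instead avoids the general formula entirely: it reduces via \'etale coordinates $U \to \BA^n$ and induction to the case $Y = \BA^1$, localizes $A$, and argues by hand — the extended prime $\mf{m}^e \subset A[x]$ is associated iff $\mf{m}$ is (using that flat maps send associated points to associated points, plus an explicit annihilator), and no non-generic point of a fiber can be associated because a monic polynomial is never a zerodivisor in $A[x]$. Your version is shorter, leans on a standard citable theorem, and makes transparent that smoothness is only used to ensure the fibers of $S \times Y \to S$ are reduced (so it would apply verbatim to any geometrically reduced $Y$); the paper's version is more elementary and self-contained, at the cost of the reduction-to-$\BA^1$ bookkeeping. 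The only points worth making explicit in your write-up are the Noetherianness of $A \otimes_k B$ (needed for the cited theorem and for compatibility of $\on{Ass}$ with localization) and the two-sided inclusion $\mf{q}\cap A = \mf{p}$, both of which are immediate.
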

\begin{proof}
	We immediately reduce to the case when $S = \Spec A$ is affine. By considering \'etale maps $U \to \BA^n$ for open subschemes $U \subset Y$ covering $Y$, we reduce to the case when $Y = \BA^n$. By inducting on $n$, we reduce to the case when $n = 1$. By localizing at a point of $A$, we reduce to the two following statements: 
	
	Let $(A, \mf{m})$ be a local $k$-algebra.
	\begin{enumerate}[label=(\roman*)]
		\item Let $\mf{m}^e \subset A[x]$ be the extension of $\mf{m}$, i.e.\ the generic point of the fiber over $\mf{m}$. Then $\mf{m}^e$ is an associated point of $A[x]$ if and only if $\mf{m}$ is an associated point of $A$. 
		\item Let $\mf{p} \subset A[x]$ be a point lying over $\mf{m}$ which is not the generic point of the fiber over $\mf{m}$. Then $\mf{p}$ is not an associated point of $A[x]$. 
	\end{enumerate}
	
	For (i), the `only if' follows from the fact that flat maps send associated points to associated points~\cite[Exer.\ 24.2.J]{v}. Conversely, if $\mf{m}$ is an associated point of $A$, then there exists $f \in A$ such that $\on{Ann}_A(f) = \mf{m}$. Then $\on{Ann}_{A[x]}(f) = \mf{m}^e$, so $\mf{m}^e$ is associated. 
	
	For (ii), there exists a \emph{monic} polynomial $P(x) \in A[x]$ whose vanishing locus contains the point $\mf{p}$. If $\mf{p}$ is associated, then $P(x)$ must be a zerodivisor in $A[x]$. However, no monic polynomial in a polynomial ring can be a zerodivisor, contradiction. 
\end{proof}

\subsubsection{}
The following lemma reduces us to studying functions on the infinitesimal Ran space. 

\begin{lem}\label{supp} 
	Let $S = \Spec A$ where $A$ is a Noetherian $k$-algebra. If a regular function $f \in \Gamma(S \times X^I, \oh)$ vanishes on the formal neighborhood of $S \times \{x_0\}$, then $f =0 $. 
\end{lem}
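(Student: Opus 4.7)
The plan is to reduce to a local application of Krull's intersection theorem and invoke Lemma~\ref{basic} to identify the associated primes. Recall that under hypothesis~(C), $X$ (and hence $X^I$) is smooth and geometrically integral over $k$; in particular, for each prime $\mf{p} \subset A$ the base change $X^I_{\kappa(\mf{p})}$ is integral, with a unique generic point which I will denote $\mf{q}_\mf{p}$.

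First I would choose an affine open $V \subset X^I$ containing the basepoint $x_0$, so that $U := S \times V$ is an affine open of $S \times X^I$. Let $\mf{a} \subset R := \Gamma(U, \oh)$ be the ideal cutting out the closed subscheme $S \times \{x_0\} \subset U$. The hypothesis that $f$ vanishes on the formal neighborhood of $S \times \{x_0\}$ translates to $f|_U \in \bigcap_n \mf{a}^n$. By Krull's intersection theorem applied to the Noetherian ring $R$, this intersection is zero provided that, for every associated prime $\mf{q}$ of $R$, the closed set $V(\mf{a}) \cap V(\mf{q})$ is nonempty in $U$.

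Lemma~\ref{basic}, applied to the smooth $k$-scheme $V$, identifies the associated primes of $R$ as the generic points $\mf{q}_\mf{p}$ (viewed as points of $U$) indexed by the associated primes $\mf{p}$ of $A$; here I use that $V_{\kappa(\mf{p})}$ is a nonempty open of the integral $X^I_{\kappa(\mf{p})}$, hence integral with generic point $\mf{q}_\mf{p}$. The closure of $\mf{q}_\mf{p}$ in $U$ is $\overline{\{\mf{p}\}} \times V$, while $V(\mf{a}) = S \times \{x_0\}$, so their intersection contains the point $(\mf{p}, x_0)$ and is therefore nonempty. This proves $f|_U = 0$.

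To upgrade $f|_U = 0$ to the global vanishing $f = 0$, I would invoke Lemma~\ref{basic} once more: every associated point of the locally Noetherian scheme $S \times X^I$ is some $\mf{q}_\mf{p}$, and since $V_{\kappa(\mf{p})}$ is a nonempty open of the irreducible scheme $X^I_{\kappa(\mf{p})}$ it contains the generic point $\mf{q}_\mf{p}$. Thus all associated points of $S \times X^I$ already lie in $U$, and $f|_U = 0$ forces the germ of $f$ at each such point to vanish, giving $f = 0$. There is no real obstacle beyond this last bookkeeping step -- the preceding lemma on associated points does all the serious work, which is presumably why it was isolated.
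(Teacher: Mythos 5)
Your proof is correct and takes essentially the same route as the paper: Krull's intersection theorem combined with Lemma~\ref{basic}'s identification of the associated points of $S \times X^I$ as the generic points of the fibers over the associated points of $S$, each of which specializes into $S \times \{x_0\}$. The only difference is bookkeeping — the paper phrases this as $\on{Supp} f$ being a union of sets $Z \times X^I$ that must meet $S \times \{x_0\}$, whereas you apply the associated-prime form of Krull on an affine chart $S \times V$ and then spread the vanishing out using that all associated points already lie in that chart; both versions are sound.
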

\begin{proof}
	By the Krull intersection theorem (which uses that $A$ is Noetherian), we conclude that $\on{Supp} f$ is disjoint from $S \times \{x_0\}$. However, $\on{Supp} f$ is a union of closures of associated points of $S \times X^I$, and Lemma~\ref{basic} says that all such closures are of the form $Z \times X^I$ for some closed $Z \subset S$. Since each $Z \times X^I$ intersects $S \times \{x_0\}$ if $Z$ is nonempty, the only possibility is that $\on{Supp} f$ is empty, i.e.\ $f = 0$. 
\end{proof}

\subsubsection{} 
We prove the following lemma for expository purposes only. We actually need the stronger version Lemma~\ref{func2}, which applies to a completed version of $S \times \Ran(X)$. 

\begin{lem}\label{func1} 
	Let $S$ be a locally Noetherian $k$-scheme. Then the pullback map $\Gamma(S, \oh) \to \Gamma(S \times \Ran(X), \oh)$ is an isomorphism. 
\end{lem}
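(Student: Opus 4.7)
The plan is to reduce the statement to a computation of $\lim_{I \in \fsetsurjne} \mc{P}_{S,n}(\{*\} \sqcup I)$ via restriction to formal neighborhoods of a basepoint, and then invoke Proposition~\ref{prop1}. First, both sides are global sections over $S$ of $\oh_S$-module sheaves (the right hand side being $(\pr_1)_*\oh_{S \times \Ran(X)} = \lim_I (\pr_1)_*\oh_{S \times X^I}$), so the statement is Zariski-local on $S$; I may assume $S = \Spec A$ with $A$ Noetherian. By the Galois descent trick of~\ref{based-based} (since $\Gamma$ is an fppf sheaf), I may further assume $X(k) \ne \emptyset$ and fix a basepoint $x_0 \in X(k)$, yielding compatible basepoints $x_0^I \in X^I(k)$ for all $I$.

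For the key step, Lemma~\ref{supp} shows that the restriction $\Gamma(S \times X^I, \oh) \hookrightarrow \Gamma((S \times X^I)^{\wedge}, \oh)$, where the completion is taken along $S \times \{x_0^I\}$, is injective. Since $X$ is smooth of some dimension $n$ (from hypothesis (C)), choosing an étale map on a Zariski neighborhood of $x_0$ to $\BA^n$ identifies $(X^I)^\wedge_{x_0^I}$ with $(\disk^n)^I$, functorially in $\{*\} \sqcup I \in \fset_*^{\op}$; thus $\Gamma((S \times X^I)^{\wedge}, \oh)$ is identified with $\mc{P}_{S,n}(\{*\} \sqcup I)$ from~\ref{inf-ran1}. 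Taking the limit over $\fsetsurjne$ preserves the injection, so
\[
\Gamma(S \times \Ran(X), \oh) \hookrightarrow \lim_{I \in \fsetsurjne} \mc{P}_{S,n}(\{*\} \sqcup I).
\]

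To compute the right hand side, write $\mc{P}_{S,n}(\{*\} \sqcup I) = \lim_d \mc{P}_{S,n,\le d}(\{*\} \sqcup I)$ and commute limits. By Lemma~\ref{rnd-cart}, each homogeneous piece $\mc{P}_{S,n,d'}$ is polynomial of degree $\le d'$, and Lemma~\ref{thicc} shows that polynomial functors of degree $\le d$ are closed under extensions, so $\mc{P}_{S,n,\le d}$ — an iterated extension of $\mc{P}_{S,n,0}, \ldots, \mc{P}_{S,n,d}$ — is polynomial of degree $\le d$. Proposition~\ref{prop1} therefore gives $\lim_I \mc{P}_{S,n,\le d}(\{*\} \sqcup I) = \mc{P}_{S,n,\le d}(\{*\}) = A$ (the one-point set carries no variables). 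Passing to the limit in $d$, the right hand side above is just $A = \Gamma(S,\oh)$.

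The composition $\Gamma(S, \oh) \to \Gamma(S \times \Ran(X), \oh) \hookrightarrow A$ is visibly the identity (the second arrow extracts the constant term under the identifications above), so pullback is injective; conversely, given any $f$ in the middle with image $g \in A$, the pullback of $g$ and $f$ coincide under the injection, hence are equal, giving surjectivity. The main obstacle is the bookkeeping: verifying that the identifications $(X^I)^\wedge_{x_0^I} \simeq (\disk^n)^I$ assemble into a natural isomorphism of functors $\fset_*^{\op} \to$ rings (so that the source of the injection has the correct $\fsetsurjne$-diagram structure) and confirming the polynomial degree bound on $\mc{P}_{S,n,\le d}$ via extension closure. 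Once these are in place, the vanishing reduces to a direct application of Proposition~\ref{prop1}.
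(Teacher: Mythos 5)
Your proposal is correct and follows essentially the same route as the paper: restrict functions to the formal neighborhood of the basepoint $S \times \{x_0^I\}$ (injective by Lemma~\ref{supp}), identify the completion functorially with $\mc{P}_{S,n}$ via smoothness, and kill the limit over $\fsetsurjne$ using Lemma~\ref{rnd-cart} together with Proposition~\ref{prop1}. The only cosmetic difference is that the paper works with rigidified functions and the homogeneous pieces $\mc{P}_{S,n,d}$, whereas you compute the full limit of $\mc{P}_{S,n,\le d}$ and deduce the isomorphism from the retraction onto $\Gamma(S,\oh)$; the substance is the same.
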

\begin{proof}
	In view of~\ref{rigid}, it is equivalent to prove that $\Gamma^e(S \times \Ran(X), \oh) = 0$. We may assume that $S = \Spec A$ for a Noetherian $k$-algebra $A$. 
	
	\begin{claim}
		We have $\Gamma^e(\Ran^n_{\on{inf}, S}, \oh) = 0$. 
	\end{claim}
	\begin{proofofclaim}
		First, observe that 
		\e{
			\Gamma^e(\Ran^n_{\on{inf}, S}, \oh) &\simeq \lim_{I \in \fsetsurjne} \Gamma^e(\Ran^n_{\on{inf}, S}(\{*\} \sqcup I), \oh) \\
			&\simeq \prod_{d=1}^\infty \lim_{I \in \fsetsurjne} \mc{P}_{S, n, d}(\{*\} \sqcup I). 
		} 
		But Lemma~\ref{rnd-cart} tells us that $\mc{P}_{S, n, d}$ is $d$-excisive, so Proposition~\ref{prop1} implies that 
		\[
			\lim_{I \in \fsetsurjne} \mc{P}_{S, n, d}(\{*\} \sqcup I) \simeq \mc{P}_{S, n, d}(\{*\}). 
		\]
		The right hand side is $A$ if $d = 0$ and $0$ otherwise, and the claim follows. 
	\end{proofofclaim}
	
	An element of $\Gamma^e(S \times \Ran(X), \oh)$ is a compatible family of elements $a_I \in \Gamma^e(S \times X^I, \oh)$. The claim tells us that the restriction of $a_I$ to the formal neighborhood of $S \times \{x_0^I\}$ is zero. But then $a_I = 0$ by Lemma~\ref{supp}. 
\end{proof}

\begin{rmk}
	As mentioned in Remark~\ref{discuss-3}, it is notable that this proof works for a field $k$ of arbitrary characteristic. See Remark~\ref{exercise} for more discussion of this point. 
\end{rmk}

\subsubsection{Remark} \label{constant} 

It is not hard to show, using the claim proved in~\ref{func1}, that any map $\Ran(X) \to S$ to any scheme $S$ is constant. The idea is to restrict to various infinitesimal neighborhoods $\Ran^n_{\on{inf}} \to \Ran(X)$. The previous claim implies that the maps $\Ran^n_{\on{inf}}\to S$ are constant, and this allows one to show that the restrictions $X^I \to S$ are constant, which implies that $\Ran(X) \to S$ was constant to begin with. 

\subsubsection{} \label{sheaf} We deduce that the assignment $S \mapsto \Pic^e(S \times \Ran(X))$, which is \emph{a priori} a sheaf valued in Picard groupoids, is actually a sheaf valued in abelian groups. 
\begin{cor}
	Let $S$ be a locally Noetherian $k$-scheme. Then the groupoid $\Pic^e(S \times \Ran(X))$ is equivalent to a set. Equivalently, rigidified line bundles on $S \times \Ran(X)$ admit no nontrivial automorphisms. 
\end{cor}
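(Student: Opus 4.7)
The plan is to reduce this statement directly to Lemma~\ref{func1}. First, for any $k$-scheme $T$ and any line bundle $\mc{L}$ on $T$, the sheaf of endomorphisms $\underline{\End}(\mc{L})$ is canonically isomorphic to $\oh_T$ (since $\mc{L}$ is locally free of rank one), so $\Aut(\mc{L}) \simeq \Gamma(T, \oh_T^\times)$. Applied to $T = S \times \Ran(X)$ and an arbitrary line bundle $\mc{L}$ on this prestack (interpreted via the limit description of $\QCoh(\Ran(X))$ in~\ref{defs}), this identifies $\Aut(\mc{L})$ with $\Gamma(S \times \Ran(X), \oh^\times)$.

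Now suppose $\mc{L}$ carries a rigidification, i.e.\ a trivialization along the section $\id_S \times x_0^{\on{Ran}} : S \hookrightarrow S \times \Ran(X)$. An automorphism of the rigidified line bundle is an automorphism $f \in \Gamma(S \times \Ran(X), \oh^\times)$ of $\mc{L}$ whose restriction to $S \times \{x_0^{\on{Ran}}\}$ equals $1$. I want to show $f = 1$.

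The main (and essentially only) step is to invoke Lemma~\ref{func1}, which asserts that $\pr_1^* : \Gamma(S, \oh) \to \Gamma(S \times \Ran(X), \oh)$ is an isomorphism. This isomorphism sends units to units, so $\pr_1^* : \Gamma(S, \oh^\times) \to \Gamma(S \times \Ran(X), \oh^\times)$ is an isomorphism as well, and in particular $f = \pr_1^* g$ for a unique $g \in \Gamma(S, \oh^\times)$. Pulling back the equation $f|_{S \times \{x_0^{\on{Ran}}\}} = 1$ along the section $\id_S \times x_0^{\on{Ran}}$, and using $\pr_1 \circ (\id_S \times x_0^{\on{Ran}}) = \id_S$, we obtain $g = 1$, hence $f = 1$.

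There is no real obstacle; the statement is a formal corollary of Lemma~\ref{func1} together with the standard identification of line-bundle automorphisms with invertible functions. All the actual work was already done in proving Lemma~\ref{func1} via the $d$-excision of $\mc{P}_{S, n, d}$ (Lemma~\ref{rnd-cart}), Proposition~\ref{prop1}, and the support argument of Lemma~\ref{supp}.
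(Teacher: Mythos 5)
Your proposal is correct and takes essentially the same approach as the paper: both reduce the triviality of automorphisms of rigidified line bundles to Lemma~\ref{func1} via the identification of such automorphisms with invertible functions on $S \times \Ran(X)$ that equal $1$ at the basepoint. The only cosmetic difference is that the paper applies Lemma~\ref{func1} to the rigidified function $1-f$ (concluding it vanishes), whereas you use the ring isomorphism $\Gamma(S,\oh) \simeq \Gamma(S\times\Ran(X),\oh)$ to write $f = \pr_1^* g$ and then restrict to the basepoint to force $g=1$; these are the same deduction.
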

\begin{proof}
	Define a \emph{rigidified invertible function} on $S \times \Ran(X)$ to be an invertible function on $S \times \Ran(X)$ which is equal to 1 on $S \times \{x_0^{\on{Ran}}\}$. If $f$ is a rigidified invertible function, then $1-f$ is a rigidified function, and Lemma~\ref{func1} proves that $1-f = 0$, so $f = 1$. Therefore, any rigidified invertible function is identically equal to 1. 
	
	An automorphism of a (rigidified) line bundle is given by a (rigidified) invertible function, so the previous paragraph shows that rigidified line bundles have no nontrivial automorphisms, as desired. 
\end{proof}

\subsubsection{Punctured completions} \label{punctured}
Assume that $X$ is affine. This assumption is needed because, in general, the colimits over $m$ in the following paragraph exist in the category of \emph{affine} schemes, but not in the category of all schemes. 

Let $A$ be a Noetherian $k$-algebra, let $f \in A$, and define $S = \Spec A$. Let $T \subset A$ be a multiplicative system each of whose elements maps to a non-zerodivisor in $A_f$,\footnote{Equivalently, no element of $T$ vanishes on an associated point of $A_f$. In fact, we will only use this construction when $f$ vanishes on the embedded points of $A$, so $A_f$ has no embedded points, in which case the requirement on $T$ is simply that its elements do not vanish on any generic point of $A$.} and let ${fT}$ be the multiplicative system generated by $f$ and $T$. Define 
\e{
	\wh{S} &:= \colim_m V(f^m) \\
	(S\times X^I)^\wedge &:= \colim_m V(f^m) \times X^I
}
where the limits are taken in the category of affine schemes. If $X = \Spec R$, then $(S \times X^I)^\wedge$ is the spectrum of the completion of $A \otimes R^{\otimes m}$ at the ideal $(f)$. Define 
\begin{align*} 
\wh{S}_{fT} &:= \text{localization of $\wh{S}$ by ${fT}$} \\
(S \times X^I)^\wedge_{fT} &:= \text{localization of $(S \times X^I)^\wedge$ by ${fT}$}. 
\end{align*} 
There is evidently a map $(S \times X^I)^\wedge_{fT} \to \wh{S}_{fT}$. 

In this way, we obtain a functor $\fset_* \to \on{Sch}^{\on{aff}}_{k}$ which sends $\{*\} \sqcup I \mapsto (S \times X^I)^\wedge_{fT}$. Let
\[
\mc{Y} := \colim_{\fsetsurjneop} (S \times X^I)^\wedge_{fT}, 
\]
evaluated in the category of prestacks. The maps defined at the end of the previous paragraph combine to give a map $\mc{Y} \to \wh{S}_{fT}$. 

\begin{rmk}
	In the proof of the crucial Proposition~\ref{final}, we will need to glue trivializations of a line bundle on $S \times X^I$ using the `cover' of $S \times X^I$ consisting of the localization $S_f \times X^I$ and the completion $(S \times X^I)^\wedge$, for a suitably chosen $f$. This fits the pattern of Beauville-Laszlo gluing, and we summarize the requisite lemmas from commutative algebra in~\ref{bl-sec}. The upshot is that the `overlap' for the gluing procedure is the punctured completion $(S \times X^I)^\wedge_f$ defined above, and this is why we need to study functions on $(S \times X^I)^\wedge_f$ and $\mc{Y}$. 
\end{rmk}

\subsubsection{Reduction} \label{reduction} 

In~\ref{punctured}, the requirement that $T$ maps to non-zerodivisors in $A_f$ is engineered so that, when studying the punctured completions $\mc{Y}$ and $(S \times X^I)^\wedge_{fT}$, we may assume that $f$ and $T$ consist only of non-zerodivisors in $A$. In the next paragraph, we explain how to perform this reduction step: 

Let $\Ann_A(f) \subset A$ be the ideal consisting of elements annihilated by a power of $f$. This is set-theoretically supported on $V(f)$, so we can make the following replacement without changing $\mc{Y}$ or $\wh{S}_{fT}$: 
\begin{itemize} 
	\item Replace $A$ by $A' := A / \Ann_A(f)$. 
	\item Replace $f$ and $T$ by their images in $A'$. 
\end{itemize} 
By performing this replacement, we may assume that $f$ is a non-zerodivisor on $S$. Then $fT$ consists only of non-zerodivisors in $A$.\footnote{In geometric terms, $f$ does not vanish on any associated point of $\Spec A'$, and the hypothesis on $T$ then implies that none of its elements vanish on any associated point of $\Spec A'$.}

If this is the case, then the pullback of $fT$ to $S \times X^I$ and $(S \times X^I)^\wedge$ consists only of non-zerodivisors. Indeed, upon noting that a function is a non-zerodivisor if and only if it does not vanish on any associated point, this follows from Exercise~24.2.J and Theorem~29.2.6(a) in~\cite{v}. (This uses the Noetherian hypothesis on $A$.) Therefore 
\[
	\oh_{(S \times X^I)^\wedge_{fT}} \to \oh_{(S \times X^I)^\wedge}
\]
is an injection. So any function on $(S \times X^I)^\wedge_{fT}$ admits at most one extension to $(S \times X^I)^\wedge$. 

\subsubsection{Rigidification for punctured completions} \label{rigid-variation}

As in~\ref{rigid}, the basepoint $x_0^I \in X^I$ determines maps $\wh{S}_{fT} \to (S \times X^I)^\wedge_{fT}$ which are functorial in $I$, so we can define rigidified line bundles and functions, denoted $\Pic^e((S \times X^I)^\wedge_{fT}), \Pic^e(\mc{Y}), \Gamma^e((S \times X^I)^\wedge_{fT}, \oh)$, and $\Gamma^e(\mc{Y}, \oh)$. These are functors defined on $\fset_*$. 

\subsubsection{} Now we study functions on the punctured completion of $S \times \Ran(X)$. 
\begin{lem} \label{func2}
	If $A$ is Noetherian, then any regular function on $\mc{Y}$ is pulled back from $\wh{S}_{fT}$. 
\end{lem}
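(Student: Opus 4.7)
The plan is to mimic the proof of Lemma~\ref{func1}, whose essential ingredients were (a) a polynomial-functor vanishing result for functions on an infinitesimal neighborhood of the basepoint, and (b) a Krull-intersection argument (cf.\ Lemma~\ref{supp}) to propagate the vanishing from the formal neighborhood to all of $(S \times X^I)^\wedge_{fT}$. By the formalism of~\ref{rigid-variation}, it suffices to prove $\Gamma^e(\mc{Y}, \oh) = 0$. After the reduction of~\ref{reduction}, we may assume $f$ and $T$ consist of non-zerodivisors in $A$; by fppf descent (as in~\ref{based-based}), we may also assume $X(k)$ is nonempty, so that a basepoint $x_0 \in X(k)$ is available.

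For step (a), consider the formal completion of $(S \times X^I)^\wedge_{fT}$ along the basepoint section $\wh{S}_{fT} \times \{x_0^I\}$. Choosing an \'etale chart $U \to \BA^n$ on a neighborhood of $x_0$ in $X$ (with $n = \dim X$), as in Lemma~\ref{subtract}, this formal completion has coordinate ring $B\bb{(x_{m,i})_{m \in [n], i \in I}}$, where $B = \Gamma(\wh{S}_{fT}, \oh)$ is a Noetherian $k$-algebra. Decomposing into homogeneous pieces in the $x_{m,i}$'s yields a natural (in $I$) identification of the rigidified functions with $\prod_{d \ge 1} \mc{P}_{\wh{S}_{fT}, n, d}(\{*\} \sqcup I)$. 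Each factor is $d$-excisive by Lemma~\ref{rnd-cart}, so Proposition~\ref{prop1} gives $\lim_I \mc{P}_{\wh{S}_{fT}, n, d}(\{*\} \sqcup I) \simeq \mc{P}_{\wh{S}_{fT}, n, d}(\{*\}) = 0$ for $d \ge 1$. Since limits commute with products, we conclude $\lim_I \Gamma^e(\wh{S}_{fT} \times \disk^{n, I}, \oh) = 0$.

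For step (b), given $(a_I) \in \Gamma^e(\mc{Y}, \oh)$, step (a) shows that the restriction of $(a_I)$ to the formal completions vanishes, so each $a_I$ lies in $\bigcap_m \mf{a}_I^m$ inside $R_I := \widehat{A'_I}_{(f)}[1/fT]$, where $\mf{a}_I$ is the ideal of the basepoint section and $A'_I := A \otimes_k R^{\otimes I}$. The main obstacle—and the crux of the argument—is to show this intersection vanishes. I expect to adapt Lemma~\ref{supp}: $R_I$ is Noetherian and flat over $B$ with smooth geometric fibers isomorphic to $X^I$, so by a suitable analog of Lemma~\ref{basic} the associated points of $R_I$ are the generic points of these fibers over associated points of $B$. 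The closure of each such associated point contains (part of) the basepoint section $\wh{S}_{fT} \times \{x_0^I\}$, since $x_0 \in X(k)$ lifts into every fiber. Hence any element whose support avoids the basepoint section must vanish, and applying this to $a_I$—whose vanishing on all orders of the formal neighborhood means its support is disjoint from the basepoint section—gives $a_I = 0$, completing the argument.
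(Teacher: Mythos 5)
Your step (a) is essentially sound, and it actually diverges from the paper in an interesting way: by restricting to the formal neighborhood of the basepoint section \emph{inside the localized completion}, you get a full inverse system over $\fsetsurjne$ with coefficients in $B=\Gamma(\wh{S}_{fT},\oh)$, so the infinite-limit Proposition~\ref{prop1} applies; the paper cannot do this because it first clears denominators, and a single $g_N\in fT$ only works for $|I|\le N$, which is exactly why it needs the finite-limit Lemma~\ref{prop1-fin} with the constraint $d\le N-2$. The genuine gap is step (b), which you yourself identify as the crux. The claim that $\Spec R_I\to \Spec B$ has fibers isomorphic to $X^I$ is false: $(f)$-adic completion does not commute with the infinite-type base change $A\to A\otimes_k R^{\otimes I}$. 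Already for $A=k[t]$, $f=t$, $T=\{1\}$, $X=\BA^1$ one has $B=k((t))$ and $R_I=k[x]\bb{t}[1/t]$, whose spectrum is not $\BA^1_{k((t))}$. So there is no ready-made ``analog of Lemma~\ref{basic},'' and the statement you actually need --- that every associated point of the punctured $(f)$-adic completion $R_I$ has closure meeting the basepoint section --- is a statement about associated primes of completions of general (possibly non-excellent) Noetherian rings, which is precisely the delicate commutative algebra the paper's proof is designed to avoid. As written, step (b) is not a proof.

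The repair is essentially the second half of the paper's argument, and it shows why one should never run the support argument on the completed ring itself. Given that $a_I\in R_I$ vanishes to all orders along the basepoint section, write $a_I=h/g$ with $h\in\Gamma((S\times X^I)^\wedge,\oh)$ and $g\in fT$ (possible for each fixed $I$). The restriction of $h$ to each infinitesimal neighborhood $(S\times V(\mf{m}^{d+1}))^{\wedge,(f)}\simeq \wh{S}\times V(\mf{m}^{d+1})$ (the isomorphism~(\ref{prod})) dies after localizing by $fT$; since, after the reduction of~\ref{reduction}, $fT$ consists of non-zerodivisors on $\wh{S}$ and $\wh{S}\times V(\mf{m}^{d+1})$ has the same associated points as $\wh{S}$, that restriction is already zero. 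Thus $h$ itself vanishes to all orders along the basepoint section of the \emph{unpunctured} completion. Now reduce modulo $f^m$: the restriction of $h$ to the finite-type scheme $V(f^m)\times X^I$ vanishes on the formal neighborhood of $V(f^m)\times\{x_0^I\}$, so Lemma~\ref{supp} (with $\Spec A/f^m$ as the base) gives that this restriction is zero for every $m$, and passing to the limit over $m$ yields $h=0$ in $(S\times X^I)^\wedge=\lim_m V(f^m)\times X^I$, whence $a_I=0$. With this substitute for your step (b), your overall strategy goes through, but the associated-point analysis of $R_I$ that you sketched should be discarded.
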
 
\begin{proof}
	By~\ref{reduction}, we may assume that $fT$ consists only of non-zerodivisors in $A$. In view of~\ref{rigid} and \ref{rigid-variation}, it suffices to prove that $\Gamma^e(\mc{Y}, \oh) = 0$. Consider an element of $\Gamma^e(\mc{Y}, \oh)$, which is a compatible family of elements $a_I \in \Gamma^e((S \times X^I)^\wedge_{fT}, \oh)$ for $I \in \fsetsurjne$. 
	
	Let $N > 0$ be an integer. There exists $g_N \in {fT}$ such that $g_N\, a_I$ extends to $(S \times X^I)^\wedge$ for all $|I| \le N$. If $g_N$ has this property, then so does any multiple of $g_N$. We may therefore assume that, for each pair of integers $N < N'$, there exists an element $r_{N, N'} \in {fT}$ such that $g_{N'} = r_{N, N'} \, g_{N}$. 
	
	In this paragraph, we consider a fixed finite set $I$. Let $\mf{m}$ be the maximal ideal of the closed point $x_0^I \in X^I$. For integers $m, d\ge 0$, we have the following commutative diagram of affine schemes: 
	\begin{cd}[column sep = 0.4in]
		V(f^m) \times V(\mf{m}^{d+1}) \ar[r, "\colim_d"] \ar[d, "\colim_m"] & (V(f^m) \times X^I)^{\wedge, \mf{m}} \ar[d, "\colim_m"] \ar[r] & V(f^m) \times X^I \ar[d, "\colim_m"] \\
		(S \times V(\mf{m}^{d+1}))^{\wedge, (f)} \ar[r, "\colim_d"] & (S \times X^I)^{\wedge, (f) + \mf{m}} \ar[r] & (S \times X^I)^{\wedge, (f)} \ar[r, shift left = 1, dash] \ar[r, dash] &  (S \times X^I)^{\wedge}. 
	\end{cd}
	Each $\wedge$ denotes completion (i.e.\ colimit in the category of affine schemes) and is followed by the ideal with respect to which the completion is performed. The arrows labeled `colim' are members of a colimit diagram; for example, the upper horizontal arrow indicates that $(V(f^m) \times X^I)^{\wedge, \mf{m}} \simeq \colim_d V(f^m) \times V(\mf{m}^{d+1})$. Also, we have 
	\begin{align}
		(S \times V(\mf{m}^{d+1}))^{\wedge, (f)} &\simeq \wh{S} \times V(\mf{m}^{d+1}). \tag{P} \label{prod}  
	\end{align}
	Now, for any function $h \in \Gamma((S \times X^I)^\wedge, \oh)$, we obtain functions denoted as follows: 
	\begin{cd}
		(h)_{m, d}  & (h)_{m, \infty} \ar[l, mapsto] & (h)_{m} \ar[l, mapsto] \\
		(h)_{\infty, d} \ar[u, mapsto] & (h)_{\infty, \infty} \ar[l, mapsto] \ar[u, mapsto] & (h)_\infty \ar[r, shift left = 1, dash] \ar[r, dash] \ar[l, mapsto] \ar[u, mapsto] & h. 
	\end{cd}
	We will apply this to functions of the form $h = g_N\, a_I$. 
	
	In this paragraph, we fix an integer $N > 0$. Recall the functor 
	\[
		\mc{P}_{S, n, \le d} : \fset_{*} \to \on{AbGrp}
	\]
	defined in~\ref{rnd-cart}, for any base scheme $S$. The assignment 
	\[
		I \rightsquigarrow (g_N\, a_I)_{\infty, d}
	\]
	yields an element of $\lim_{|I| \le N} \mc{P}_{\wh{S}, n, \le d}$ where $n = \dim X$. (This uses (\ref{prod}).) Lemma~\ref{rnd-cart} implies that $\mc{P}_{\wh{S}, n, \le d}$ is polynomial of degree $\le d$, so Lemma~\ref{prop1-fin} can be applied when $d \le N-2$. Since we are dealing with \emph{rigidified} functions, i.e.\ the constant terms are zero, we conclude that $(g_N\, a_I)_{\infty, N-2} = 0$.
	
	Now fix an integer $N_0 > 0$, and let $N > N_0$ be any integer. The identities in this paragraph will be valid for $I$ satisfying $|I| \le N_0$. Recall that $g_{N} = r_{N_0, N} \, g_{N_0}$ for $r_{N_0, N} \in {fT}$. The previous paragraph implies that 
	\e{
		0 &= (g_N\, a_I)_{\infty, N-2}  \\
		&= r_{N_0, N} \, (g_{N_0}\, a_I)_{\infty, N-2}.
	} 
	Now, since $\wh{S} \times V(\mf{m}^{N-2})$ has the same associated points as $\wh{S}$,\footnote{Proof: the map $\wh{S} \times V(\mf{m}^{N-2}) \xrightarrow{\pr_1} \wh{S}$ is flat and induces a bijection on the underlying topological spaces, so Exercise 24.2.J in \cite{v} yields the result.} the isomorphism (\ref{prod}) implies that ${fT}$ maps to non-zerodivisors on $(S \times V(\mf{m}^{N-2}))^{\wedge, (f)}$. Therefore the previous equation implies that 
	\[
		(g_{N_0}\, a_I)_{\infty, N-2} = 0. 
	\]
	Keeping $N_0$ fixed and passing to the limit as $N \to \infty$, we conclude that 
	\[
		(g_{N_0}\, a_I)_{\infty, \infty} = 0. 
	\]
	This implies that 
	\[
		(g_{N_0}\, a_I)_{m, \infty} = 0. 
	\]
	This is a function on $(V(f^m) \times X^I)^{\wedge, \mf{m}}$, so we may apply Lemma~\ref{supp} and conclude that 
	\[
		(g_{N_0}\, a_I)_m = 0. 
	\]
	Taking the limit as $m \to \infty$ we find that $g_{N_0}\, a_I = 0$, from which it follows that $a_I = 0$, by definition of ring localization. 
	
	This conclusion applies to $|I| \le N_0$, but we are now free to take $N_0 \to \infty$, so $a_I = 0$ for all $I$, as desired. 
\end{proof}

\subsection{The Artinian case} \label{formal-sec} 

To prove the result in the case when $S$ is Artinian, we bootstrap from the result of Corollary~\ref{cor-pic-1} using deformation theory. 

\subsubsection{} \label{formal}
Here is the desired statement. The proof will rely on the next Lemma~\ref{ga-trivial}. 
\begin{lem} 
	If $S$ is an Artinian scheme over $k$, and $X$ satisfies the hypothesis (C), then $\Pic^e(S \times \Ran(X)) = 0$.  
\end{lem}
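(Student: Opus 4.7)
The plan is to use deformation theory along the $\mf{m}$-adic filtration of $S$, reducing the base case to a field (where Corollary~\ref{cor-pic-1} applies) and controlling the obstruction at each square-zero thickening using the forthcoming Lemma~\ref{ga-trivial}, which will furnish the vanishing of the relevant $\mathbb{G}_a$-cohomology on $\Ran(X)$.

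Since $S$ is Artinian, it decomposes as a finite disjoint union of local pieces and $\Pic^e$ respects this decomposition, so we may assume $S = \Spec A$ with $A$ an Artinian local $k$-algebra with maximal ideal $\mf{m}$ and residue field $\kappa$. Write $A_j := A/\mf{m}^{j+1}$ and $S_j := \Spec A_j$; the claim is that $\Pic^e(S_j \times \Ran(X)) = 0$ by induction on $j$. For the base case $j = 0$, the prestack $S_0 \times_k \Ran(X)$ coincides with $\Ran(X_\kappa)$, and hypothesis (C) is preserved under the field extension $k \hookrightarrow \kappa$ since $X_{\ol{\kappa}} \simeq (X_{\ol{k}})_{\ol{\kappa}}$, and the base change to $\ol{\kappa}$ of a smooth proper connected $\ol{k}$-variety remains smooth, proper, and (geometrically, hence) connected; thus Corollary~\ref{cor-pic-1} yields $\Pic^e(\Ran(X_\kappa)) = 0$.

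For the inductive step, set $K := \mf{m}^j/\mf{m}^{j+1}$, a finite-dimensional $\kappa$-vector space annihilated by $\mf{m}$. Given $\mc{L} \in \Pic^e(S_j \times \Ran(X))$, the inductive hypothesis together with Corollary~\ref{sheaf} provides a unique rigidified trivialization $\phi_{j-1}$ of $\mc{L}|_{S_{j-1} \times \Ran(X)}$. The kernel of $\oh^\times_{S_j \times \Ran(X)} \to \oh^\times_{S_{j-1} \times \Ran(X)}$ is the sheaf $1 + K \otimes_\kappa \oh$ scheme-theoretically supported on $S_0 \times \Ran(X) \simeq \Ran(X_\kappa)$, which we identify via $1 + f \leftrightarrow f$ with the additive sheaf $K \otimes_\kappa \oh_{\Ran(X_\kappa)}$. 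The obstruction to extending $\phi_{j-1}$ to a (not necessarily rigidified) trivialization of $\mc{L}$ on $S_j \times \Ran(X)$ thus lies in $K \otimes_\kappa H^1(\Ran(X_\kappa), \oh)$, which will vanish by Lemma~\ref{ga-trivial}. The resulting space of such extensions is a torsor under $K \otimes_\kappa H^0(\Ran(X_\kappa), \oh) = K \otimes_\kappa \kappa = K$ (using Lemma~\ref{func1} over $\kappa$), and the rigidification of $\mc{L}$ at $S_j \times \{x_0^{\Ran}\}$ picks out the unique extension compatible with it, yielding the desired rigidified trivialization of $\mc{L}$.

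The main obstacle is the vanishing of $H^1(\Ran(X_\kappa), \oh)$, which is not immediate because $X$ need not be proper and a K\"unneth formula is therefore unavailable on each stratum $X^I$; this is precisely what Lemma~\ref{ga-trivial} will address, presumably by an $n$-excisive-functor argument analogous to the one used for functions in Lemma~\ref{func1}. Modulo that result, the inductive deformation-theoretic argument sketched above is routine.
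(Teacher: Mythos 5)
Your proof is correct and follows essentially the same route as the paper: reduce to an Artinian local ring, induct along a filtration of the ring by square-zero extensions, settle the base case over the residue field with Corollary~\ref{cor-pic-1}, and kill the lifting problem at each step with Lemma~\ref{ga-trivial} (with Corollary~\ref{sheaf} supplying uniqueness of the trivializations). The only cosmetic differences are that the paper refines the filtration so the graded pieces are one-dimensional and phrases the inductive step as an identification of the lifting data with a $\ga \otimes K$-torsor on $\Ran(X)$ rather than as an obstruction class in $H^1$ plus a rigidification-matching argument; these amount to the same mechanism.
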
 
\begin{proof} 
	We may assume that $S = \Spec R$ where $R$ is local and Artinian. Then $R$ has a unique prime ideal $\mf{m}$ with residue field $K := R / \mf{m}$ which is a field extension of $k$. Furthermore, as an $R$-module, $R$ has a filtration
	\[
		0 = I_N \subset I_{N-1} \subset \cdots \subset I_0 = R
	\]
	where $I_{n} / I_{n+1} \simeq K$ for each $n$. The image of $1 \in K$ under this (non-canonical) isomorphism gives an element $\ol{\epsilon}_n \in I_{n} / I_{n+1}$. For each $n$, we have a short exact sequence in $R$-mod: 
	\[
		0 \to K \xrightarrow{\ol{\epsilon}_n} R/I_{n+1} \to R/I_n \to 0. 
	\]
	If we define $S_n := R/I_n$, then we have a sequence of closed embeddings 
	\[
		\Spec K = S_1 \hra S_2 \hra \cdots \hra S_N = S.
	\]
	
	For any $k$-algebra $\wt{R}$, define the following sheaves of abelian groups on the big Zariski site of $\Spec k$: 
	\e{
		(\ga \otimes \wt{R})(T) &= \{\text{regular functions on } T \times \Spec \wt{R}\} \\
		(\gm \otimes \wt{R})(T) &= \{\text{invertible functions on } T \times  \Spec \wt{R}\}
	} 
	where $T$ is an arbitrary affine $k$-scheme. These are just the mapping prestacks $\ul{\Hom}(\Spec \wt{R}, \ga)$ and $\ul{\Hom}(\Spec \wt{R}, \gm)$, respectively. 
	
	For each $n\ge 1$, we have an exact sequence of Zariski sheaves of abelian groups: 
	\[
		0 \to \ga \otimes K \to \gm \otimes R/I_{n+1} \to \gm \otimes R/I_n \to 0.
	\]
	On an affine scheme $T = \Spec A$, the first horizontal map is defined by sending a function $a \in A\otimes_k K$ to the invertible function $1 + a\, \ol{\epsilon}_n \in A \otimes_k R/I_{n+1}$. 
	
	Now fix a rigidified line bundle $\mc{L}$ on $S \times \Ran(X)$. Its restriction to $S_1 \times \Ran(X)$ is trivial by Corollary~\ref{cor-pic-1}. Fix $n \ge 1$ and assume by induction that its restriction to $S_n \times \Ran(X)$ is trivial. We claim that its restriction to $S_{n+1} \times \Ran(X)$ must also be trivial. Indeed, a line bundle on $S_{n+1} \times \Ran(X)$, equipped with trivialization on $S_n \times \Ran(X)$, is equivalent to a torsor for $\gm \otimes R/I_{n+1}$ defined on $\Ran(X)$, equipped with trivialization of the induced $\gm \otimes R/I_n$ torsor. But this is equivalent to a $\ga \otimes K$ torsor on $\Ran(X)$, which is trivial by Lemma~\ref{ga-trivial}. This completes the proof of the inductive step. 	
\end{proof} 

\subsubsection{} \label{ga-trivial} Let $K$ be an arbitrary field extension of $k$, and recall from~\ref{formal} the definition of the big Zariski sheaf $\ga \otimes K$ valued in abelian groups. 
\begin{lem} 
	Any $\ga \otimes K$ torsor on $\Ran(X)$ is trivial. 
\end{lem}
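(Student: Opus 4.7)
The plan is to use the same strategy as in the proof of Proposition~\ref{prop-pic-1}. After identifying the groupoid of $\ga \otimes K$-torsors on $\Ran(X)$ with $\tau^{\le 0}\bigl(\holim_{I \in \fsetsurjne} R\Gamma(X^I, \oh_{X^I}) \otimes_k K\,[1]\bigr)$, it suffices to show that this truncation is equivalent to $K[1]$, which by the Bousfield--Kan spectral sequence~\cite[7.1]{bk} reduces to
\[
\lim_I \Gamma(X^I, \oh) \otimes_k K \simeq K, \quad \nlim{1}_I \Gamma(X^I, \oh) \otimes_k K = 0, \quad \lim_I H^1(X^I, \oh) \otimes_k K = 0.
\]
The first is Lemma~\ref{func1} applied with $S = \Spec K$. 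Following the pattern of Corollary~\ref{cor-pic-1}, fppf (Galois) descent first allows us to pass to a finite extension of $k$ and assume that $X$ admits a $k$-rational basepoint $x_0$ and a smooth proper geometrically connected compactification $\ol{X}/k$.

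For the third statement, the K\"unneth formula on $\ol{X}$ (together with $H^0(\ol{X}, \oh) = k$ from geometric connectedness) gives $H^1(\ol{X}^I, \oh) \simeq \bigoplus_{i \in I} H^1(\ol{X}, \oh)$, a polynomial functor of degree $\le 1$ in the sense of Definition~\ref{def-poly}. A local cohomology analysis of the excision long exact sequence along $Z^I := \ol{X}^I \setminus X^I$, using smoothness of $X$, identifies $I \mapsto H^1(X^I, \oh)$ as a subquotient of this linear functor, hence itself polynomial of degree $\le 1$ by Lemma~\ref{thicc}. Proposition~\ref{prop1} then yields the vanishing, which is preserved upon tensoring with the flat $k$-module $K$.

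For the second statement, we upgrade the proof of Lemma~\ref{func1} from a $\lim^0$ to a $\nlim{1}$ assertion. Since $\fsetsurjne$ has terminal object $\{1\}$ its classifying space is contractible, so $R\lim$ of any constant functor is concentrated in degree zero; combined with the basepoint splitting $\Gamma(X^I, \oh) = k \oplus \Gamma^e(X^I, \oh)$, it suffices to show $\nlim{1}_I \Gamma^e(X^I, \oh) = 0$. Lemma~\ref{rnd-cart} shows $\mc{P}_{n, d}$ is $d$-excisive as a functor to $D(\on{Vect}_k)$, so Theorem~\ref{thm2} gives $R\lim_I \mc{P}_{n, d}\iota = 0$ for every $d \ge 1$; taking products over $d$ yields the derived vanishing $R\lim_I \Gamma^e(\raninf^n(\{*\} \sqcup I), \oh) = 0$ in all degrees. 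The short exact sequence of functors
\[
0 \to \Gamma^e(X^I, \oh) \to \Gamma^e((X^I)^{\wedge}_{x_0^I}, \oh) \to \mc{Q}(I) \to 0,
\]
whose middle term has vanishing $R\lim$, then reduces the desired $\nlim{1}$ vanishing to $\lim_I \mc{Q}(I) = 0$, which one attempts by grading $\mc{Q}$ by order of vanishing at $x_0^I$ and applying Proposition~\ref{prop1} to each graded piece.

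The main obstacle is this cokernel vanishing $\lim_I \mc{Q}(I) = 0$: the compatibility structure under generalized diagonals must be delicately combined with the polynomial character of the graded pieces, since $\mc{Q}$ itself is not polynomial of bounded degree. A slicker alternative, if it can be made to work, is to prove directly that $I \mapsto R\Gamma(X^I, \oh)$ is $n$-excisive in $D(\on{Vect}_k)$ --- by combining the K\"unneth decomposition on $\ol{X}^I$ with a careful excision analysis for $Z^I$ --- so that Theorem~\ref{thm2} applies and immediately yields $R\holim_I R\Gamma(X^I, \oh) = k[0]$, settling all the required vanishings at once.
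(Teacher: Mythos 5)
Your reduction via the Bousfield--Kan spectral sequence to the three termwise statements is sound in outline, but two of the three inputs are not established, so the proof has genuine gaps. For the third statement, the claim that $I \mapsto H^1(X^I,\oh)$ is a subquotient of the linear functor $I \mapsto H^1(\ol{X}^I,\oh)$ fails: unlike the Picard--group situation of Corollary~\ref{want-quadratic}, where smoothness lets every line bundle on $X^I$ extend to $\ol{X}^I$, restriction of coherent cohomology to an open subvariety is neither injective nor surjective, and by the K\"unneth formula $H^1(X^I,\oh)\simeq \bigoplus_{i\in I} H^1(X,\oh)\otimes_k H^0(X,\oh)^{\otimes (I\setminus\{i\})}$. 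Since $H^0(X,\oh)$ is in general much larger than $k$ ($X$ is not assumed proper; it may even be affine), this functor is \emph{not} polynomial of bounded degree, so Proposition~\ref{prop1} cannot be applied to it, and no excision analysis along $Z^I$ will make it so. The same objection defeats your ``slicker alternative'': by K\"unneth $R\Gamma(X^I,\oh)\simeq R\Gamma(X,\oh)^{\otimes I}$ is a multiplicative functor, not an $n$-excisive one for any fixed $n$, so Theorem~\ref{thm2} does not apply. For the second statement you acknowledge the gap yourself: the cokernel $\mc{Q}$ is not polynomial of bounded degree, the proposed grading by order of vanishing is not compatible with the quotient by the (non-graded) image of $\Gamma^e(X^I,\oh)$, and no proof of $\lim_I \mc{Q}(I)=0$ is given; so the $\lim^1$-vanishing, which is the heart of the matter, is missing.

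The paper's proof avoids termwise analysis of the $E_2$-page altogether. It identifies the restriction of $\ga\otimes K$ to $\Ran(X)$ with $q'_*\oh_{\Ran_K(X_K)}$ for the affine map $q' : \Ran_K(X_K)\to\Ran(X)$, so that torsors are classified by $H^1(\Ran_K(X_K),\oh)$, and then invokes the semigroup cancellation argument of \cite[Sect.\ 6]{g}, whose only inputs are the K\"unneth theorem for coherent cohomology and the already established $H^0(\Ran_K(X_K),\oh)=K$ (Lemma~\ref{func1}). The key point is that the vanishing comes from the multiplicative (factorization) structure of $R\Gamma$ along the semigroup $\Ran(X)$, not from any polynomial or excisive behaviour of $I\mapsto R\Gamma(X^I,\oh)$; to repair your argument you would need to replace the polynomial-functor input in degrees one and above by that cancellation trick.
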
 
\begin{proof} 
	Writing $X_K := \Spec K \times_{\Spec k} X$, we have a Cartesian diagram: 
	\begin{cd} 
		\Ran_K(X_K) \ar{r}{q'} \ar{d}[swap]{p'} & \Ran(X) \ar{d}{p} \\
		\Spec K \ar{r}{q} & \Spec k
	\end{cd} 
	(Here, as in Corollary~\ref{cor-pic-1}, the notation $\Ran_K(X_K)$ means the Ran space construction taken over $K$ rather than $k$.) The restriction of $\ga \otimes K$ (as a sheaf on the big Zariski site of $\Spec k$) to $\Ran(X)$ is $q'_* \oh_{\Ran_K(X_K)}$, so isomorphism classes of torsors for this sheaf are in bijection with the cohomology group $H^1(\Ran(X), q'_* \oh_{\Ran_K(X_K)})$. 
	
	Since $\oh_{\Ran_K(X_K)}$ is quasicoherent, and $q_*'$ is affine and hence acyclic for quasicoherent sheaves, we have 
	\[
		H^1(\Ran(X), q'_*\, \oh_{\Ran_K(X_K)}) \simeq H^1(\Ran_K(X_K), \oh_{\Ran_K(X_K)})
	\]
	as $k$-vector spaces. The argument of \cite[Sect.\ 6]{g} shows that 
	\[
	H^i(\Ran_K(X_K), \oh_{\Ran_K(X_K)}) = 0 
	\]for all $i > 0$, as desired. This argument applies because there is a K\"unneth theorem for coherent cohomology~\cite[\href{https://stacks.math.columbia.edu/tag/0BEC}{Tag 0BEC}]{stacks} and because we already know that $H^0(\Ran_K(X_K), \oh) = K$, by Lemma~\ref{func1}. 	
\end{proof}

\subsection{The case of affine $X$} \label{induct} \label{final}

Starting from Lemma~\ref{formal}, which establishes the desired result in the case when $S$ is Artinian, we extend the resulting trivializations of line bundles to the completed version of $S \times \Ran(X)$ introduced in~\ref{punctured}. We need to restrict to affine $X$ because that is the generality in which this construction makes sense. Next, we glue these trivializations across punctured completions. In order to do so, we will use some lemmas in commutative algebras which are gathered in~\ref{bl-sec}. 

\begin{prop} 
	Assume that $X$ is affine and satisfies (C). Then, for any locally Noetherian scheme $S$ over $k$, we have $\Pic^e(S \times \Ran(X)) = 0$. 
\end{prop}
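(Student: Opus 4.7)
The plan follows the four-step strategy from Remark~\ref{rmk-ega}, with the crucial observation that the rigidification will make the Beauville-Laszlo gluing compatibility automatic.

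First, by Corollary~\ref{sheaf} the functor $T \mapsto \Pic^e(T \times \Ran(X))$ is a Zariski sheaf of sets on locally Noetherian $T$, reducing the problem to $S = \Spec A$ affine Noetherian. Then I will proceed by Noetherian induction on $A$: a minimal-counterexample argument reduces to the case where $\mc{L}$ is a nontrivial rigidified line bundle on $\Spec A \times \Ran(X)$ with $\mc{L}|_{V(\mf{J}) \times \Ran(X)}$ trivial for every nonzero ideal $\mf{J} \subset A$. If $A$ is Artinian, Lemma~\ref{formal} contradicts this, so we may assume $\dim A \ge 1$; after invoking~\ref{reduction}, pick a non-zerodivisor $f \in A$.

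By the inductive hypothesis applied to each $V(f^n)$, and uniqueness of rigidified trivializations (Corollary~\ref{sheaf}), we obtain a compatible family of rigidified trivializations of $\mc{L}$ on the $V(f^n) \times \Ran(X)$. Since $X = \Spec R$ is affine, so is each $V(f^n) \times X^I$, and passing to the $f$-adic inverse limit in each slot yields a rigidified trivialization $\tau^\wedge$ of $\mc{L}$ on $(S \times X^I)^\wedge$ compatibly in $I \in \fsetsurjne$---this is Grothendieck's existence theorem in its elementary affine form. By a parallel application of the inductive hypothesis (see obstacle below), I obtain a rigidified trivialization $\tau^{\on{loc}}$ of $\mc{L}|_{S_f \times \Ran(X)}$.

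These will be glued via Beauville-Laszlo (inputs from~\ref{bl-sec}). The discrepancy on the overlap is a system of invertible functions $g_I \in \Gamma((S \times X^I)^\wedge_f, \oh)^\times$, compatible in $I$; by Lemma~\ref{func2}, each $g_I$ is pulled back from a common invertible function $h$ on $\wh{S}_f$. Here the rigidification intervenes decisively: both $\tau^\wedge$ and $\tau^{\on{loc}}$ restrict to the distinguished basepoint trivialization at $S \times \{x_0^I\}$, so $g_I$ equals $1$ on the basepoint section $\wh{S}_f \times \{x_0^I\} \subset (S \times X^I)^\wedge_f$; since this section projects isomorphically onto $\wh{S}_f$, we conclude $h = 1$ and hence $g_I = 1$. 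Thus $\tau^\wedge$ and $\tau^{\on{loc}}$ glue to a rigidified trivialization of $\mc{L}$ on $S \times X^I$ compatibly in $I$, and hence on $S \times \Ran(X)$, contradicting nontriviality.

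The principal obstacle is the production of $\tau^{\on{loc}}$: since localization need not decrease Krull dimension, the inductive invariant must be carefully chosen. I expect to use a lexicographic invariant (e.g. Krull dimension of $A$ together with the number of maximal-dimensional irreducible components of $\Supp(\mc{L})$) combined with a selection of $f$ that kills a maximal-dimensional component of $\Supp(\mc{L})$; this way, $S_f$ strictly decreases in the invariant while each $V(f^n)$ has strictly smaller Krull dimension after~\ref{reduction}, closing the induction on both sides of the Beauville-Laszlo decomposition.
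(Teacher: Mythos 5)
Your overall architecture (sheafification via Corollary~\ref{sheaf}, Noetherian induction, the Artinian case, Grothendieck existence on the completions, Beauville--Laszlo gluing with the discrepancy killed by Lemma~\ref{func2} plus rigidification) matches the paper's, and your treatment of the overlap is essentially the paper's argument. But the step you yourself flag as the principal obstacle --- producing $\tau^{\on{loc}}$ on $S_f \times \Ran(X)$ by a ``parallel application of the inductive hypothesis'' --- is a genuine gap, and the proposed fix does not close it. The lexicographic invariant is not well defined: $\Supp(\mc{L})$ is all of $S \times \Ran(X)$ for a line bundle, and the ``locus of nontriviality'' is not a closed subscheme you can point to in advance. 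Worse, the two sides of the induction pull in opposite directions: to make $S_f$ drop a top-dimensional component you need $f$ to vanish identically on that component, but then $V(f^n)$ contains the whole component and your invariant does not decrease on the closed side; the reduction of~\ref{reduction} cannot rescue this, since it replaces $A$ by $A/\Ann_A(f)$ only for the purpose of studying the punctured completion $\mc{Y}$ and $\wh{S}_{fT}$ --- it does not change the closed subschemes $V(f^n) \subset S$ to which the inductive hypothesis and Grothendieck existence must be applied, nor can you change $S$ itself, since $\mc{L}$ lives over the original $S$.

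The paper avoids inducting on the open side altogether. Instead of a single $f$ and a trivialization over $S_f \times \Ran(X)$, it localizes at the multiplicative system $T$ of elements avoiding all minimal primes: $S_T$ is Artinian, so Lemma~\ref{formal} trivializes $\mc{L}|_{S_T \times \Ran(X)}$, and a spreading-out argument then extends the resulting section over $X^n$ to a nonvanishing section $a^{(n)}$ over $S_{f_n} \times X^n$ for some $f_n \in T$ \emph{depending on $n$} (one cannot expect a single $f$ to work for all $n$ simultaneously, which is exactly why your single-$f$ picture breaks). The Noetherian induction is then only ever applied to the closed subschemes $V(f_n^m)$, which is the standard well-founded form needing no numeric invariant; after choosing $f_n$ to vanish on the embedded points of $S$ (so that $T$ stays a system of non-zerodivisors after inverting $f_n$, which your sketch also needs and omits), the agreement of $a^{(n)}$ with the formal section $b^{(n)}$ on $(S \times X^n)^\wedge_{f_n}$ is checked after further localizing by $T$, using Lemma~\ref{func2} exactly as you do. The price of letting $f_n$ vary is an extra verification, absent from your proposal, that the glued section $c^{(n)}$ is independent of the choice of $f_n$ and that the $c^{(n)}$ are compatible under the diagonal maps $\Delta_\xi$; this is done by passing to a common $f = f_{n_1} f_{n_2}$ and reusing the uniqueness statements. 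If you want to salvage your plan, replace the induction producing $\tau^{\on{loc}}$ by this ``localize at the generic points, apply the Artinian case, spread out one $n$ at a time'' mechanism.
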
 
\begin{proof} 
	We know from~\ref{sheaf} that the assignment $U \mapsto \Pic^e(U \times \Ran(X))$ defined on open subschemes of $S$ is a sheaf valued in abelian groups. Therefore, it suffices to prove the result when $S$ is affine, so we may assume $S = \Spec A$. Since the result when $S$ is a reduced point follows from Corollary~\ref{cor-pic-1}, we may assume by Noetherian induction that the result holds for all strictly smaller closed subschemes of $S$. 
	
	Fix a rigidified line bundle $\mc{L}$ on $S \times \Ran(X)$. 
	
	For notational ease, we replace $\fsetsurjne$ by its skeletal subcategory $\mc{C}$ consisting of the objects $[n]$ for $n \ge 1$. 	
	
	Let $T \subset A$ be the multiplicative system consisting of elements of $A$ not contained in any minimal prime ideal. Then $S_T := \Spec A_T$ is an Artinian scheme, so Lemma~\ref{formal} implies that the restriction of $\mc{L}$ to $S_T \times \Ran(X)$ is trivial. Concretely, this trivialization provides sections $\mathring{a}^{(n)} \in \Gamma(S_T \times X^n, \mc{L})$ which are compatible under the maps corresponding to surjections of finite sets. 
	
	\begin{claim}
		For each $n$, there exists $f_n \in T$ such that $\mathring{a}^{(n)}$ is the restriction of a nonvanishing section $a^{(n)} \in \Gamma(S_{f_n} \times X^n, \mc{L})$, where $S_{f_n} := \Spec A_{f_n}$. 
	\end{claim}
	\begin{proof}
		Fix local trivializations of $\mc{L}|_{S \times X^n}$ with respect to some affine open cover $\{U_\beta\}_{\beta \in B}$ of $S \times X^n$, and write $U_\beta = \Spec A_\beta$ where each $A_\beta$ is an $A$-algebra. Localizing by $T$, this yields local trivializations of $\mc{L}|_{S_T \times X^n}$ with respect to the affine open cover of $S_T \times X^n$ consisting of $(U_\beta)_T := \Spec ((A_\beta)_T)$. The nonvanishing section $\mathring{a}^{(n)}$ can be interpreted as consisting of the following data: 
		\begin{enumerate}
			\item[] There are functions $g_\beta, h_\beta \in \Gamma((U_\beta)_T, \oh)$ for each $\beta \in B$, satisfying two constraints:  
			\item[(F1)] For each $\beta \in B$, we have $g_\beta \cdot h_\beta = 1$.
			\item[(F2)] For each pair $\beta_1, \beta_2 \in B$, the product $g_{\beta_1} \cdot h_{\beta_2}$ on the overlap $(U_{\beta_1})_T \cap (U_{\beta_2})_T$ is equal to the corresponding transition function of $\mc{L}|_{S_T \times X^n}$. 
		\end{enumerate}
		For each $\beta \in B$, we have $g_\beta = t^{-1}\, \wt{g}_\beta$ and $h_{\beta} = s^{-1} \, \wt{h}_{\beta}$ for some functions $\wt{g}_\beta, \wt{h}_\beta$ on $U_\beta$ and some elements $s, t \in T$. If we take $f'_n \in T$ to be the product of all such $s$ and $t$ which arise, then for each $\beta \in B$ the functions $g_{\beta}$ and $h_{\beta}$ are restrictions of functions $g'_{\beta}$ and $h'_{\beta}$ defined on the localizations $(U_\beta)_{f'_n}$. Now, each equation $g'_\beta \cdot h'_\beta = 1$ becomes true after further localizing by some element $u\in T$, and similarly for the equations corresponding to (F2). (Note that the transition function in question is defined on $U_{\beta_1} \cap U_{\beta_2}$, even before localizing by $T$.) Let $f_n$ be the product of $f'_n$ with all the $u$'s which arise. Then the restrictions of $g'_\beta$ and $h'_\beta$ to $(U_\beta)_{f_n}$ for each $\beta \in B$ yield a nonvanishing section of $\mc{L}|_{S_{f_n} \times X^n}$, as desired. 
	\end{proof}
	By multiplying $f_n$ by an additional element of $T$, we may assume that $f_n$ vanishes on all the embedded points of $S$. Hence, its pullback to $S_{f_n} \times X^n$ vanishes on all the embedded points of $S_{f_n} \times X^n$ (see Lemma~\ref{basic}). 
	
	For any $m \ge 1$, the inductive hypothesis gives a trivialization of $\mc{L}$ on $V(f_n^m) \times \Ran(X)$, where $V(f_n^m)$ is the vanishing locus of $(f_n)^m$ on $S$. The uniqueness of trivializations proved in Corollary~\ref{sheaf} implies that these trivializations are compatible under pullback along the maps $V(f_n^m) \hra V(f_n^{m+1})$. In this way, we obtain nonvanishing sections of $\mc{L}$ on the formal neighborhoods of $V(f_n) \subset S \times X^n$ for various $n$. Grothendieck's existence theorem turns these formal sections into nonvanishing sections of $\mc{L}$ on the completions $(S\times X^n)^\wedge$.\footnote{The scheme $(S \times X^n)^\wedge$ is defined as in~\ref{punctured}, completing with respect to $f_n$ in place of $f$.} In this way, we obtain a trivialization of $\mc{L}$ on 
	\[
		\mc{Y}' := \colim_{\mc{C}^\op}(S \times X^n)^\wedge. 
	\]
	Here $\mc{L}|_{\mc{Y}'}$ is taken to be a rigidified line bundle in the sense of~\ref{rigid-variation}. Let $b^{(n)} \in \Gamma((S \times X^n)^\wedge, \mc{L})$ be the section given by this trivialization. 
	
	At this point, we have sections 
	\e{
		& a^{(n)} \in \Gamma(S_{f_n} \times X^n, \mc{L}) \\
		& b^{(n)} \in \Gamma((S \times X^n)^\wedge, \mc{L}). 
	} 
	In what follows, subscripts on $(S\times X^n)^\wedge$ indicate localization. 
	\begin{claim} 
		The pullbacks of $a^{(n)}$ and $b^{(n)}$ to $(S \times X^n)^\wedge_{f_n}$ agree. 
	\end{claim} 
	\begin{proof} 
		We have a commutative diagram\footnote{A comment about notation: since we are in the special case of~\ref{punctured} when $f_n = f \in T$, we can replace the subscripts $fT$ by $T$, and we have done so above.} 
		\begin{cd} 
			(S \times X^n)^\wedge_T \ar{r} \ar{d} & (S \times X^n)^\wedge_{f_n} \ar{d} \ar{r} & (S \times X^n)^\wedge \ar[d] \\
			S_T \times X^n \ar{r} & S_{f_n} \times X^n \ar[r] & S \times X^n
		\end{cd} 
		Since $f_n$ vanishes on all the embedded points of $S$, every element of $T$ restricts to a non-zerodivisor on $S_{f_n}$. Note that the maps 
		\[
			(S \times X^n)^\wedge_{f_n} \to (S \times X^n)_{f_n} \to S_{f_n}
		\]
		are flat, because completion is flat under Noetherian hypotheses~\cite[Thm.\ 29.2.6]{v} and localization preserves flatness. It follows that every element of $T$ restricts to a non-zerodivisor on $(S \times X^n)^\wedge_{f_n}$ as well. Hence, a nonzero function on $(S \times X^n)^\wedge_{f_n}$ pulls back to a nonzero function on $(S \times X^n)^\wedge_T$. It now suffices to check that $a^{(n)}$ and $b^{(n)}$ agree on $(S \times X^n)^\wedge_T$. 
		
		The pullback of $a^{(n)}$ to $(S \times X^n)^\wedge_T$ is also the pullback of $\mathring{a}^{(n)}$ to $(S \times X^n)^\wedge_T$. 	
		This shows that $a^{(n)}$ is part of a trivialization of the restriction of $\mc{L}$ to $\mc{Y}$, where 
		\[
			\mc{Y} := \colim_{\mc{C}^\op}(S \times X^n)^\wedge_{T}  
		\]
		was defined in~\ref{punctured}, and $\mc{L}|_{\mc{Y}}$ is viewed as a rigidified line bundle in the sense of~\ref{rigid-variation}. On the other hand, $b^{(n)}$ is already part of a trivialization of $\mc{L}|_{\mc{Y}'}$ from before, and this pulls back to a trivialization of $\mc{L}_{\mc{Y}}$. Now Lemma~\ref{func2}  implies that these two trivializations coincide, thereby showing that the restrictions of $a^{(n)}$ and $b^{(n)}$ to $(S \times X^n)^\wedge_{T}$ are equal. 	
	\end{proof} 
	
	Proposition~\ref{gluing} gives a section 
	\[
		c^{(n)} \in \Gamma(S \times X^n, \mc{L})
	\]
	which restricts to $a^{(n)}$ and $b^{(n)}$. This section is nonvanishing since $a^{(n)}$ and $b^{(n)}$ are nonvanishing. We wish to show that these $c^{(n)}$ determine a trivialization of $\mc{L}$ on $S \times \Ran(X)$. 
	
	\begin{claim} 
		The section $c^{(n)}$ does not depend on the choice of $f_n$. 
	\end{claim} 
	\begin{proof} 
		Let $c'^{(n)}$ correspond to the choice of another $f_n'$, and let $c''^{(n)}$ correspond to $f_nf_n'$. It suffices to show that $c^{(n)} = c''^{(n)}$, because the same argument could be repeated to show $c'^{(n)} = c''^{(n)}$. 
		
		Since $c^{(n)}$ and $c''^{(n)}$ both restrict to $\mathring{a}^{(n)}$, their difference restricts to zero on $S_T \times X^n$. This implies that the difference is supported on the closure of the union of the embedded points of $S \times X^n$, which is the same as the preimage of the analogous locus in $S$. In particular, the difference is annihilated by a sufficiently high power of $f_n$, since $f_n$ vanishes on the embedded points of $S$. 
		
		By construction, the restriction of $c^{(n)}$ to $(S \times X^n)^{\wedge, f_n}$ is $b^{(n)}$ which is part of a trivialization of $\mc{L}$ on $\mc{Y}'$ as defined before.\footnote{The additional superscript indicates completion with respect to $(f_n)$.} Similarly, the restriction of $c''^{(n)}$ to $(S \times X^n)^{\wedge, f_nf_n'}$ is some section $b''^{(n)}$ which is part of a trivialization of $\mc{L}$ on 
		\[
			\mc{Y}'' := \colim_{\mc{C}^\op} (S \times X^n)^{\wedge, f_nf_n'}, 
		\]
		as a rigidified line bundle. We have a natural map $\mc{Y}' \to \mc{Y}''$, and the uniqueness of trivializations which follows from~\ref{func2} implies that the restriction of $b''^{(n)}$ to $(S \times X^n)^{\wedge, f_n}$ coincides with $b^{(n)}$. Therefore the difference $c^{(n)} - c''^{(n)}$ restricts to zero on $(S \times X^n)^{\wedge, f_n}$. Since this difference lies in $\Ann(f_n)$, Corollary~\ref{ann} implies that it equals zero. 
	\end{proof} 
	
	Now, consider an arbitrary surjective map $\xi_{21} : [n_1] \sra [n_2]$ which gives rise to the diagonal map $\Delta_{\xi_{21}} : S \times X^{n_2} \to S \times X^{n_1}$. By our construction, there are functions $f_{n_1}, f_{n_2} \in T$ which give sections $c^{(n_1)}$ and $c^{(n_2)}$. By the previous claim, we can replace both $f_{n_1}$ and $f_{n_2}$ by $f_{n_1}f_{n_2}$ without changing $c^{(n_1)}$ and $c^{(n_2)}$. Therefore we may write $f:= f_{n_1} = f_{n_2}$. Now $c^{(n_2)}$ and $(\Delta_{\xi_{21}})^*c^{(n_1)}$ are two sections of $\mc{L}|_{S \times X^{n_2}}$ which coincide when restricted to $S_T \times X^{n_2}$ and $(S \times X^{n_2})^{\wedge, f}$. The same argument as was used to prove the claim shows that $c^{(n_2)} = (\Delta_{\xi_{21}})^*c^{(n_1)}$, as desired. 	
\end{proof} 

\subsection{Beauville--Laszlo gluing for regular functions} \label{bl-sec} 
\subsubsection{Setup} \label{setup} Let $A$ be a Noetherian ring and let $f \in A$ be an arbitrary element. Let $\wh{A}$ be the completion of $A$ along the ideal $(f)$, and let $\wh{A}_f := A_f \otimes_A \wt{A}$ be the localization of $\wh{A}$ by the image of $f$. We have an obvious commuting diagram: 
\begin{cd} 
	A \ar{r} \ar{d} & A_f \ar{d} \\
	\wh{A} \ar{r} & \wh{A}_f
\end{cd} 

\subsubsection{} \label{bl-iso} 
We shall use the following elementary result: 
\begin{lem} 
	~\\ \vspace{-0.25cm} 
	\begin{enumerate}[label=(\roman*)]
		\item Let $M$ be an $A$-module which is set-theoretically supported on the vanishing locus of $f$. Then the natural map $M \to \wh{A} \otimes_A M$ is an isomorphism. 
		\item The natural map $A_f / A \to \wh{A} \otimes_A (A_f / A)$ is an isomorphism. 
	\end{enumerate} 
\end{lem}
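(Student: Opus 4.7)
The plan is to prove (i) directly from commutative algebra, then deduce (ii) as an immediate corollary.

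First, for (ii), I would observe that $A_f/A$ is set-theoretically supported on $V(f)$. Indeed, any element of $A_f/A$ has a representative of the form $a/f^n$ for some $n \ge 0$, and $f^n \cdot (a/f^n) = a \in A$, so its class is annihilated by $f^n$. Hence every element is $f$-power torsion, which implies $\mathrm{Supp}(A_f/A) \subset V(f)$. Thus (ii) follows from (i) applied to the module $M = A_f/A$.

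For (i), the strategy is to reduce to the case where $M$ is annihilated by a fixed power of $f$, and then use the standard fact (for $(f)$-adic completion of a Noetherian ring) that $\wh{A}/f^n \wh{A} \simeq A/f^n A$ for every $n \ge 0$. Here is the reduction: because $A$ is Noetherian, any $m \in M$ generates a finitely generated submodule $Am \simeq A/\Ann(m)$ whose support is contained in $V(f)$; this forces $f \in \sqrt{\Ann(m)}$, so $f^n m = 0$ for some $n$ depending on $m$. Setting $M[f^n] := \{m \in M : f^n m = 0\}$, we obtain a filtered colimit presentation $M \simeq \colim_n M[f^n]$.

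Each $M[f^n]$ is an $A/f^n A$-module, so by the isomorphism $\wh{A}/f^n \wh{A} \simeq A/f^n A$ we have
\[
	\wh{A} \otimes_A M[f^n] \simeq (\wh{A}/f^n \wh{A}) \otimes_{A/f^n A} M[f^n] \simeq M[f^n],
\]
and one verifies that this identification is compatible with the natural map $m \mapsto 1 \otimes m$. Since the tensor product commutes with filtered colimits, passing to the colimit in $n$ gives $\wh{A} \otimes_A M \simeq M$.

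There is no genuine obstacle here; the only content is the classical identity $\wh{A}/f^n\wh{A} \simeq A/f^n A$ (which can be cited from the Stacks Project or Matsumura), and the observation that $f$-power torsion modules are unchanged under $\wh{A} \otimes_A (-)$. The lemma is really a packaging of the slogan that quasicoherent sheaves set-theoretically supported on $V(f)$ are unaffected by passing between $\Spec A$ and its formal neighborhood along $V(f)$.
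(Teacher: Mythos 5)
Your proof is correct. The only divergence from the paper is in part (i): the paper does not argue it at all, but simply cites it as Lemme~1 of Beauville--Laszlo \cite{bl}, whereas you give the self-contained standard argument — filter $M$ by its $f^n$-torsion submodules $M[f^n]$, use $\wh{A}/f^n\wh{A} \simeq A/f^nA$ to see that $\wh{A}\otimes_A(-)$ does nothing to an $A/f^nA$-module, and pass to the filtered colimit. This is essentially the proof BL themselves give, so nothing is lost; what your version buys is transparency about what is actually used: the only input is $\wh{A}/f^n\wh{A}\simeq A/f^nA$, which holds for completion along any finitely generated ideal, so the Noetherian hypothesis is not really needed for this lemma (it is needed elsewhere in \ref{bl-sec}, e.g.\ for flatness of $\wh{A}$ in Corollary~\ref{ann}). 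Two tiny remarks: the Noetherian hypothesis is also not what makes $Am$ finitely generated (it is cyclic), and your deduction of (ii) — every class in $A_f/A$ is represented by $a/f^n$ and hence killed by $f^n$, so $A_f/A$ is $f$-power torsion and (i) applies — is exactly the (unstated) deduction intended in the paper's "(ii) follows from (i)."
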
 
\begin{proof} 
	Point (i) appears as Lemme 1 in \cite{bl}, and (ii) follows from (i). 
\end{proof} 

\subsubsection{} \label{ann} 
Let $\Ann_A(f) \subset A$ denote the ideal consisting of elements which are annihilated by some power of $f$, and define $\Ann_{\wh{A}}(f)$ similarly. 
\begin{cor} 
	The natural map $A \to \wh{A}$ induces an isomorphism $\Ann_A(f) \simeq \Ann_{\wh{A}}(f)$. 
\end{cor}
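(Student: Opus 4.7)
The plan is to reduce the statement to Lemma~\ref{bl-iso}(i) by filtering $\Ann_A(f)$ by the submodules $\Ann_A(f^n) := \ker(A \xrightarrow{f^n} A)$, and similarly for $\wh{A}$. Since $\Ann_A(f) = \bigcup_n \Ann_A(f^n)$ and $\Ann_{\wh{A}}(f) = \bigcup_n \Ann_{\wh{A}}(f^n)$ by definition, and the natural map $A \to \wh{A}$ carries $\Ann_A(f^n)$ into $\Ann_{\wh{A}}(f^n)$, it suffices to prove the level-$n$ statement: for every $n \ge 1$, the map $\Ann_A(f^n) \to \Ann_{\wh{A}}(f^n)$ is an isomorphism.

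For the level-$n$ statement, I would proceed in two steps which compose to give the desired isomorphism. First, by definition $f^n$ annihilates $\Ann_A(f^n)$, so this $A$-module is set-theoretically supported on $V(f^n) = V(f)$. Hence Lemma~\ref{bl-iso}(i) applies and gives a canonical isomorphism $\Ann_A(f^n) \xrightarrow{\sim} \wh{A} \otimes_A \Ann_A(f^n)$. Second, because $A$ is Noetherian, its $(f)$-adic completion $\wh{A}$ is $A$-flat; applying $\wh{A} \otimes_A -$ to the defining left-exact sequence $0 \to \Ann_A(f^n) \to A \xrightarrow{f^n} A$ therefore yields $\wh{A} \otimes_A \Ann_A(f^n) \xrightarrow{\sim} \ker(\wh{A} \xrightarrow{f^n} \wh{A}) = \Ann_{\wh{A}}(f^n)$.

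Composing the two isomorphisms gives $\Ann_A(f^n) \xrightarrow{\sim} \Ann_{\wh{A}}(f^n)$, and an inspection shows this composition is precisely the map induced by $A \to \wh{A}$ (both isomorphisms are induced by that base change). Taking the union over $n$ then yields the corollary. There isn't really a main obstacle here: once one has Lemma~\ref{bl-iso}(i) and flatness of $A \to \wh{A}$ at hand, the argument is essentially formal, and the only point worth a line of justification is identifying the composed isomorphism with the natural map, which is immediate because each constituent is $A$-linear and compatible with the inclusions into $A$ and $\wh{A}$.
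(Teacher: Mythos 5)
Your proof is correct and uses the same two ingredients as the paper's: the identification $M \simeq \wh{A} \otimes_A M$ for modules supported on $V(f)$ (Lemma~\ref{bl-iso}(i)) together with flatness of $\wh{A}$ over $A$ applied to $0 \to \ker(f^n) \to A \xrightarrow{f^n} A$. The only (harmless) difference is bookkeeping: you argue level by level with the filtration $\Ann_A(f) = \bigcup_n \ker(f^n)$ and pass to the union, whereas the paper uses Noetherianity a second time, via finite generation of $\Ann_A(f)$, to stabilize at a single sufficiently large $n$.
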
 
\begin{proof} 
	Because $\Ann_A(f)$ is finitely generated (this uses that $A$ is Noetherian), we have the following exact sequence for all sufficiently large $n$: 
	\begin{cd} 
		0 \ar[r] & \Ann_A(f) \ar[r] & A \ar[r, "f^n"] & A 
	\end{cd} 
	Since $\wh{A}$ is flat over $A$ by \cite[Thm.\ 29.2.6(a)]{v}, we obtain another exact sequence 
	\begin{cd} 
		0 \ar[r] & \wh{A} \otimes_A \Ann_A(f) \ar[r] & \wh{A} \ar[r, "f^n"] & \wh{A} 
	\end{cd} 
	Since this holds for arbitrarily large $n$, we conclude that $\wh{A} \otimes_A \Ann_A(f) \simeq \Ann_{\wh{A}}(f)$. But $\Ann_A(f)$ is supported on the vanishing locus of $f$, so Lemma~\ref{bl-iso} implies that $\wh{A} \otimes_A \Ann_A(f) \simeq \Ann_A(f)$. 
\end{proof} 

\subsubsection{Gluing} \label{gluing} 
This gluing result for functions is used in the proof of Proposition~\ref{induct}. 
\begin{prop} 
	Let $a \in A_f$ and $b \in \wh{A}$ be such that their images in $\wh{A}_f$ agree. Then there exists a unique $c \in A$ which simultaneously maps to $a$ and $b$. 
\end{prop}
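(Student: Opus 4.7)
The plan is to verify that the natural ring map $A \to A_f \times_{\wh{A}_f} \wh{A}$ is a bijection, handling uniqueness (injectivity) and existence (surjectivity) separately. The essential input in both halves is Corollary~\ref{ann}: since we do not assume $f$ is a non-zerodivisor, we cannot apply the classical Beauville--Laszlo argument verbatim, and Corollary~\ref{ann} serves as the bridge allowing us to transport $f$-power torsion between $A$ and $\wh{A}$ in both directions.

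For uniqueness, suppose $c \in A$ maps to $0$ in both $A_f$ and $\wh{A}$. The first condition forces $c \in \Ann_A(f)$, and then the injectivity of $\Ann_A(f) \to \Ann_{\wh{A}}(f)$ given by Corollary~\ref{ann} -- combined with the fact that $c$ maps to $0$ in $\wh{A}$ -- yields $c = 0$.

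For existence, start with $(a,b)$ where $a$ and $b$ agree in $\wh{A}_f$, and write $a = a'/f^n$ with $a' \in A$. The difference $a' - f^n b \in \wh{A}$ vanishes in $\wh{A}_f$, hence lies in $\Ann_{\wh{A}}(f)$ (which by Noetherianity of $\wh{A}$ equals $\Ann_{\wh{A}}(f^k)$ for $k$ large). Corollary~\ref{ann} produces a unique $\alpha \in \Ann_A(f)$ whose image in $\wh{A}$ equals $a' - f^n b$. Then $a' - \alpha \in A$ has image $f^n b \in f^n \wh{A}$, so its image in $\wh{A}/f^n \wh{A}$ is zero; using the standard identification $\wh{A}/f^n \wh{A} \simeq A/f^n A$, we conclude $a' - \alpha \in f^n A$ and write $a' - \alpha = f^n \wt{c}$ for some $\wt{c} \in A$. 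A direct check shows that $\wt{c}$ maps to $a$ in $A_f$ (since $\alpha$ is killed there) and to $b$ in $\wh{A}$ up to an $f$-torsion discrepancy; Corollary~\ref{ann} once more identifies that discrepancy with a $\beta \in \Ann_A(f)$, and subtracting $\beta$ corrects the image in $\wh{A}$ without disturbing the image in $A_f$, yielding the required $c$.

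The main obstacle is precisely that $f$ may be a zerodivisor, which precludes naive cancellation or division by $f^n$ in $A$ and $\wh{A}$. The argument routes around this via the two applications of Corollary~\ref{ann} -- first to identify the $\wh{A}$-side torsion element $a' - f^n b$ with an honest element of $A$, and then to clean up the residual torsion error in $\wt{c} - b$ -- while genuine division by $f^n$ is only carried out modulo $f^n$, via the formal isomorphism $A/f^n A \simeq \wh{A}/f^n \wh{A}$.
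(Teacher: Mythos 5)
Your argument is correct, and it reaches the conclusion by a genuinely more element-wise route than the paper. The paper's proof first disposes of $a$ in one stroke: by Lemma~\ref{bl-iso}(ii) the map $A_f/A \to \wh{A}_f/\wh{A}$ is an isomorphism, and since the image of $a$ in $\wh{A}_f$ lies in $\wh{A}$ (it equals the image of $b$), the class of $a$ in $A_f/A$ vanishes, so $a$ lifts to $A$ and one may assume $a=0$; then $b$ lies in $\ker(\wh{A} \to \wh{A}_f) = \Ann_{\wh{A}}(f)$ and a single application of Corollary~\ref{ann} produces the (unique, $f$-power-torsion) element $c$. You never invoke Lemma~\ref{bl-iso}(ii); instead you clear denominators, writing $a = a'/f^n$, transport the torsion element $a' - f^n b \in \Ann_{\wh{A}}(f)$ back to $\alpha \in \Ann_A(f)$ via Corollary~\ref{ann}, divide by $f^n$ using $A/f^nA \simeq \wh{A}/f^n\wh{A}$ (which is Lemma~\ref{bl-iso}(i) applied to $A/f^nA$, together with flatness of $\wh{A}$), and then correct the residual torsion discrepancy with a second application of Corollary~\ref{ann}. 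In effect your denominator-clearing manipulation re-derives the content of Lemma~\ref{bl-iso}(ii) by hand, at the cost of two uses of Corollary~\ref{ann} instead of one; what it buys is a self-contained, explicit construction that does not lean on the quotient-module isomorphism, and a cleanly separated uniqueness step (injectivity of $\Ann_A(f) \to \Ann_{\wh{A}}(f)$ applied to an element killed in $A_f$), which the paper leaves largely implicit. All the individual steps check out: the kernel of $\wh{A} \to \wh{A}_f$ is indeed the $f$-power torsion, the passage from vanishing in $\wh{A}/f^n\wh{A}$ to divisibility by $f^n$ in $A$ is justified by the isomorphism you cite, and subtracting $\beta \in \Ann_A(f)$ fixes the image in $\wh{A}$ without changing the image in $A_f$.
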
  
\begin{proof} 
	Since tensors commute with quotients, Lemma~\ref{bl-iso}(ii) says that $A_f / A \simeq \wh{A}_f / \wh{A}$. By hypothesis, the image of $a$ in $\wh{A}_f$ lies in $\wh{A}$. Therefore the image of $a$ in $A_f / A$ vanishes, so $a$ can be lifted to $\wt{a} \in A$. Subtracting off $\wt{a}$, we may assume that $a = 0$. 
	
	With the assumption that $a = 0$, we have that $b$ lies in the kernel of $\wh{A} \to \wh{A}_f$. The isomorphism of Corollary~\ref{ann} implies that $b \in \wh{A}$ is the image of a \emph{unique} $c \in A$ which is also annihilated by $f$. But then $c$ also maps to zero in $A_f$, so it satisfies the requirement of mapping to $a = 0$ and to $b$. 
\end{proof}

\subsection{The general case} \label{ssec-general}

We now complete the proof of Theorem~\ref{main2} by explaining how to remove the hypothesis that $X$ is affine from the statement of Proposition~\ref{final}. 

\subsubsection{} 
Assume that $Y$ satisfies (C) and has a basepoint $y_0 \in Y(k)$. Let $S$ be a locally Noetherian $k$-scheme, and take a rigidified line bundle $\mc{L} \in \Pic^e(S \times \Ran(Y))$.

\begin{lem}\label{affine-y}
	 There exists a unique trivialization $r \in \mc{L}|_{S \times Y}$ such that, for any affine open $U \subset Y$ containing $y_0$, the restriction of $r$ to $S \times U$ equals the $(S \times U)$-part of the unique trivialization of $\mc{L}|_{S \times \Ran(U)} \in \Pic^e(S \times \Ran(U))$ guaranteed by Proposition~\ref{final}. 
\end{lem}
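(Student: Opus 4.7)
The plan is to deduce this lemma from Proposition~\ref{final} via a Zariski gluing argument over affine opens of $Y$ containing $y_0$. First, for any affine open $U \subset Y$ with $y_0 \in U$, the scheme $U$ is affine, contains the $k$-rational point $y_0$, and inherits hypothesis (C) from $Y$ (since $U_{\ol{k}}$ embeds into the same smooth proper compactification as $Y_{\ol{k}}$). Hence Proposition~\ref{final} applies to $U$ with the restricted rigidified line bundle $\mc{L}|_{S \times \Ran(U)}$, producing a unique trivialization $s_U \in \Pic^e(S \times \Ran(U))$. Restricting $s_U$ to the first stratum $S \times U \hookrightarrow S \times \Ran(U)$ yields a nonvanishing section $r_U \in \Gamma(S \times U, \mc{L})$.

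For the gluing step, consider two such opens $U_1, U_2$. Their intersection $W = U_1 \cap U_2$ is again affine (since $Y$ is a separated variety) and contains $y_0$, so Proposition~\ref{final} applies to $W$ as well and produces a unique trivialization $s_W$. By the uniqueness clause of Proposition~\ref{final}, the restrictions $s_{U_1}|_{S \times \Ran(W)}$ and $s_{U_2}|_{S \times \Ran(W)}$ must both coincide with $s_W$; restricting to the first stratum gives $r_{U_1}|_{S \times W} = r_{U_2}|_{S \times W}$. Therefore the $r_U$ glue into a single nonvanishing section $r \in \Gamma(S \times Y, \mc{L})$ satisfying the required compatibility, and uniqueness of $r$ follows since it is pinned down on each $S \times U$ by the condition of the lemma.

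The main obstacle is confirming that the affine opens $U \subset Y$ containing $y_0$ genuinely cover $Y$, so that the glued sections produce a trivialization defined on all of $S \times Y$ rather than on a proper open subset. For quasi-projective $Y$ this coverage is standard (one excises a hyperplane section missing both the chosen point $y \in Y$ and $y_0$), and in the generality of hypothesis (C) one reduces to this situation by working Zariski-locally around each point of $Y$ and leveraging the smooth proper compactification provided by (C) to embed a neighborhood into a quasi-projective chart. This is the sole genuine geometric input beyond Proposition~\ref{final}; once it is secured, the gluing construction above immediately furnishes the desired trivialization $r$.
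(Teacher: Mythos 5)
Your argument is correct and essentially the paper's: apply Proposition~\ref{final} on each affine open $U \subset Y$ containing $y_0$, use the uniqueness of rigidified trivializations (Corollary~\ref{sheaf}) to see that the resulting sections agree on overlaps (you do this via the affine intersection $U_1 \cap U_2$, the paper via nested pairs $U_1 \subset U_2$, which amounts to the same thing since $Y$ is separated), and glue by the Zariski sheaf property. One remark: the coverage of $Y$ by affine opens containing $y_0$, which you single out as the remaining geometric input, is simply asserted in the paper without argument, and your sketched justification via the compactification from (C) is not a proof as stated (that compactification need not be quasi-projective, and a quasi-projective chart around an arbitrary point need not contain $y_0$), so on this point you are no worse off, but also no better, than the paper.
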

\begin{proof}
	Since $Y$ is covered by the affine open subschemes $U \subset Y$ containing $y_0$, the uniqueness of $r$ is automatic. To show existence, we need only show the following statement: 
	\begin{itemize}
		\item For every such $U$, let $r_U \in \mc{L}|_{S \times U}$ be the $S \times U$-part of the unique trivialization of $\mc{L}|_{S \times \Ran(U)}$. Then, for any two such $U_1, U_2$ with $U_1 \subset U_2$, we have $r_{U_2}|_{S \times U_1} = r_{U_1}$. 
	\end{itemize}
	Then the Zariski sheaf condition on $\mc{L}|_{S \times Y}$ shows that the $r_U$ glue into a nonvanishing section $r$. 
	
	The bullet point statement follows from the uniqueness of trivialization of a rigidified line bundle on $S \times \Ran(U_1)$ (see Corollary~\ref{sheaf}). Indeed, the trivialization of $\mc{L}|_{S \times \Ran(U_2)}$ pulls back to a trivialization of $\mc{L}|_{S \times \Ran(U_1)}$ under the map $S \times \Ran(U_1) \to S \times \Ran(U_2)$, and looking at sections on $S \times U_1$ yields the claimed equality. 
\end{proof}

\subsubsection{} \label{last-triv} 
Assume that $X$ satisfies (C) and has a basepoint $x_0 \in X(k)$. Let $S$ be a locally Noetherian $k$-scheme, and take a rigidified line bundle $\mc{L} \in \Pic^e(S \times \Ran(X))$. For each $I$, we have a functor $\fsetsurjne \to \fsetsurjne$ given by $I \times (-)$, and the colimit version of Lemma~\ref{lims} yields a map $\Ran(X^I) \xrightarrow{\phi^{(I)}} \Ran(X)$ which is functorial in $I$. (This construction was already used in~\ref{ssec-canon-proof}.) We observe here that there is a commutative diagram 
\begin{cd}
	X^I \ar[r] \ar[rd] & \Ran(X^I) \ar[d, "\phi^{(I)}"] \\
	& \Ran(X)
\end{cd}
which is also functorial in $I$. 

Let $\mc{L}^{(I)} \in \Pic^e(S \times \Ran(X^I))$ be the pullback of $\mc{L}$ under the resulting map $S \times \Ran(X^I) \to S \times \Ran(X)$, and let $r^{(I)} \in \mc{L}^{(I)}|_{S \times X^I} \simeq \mc{L}|_{S \times X^I}$ be the trivialization guaranteed by Lemma~\ref{affine-y}, where we choose the basepoint $x_0^I \in X^I(k)$. 

\begin{lem}
	 The trivializations $r^{(I)}$ are compatible under surjections $\xi : I \sra J$ in the sense that $(\id_S \times \Delta_{\xi})^*r^{(I)} = r^{(J)}$. Therefore, the $r^{(I)}$ collectively yield a trivialization of $\mc{L}$. 
\end{lem}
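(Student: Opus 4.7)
The strategy is to reduce the compatibility $(\id_S \times \Delta_\xi)^* r^{(I)} = r^{(J)}$ on $S \times X^J$ to a local check on affine opens of $X^J$ of the form $U^J$, where $U \subset X$ is an affine open containing $x_0$. These $U^J$ cover $X^J$: for any closed point $(p_1, \ldots, p_{|J|}) \in X^J$, the same covering input that underlies Lemma~\ref{affine-y}, applied to the finite set $\{x_0, p_1, \ldots, p_{|J|}\} \subset X$, produces an affine open $U \subset X$ containing all of these points, and then $(p_1, \ldots, p_{|J|}) \in U^J$.

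Fix such a $U$. It is affine, inherits hypothesis (C) from $X$ (by restricting the compactification of $X_{\ol{k}}$), and so do $U^I$ and $U^J$, which are themselves affine. Proposition~\ref{final} therefore yields unique rigidified trivializations $\sigma_U^{(I)}$ and $\sigma_U^{(J)}$ of $\mc{L}^{(I)}|_{S \times \Ran(U^I)}$ and $\mc{L}^{(J)}|_{S \times \Ran(U^J)}$, respectively. By the characterization in Lemma~\ref{affine-y} applied to $Y = X^I$ (resp.\ $X^J$), the restrictions satisfy $r^{(I)}|_{S \times U^I} = \sigma_U^{(I)}|_{S \times U^I}$ and $r^{(J)}|_{S \times U^J} = \sigma_U^{(J)}|_{S \times U^J}$.

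Now the functoriality of $\phi^{(-)}$ in $I$ gives a commutative square
\begin{cd}
\Ran(U^J) \ar[r] \ar[d, hookrightarrow] & \Ran(U^I) \ar[d, hookrightarrow] \\
\Ran(X^J) \ar[r, "\Ran(\Delta_\xi)"] & \Ran(X^I)
\end{cd}
whose top horizontal map is induced by $\Delta_\xi|_{U^J} : U^J \to U^I$ (well-defined since $\Delta_\xi(U^J) \subset U^I$). Pullback along the top row sends $\mc{L}^{(I)}|_{S \times \Ran(U^I)}$ to $\mc{L}^{(J)}|_{S \times \Ran(U^J)}$ and preserves rigidifications (both come from the rigidification of $\mc{L}$ at $x_0^{\Ran}$, to which the basepoints $x_0^I$ and $x_0^J$ of $X^I$ and $X^J$ both map). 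Hence the pullback of $\sigma_U^{(I)}$ is a rigidified trivialization of $\mc{L}^{(J)}|_{S \times \Ran(U^J)}$ and therefore equals $\sigma_U^{(J)}$ by uniqueness. Restricting this identity to $S \times U^J$ yields $(\id_S \times \Delta_\xi)^* r^{(I)}|_{S \times U^J} = r^{(J)}|_{S \times U^J}$, and running $U$ over the cover gives the desired equality on all of $S \times X^J$.

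The one delicate ingredient is the covering assertion of the first paragraph; this is the same geometric input that already lurks behind Lemma~\ref{affine-y}. Granted it, everything else in the argument is a formal consequence of the uniqueness statements in Proposition~\ref{final} and Lemma~\ref{affine-y}, combined with the functoriality of $\phi^{(I)}$ that relates $\mc{L}^{(I)}$ and $\mc{L}^{(J)}$ through the square above.
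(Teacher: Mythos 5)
Your overall route is the same as the paper's: identify $r^{(I)}|_{S\times U}$ and $r^{(J)}|_{S\times V}$ with restrictions of the unique rigidified trivializations over the Ran spaces of suitable affine opens (Proposition~\ref{final} together with the uniqueness from Corollary~\ref{sheaf}), and use the functoriality of $\phi^{(I)}$ in $I$ to see that pulling back the trivialization over $S\times\Ran(U)$ yields the rigidified trivialization over $S\times\Ran(V)$. Your steps involving $\sigma_U^{(I)}$, $\sigma_U^{(J)}$, the compatibility of basepoints, and the restriction back to $S\times U^J$ are all correct and reproduce the paper's mechanism.

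The gap is in the covering step. You reduce to opens of $X^J$ of the form $U^J$ with $U\subset X$ affine containing $x_0$, and justify that these cover $X^J$ by invoking ``the same covering input that underlies Lemma~\ref{affine-y}'' applied to the finite set $\{x_0,p_1,\ldots,p_{|J|}\}$. That is not the same input: the covering used in Lemma~\ref{affine-y} is that each single point of $Y$ lies in some affine open containing the basepoint, whereas you need $|J|+1$ prescribed closed points of $X$ to lie in one common affine open of $X$. The latter is strictly stronger. It holds when $X$ is quasi-projective, but hypothesis (C) does not grant quasi-projectivity, and by the Chevalley--Kleiman criterion a smooth proper non-projective variety (e.g.\ Hironaka's example, which satisfies (C) over $\ol{k}$) contains finite sets of closed points not contained in any affine open; for such an $X$ your product opens $U^J$ do not cover $X^J$. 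The paper sidesteps this by not insisting on product opens: for an arbitrary affine open $U\subset X^I$ containing $x_0^I$ it chooses an affine open $V\subset X^J$ containing $x_0^J$ with $\Delta_\xi(V)\subset U$, compares the unique trivializations over $S\times\Ran(U)$ and $S\times\Ran(V)$ exactly as you do over $S\times\Ran(U^I)$ and $S\times\Ran(U^J)$, and lets $U$ and $V$ vary; this only uses covering statements of the two-point kind already assumed in Lemma~\ref{affine-y}. Replacing your product opens by such pairs $(U,V)$ (or adding a quasi-projectivity hypothesis on $X$) repairs the argument; the rest of what you wrote goes through verbatim.
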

\begin{proof}
	Given a surjection $\xi : I \sra J$, the claimed compatibility involves the diagram 
	\begin{cd}
		X^J \ar[dd, swap, "\Delta_{\xi}"] \ar[rd] \\
		& \Ran(X) \\
		X^I \ar[ru] 
	\end{cd}
	which we factor as follows: 
	\begin{cd}
		X^J \ar[dd, swap, "\Delta_{\xi}"] \ar[r] & \Ran(X^J)\ar[dd] \ar[rd, "\phi^{(J)}"] \\
		& & \Ran(X) \\
		X^I \ar[r] & \Ran(X^I) \ar[ru, swap, "\phi^{(I)}"] 
	\end{cd}
	By definition, $r^{(I)}$ is characterized by the property that $r^{(I)}|_{S \times U}$ is part of the unique trivialization of $\mc{L}^{(I)}|_{S \times \Ran(U)}$, for any open $U \subset X^I$ containing the basepoint $x_0^I$. For a fixed $U$, let $V \subset X^J$ be an affine open subscheme containing $x_0^J$ such that $\Delta_\xi(V) \subset U$. By the above commutative diagram, the pullback $(\id_S \times \Delta_{\xi})^*r^{(I)}$ restricts to the $(S \times V)$-part of the unique trivialization of $\mc{L}^{(J)}|_{S \times \Ran(V)}$. Therefore, we have 
	\[
		(\id_S \times \Delta_{\xi})^*r^{(I)}|_{S \times V} = r^{(J)}|_{S \times V}
	\]
	by the defining property of $r^{(J)}$. As $U$ and $V$ vary, the resulting $V$'s cover $X^J$, and the lemma follows. 
\end{proof}

\subsubsection{Proof of Theorem~\ref{main2}}

Assume that $X$ satisfies (C) and let $S$ be a locally Noetherian $k$-scheme. By~\ref{based-based}, we may assume that $X(k)$ is nonempty. Now, Lemma~\ref{last-triv} implies that every $\mc{L} \in \Pic^e(S \times \Ran(X))$ is trivial, i.e. $\Pic^e(S \times \Ran(X)) = 0$, as desired.

\end{document}